\documentclass{article}

\usepackage{amssymb}
\usepackage{amsmath}
\usepackage{amsthm}

\usepackage{subfigure} 
\usepackage{mathtools} 
\mathtoolsset{showonlyrefs}
\usepackage{dsfont} 
\usepackage{multicol} 
\usepackage{hyperref}
\usepackage[a4paper,left=3cm,right=3cm,top=3cm,bottom=3cm,]{geometry}

\newtheorem{theorem}{Theorem}[section]
\newtheorem{proposition}[theorem]{Proposition}
\newtheorem{definition}[theorem]{Definition}
\newtheorem{lemma}[theorem]{Lemma}
\newtheorem{corollary}[theorem]{Corollary}
\theoremstyle{definition}
\newtheorem{example}[theorem]{Example}
\newtheorem{remark}[theorem]{Remark}

\numberwithin{equation}{section}

\newcommand{\ith}[2]{#1^{ ( #2 ) } }
\renewcommand{\epsilon}{\varepsilon}
\renewcommand{\phi}{\varphi}
\newcommand{\Ide}{\ith I2}
\newcommand{\irl}{I}
\newcommand{\idv}{J}
\newcommand{\ent}{H}
\newcommand{\spaces}{S}
\newcommand{\alphabet}{\spaces^2}
\newcommand{\indic}{\mathds{1}}
\newcommand{\proba}{\mathcal P(\spaces^2)}
\newcommand{\probaun}{\mathcal P(\spaces)}
\newcommand{\balanced}{\mathcal P_\mathrm{bal}}
\newcommand{\abscont}{{\mathcal D}}
\newcommand{\bounded}{\mathcal B}
\newcommand{\expbounded}{\mathcal E}
\newcommand{\continuous}{\mathcal C}
\newcommand{\unifcont}{\mathcal C_{\mathrm u}}
\newcommand{\deltaa}{\delta}

\newcommand{\Ab}{\mathcal A}
\newcommand{\Ade}{\ith {\mathcal A} 2}
\newcommand{\Aun}{\ith {\mathcal A} 1}
\newcommand{\Ak}{\ith {\mathcal A}k}
\newcommand{\Adebal}{\ith {\mathcal A} 2_\mathrm{bal}}
\newcommand{\Abal}{\mathcal A_\mathrm{bal}}
\newcommand{\conv}{\mathrm{co}\,}
\newcommand{\sigmaco}{\sigma\text{-}\mathrm{co}\,}

\DeclareMathOperator{\cl}{\mathrm{cl}}
\DeclareMathOperator{\supp}{\mathrm{supp}}

\newcommand{\tvnorm}[1]{\left|#1\right|_{\mathrm{TV}}}

\newcommand{\tvnormm}[1]{\big|#1\big|_{\mathrm{TV}}}
\newcommand{\tvnorml}[1]{\bigg|#1\bigg|_{\mathrm{TV}}}
\newcommand{\tvnormxl}[1]{\Bigg|#1\Bigg|_{\mathrm{TV}}}

\newcommand{\uf}{f}
 
\newcommand{\Kdelta}{\spaces_1}
\newcommand{\Kun}{\spaces_2}
\newcommand{\words}{\spaces_{\mathrm{fin}}}
\newcommand{\wordspos}{\spaces_{\mathrm{fin,+}}}
\newcommand{\N}{{\mathbb N}}

\renewcommand{\d}{\mathrm d}
\renewcommand{\Omega}{\spaces^\N}

\newcommand{\closeword}{W}
\renewcommand{\hat}{}
\newcommand{\csli}{c_\mathrm{sl}}
\newcommand{\csti}{c_\mathrm{st}}

\newcommand{\stitchable}{A^{(t)}}
\newcommand{\kv}[1]{k_{\underline #1}}

\begin{document}

\title{Large deviations for {possibly reducible} Markov chains on discrete state spaces}

\author{Léo Daures\footnote{Université Paris Cité and Sorbonne Université, CNRS, Laboratoire de Probabilités, Statistique et Modélisation, Paris, France F-75205 CEDEX 13}} 
\date{}

\maketitle
\begin{abstract}
	We study the large deviations of Markov chains under the sole assumption that the state space is discrete. In particular, we do not require any of the usual irreducibility
	and exponential tightness assumptions. Using subadditive arguments, we provide an
	elementary and self-contained proof of the weak level-2 and level-3 large deviation principles. Due to the possible reducibility of the Markov chain, the rate functions may be nonconvex and may differ, outside a specific set, from the Donsker-Varadhan entropy and other classical rate functions.
\end{abstract}

\vspace{0.5cm}
\noindent
\textbf{Keywords:}
Large deviations, Markov chains
\\
\textbf{MSC2020:} 60F10, 60J10
\tableofcontents
\section{Introduction}
Let $X=(X_n)_{n\geq 1}$ be a Markov chain on a discrete countable state space $\spaces$ associated with a stochastic kernel $p$ and an initial distribution $\beta$, and let $\mathbb P$ denote the law of $X$.\label{not: X}\label{not: S}\label{not: p}\label{not: beta}\label{not: P}
From the seminal work of Donsker and Varadhan \cite{DV1,DV2,DV3,DV4} to the recent publication of a comprehensive monograph by de Acosta \cite{deacosta2022}, large deviations of Markov chains have become a standard subject in probability theory. 
A \emph{Large Deviation Principle} (LDP) describes an exponential rate of decay of probabilities. This work primarily focuses on \emph{weak LDPs}. We recall that weak LDPs differ from standard LDPs in that the upper bound is only required to hold for all compact (instead of all closed) sets; see Definition~\ref{def: weak and full LDP}. Following standard terminology, we will occasionally refer to the standard LDP as the {\em full LDP} to emphasize the contrast with the weak LDP.

For Markov chains, it is usual to study the large deviations of the time average of a given function $f:\spaces\to \mathbb R^d$, of the occupation times, and of the empirical process, respectively defined as
\label{not: Lninfty}\label{not: Ln1}
\begin{equation}\label{eq: definition empirical measures level 1 2 3}
	A_nf=\frac 1n\sum_{i=1}^nf(X_i),\qquad
	\ith L1_n=\frac 1n\sum_{i=1}^n\delta_{X_i},\qquad 
	\ith L\infty_n=\frac 1n\sum_{i=1}^n\delta_{T^i(X)},\qquad n\geq 1,
\end{equation}
where $T$ is the shift operator acting on $\spaces^\N$ by $T(x_1, x_2, \dots) = (x_2, x_3, \dots)$.
Here, $A_nf$ is a $d$-dimensional vector, $\ith L1n$ is a probability measure on $\spaces$, and $\ith L\infty_n$ is a probability measure on $\spaces^\N$.
Following the terminology of~\cite{ellis2006}, we refer to the (weak) LDPs of the three sequences in \eqref{eq: definition empirical measures level 1 2 3} as \emph{level-1}, \emph{level-2}, and \emph{level-3} (weak) LDPs respectively. 

The level-1, level-2, and level-3 full LDPs for Markov chains have been extensively studied in the literature. 
Most existing results are stated under assumptions that ensure two properties: one of \emph{irreducibility} and one of \emph{exponential tightness} (see Section~\ref{section: definition and main results} for definitions).

The irreducibility property generally provides the basis of the proofs of the LDP lower bound. 
We may add that irreducibility also typically yields the convexity of the rate functions;
simple examples of Markov chains that lack irreducibility properties, and which satisfy LDPs with nonconvex rate functions were already exhibited in~\cite{dinwoodie1993}.
In order to work in irreducible setups, some authors (historical references \cite{DV3,neynummelin1987, deacosta1990} as well as more recent works \cite{deacosta2013,deacosta2022}) straightforwardly require that the kernel $p$ be \emph{matrix-irreducible} or \emph{$\varphi$-irreducible},\footnote{Most authors simply call~\emph{irreducibility} the notion of $\varphi$-irreducibility. Even if it may be improper we will keep the name $\varphi$-irreducibility to avoid any ambiguity.}
but good irreducibility properties can also be consequences of broader assumptions, such as Assumption {\bf (U)} in \cite{deuschel1989,DZ}.\footnote{Originally introduced as Hypothesis 1.1(b') in \cite{ellis1988}.} 
Even though such irreducibility assumptions are often considered mild, we stress that non-irreducible Markov chains arise in many applications; models involving extinction events constitute just one prominent example.\footnote{One may think, among others, of some ecological models \cite{biobranchingprocess}, discretized birth-and-death processes, $N$-player ruin problems, or some chemical reaction networks models.}
We recall one of these models in Example~\ref{ex: wright fisher}. In such applications, reducibility is structural and not just the result of the existence of one absorbing state.


The property of exponential tightness, for its part, may be assumed through various hypothesis; one is {\bf(H*)}, originally introduced by Donsker and Varadhan in~\cite{DV3}, but assumption {\bf (U)} of \cite{deuschel1989,DZ} is also sufficient.
Broadly speaking, exponential tightness guarantees that the LDPs are full~---~which naturally associates exponential tightness to the upper bound of the LDP in the literature~---~and that the rate functions are good. 
In the absence of exponential tightness, the validity of the full level-2 and level-3 LDPs, as well as that of the (weak) level-1 LDP, seems to depend on the fine details of the Markov chain.
We recall in Appendix~\ref{section: examples and counterexamples} some well-known\footnote{See for instance Section~3 of~\cite{baxter1991}, Example~1 of \cite{brycdembo1996}, Exercises~13.14 and~13.15 of~\cite{rassoul} and Example 10.3 of~\cite{deacosta2022}.} examples of Markov chains (on countable spaces) which lack exponential tightness or goodness of the rate function, and for which only the level-2 and level-3 {\em weak} LDPs hold, and which even fail to satisfy the level-1 weak LDP.

In this paper, we prove that $(X_n)$ always satisfies the level-2 and level-3 weak LDP; see Theorems~\ref{theorem: intro weak LDP for Ln1} and~\ref{theorem: intro weak LDP for Lninfty} below.  Besides discreteness of the space $\spaces$, no assumption is made on the kernel $p$ or on the initial distribution $\beta$, and in particular, the results hold without exponential tightness or irreducibility. In view of the above discussion, the stated weak LDPs are the best achievable results in this situation.
These LDPs will be obtained as consequences of an auxiliary result, which is interesting in its own right, namely the weak LDP for the {\em pair empirical measures}; see  Theorem~\ref{theorem: intro weak LDP for Ln2}.
{Like in \cite{denhollander2008}, the study of the pair empirical measures is a key to a better understanding of the simple, usual, empirical measures; see Remarks~\ref{remark: intro defence of L2n} and~\ref{rem: defence of L2n}.}

In the derivation of the above weak LDPs, we will introduce the sets $\Ak$ of \emph{admissible measures}; see Definition~\ref{def: admissible measures}.
These sets contain the measures that are relevant for the LDPs, and the rate functions are trivially infinite on their complement.
Since LDPs describe exponential decay of probabilities, we stress that measures that give positive probability to transient states may be admissible and have a finite rate function. Examples are provided in Appendix~\ref{section: examples and counterexamples}.

The conclusions of Theorems~\ref{theorem: intro weak LDP for Ln2},~\ref{theorem: intro weak LDP for Ln1}, and \ref{theorem: intro weak LDP for Lninfty} are not new in themselves. 
In 2002, de La Fortelle and Fayolle \cite{fortelle2002} claimed the weak LDP for the pair empirical measure with modified version of the relative entropy as rate function; unfortunately their conclusions {are false as stated, and their proof appears to contain some gaps.} 
In 2005, Jiang and Wu \cite{wu2005} proved the level-2 and level-3 weak LDPs with modified versions of the Donsker-Varadhan entropy as rate functions. This proof was based on some functional analysis arguments from a previous rich article by Wu \cite{wu2000}.
Eventually, in 2015, the level-2 weak LDP made its way to the book of Rassoul-Agha and Seppäläinen \cite{rassoul}, where a proof is outlined as an adaptation of the proof for the irreducible case. 
All these works define a notion of admissible measure. 
We believe that the novelty of the present article lies in two major features.
First, we provide a proof based on subadditivity, which is self-contained and purely probabilistic.
Second, we provide an exhaustive identification of the LDP rate functions.

More precisely, the first feature consists in the use of the \emph{subadditive method}, which requires significant adaptations in order to handle reducible Markov chains.
In large deviations theory, the subadditive method consists in applying a version of the subadditive lemma (see for example Lemma 6.1.11 in \cite{DZ}), in order to prove the existence of the \emph{Ruelle-Lanford function} (RL function), which, in turn, implies the desired weak LDP; see Section~\ref{section: ruelle lanford functions}. 
Subadditive techniques have been employed extensively in the study of large deviations of irreducible Markov chains, leading to concise and elegant proofs of weak LDPs. 
The most classical version of the subadditive method in the case of irreducible Markov chains is the one presented in Chapter 4 of \cite{deuschel1989} and Chapter 6 of \cite{DZ} where the level-2 and level-3 LDPs with abstract rate functions are derived under Assumption~{\bf (U)}. The authors are then able to identify the rate functions with the desired ones. Recently, the method also appeared in~\cite{papierdecouple} under some {\em decoupling} assumptions which cover irreducible Markov chains on finite state spaces.
Some authors \cite{dinwoodieney1995,deacosta1998} also have extracted the main arguments of the subadditive method to use them separately in the derivation of the LDP lower bound under some irreducibility assumptions~---~the upper bound being derived by other means.
The most recent version of this argument is presented in Chapter 2 of \cite{deacosta2022}.
The classical subadditive method, and the variations used in all the above references, inherently relies on irreducibility and always produces convex rate functions. The improvements to the method that we provide involve carefully {\em slicing} and {\em stitching} finite-length trajectories, in a way that accomodates (possible) reducibility. To the best of the author's knowledge, this work provides the first instance of a subadditive argument beyond irreducibility and convexity. In addition to offering an elegant and self-contained proof of the level-2 and level-3 weak LDPs, we hope that these developments will pave the way for future extensions and applications of the subadditive method.

The second feature of the present article is the comprehensive identification of the rate functions. 
We identify the rate functions with all known standard expressions on the set of admissible measures $\Ak$, and show that they are infinite outside $\Ak$. The latter property is at the root of the above-mentioned nonconvexity of the rate functions in non-irreducible cases. In addition to the possible lack of convexity, the rate functions that we obtain may not be good, and the LDPs may not be full. The lack of these properties makes the identification of the rate functions more delicate than usual. 

This article proceeds as follows.
In the remainder of the current section, we provide some notation and definitions, and state the main results of this article.
In Section~\ref{section: weak LDP by subadditive method}, we present the modifications of the subadditive method required to accommodate the generality of our setup and derive the weak LDP for the \emph{pair empirical measure}, defined in~\eqref{eq: definition Ln2}. Section~\ref{section: identification rate function} is dedicated to the computation of the rate function of this weak LDP. Sections~\ref{section: contraction} and~\ref{section: weak LDP on the process level} use this weak LDP to derive respectively the weak LDP for the occupation times, and the weak LDP for the empirical process.

\subsection{Definitions and main results}
\label{section: definition and main results}
Let us recall the definition of exponential tightness, goodness, and of the weak and full LDP.
\begin{definition}
	\label{def: weak and full LDP}
	Let $(Y_n)=(Y_n)_{n\in \N}$ be a random process on a Hausdorff space $\mathcal X$, endowed with its Borel $\sigma$-algebra.
	\begin{enumerate}
		\item 
		We say that $(Y_n)$ is exponentially tight if for all $\alpha<\infty$, there exists a compact set $K\subseteq \mathcal X$ such that 
		\begin{equation*}
			\limsup_{n\to \infty}\frac 1n \log \mathbb P(Y_n\notin K)<-\alpha.
		\end{equation*} 
		\item 
		We say that $(Y_n)$ satisfies the weak (respectively, full) LDP if there exists a lower semicontinuous function $I:\mathcal X\to[0,+\infty]$ such that
		\begin{equation}
			\label{eq: LDP upper bound}
			\limsup_{n\to \infty} \frac 1n\log \mathbb P(Y_n\in K)\leq -\inf_{x\in K} I(x),
		\end{equation}
		for every compact (respectively, closed) set $K$ in $\mathcal X$ and
		\begin{equation}
			\label{eq: LDP lower bound}
			\liminf_{n\to \infty} \frac 1n\log \mathbb P(Y_n\in U)\geq -\inf_{x\in U} I(x),
		\end{equation}
		for every open set $U$ of $\mathcal X$. If so, $I$ is called the \emph{rate function} of the LDP. We say that the rate function is \emph{good} if its level sets are compact, {\it i.e.}~if $\{x\in \mathcal X: I(x)\leq C\}$ is compact for every $C\in \mathbb R$.
		We refer to~\eqref{eq: LDP lower bound} as the \emph{lower bound} and to~\eqref{eq: LDP upper bound} as the \emph{upper bound} of the (weak) LDP. 
	\end{enumerate}
\end{definition}
When a random process satisfies a full LDP or a weak LDP on a Polish space, its rate function is unique: see Lemma~4.1.4 and Exercise~4.1.30 of~\cite{DZ}.

Let $k\in \N:=\{1,2,\ldots\}$. Let $\mathcal P(\spaces^k)$ denote the set of probability measures on $\spaces^k$, embedded in the vector space $\mathcal M(\spaces^k)$ of finite signed measures on $\spaces^k$.\label{not: Pcal}\label{not: Mcal} Let $\bounded(\spaces^k)$ denote the set of bounded real-valued functions on $\spaces^k$.\footnote{We do not mention measurability nor continuity since $\spaces^k$ is discrete!}\label{not: Bounded} We define the dual pairing 
\label{not: pairing}
\begin{equation}
	\label{eq: dual pairing}
	(\mu,V)\mapsto \langle \mu, V\rangle= \sum_{u\in \spaces^k}V(u)\mu(u),
	\qquad V\in \bounded(\spaces^k),\ \mu\in \mathcal M(\spaces^k),
\end{equation}
and we call \emph{weak topology} the coarsest topology on $\mathcal M(\spaces^k)$ making $\langle \cdot,V\rangle$ continuous for all $V\in \bounded(\spaces^k)$.\footnote{The \emph{dual} pairing of~\eqref{eq: dual pairing} can also be used to define a weak topology on $\bounded(\spaces^k)$; see Appendix~\ref{section: duality}.}
We equip $\mathcal M(\spaces^k)$ with the weak topology.
A convenient metric to work with is the total variation (TV) distance, denoted $\tvnorm{\cdot}$.\label{not: TV}
Since $\spaces$ is discrete, $\tvnorm{\cdot}$ metrizes the weak topology on $\mathcal P(\spaces^k)$; this is because $\tvnorm{\cdot}$ can be expressed as half the $\ell ^1$ norm on $\mathcal P(\spaces^k)$.
In $\mathcal P(\spaces^k)$, the open $\tvnorm{\cdot}$-ball for of radius $\rho$ centered at $\mu$ is denoted by $\mathcal B(\mu,\rho)$.\label{not: ball}
When $\Kdelta$ is a subset of $\spaces$, we identify $\mathcal P(\Kdelta^k)$ with the set $\{\mu\in \mathcal P(\spaces ^k)\ |\ \supp\mu\subseteq \Kdelta^k\}$, where $\supp \mu = \{u\in \spaces^k\ |\  \mu(u)>0\}$ is the support of $\mu$.\label{not: supp}
The space $\mathcal P(\spaces^k)$ is complete; see for instance Theorem~6.5 of~\cite{parthasarathy2005}. 

Most of the work in this paper is done in $\mathcal P(\spaces^2)$, with the pair empirical measure $L_n$ defined as
\label{not: Ln2}
\begin{equation}
	\label{eq: definition Ln2}
	L_n=\ith L2_n
	=\frac1{n}\sum_{k=1}^{n}\delta_{(X_k,X_{k+1})}\in \proba, \qquad n\geq 1.
\end{equation}
The measure $L_n$, which counts the transitions between each pair of states, carries more useful information than the occupation times $L_n^{(1)}$.
Following \cite{fortelle2002}, we say that a finite measure $\mu$ on $\spaces^2$ is \emph{balanced} if its first and second marginal are equal \emph{i.e}~if
\begin{equation}
	\label{eq: definition mu balanced in S2}
	\mu(A\times \spaces) = \mu(\spaces\times A), \quad A\subseteq \spaces.
\end{equation}
We call $\mu^{(1)}$ the measure on $\spaces$ for which $\ith \mu 1(A)$ is given by the left-hand side of~\eqref{eq: definition mu balanced in S2} (or either side of~\eqref{eq: definition mu balanced in S2} when $\mu$ is balanced).\label{not: mu1}
The set of all balanced probability measures is closed and we denote it by $\balanced(\spaces ^2)$.\label{not: Pbal} It will play a central role in the weak LDP of Theorem~\ref{theorem: intro weak LDP for Ln2} below because, by definition, $L_n$ is $\frac2n$-close to a balanced measure. 

As we consider Markov chains that may be reducible, the decomposition of $\spaces$ into irreducible classes plays a central role in the following.
The stochastic kernel $p$ defines a relation $\leadsto$ on $\spaces$ with $x\leadsto y$ if there exists a finite sequence $x=x_1,x_2,\ldots, x_{n+1}=y$ in $\spaces$ satisfying $p(x_i,x_{i+1})>0$ for all $1\leq i\leq n$.\footnote{Note that, unlike other definitions of communication, this one requires the sequence to have at least two elements. In particular, we do not have $x\leadsto x$ in general.}\label{not: reachable}
We then say that $y$ is {\em reachable} from $x$. An irreducible class is a maximal subset $C$ of $\spaces$ such that $x\leadsto y$ for all $(x,y)\in C^2$. 
We denote by $B$ the set of all points $x\in \spaces$ that do not belong to any irreducible class; these are the points such that $x\not \leadsto x$, meaning that they can be visited by $X$ at most once. The state space $\spaces$ can be partitioned as
\label{not: Cj}\label{not: Jcal}
\begin{equation}
	\label{eq: decomposition of S in irreducible classes}
	\spaces=B\cup \Bigg(\bigcup_{j\in {\mathcal J}}C_j\Bigg),
\end{equation}
where the $C_j$ are the irreducible classes and ${\mathcal J}$ is countable.\footnote{Note that~\eqref{eq: decomposition of S in irreducible classes} is \emph{not} the Doeblin decomposition. The family $(C_j)_{j\in \mathcal J}$ contains both transient and essential classes. In particular, the sets $C_j$ are not the absorbing classes of the Markov chain, and may not be absorbing at all.}
We say that the class $C_{j'}$ is reachable from $C_{j}$ and write $C_j\leadsto C_{j'}$ if some $y\in C_{j'}$ is reachable from some $x\in C_j$.\footnote{Or equivalently, if all $y\in C_{j'}$ are reachable from all $x\in C_j$.} The relation $\leadsto$ is a partial order on $(C_j)_{j\in {\mathcal J}}$. 
We also define reachability from $\beta$ by writing $\beta\leadsto y$ if $y\in \supp \beta$ or if there exists $x\in \supp \beta$ such that $x\leadsto y$, and $\beta\leadsto C_j$ if some $y\in C_j$ is reachable from $\beta$.\footnote{Or equivalently, if all $y\in C_j$ are reachable from $\beta$.}
If there is a $j$ such that $x\leadsto C_j$ for all $x\in \spaces$, the Markov chain is $\varphi$-\emph{irreducible}.
If there is only one class and $B$ is empty, the Markov chain is \emph{matrix-irreducible}.

Using this decomposition into irreducible classes, we can now define admissible measures.
In this definition and in the following, if $\mu$ is a measure and $A$ is a set, $\mu|_A$ denotes the restriction of $\mu$ to $A$, which is the measure defined by $\mu|_A(\cdot)=\mu(A\cap \cdot )$.\label{not: muA}
\begin{definition}[Admissibility]
	\label{def: admissible measures}
	Let $k\in\N$. Let $\mu\in \mathcal P (\spaces^k)$. 
	We say that $\mu$ is \emph{pre-admissible} if there exists a set of indices $\mathcal J_\mu\subseteq \mathcal J$ such that 
	\begin{equation}
		\label{eq: definition pre admissible measure}
		\mu=\sum_{j\in \mathcal J_\mu}\mu|_{C_j^k},
	\end{equation} 
	in which case we impose that $\mathcal J_\mu$ is minimal.
	Moreover, if $\mu$ is pre-admissible, we say that $\mu$ is \emph{admissible} if the order $\leadsto$ is total on $(C_j)_{j\in {\mathcal J}_\mu}$ and $\beta\leadsto C_j$ for all $j\in {\mathcal J}_\mu$. 
	We denote by $\Ak$ the set of all admissible measures, and we set $\Adebal=\Ade\cap\balanced (\spaces^2)$.
\end{definition}
\label{not: Ak}
\label{not: Akbal}
\label{not: Jmu}
Those definitions are general, but we will mostly work with $k=1$ or $k=2$. When $k=1$, (pre-)admissible measures vanish on $B$. When $k=2$, (pre-)admissible measures are supported by pairs of points that belong to the same irreducible class.
Properties of $\Ak$ are further explored in appendix~\ref{section: admissible measures}.
In particular, setting
\label{not: abscont}
\begin{equation}
	\label{eq: definition absolutely continuous measures}
	\ith \abscont 2=\{\mu\in \mathcal M(\spaces^2)\ |\ \forall(x,y)\in \spaces^2,\,p(x,y)=0\Rightarrow\mu(x,y)=0\},
\end{equation}
Proposition~\ref{prop: characterization A2bal} shows that $\Adebal \cap\ith \abscont 2$ consists of precisely those measures which can be asymptotically approximated by $L_n^{(2)}$ with positive probability, and hence which are relevant for the corresponding LDP; see Remark~\ref{remark: other definitions of admissibility}.

From a technical point of view, the central result of this paper is the following theorem.
\begin{theorem}[Weak LDP for the pair empirical measures]
	\label{theorem: intro weak LDP for Ln2}
	The sequence $(\ith L2_n)$ satisfies the weak LDP in $\mathcal P(\spaces^2)$ with rate function $\Ide$ given by 
	\begin{equation}
		\label{eq: rate function I2}
		\Ide(\mu)=
		\begin{cases}
			(\ith \Lambda2)^*(\mu)=\ith \idv2(\mu)=\ith R2(\mu)\quad &\hbox{if $\mu\in \Adebal$},\\
			\infty&\hbox{otherwise},
		\end{cases}
	\end{equation}
	where the functions $\ith \Lambda2$, $\ith \idv2$, and $\ith R2$ are defined by equations~\eqref{eq: definition Lambda}, \eqref{eq: definition DV entropy} and~\eqref{eq: definition R} respectively and $\cdot^*$ denotes the convex conjugate as in Definition~\ref{def: convex conjugate}.
\end{theorem}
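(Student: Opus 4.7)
The plan is to follow the Ruelle-Lanford function strategy and establish, for every $\mu\in\proba$, the existence of
\begin{equation*}
\mathcal L(\mu)=\lim_{\rho\to 0}\lim_{n\to \infty}\frac 1n\log \mathbb P\bigl(L_n\in \mathcal B(\mu,\rho)\bigr),
\end{equation*}
from which the weak LDP with rate function $\Ide=-\mathcal L$ will follow by standard arguments (finite covers for the upper bound on compacts, direct estimates for the lower bound on open sets). The remaining task then splits into two pieces: producing this limit via a subadditive argument adapted to the possibly reducible structure of $X$, and identifying $-\mathcal L$ with the three functionals on the right-hand side of~\eqref{eq: rate function I2}.

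The case $\mu\notin \Adebal$ I would treat first. That $\mu$ must be balanced is automatic, since the two marginals of $L_n$ differ only at the endpoints $X_1$ and $X_{n+1}$, so they are $2/n$-close in total variation. For admissibility, any realization of $(X_1,\dots,X_{n+1})$ for which $L_n$ lies in a small $\rho$-ball around $\mu$ must visit precisely the classes $(C_j)_{j\in \mathcal J_\mu}$ in an order compatible with $\leadsto$, starting from a state reachable from $\beta$. Hence if $(\mathcal J_\mu,\leadsto)$ fails to be totally ordered, or if some $C_j$ with $j\in \mathcal J_\mu$ is unreachable from $\beta$, then for $\rho$ small enough and $n$ large enough $\mathbb P(L_n\in \mathcal B(\mu,\rho))=0$, whence $\mathcal L(\mu)=-\infty$ and $\Ide(\mu)=+\infty$.

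For $\mu\in \Adebal$, the existence of $\mathcal L(\mu)$ is the core of the proof and is established by the slicing and stitching scheme announced in the introduction. The classical approach concatenates two independent trajectories of lengths $n$ and $m$ each approximating $\mu$ to obtain an approximate superadditivity
\begin{equation*}
\log \mathbb P\bigl(L_{n+m}\in \mathcal B(\mu,\rho)\bigr)\geq \log \mathbb P\bigl(L_n\in \mathcal B(\mu,\rho')\bigr)+\log \mathbb P\bigl(L_m\in \mathcal B(\mu,\rho'')\bigr)-o(n+m),
\end{equation*}
and then concludes by Fekete's lemma. Reducibility obstructs the concatenation step: once the chain leaves a transient class it cannot return. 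The fix is to use the total order on $\mathcal J_\mu$ to slice each contributing trajectory into successive blocks, the $j$-th block staying in $C_j$ for a duration proportional to $\mu(C_j^2)$ and realizing the restriction of $L_n$ to $C_j^2$ close to $\mu|_{C_j^2}$; two such trajectories are then stitched block-by-block, bridging the end of the $j$-th block of the first trajectory with the start of the $j$-th block of the second by a short path inside $C_j$ (available because $C_j$ is matrix-irreducible). The main obstacle is controlling uniformly the lengths and probabilities of these bridging paths and aligning the two slicings combinatorially; once this is achieved, the approximate superadditivity above holds and the desired limit follows.

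The identification $-\mathcal L=(\ith \Lambda 2)^*=\ith \idv 2=\ith R 2$ on $\Adebal$ is performed in Section~\ref{section: identification rate function}. The equality with the convex conjugate $(\ith \Lambda 2)^*$ proceeds via a Varadhan-type Laplace computation based directly on the construction of $\mathcal L$. The equalities with the Donsker-Varadhan entropy $\ith \idv 2$ and with $\ith R 2$ reduce, thanks to admissibility and the same slicing as above, to the well-known identifications on each irreducible class $C_j$, combined with an additivity over $j\in \mathcal J_\mu$ weighted by the masses $\mu(C_j^2)$.
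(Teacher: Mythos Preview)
Your subadditive Ruelle--Lanford part is essentially the paper's argument: existence of $\mathcal L$ via slicing into class-wise blocks and stitching them in the total order on $\mathcal J_\mu$, and the $\mu\notin\Adebal$ case is handled exactly as you say. The paper packages the superadditivity as an $N$-fold inequality (Proposition~\ref{prop: P is decoupled}) rather than a two-term one, but this is immaterial.

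The identification step, however, has a real gap. A Varadhan computation from the weak LDP alone only gives one direction: the LDP lower bound yields $\Lambda\geq I^*$, hence $\Lambda^*\leq I^{**}\leq I$, but the reverse inequality $I\leq\Lambda^*$ requires the Varadhan upper bound, which fails without exponential tightness or goodness of $I$ (neither of which holds here). The paper handles this by introducing a truncated SCGF $\Lambda_\infty$ (restricting to trajectories confined to finite subsets of $\spaces$), proving the two-sided Varadhan identity $I^*=\Lambda_\infty$ (Proposition~\ref{prop: I = Lambda*}), and separately showing $I=I^{**}$ on $\Ade$ (Proposition~\ref{prop: I=I**}). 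The latter step is nontrivial: it uses both the convexity of $I$ on each face $\{\nu:\mathcal J_\nu\subseteq\mathcal J_\mu\}$ (from the coupling map) and the \emph{affinity} of $I$ across disjoint classes (from a separate \emph{decoupling} construction, Proposition~\ref{prop: decoupling ineq for the decoupling map}), which you do not mention.

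Your proposed class-by-class reduction for $\ith\idv2$ and $\ith R2$ is a plausible alternative route to $I=\ith R2$ on $\Adebal$, but it too requires the additivity $I(\mu)=\sum_j\mu(C_j^2)I(\tilde\mu_j)$, which is precisely the decoupling property you omit; and the ``well-known'' irreducible-class identification you invoke is itself delicate when $C_j$ is infinite and the restricted chain lacks tightness. The paper instead proves $\Lambda_\infty^*=\Lambda^*=\ith\idv2=\ith R2$ on $\Adebal$ directly (Proposition~\ref{prop: equality of all rate functions on Adebal}), using a finite-support approximation of balanced measures (Lemma~\ref{lemma: balanced measure decomposition}) to bypass tightness issues.
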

This theorem embodies two statements: first, $(\ith L2_n)$ satisfies the weak LDP, and second, the rate function of this weak LDP can be determined explicitly. 

The first point is developed in Section~\ref{section: weak LDP by subadditive method}, where a self-contained proof of the weak LDP for $(\ith L2_n)$ is provided (Theorem~\ref{theorem: Ln2 satisfies a weak LDP with rate function -s}) through the use of RL functions (Section \ref{section: ruelle lanford functions}). 
The second point, the identification of the rate function, is investigated in details in Section~\ref{section: identification rate function}. The expressions in the first line of~\eqref{eq: rate function I2} are standard in large deviations theory:
\begin{itemize}
	\item The function $(\ith \Lambda2)^*$ is the convex conjugate of the scaled cumulant generating function (SCGF) of the sequence $(\ith L2_n)$, defined in~\eqref{eq: definition Lambda}.\footnote{$\Lambda$ is also often called pressure, although this terminology is not physically accurate in general.}
	Since they provide information on the exponential scale, SCGFs are common objects in large deviations theory. Their convex conjugates are often identified as rate functions of LDPs, for instance via the Gärtner-Ellis Theorem (see~V.2 in~\cite{denhollander2008} or Chapter~2.3 of~\cite{DZ}) or Varadhan's Lemma (see III.3 in~\cite{denhollander2008}, or Chapters 4.3 and 4.5 of~\cite{DZ}).
	SCGFs are also relevant in the specific context of Markov chains, as in Theorem~6.3.8 of~\cite{DZ} and in Chapters 2 and 3 of~\cite{deacosta2022}.
	\item 
	The function $\ith \idv 2$ is the Donsker-Varadhan entropy (DV entropy), defined in~\eqref{eq: definition DV entropy} and first introduced by Donsker and Varadhan in~\cite{DV1}.\footnote{The function $J^{(2)}$ is actually the DV entropy for the pair empirical measures; the traditional DV entropy is the function  $\ith \idv 1$ defined in \eqref{eq: definition IDV1}.} Since then, it has been presented as the standard rate function of LDPs in Markovian setups (see for instance 
	Theorem~IV.7 of~\cite{denhollander2008}, or Chapter 6.5 of~\cite{DZ}). In the literature, it is common to find that the DV entropy satisfies the LDP lower bound, under very mild assumptions; see Chapter~3 of~\cite{deacosta2022}. See also Chapter~13.2 of~\cite{rassoul}.
	\item The function $\ith R 2$ is defined in~\eqref{eq: definition R}, and is expressed in terms of relative entropy. Going back to Sanov's theorem, relative entropy is always expected to play a role in level-2 large deviations. 
	This function appears for instance in the LDPs of Theorem~IV.3 of~\cite{denhollander2008}, of Chapter 6.5.2 of~\cite{DZ}, and of Chapter~13.2 of~\cite{rassoul}. {Note that the expression provided in \eqref{eq: definition R} is a completely explicit formula rather than the optimum of a function.}
\end{itemize}
We further notice that the functions $(\ith\Lambda2)^*$, $\ith\idv2$, and $\ith R2$ do not depend on $\beta$, so the rate function $\Ide$ only depends on $\beta$ through the set $\Adebal$. The second line of~\eqref{eq: rate function I2} is also interesting by itself, and is specific to the reducible case. It says that outside of $\Adebal$, the rate function is infinite. This is only a sufficient condition and $\ith I2$ can be infinite inside $\Ade$; see Remark~\ref{remark: I infinite on Adebal}. A brief analysis of the geometry of $\Adebal$ (see Appendix~\ref{section: admissible measures}) reveals that this property prevents the rate function from being convex in many cases, with the consequences discussed in the introduction.

The consequences of Theorem~\ref{theorem: intro weak LDP for Ln2} are the level-2 and level-3 weak LDPs:
\begin{theorem}[Level-2 weak LDP]
	\label{theorem: intro weak LDP for Ln1}
	The sequence
	$(\ith L1_n)_{n\geq 1}$ satisfies the weak LDP in $\mathcal P(\spaces)$ with rate function $\ith I 1$ given by
	\begin{equation}
		\label{eq: rate functions I1}
		\ith I 1(\mu)=
		\begin{cases}
			(\ith \Lambda1)^*(\mu)=\ith \idv 1(\mu)=\ith R1(\mu)\quad &\hbox{if $\mu\in \Aun$},\\
			\infty\quad &\hbox{otherwise,}
		\end{cases}
	\end{equation}
	where the functions $\ith \Lambda1$, $\ith \idv 1$, and $\ith R1$ are defined by equations~\eqref{eq: definition Lambda1}, \eqref{eq: definition IDV1}, and \eqref{eq: definition R1}, respectively and $\cdot^*$ denotes the convex conjugate as in Definition~\ref{def: convex conjugate}.
\end{theorem}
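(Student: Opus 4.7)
Theorem~\ref{theorem: intro weak LDP for Ln1} will be derived from Theorem~\ref{theorem: intro weak LDP for Ln2} by a contraction argument through the first-marginal projection $\pi\colon \proba \to \mathcal P(\spaces)$, $\nu \mapsto \ith \nu 1$, and the rate function obtained will then be identified with $(\ith \Lambda 1)^*$, $\ith \idv 1$, and $\ith R 1$.

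\textbf{A balanced surrogate for $\ith L 2_n$.}  The map $\pi$ is continuous (even $1$-Lipschitz in $\tvnorms{\cdot}$) but not proper, so the contraction principle does not apply naively. To bypass this I introduce
\[
\tilde L_n := \ith L 2_n - \frac{1}{n} \delta_{(X_n, X_{n+1})} + \frac{1}{n} \delta_{(X_n, X_1)} \in \proba,
\]
which enjoys three properties: (a) $\tilde L_n \in \balanced(\spaces^2)$, because the shift cancels the second-marginal mismatch without affecting the first marginal; (b) $\pi(\tilde L_n) = \ith L 1_n$ exactly, since both endpoints of the shift live in the same first-coordinate atom; and (c) $\tvnorm{\tilde L_n - \ith L 2_n} \leq 1/n$ deterministically. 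Property (c) makes $(\tilde L_n)$ super-exponentially equivalent to $(\ith L 2_n)$, hence $(\tilde L_n)$ satisfies the same weak LDP with rate function $\Ide$.

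\textbf{Contraction.}  For compact $K \subseteq \mathcal P(\spaces)$, set $\mathcal K_K := \pi^{-1}(K) \cap \balanced(\spaces^2)$. Tightness of $K$ on $\spaces$ propagates to $\mathcal K_K$ on $\spaces^2$ via the balanced identity $\nu(F \times F) \geq 1 - 2 \ith \nu 1(F^c)$; combined with closedness, Prokhorov gives compactness of $\mathcal K_K$. Using (a) and (b), $\{\ith L 1_n \in K\} = \{\tilde L_n \in \mathcal K_K\}$, so the weak LDP for $\tilde L_n$ yields
\[
\limsup_n \frac{1}{n} \log \mathbb P(\ith L 1_n \in K) \leq -\inf_{\mathcal K_K} \Ide = -\inf_{\mu \in K} \tilde I(\mu),
\]
where $\tilde I(\mu) := \inf\{\Ide(\nu) : \nu \in \Adebal,\ \pi(\nu) = \mu\}$ (the restriction to $\Adebal$ is automatic since $\Ide = \infty$ elsewhere). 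For open $U \subseteq \mathcal P(\spaces)$, $\pi^{-1}(U)$ is open in $\proba$, and the lower bound of Theorem~\ref{theorem: intro weak LDP for Ln2} applied directly to $\ith L 2_n$ gives $\liminf \frac{1}{n} \log \mathbb P(\ith L 1_n \in U) \geq -\inf_U \tilde I$.

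\textbf{Identification of the rate function.}  It remains to show $\tilde I = \ith I 1$. Outside $\Aun$, a direct inspection of Definition~\ref{def: admissible measures} shows that $\pi(\Adebal) \subseteq \Aun$---pre-admissibility, total $\leadsto$-ordering of the indexing classes, and $\beta$-reachability all descend to the first marginal---so $\tilde I(\mu) = \inf \emptyset = \infty = \ith I 1(\mu)$. Inside $\Aun$, the classical contraction identities $(\ith \Lambda 1)^*(\mu) = \inf\{(\ith \Lambda 2)^*(\nu) : \pi(\nu) = \mu\}$ (by convex duality via $\ith \Lambda 2(f \otimes 1) = \ith \Lambda 1(f)$), $\ith \idv 1(\mu) = \inf\{\ith \idv 2(\nu) : \nu \in \balanced(\spaces^2),\ \pi(\nu) = \mu\}$ (from the variational formula for the DV entropy), and $\ith R 1(\mu) = \inf\{\ith R 2(\nu) : \nu \in \balanced(\spaces^2),\ \pi(\nu) = \mu\}$ (by minimizing the relative entropy over conditional kernels $y \mapsto \nu(x,y)/\ith \nu 1(x)$), combined with the equalities on $\Adebal$ provided by Theorem~\ref{theorem: intro weak LDP for Ln2}, yield $\tilde I = (\ith \Lambda 1)^* = \ith \idv 1 = \ith R 1$ on $\Aun$. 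The main technical point is the construction of $\tilde L_n$ and the propagation of tightness under the balanced constraint, which together let the contraction pass from the non-compact $\pi^{-1}(K)$ to the compact $\mathcal K_K$.
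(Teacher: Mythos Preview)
Your overall strategy matches the paper's: introduce the balanced surrogate $\tilde L_n$, contract along $\pi$ restricted to $\balanced(\spaces^2)$ (where preimages of compacta are compact via tightness propagation), and identify the resulting rate function. The paper packages the contraction step as a general weak contraction principle (its Lemma~4.1), whose proof also explicitly establishes lower semicontinuity of the contracted rate function---a point you leave implicit, though it is recoverable once you identify $\tilde I$ with $(\ith\Lambda1)^*$ or $\ith J1$.

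There is, however, a gap in your identification of $(\ith\Lambda1)^*$. The relation $\ith\Lambda2(f\otimes 1)=\ith\Lambda1(f)$ only yields the one-sided bound
\[
(\ith\Lambda1)^*(\mu)\le\inf\{(\ith\Lambda2)^*(\nu):\pi(\nu)=\mu\};
\]
the reverse does \emph{not} follow from convex duality, and the infimum on the right ranges over all $\nu\in\proba$, where $(\ith\Lambda2)^*$ can be strictly smaller than $\Ide$ outside $\Adebal$. The paper does not attempt this contraction identity at all. Instead (Proposition~4.4) it proves $(\ith\Lambda1)^*=\ith J1$ on $\Aun$ directly: the easy half $(\ith\Lambda1)^*\ge\ith J1$ is the usual eigenfunction trick $V=\log(f/pf)$, while the hard half $\ith J1\ge(\ith\Lambda1)^*$ passes through an auxiliary SCGF $\ith{\widetilde\Lambda}1(V)=\sup_{x\in\spaces_\beta}\limsup_n\tfrac1n\log\mathbb E_x[e^{n\langle\ith L1_n,V\rangle}]$, shows $\ith\Lambda1\ge\ith{\widetilde\Lambda}1$ via the Markov property, and then bounds $\ith J1\ge(\ith{\widetilde\Lambda}1)^*$ by a Fatou argument with the resolvent-type test functions $f_n=\sum_{k\le n}e^{-\lambda k}(p^V)^k1$. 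Your route can be salvaged: combining your one-sided duality bound with $\tilde I=\ith J1$ and the (unstated) easy inequality $\ith J1\le(\ith\Lambda1)^*$ closes the loop $(\ith\Lambda1)^*\le\tilde I=\ith J1\le(\ith\Lambda1)^*$. But as written, the appeal to ``convex duality'' for the full equality is unjustified.
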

\begin{theorem}[Level-3 weak LDP]
	\label{theorem: intro weak LDP for Lninfty}
	Let $\mathcal P(\spaces^\N)$ be equipped with the weak topology; see Section~\ref{section: dawson gartner}.
	The sequence
	$(\ith L\infty_n)_{n\geq 1}$ satisfies the weak LDP in $\mathcal P(\spaces^\N)$, with rate function $\ith I \infty $ given by
	\begin{equation}
		\label{eq: rate functions Iinfty}
		\ith I \infty(\mu)=
		\begin{cases}
			(\ith \Lambda\infty )^*(\mu)=\ith \idv \infty(\mu)=\ith R\infty(\mu)\quad &\hbox{if $\mu\in \ith \Abal\infty$},\\
			\infty\quad &\hbox{otherwise,}
		\end{cases}
	\end{equation}
	where the functions $(\ith \Lambda\infty)^*$, $\ith \idv \infty$, and $\ith R\infty$ are defined by equations~\eqref{eq: definition Lambda infty *}, \eqref{eq: definition idv infty}, and \eqref{eq: definition R infty} respectively and the set $\ith \Abal \infty$ is introduced in Definition~\ref{def: admissibility on the process level}. Moreover, assuming that $\mu\in \ith \Abal\infty$ and $\ent (\ith \mu 1|\beta)<\infty$, we have the additional expression
	\begin{equation}
		\label{eq: level 3 relative entropy}
		\ith I\infty(\mu) =\lim_{k\to\infty}\frac 1k\ent  (\ith \mu k |\mathbb P_k),
	\end{equation}
	where $\ith \mu k$ is a marginal of $\mu$ that will be defined in Section~\ref{section: dawson gartner} and $\mathbb P_k$ is the marginal of $\mathbb P$ on the first $k$ coordinates; see~\eqref{eq: definition Pt}. The relative entropy $\ent (\cdot|\cdot)$ is defined in~\eqref{eq: definition relative entropy S(mu|nu)}.
\end{theorem}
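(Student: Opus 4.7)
The plan is to derive the level-3 weak LDP from the finite-level weak LDPs via the Dawson-Gärtner projective limit theorem. The space $\mathcal P(\spaces^\N)$ endowed with the weak topology of Section~\ref{section: dawson gartner} is the projective limit of $(\mathcal P(\spaces^k))_{k\geq 1}$ under the canonical marginal projections $\pi_k:\mu\mapsto \ith \mu k$, and the process empirical measure $\ith L\infty_n$ projects onto the $k$-block empirical measure $\frac 1 n \sum_{i=1}^{n}\delta_{(X_{i+1},\ldots,X_{i+k})}$. A weak LDP is therefore needed for the latter at each $k\geq 1$: for $k=1$ and $k=2$ these are Theorems~\ref{theorem: intro weak LDP for Ln1} and~\ref{theorem: intro weak LDP for Ln2} respectively, while for general $k$ I would invoke the standard block-chain device, noting that the sequence $Y_i := (X_{i+1},\ldots,X_{i+k})$ is itself a Markov chain on the discrete countable space $\spaces^k$, with kernel and initial distribution naturally induced by $p$ and $\beta$, and that its occupation time is exactly the $k$-block empirical measure. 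Applying Theorem~\ref{theorem: intro weak LDP for Ln1} to $Y$ then yields a weak LDP with an explicit rate function $\ith I k$ enjoying the three standard expressions on the corresponding admissible set $\ith \Ab k$, and Dawson-Gärtner (Theorem~4.6.1 of~\cite{DZ}) produces the weak LDP for $(\ith L\infty_n)$ with rate function $\ith I\infty(\mu) = \sup_{k\geq 1}\ith I k(\ith \mu k)$.

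The remaining task is to match this supremum with each of the three expressions appearing in~\eqref{eq: rate functions Iinfty}. Since $\ith \Lambda\infty$, $\ith \idv\infty$, and $\ith R\infty$ can themselves be defined as monotone limits or suprema in $k$ of their finite-level analogues, each identity at level $\infty$ reduces to the corresponding identity at each finite level, already supplied by Theorem~\ref{theorem: intro weak LDP for Ln2}. For the dichotomy on $\ith \Abal\infty$: process-level admissibility should reduce to admissibility of every finite marginal, so that $\mu \notin \ith \Abal\infty$ implies $\ith \mu k \notin \ith \Ab k$ for some $k$, and hence $\ith I k(\ith \mu k) = \infty$. For the additional identity~\eqref{eq: level 3 relative entropy}, the Markov chain rule for relative entropy yields
\begin{equation*}
\ent(\ith \mu k | \mathbb P_k) = \ent(\ith \mu 1 | \beta) + \sum_{i=1}^{k-1}\ent\bigl(\ith \mu{i+1} \,\big\|\, \ith \mu i \otimes p\bigr)
\end{equation*}
for shift-invariant $\mu$, so dividing by $k$ produces a Cesàro average converging to the shift-invariant increment $\ith R 2(\ith \mu 2) = \ith R\infty(\mu)$, provided $\ent(\ith \mu 1 | \beta) < \infty$ to absorb the boundary term.

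The main obstacle I expect is this final identification step: verifying that the Dawson-Gärtner supremum genuinely coincides with $\ith R\infty$, $\ith \idv\infty$, and $(\ith \Lambda\infty)^*$ for \emph{all} admissible $\mu$, including those outside the Markov sub-cone where the chain-rule computation is cleanest. Ensuring that the three finite-level expressions interchange correctly with the projective limit, and that process-level admissibility matches marginal-wise admissibility for every~$k$, will be the delicate part. Once these are secured, the proof reduces to routine bookkeeping on marginals.
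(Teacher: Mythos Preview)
Your overall strategy matches the paper's: obtain a weak LDP for each $k$-block empirical measure via the block-chain device, lift to the process level by a projective-limit argument, and identify the rate function through the finite-level identities. Two points need correction.

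First, you invoke Theorem~4.6.1 of~\cite{DZ} as a black box, but that statement assumes the component rate functions are good, which fails here. The paper therefore reproves the weak upper bound by hand: for a compact $K\subseteq\mathcal P(\spaces^\N)$ and $\alpha<\inf_K \ith I\infty$, the sets $\pi_k(K)\cap\{\ith Ik\leq\alpha\}$ are compact (compactness of $\pi_k(K)$, not goodness of $\ith Ik$, is what is used) and their projective limit is empty, so one of them is already empty, whence the $k$-level weak upper bound applies. The lower bound is the standard Dawson--G\"artner argument. This adaptation is routine, but you should not delegate it to a theorem whose hypotheses are not satisfied.

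Second, your chain-rule telescoping is correct, but the conclusion ``converging to the shift-invariant increment $\ith R 2(\ith \mu 2)=\ith R\infty(\mu)$'' is wrong: the increments $\ent(\ith\mu{i+1}\,|\,\ith\mu i\otimes p)=\ith R{i+1}(\ith\mu{i+1})$ are not constant in $i$, and the equality $\ith R 2(\ith\mu 2)=\ith R\infty(\mu)$ holds only when $\mu$ is itself a Markov measure. What is true (Pinsker's lemma, Lemma~\ref{lemma: rate functions for Lninfty}\,(\ref{item: pinsker lemma}) in the paper) is that $k\mapsto\ith R k(\ith\mu k)$ is nondecreasing with limit $\ith R\infty(\mu)$; the Ces\`aro average of this convergent sequence then gives~\eqref{eq: level 3 relative entropy}. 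With that fix your argument for~\eqref{eq: level 3 relative entropy} coincides with the paper's Proposition~\ref{prop: alternative expression for R infty}.

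A minor difference in implementation: the paper applies Theorem~\ref{theorem: intro weak LDP for Ln2} (the pair-empirical LDP) to the $(k{-}1)$-block chain, whereas you apply Theorem~\ref{theorem: intro weak LDP for Ln1} (the occupation-time LDP) to the $k$-block chain. Both routes give the correct $\ith Ik$, but the pair version makes the identification with $\ith Rk$, $\ith\idv k$, and $(\ith\Lambda k)^*$ more direct, since $\ith R1$ for the $k$-block chain is defined through an additional infimum that you would have to unwind.
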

The same comments as for Theorem~\ref{theorem: intro weak LDP for Ln2} can be made: the sets $\Aun$ and $\ith \Abal\infty$ which appear in \eqref{eq: rate functions I1} and \eqref{eq: rate functions Iinfty} may cause the rate functions to be nonconvex for reducible Markov chains.

The path we follow to derive Theorems~\ref{theorem: intro weak LDP for Ln1} and~\ref{theorem: intro weak LDP for Lninfty} from Theorem~\ref{theorem: intro weak LDP for Ln2} will come as no surprise to the experienced reader:
\begin{itemize}
	\item The weak LDP for $(\ith L1_n)$ is obtained using the contraction principle (see III.5 in~\cite{denhollander2008} or Theorem 4.2.1 of~\cite{DZ}), which has to be adapted, since we only have the weak LDP and the rate function may not be good.
	\item The weak LDP for $(\ith L\infty_n)$ is obtained using the Dawson-Gärtner Theorem (see for instance Theorem 4.6.1 of~\cite{DZ}). We follow the tracks of Sections 6.5.2, 6.5.3 of~\cite{DZ} or Section 4.4 of~\cite{deuschel1989}, where the level-3 LDP for Markov chains is proved under the uniformity assumption {\bf(U)}. Once again, in the absence of this assumption, some technical adaptations are required.
\end{itemize}

We briefly comment on the fact that if the state space $\spaces$ is finite, then all LDPs are full and all rate functions are good. In addition, in this case many technical aspects of the proofs can be considerably simplified; see Appendix~\ref{section: finite case} for a discussion. However, the main conceptual modifications of the subadditive method that we develop to handle reducible Markov chains, namely the {\em slicing} and {\em stitching} procedure below, remain necessary.

\begin{remark}
	\label{remark: intro defence of L2n}
	{
		Why do we need to prove Theorem \ref{theorem: intro weak LDP for Ln2} before proving Theorems~\ref{theorem: intro weak LDP for Ln1} and \ref{theorem: intro weak LDP for Lninfty}? 
		The first reason is technical and comes from the fact that the subadditive method only yields the weak LDP with an abstract rate function. 
		Indeed, the subadditive arguments of Section \ref{section: weak LDP by subadditive method} only conclude the existence of $\ith I2$. The identification of $\ith I2$ with expressions of \eqref{eq: rate function I2 for Ln2} is only done later in Section \ref{section: identification rate function}, and relies on the study of balanced measures (see Remark \ref{remark: about step 3 of IDV=Lambda*} and Lemma \ref{lemma: balanced measure decomposition}). The notion of balanced measure is fundamentally related to pair measures and has no equivalent in $\mathcal P(\spaces)$.
		If we used the same subadditive arguments as those of Section \ref{section: ruelle lanford functions} for the simple empirical measure $(\ith L1_n)$, we would be able to derive the existence of $\ith I1$, but not the expressions provided in \eqref{eq: rate functions I1}. With the tools at our disposal, it is not possible to identify $\ith I1$ without prior knowledge of $\ith I2$.
		
		Computing first $\ith I2$ and second $\ith I1$ was already the route of Chapter IV of \cite{denhollander2008}, even though this book only treats the irreducible finite case.
		In this reference, the simplicity of the expression $\ith I2=\ith R2$ allows the author to very quickly derive the LDP for $(L_n^{(2)})$ with rate function $\ith R2$ and to provide a connection with Sanov's Theorem. Note that the expression $\ith I2=\ith R2$ is not a variational expression but rather an explicit sum. The second reason for making the weak LDP for $(\ith L2_n)$ central is that the simplicity of this expression suggests think that the level of pair measures is the right level to consider.
		
		The third reason is that deriving the level-3 weak LDP of Theorem \ref{theorem: intro weak LDP for Lninfty} from the weak LDP for $(\ith L2_n)$ is easier in practice than deriving it from the weak LDP for $(\ith L1_n)$; see Section \ref{section: weak LPD for Lnk}.
	}
\end{remark}

\noindent
{\bf Note.} Since the first version of this preprint, the author has also studied the case where $S$ is the Euclidean space, using an approach similar to that of the present document. See \cite{daures2026}.

	\section{Weak LDP by subadditivity}
\label{section: weak LDP by subadditive method}
In this section, we prove the first part of Theorem~\ref{theorem: intro weak LDP for Ln2}, \emph{i.e.}~that the sequence $(L_n)$ defined in \eqref{eq: definition Ln2} satisfies the weak LDP. We use a subadditive method.
\subsection{Notations}
\label{section: notations}
Trajectories of $(X_n)_{n\geq 1}$ are sequences of elements of $\spaces$. 
In the following, we will often refer to the elements of $\spaces$ as {\em letters}, and to finite sequences of elements of $\spaces$, {\it i.e}.~finite pieces of trajectories, as {\em words}. The set of all words is denoted by $\words$, the set of words of length $t$ by $\spaces ^t$ and the length of a finite word $u$ by $|u|$. We will use the symbol $e$ for the empty word (which has length 0).\label{not: e}\label{not: words}\label{not: length}
We extend the definition of $p$ to finite words by setting 
\begin{equation}
	\label{eq: definition of p over words}
	p(u)=p(u_1,u_2)\times \ldots \times p(u_{|u|-1},u_{|u|}),
	\qquad u\in \words,\ |u|\geq 2,
\end{equation}
and $p(u)=1$ if $|u|\leq 1$.\label{not: p words}
The function $p$, seen as a function over $\words$, satisfies the following property: for $u,v\in \words$, $p(uv)=p(u)p(v)p(u_{|u|},v_1)$, where  the quantity $p(u_{|u|},v_1)$ depends only on the last letter of $u$ and the first letter of $v$. The law $\mathbb P$ induces a probability measure $\mathbb P_{t}$ on $\spaces ^t$ by
\label{not: Pt}
\begin{equation}
	\label{eq: definition Pt}
	\mathbb P_{t}(u)=\mathbb P(X_1=u_1,\ldots, X_t=u_t)=\beta(u_1)p(u).
\end{equation}
An empirical measure can be associated with each word. 
We define
\label{not: M(u)}
\label{not: L(u)}
\begin{equation}
	\label{eq: notation L[u] M[u]}
	M[u]=\sum_{i=1}^{t}\delta_{(u_i,u_{i+1})}\in \mathcal M(\spaces^2),\qquad L[u]=\frac 1{t} M[u]\in\proba,
	\qquad u\in \spaces^{t+1}.
\end{equation}
If $u$ has length 1 or 0, we set $L[u]$ to be the null measure.
We will extensively use the fact that $\tvnorm{M[u]}=|u|-1$ if $u\neq e$.\footnote{We also have $\tvnorm{M[e]}=0=|e|$.} 
Given $t\in \N$, $\Kdelta\subseteq \spaces$, $\mu\in \proba$ and $\rho >0$,
we define the following sets of words:
\begin{itemize}
	\item $\wordspos$ is the set of words $u$ such that $p(u)>0$.\label{not: wordspos}
	\item $\closeword_t(\mu,\rho)$ is the set of words $u\in \spaces^{t+1}$ such that $\hat L[u]\in \mathcal B(\mu,\rho)$ and $\mathbb P_{t+1}(u)>0$,\label{not: closeword1}
	\item $\closeword_{t,S_1}(\mu,\rho)$ is the intersection of $\closeword_t(\mu,\rho)$ and $\Kdelta^{t+1}$.\label{not: closeword2}
\end{itemize}
Observe also that $\Kdelta^t$ is exactly the set of words $u\in\spaces^t$ such that $L[u]\in \mathcal P(\Kdelta^2)$.

\begin{remark}
	\label{rem: defence of L2n}
{
	This section proves the abstract weak LDP for $(L_n)=(\ith L2_n)$ instead of that for $(\ith L1_n)$.
	We already commented on why it is preferable to study the weak LDP for $(\ith L2_n)$ rather than that for $(\ith L1_n)$ (see Remark \ref{remark: intro defence of L2n}). 
	However, one could contrast our strategy to obtain the weak LDP for $(\ith L2_n)$ with another valid one.  
	It would consist in applying the arguments of the current section to the empirical measures $\ith L{1,Z}_n$ of a generic Markov chain $(Z_n)$ in order to obtain the weak LDP for $(\ith L{1,Z}_n)$. Using the fact that $Y_n:=(X_n, X_{n+1})$ is a Markov chain as in Section \ref{section: weak LPD for Lnk}, this yields the weak LDP for $\ith L2_n=\ith L{1,Y}_n$. 
	Arguably, the efficiency of these two routes (ours and the latter one) is comparable.
	We choose to work consistently with $(L_n^{(2)})$ throughout. This approach reflects the central role of $(L_n^{(2)})$ in the theory of Markov chains, and prevents potentially confusing transitions between $(Y_n)$ and $(X_n)$.
}
\end{remark}
\subsection{Ruelle-Lanford functions and weak LDP for the pair empirical measures}
\label{section: ruelle lanford functions}
\begin{definition}[Ruelle-Lanford functions]
	\label{def: ruelle lanford functions}
	\label{not: sbar}\label{not: sund}
	For all Borel sets $A\subseteq \proba$, let
	\begin{equation*}
		\begin{split}
			\underline s(A)&=\liminf_{n\to \infty}\frac 1n\log \mathbb P(L_n\in A),\\
			\overline s(A)&=\limsup_{n\to \infty}\frac 1n\log \mathbb P(L_n\in A).
		\end{split}
	\end{equation*}
	For all $\nu\in\mathcal P(\spaces^2)$, we define $\underline s(\nu)$ and $\overline s(\nu)$ as the monotone limits
	${\underline s}(\nu)=\lim_{\delta\to0} {\underline s}(\mathcal B(\nu,\delta))$ and 
	${\overline s}(\nu)=\lim_{\delta\to0} {\overline s}(\mathcal B(\nu,\delta))$. We say that $(L_n)$ has a Ruelle-Lanford (RL) function if $\underline s=\overline s$ on $\mathcal P(\spaces^2)$, in which case the function $\underline s=\overline s$ is called the RL function of $(L_n)$.
\end{definition}

The notion of RL function was first introduced by Ruelle in~\cite{ruelle1965} as an expression for the entropy. 
Later, in~\cite{lanford1973}, Lanford revealed how efficient it can be in the theory of large deviations. This gave rise to the subadditive method, originally developed for independent and identically distributed random variables in~\cite{lanford1973,bahadurzabell1979,azencott}.
It has later been subjected to some adaptations to derive results for a larger class of random variables; among others, Markov chains in \cite{stroock1984,ellis1988}.
See the introduction for some examples of use of subadditivity for large deviations for Markov chains. See \cite{pfisterlewis1995} for a more detailed historical account. 

The interest in considering RL functions in our case resides in the following standard result; for a proof, see, for example, Theorem~4.1.1 in~\cite{DZ}, Proposition~3.5 of~\cite{pfister2000}, Theorem~3.1 of~\cite{pfisterlewis1995} and, more recently, Section~3.2 of~\cite{papierdecouple}.
\begin{lemma}
	\label{lemma: RL implies LDP}
	Suppose that $(L_n)$ has a RL function $s$. Then, $s$ is upper semicontinuous and $(L_n)$ satisfies the weak LDP with rate function $-s$.
\end{lemma}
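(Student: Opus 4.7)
The argument is standard; the strategy is to pass from the pointwise equality $\underline s = \overline s = s$ to set-level control, using the ball-based definition of $s$ together with the monotonicity of $\underline s$ and $\overline s$ under set inclusion (which is inherited from the monotonicity of $\mathbb P(L_n\in\cdot)$).

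First I would establish that $s$ is upper semicontinuous, and hence that $-s$ is a bona fide lower semicontinuous $[0,\infty]$-valued function (nonnegativity follows from $\mathbb P(L_n\in A)\leq 1$). Given $\nu_n\to\nu$ in $\proba$, for every $\delta>0$ one has $\tvnorm{\nu_n-\nu}<\delta/2$ eventually, and the triangle inequality gives $\mathcal B(\nu_n,\delta/2)\subseteq \mathcal B(\nu,\delta)$. Monotonicity then yields $s(\nu_n)\leq \overline s(\mathcal B(\nu_n,\delta/2))\leq \overline s(\mathcal B(\nu,\delta))$; taking $\limsup_n$ and then $\delta\to 0$ produces $\limsup_n s(\nu_n)\leq s(\nu)$.

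For the LDP lower bound, fix an open set $U\subseteq\proba$ and $\nu\in U$. Choose $\delta>0$ small enough that $\mathcal B(\nu,\delta)\subseteq U$. Monotonicity and the definition of $\underline s(\nu)$ as a decreasing limit give
\begin{equation*}
\liminf_{n\to\infty}\frac1n\log \mathbb P(L_n\in U) \;\geq\; \underline s(\mathcal B(\nu,\delta)) \;\geq\; \underline s(\nu) \;=\; s(\nu).
\end{equation*}
Taking the supremum over $\nu\in U$ gives $\liminf_n \frac1n\log\mathbb P(L_n\in U)\geq \sup_{\nu\in U}s(\nu)=-\inf_{\nu\in U}(-s)(\nu)$, as desired.

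For the compact upper bound, fix a compact set $K$ and $\epsilon>0$. For each $\nu\in K$, pick $\delta(\nu)>0$ with $\overline s(\mathcal B(\nu,\delta(\nu)))\leq s(\nu)+\epsilon$, which is possible because $\overline s(\mathcal B(\nu,\delta))\downarrow s(\nu)$ as $\delta\to 0$. Compactness lets us extract a finite subcover $K\subseteq\bigcup_{i=1}^{N}\mathcal B(\nu_i,\delta(\nu_i))$. Then, using the elementary identity $\limsup_n\frac1n\log(a_n^{(1)}+\cdots+a_n^{(N)})=\max_i\limsup_n\frac1n\log a_n^{(i)}$ valid for finitely many sequences,
\begin{equation*}
\limsup_{n\to\infty}\frac1n\log\mathbb P(L_n\in K)\;\leq\;\max_{1\leq i\leq N}\overline s(\mathcal B(\nu_i,\delta(\nu_i)))\;\leq\; \sup_{\nu\in K} s(\nu)+\epsilon \;=\; -\inf_{\nu\in K}(-s)(\nu)+\epsilon,
\end{equation*}
and letting $\epsilon\to 0$ concludes. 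The only genuinely delicate point is this last step: the finite-subcover argument forces us to restrict to compact $K$, which is precisely the reason the conclusion is a \emph{weak} LDP rather than a full one; everywhere else the argument is a routine unwinding of the definitions of $\underline s$ and $\overline s$.
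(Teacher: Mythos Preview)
Your proof is correct and is exactly the standard argument; the paper does not supply its own proof of this lemma but instead cites Theorem~4.1.1 in \cite{DZ}, Proposition~3.5 of \cite{pfister2000}, Theorem~3.1 of \cite{pfisterlewis1995}, and Section~3.2 of \cite{papierdecouple}, all of which proceed essentially as you do. Nothing to add.
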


Thanks to this lemma, the goal of this section is now simply to prove that $(L_n)$ has a RL function. 
The proof of the existence of the RL function and the derivation of its basic properties are achieved via the following technical proposition, whose proof will be covered by the next sections and concluded in Section~\ref{section: coupling map}. It states a key \emph{decoupled inequality}, which is the core of our subadditive method.

In the following proposition, for the purpose of the sole existence of the RL function, one can consider $\mu_1=\mu_2$\footnote{Taking two different measures in the construction is a common technique in subadditive methods, as it can often be used to later derive the convexity of the rate function; see~\cite{papierdecouple}. Even though our rate function is not convex, this technique still delivers useful properties; see Property~\ref{item: midpoint convexity of I2} of Proposition~\ref{prop: properties of rate function -s}.} and $\Kun=\spaces$ (and thus overlook the existence of $\Kdelta$).
\begin{proposition}
	\label{prop: P is decoupled}
	Let $0<\deltaa\leq 1/12$, and let, for any integers $n,t$,
	\begin{equation}
		\label{eq: definition Nnt}
		N=N(n,t)=\left \lceil \frac{t+1}{n(1-4\delta)}\right \rceil, 
		\qquad N_1=\Big\lfloor \frac{N}{2}\Big\rfloor,
		\qquad N_2=N-N_1.
	\end{equation}
	Let $\mu_1,\mu_2\in \Ade$ such that $\mu:=\frac12\mu_1+\frac12\mu_2\in \Ade$.
	Then, there exist an integer $n_0$, a sequence of integers $(t_n)$, and a finite set $\Kdelta\subseteq \spaces$ such that for all $n\geq n_0$, all $t\geq t_n$, and all set $\Kun\supseteq \Kdelta$,\footnote{The radius $20\delta$ in~\eqref{eq: decoupled inequality for P} is far from being optimal, but any multiple of $\delta$ acceptable for the purpose of the subadditive method; see how the factor $20$ eventually does not matter in the proof of Theorem~\ref{theorem: Ln2 satisfies a weak LDP with rate function -s}.}
	\begin{equation}
		\label{eq: decoupled inequality for P}
		\mathbb P_{t+1}(\closeword_{t,\Kun}(\mu,20\delta))\geq C_{n,t}\mathbb P_{n+1}(\closeword_{n,\Kun}(\mu_1,\delta))^{N_1}\mathbb P_{n+1}(\closeword_{n,\Kun}(\mu_2,\delta))^{N_2},
	\end{equation}
	for some constants $C_{n,t}$ satisfying
	\begin{equation}
		\label{eq: constant Cnt}
		\lim_{n\to \infty}\lim_{t\to\infty}\frac 1t\log C_{n,t}=0.
	\end{equation}
\end{proposition}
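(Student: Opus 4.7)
The plan is to build, from $N_1$ typical words of length $n+1$ approximating $\mu_1$ and $N_2$ approximating $\mu_2$, a single word $w$ of length $t+1$ whose empirical measure is within $20\delta$ in TV of $\mu = \tfrac12(\mu_1+\mu_2)$, via a slicing-and-stitching procedure. The admissibility of $\mu$ is critical: it provides a total order on $\mathcal J_\mu$ and reachability of every such class from $\beta$, which are the combinatorial scaffolding on which the reassembly is built.

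First I choose a finite $\Kdelta\subseteq \spaces$ that captures all but an $O(\delta)$ fraction of the mass of $\mu_1,\mu_2$, meets every relevant class $C_j\in \mathcal J_\mu$, contains a state $x_\beta\in \supp\beta$ with $x_\beta\leadsto C_j$ for all such $j$, and contains the vertices of a fixed family of bridges of length bounded by some $M$ and positive $p$-probability: within-class bridges connecting any two elements of $\Kdelta \cap C_j$ (via irreducibility of the class), cross-class bridges between consecutive classes of $\mathcal J_\mu$ (via the total order), and a $\beta$-bridge from $x_\beta$ to the smallest class (via reachability from $\beta$). Typical words $u^i\in \closeword_{n,\Kun}(\mu_i,\delta)$ then take values in $\Kun\supseteq \Kdelta$ and visit classes in increasing $\leadsto$-order.

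Given $u^1,\ldots,u^N$ chosen from the appropriate typical sets, I slice each $u^i$ at its class transitions into at most $|\mathcal J_\mu|$ mono-class pieces, collect all slices across the $N$ words, regroup them by class in increasing order on $\mathcal J_\mu$, and stitch them using the pre-fixed within- and cross-class bridges, prepending the $\beta$-bridge and appending a padding to reach length exactly $t+1$. The choice of $N$ in~\eqref{eq: definition Nnt}, together with $|\mathcal J_\mu|$ finite and bridges of bounded length, makes the total bridge-plus-padding length a fraction $O(\delta)$ of $t+1$---this is the role of the factor $1-4\delta$---so that the empirical measure of $w$ differs from $\frac{N_1}{N}L[u^1]+\cdots+\frac{N_2}{N}L[u^N]\approx \mu$ by at most $O(\delta)$ in TV, yielding $L[w]\in \mathcal B(\mu,20\delta)$. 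By the Markov property and the determinism of all bridges, $\mathbb P_{t+1}(w)$ factorizes into a common deterministic weight times $\prod_{i}p(u^i)$; since the class decomposition of $w$ lets one recover the original $u^i$, the stitching map is injective in $(u^1,\ldots,u^N)$, and summing over typical words yields~\eqref{eq: decoupled inequality for P}. The deterministic weight, which becomes $C_{n,t}$, is bounded below by $(c_{\min})^{O(N)}=(c_{\min})^{O(t/n)}$ for some $c_{\min}>0$ depending only on $\Kdelta$, giving $\lim_n\lim_t t^{-1}\log C_{n,t}=0$.

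The main obstacle is the simultaneous enforcement of three constraints: every stitched joint must respect $\leadsto$, which requires the total order on $\mathcal J_\mu$ granted by admissibility; the cuts and bridges must perturb the empirical measure by only $O(\delta)$, which dictates how $N$ scales with $t$, $n$, $\delta$; and the reassembly must be injective in $(u^1,\ldots,u^N)$ so that summed probabilities yield the product structure on the right-hand side of~\eqref{eq: decoupled inequality for P}. Admissibility is precisely what makes the first constraint satisfiable; in the non-admissible case no such scaffolding exists, a feature that will later be reflected in the rate function $\Ide$ being infinite outside $\Adebal$.
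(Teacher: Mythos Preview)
Your overall strategy---slice typical words by class, reorder the slices according to the total order on $\mathcal J_\mu$, stitch with fixed bridges, pad to length $t+1$---is exactly the paper's coupling map $\Psi_{n,t}$. The genuine gap is your injectivity claim.

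The assertion that ``the class decomposition of $w$ lets one recover the original $u^i$'' is false. Two independent sources of non-injectivity arise. First, slicing discards the inter-class transition subwords of each $u^i$ (the letters that carry $u^i$ from one class to the next); these are lost and cannot be reconstructed from $w$. Second, within each class $C_j$ the stitched word $w$ contains several slices (one from each $u^i$ that visits $C_j$) separated by within-class bridges, and there is no way to locate the slice/bridge boundaries from $w$ alone: the bridges are themselves $C_j$-words and need not be distinguishable from the slices. The class decomposition of $w$ tells you only where the $C_j$-blocks sit, not how each block factors into slices, let alone which slice came from which $u^i$.

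Because the map is many-to-one, the pointwise bound $\mathbb P_{t+1}(w)\geq c\prod_i p(u^i)$ does not sum to~\eqref{eq: decoupled inequality for P} without control on the fibre sizes. The paper supplies this via two explicit combinatorial bounds: for the slicing map, summing over the (unknown) lengths of the discarded inter-class pieces gives a multiplicity factor $(n+1)^{r+1}$ per word; for the stitching map, a binomial count of the possible locations of up to $Nr$ slice/bridge boundaries inside a word of length $\leq N(n+1)+Nr\tau$ gives another polynomial-in-length, exponential-in-$Nr$ factor. These multiplicities enter $C_{n,t}$ alongside the bridge weights $\eta^{Nr}$, and it is their presence that makes the verification of~\eqref{eq: constant Cnt} an actual computation rather than an immediate consequence of $N=O(t/n)$. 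Your estimate $C_{n,t}\geq (c_{\min})^{O(N)}$ accounts only for the bridge probabilities and omits precisely these factors.
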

Taking Proposition~\ref{prop: P is decoupled} for granted, we can derive the existence of the RL function of $(L_n)$ and the weak LDP for $(L_n)$.
\begin{theorem}
	\label{theorem: Ln2 satisfies a weak LDP with rate function -s}
	\label{not: I2}
	\label{not: s}
	The sequence $(L_n)$ has a RL function $s:=\underline s=\overline s$ and satisfies the weak LDP with rate function $\Ide:=-s$.
\end{theorem}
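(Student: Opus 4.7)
The plan is to apply Lemma~\ref{lemma: RL implies LDP}, which reduces the task to showing that $(L_n)$ admits a Ruelle-Lanford function, i.e., $\underline s(\mu) = \overline s(\mu)$ for every $\mu \in \proba$. Since $\underline s \leq \overline s$ is automatic from the definitions, I only need to prove $\underline s(\mu) \geq \overline s(\mu)$, and I would split the argument according to whether $\mu$ lies in $\Adebal$.

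For $\mu \in \Adebal$, the plan is to feed Proposition~\ref{prop: P is decoupled} with $\mu_1 = \mu_2 = \mu$ and $\Kun = \spaces$. Since then $\closeword_{n, \spaces}(\mu, \rho) = \closeword_n(\mu, \rho)$ and $\mathbb P_{n+1}(\closeword_n(\mu, \rho)) = \mathbb P(L_n \in \mathcal B(\mu, \rho))$, inequality~\eqref{eq: decoupled inequality for P} becomes
\begin{equation*}
\mathbb P(L_t \in \mathcal B(\mu, 20\delta)) \geq C_{n,t}\, \mathbb P(L_n \in \mathcal B(\mu, \delta))^{N(n,t)}.
\end{equation*}
If $\overline s(\mu) = -\infty$ there is nothing to prove; otherwise monotonicity of $\delta \mapsto \overline s(\mathcal B(\mu, \delta))$ gives $\overline s(\mathcal B(\mu, \delta)) \geq \overline s(\mu) > -\infty$, which ensures that the right-hand side is positive for infinitely many $n \geq n_0$. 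Taking $\tfrac 1t \log$ and letting $t \to \infty$ at fixed $n$ (using $N(n,t)/t \to 1/[n(1-4\delta)]$ and~\eqref{eq: constant Cnt}), I would obtain
\begin{equation*}
\underline s(\mathcal B(\mu, 20\delta)) \geq c_n + \tfrac{1}{n(1-4\delta)} \log \mathbb P(L_n \in \mathcal B(\mu, \delta)),
\end{equation*}
where $c_n := \lim_t \tfrac 1t \log C_{n,t}$ tends to $0$ as $n \to \infty$. A further $\limsup_n$ (the $n$ with vanishing probability contribute $-\infty$ and do not affect the $\limsup$) yields $\underline s(\mathcal B(\mu, 20\delta)) \geq \overline s(\mathcal B(\mu, \delta))/(1-4\delta)$, and sending $\delta \to 0$ together with the monotone-limit definitions of $\underline s(\mu)$ and $\overline s(\mu)$ delivers the desired inequality.

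For $\mu \notin \Adebal$, I would instead show directly that $\overline s(\mu) = -\infty$, whence $\underline s(\mu) = \overline s(\mu) = -\infty$. This rests on a short case analysis. If $\mu$ is not balanced, the $\frac 2n$-closeness of $L_n$ to $\balanced(\spaces^2)$ eventually forbids $L_n \in \mathcal B(\mu, \delta)$. If $\mu$ is balanced but not pre-admissible, then $\mu$ charges some specific pair $(x,y)$ outside $\bigcup_j C_j^2$; since such a pair can be traversed at most once in any trajectory, $L_n(\{(x,y)\}) \leq 1/n$ and eventually $L_n \notin \mathcal B(\mu, \delta)$. If some $C_j \in \mathcal J_\mu$ is unreachable from $\beta$, then $L_n(C_j^2) = 0$ deterministically, while $\mu(C_j^2) > 0$ by the minimality of $\mathcal J_\mu$. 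Finally, if the classes in $\mathcal J_\mu$ are not totally ordered by $\leadsto$, every realization of the trajectory misses at least one $C_j \in \mathcal J_\mu$, because trajectories visit irreducible classes only along a $\leadsto$-chain, so again $L_n$ cannot approximate $\mu$ for small enough $\delta$.

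The hard work is packaged inside Proposition~\ref{prop: P is decoupled}; the passage above is a standard subadditive argument. The only ingredient specific to the reducible setting is the case analysis for $\mu \notin \Adebal$, which exploits the structural fact that every trajectory visits irreducible classes along a chain in the $\leadsto$-order.
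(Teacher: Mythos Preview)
Your proposal is correct and follows the same approach as the paper for the admissible case: feed Proposition~\ref{prop: P is decoupled} with $\mu_1=\mu_2=\mu$, take logarithms, send $t\to\infty$, then $n\to\infty$, then $\delta\to0$. The only difference is organizational: the paper's proof of this theorem applies the subadditive argument to every $\mu\in\Ade$ (not just $\Adebal$) and simply omits the case $\mu\notin\Ade$, deferring it to Property~\ref{item: I2 infinite outside Adebal} of Proposition~\ref{prop: properties of rate function -s}, where the contrapositive of Proposition~\ref{prop: characterization A2bal} is invoked. Your direct case analysis for $\mu\notin\Adebal$ is a perfectly valid (and arguably cleaner) substitute for that appendix reference, and it correctly captures the structural facts: pairs outside $\bigcup_j C_j^2$ are traversed at most once, unreachable classes are never visited, and incomparable classes cannot both appear in a single trajectory.
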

\begin{proof}
	Let $\mu\in\Ade$. We set $\mu_1=\mu_2=\mu$ and $\Kun=\spaces$ in Proposition~\ref{prop: P is decoupled}. Then, \eqref{eq: decoupled inequality for P} yields
	\begin{equation*}
		\mathbb P_{}(L_t\in\mathcal B(\mu,20\delta))\geq C_{n,t}\mathbb P(L_n\in\mathcal B(\mu,\delta))^{N}.
	\end{equation*}
	By taking the logarithm on both sides, dividing by $t$ and letting $t\to \infty$, we obtain
	\begin{equation*}
		\underline s(\mathcal B(\mu,20\delta))\geq \lim_{t\to\infty}\frac{1}{t}\log C_{n,t}+\frac{1}{n(1-4\delta)}\log \mathbb P_{n+1}(L_n\in \mathcal B(\mu, \delta)).
	\end{equation*}
	By taking the limit superior as $n\to \infty$ and using \eqref{eq: constant Cnt}, we have
	\begin{equation*}
		\underline s(\mathcal B(\mu,20\delta))\geq \frac 1{(1-4\delta)}\overline s(\mathcal B(\mu,\delta)).
	\end{equation*}
	Further taking the limit as $\delta\to 0$ yields $\underline s(\mu)\geq\overline s(\mu)$. Since, obviously, we also have $\underline s(\mu)\leq\overline s(\mu)$, the sequence $(L_n)$ has a RL function $s$ and the weak LDP with the rate function $-s$ follows from Lemma \ref{lemma: RL implies LDP}.
\end{proof}
By Theorem~\ref{theorem: Ln2 satisfies a weak LDP with rate function -s}, the function $\ith I2$ is the unique rate function\footnote{In particular, $\ith I2$ is lower semicontinuous.} associated with the weak LDP for $(L_n)$.
Theorem~\ref{theorem: Ln2 satisfies a weak LDP with rate function -s} does not provide much information to compute $\Ide$ in practice. 
However, we can access a few useful properties of $\Ide$ as direct consequences of Proposition~\ref{prop: P is decoupled}, and of Proposition~\ref{prop: decoupling ineq for the decoupling map}, which we state here and will prove in Section~\ref{section: decoupling map}. 
Inequality~\eqref{eq: decoupling ineq for the decoupling map} can be seen as a reciprocal inequality of~\eqref{eq: decoupled inequality for P} in the case of disjoint supports and will indeed be obtained by reversing the construction used to derive Proposition~\ref{prop: P is decoupled}. 
\begin{proposition}
	\label{prop: decoupling ineq for the decoupling map} 
	Let $\lambda_1\in (0,1)$ and let $\lambda_2=1-\lambda_1$. 
	Let $0<\delta<\min(\lambda_1/12,\lambda_2/12)$ and let, for any integer $n$,
	\begin{equation*}
		t_i=t_i(n)=\lfloor n(\lambda_i-2\delta)\rfloor-1, \qquad i\in\{1,2\}.
	\end{equation*}
	Let $\mu_1,\mu_2\in \Ade$ be such that $\mu:=\lambda_1\mu_1+\lambda_2\mu_2\in \Ade$ and $\mathcal J_{\mu_1}\cap \mathcal J_{\mu_2}=\emptyset$.
	Then, there exists an integer $n_0$ such that for all $n\geq n_0$,\footnote{As for the factor $20$ in~\eqref{eq: decoupled inequality for P}, the factor $125/\lambda_i^2$ surely is not optimal, but this does not matter for the sequel; see the proof of Property~\ref{item: linearity of I2 between separated measures} of Proposition~\ref{prop: properties of rate function -s}.}
	\begin{equation}
		\label{eq: decoupling ineq for the decoupling map}
		\mathbb P_{t_1+1}\Big(\closeword_{t_1}\Big(\mu_1,\frac{125}{\lambda_1^2}\delta\Big)\Big)\mathbb P_{t_2+1}\Big(\closeword_{t_2}\Big(\mu_2,\frac{125}{\lambda_2^2}\delta\Big)\Big)\geq \widetilde C_n\mathbb P_{n+1}(\closeword_{n}(\mu,\delta)),
	\end{equation}
	for some constants $\widetilde{C}_n$ satisfying 
	\begin{equation}
		\label{eq: constant tilde Cn}
		\lim_{n\to\infty}\frac 1n\log \widetilde C_n=0.
	\end{equation}
\end{proposition}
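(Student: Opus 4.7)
The plan is to construct a \emph{decoupling map} $\Phi:\closeword_n(\mu,\delta)\to \closeword_{t_1}(\mu_1,\tfrac{125}{\lambda_1^2}\delta)\times \closeword_{t_2}(\mu_2,\tfrac{125}{\lambda_2^2}\delta)$ that inverts the stitching construction underlying Proposition~\ref{prop: P is decoupled}, and then compare the fibre probabilities of $\Phi$. Summing the inequality $\mathbb P_{t_1+1}(v_1)\mathbb P_{t_2+1}(v_2) \geq \widetilde{C}_n\, \mathbb P_{n+1}(\Phi^{-1}(v_1,v_2))$ over the image of $\Phi$ will yield~\eqref{eq: decoupling ineq for the decoupling map}, so the task reduces to building $\Phi$ and establishing this per-fibre bound with a subexponential constant.

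Slicing. Let $A_i=\bigsqcup_{j\in \mathcal J_{\mu_i}}C_j$; by the disjointness of $\mathcal J_{\mu_1}$ and $\mathcal J_{\mu_2}$ the sets $A_1,A_2$ are disjoint and $\mu$ is supported on $A_1^2\sqcup A_2^2$ with $\mu(A_i^2)=\lambda_i$. The TV bound $\tvnorm{L[u]-\mu}<\delta$ then forces at most $n\delta$ pairs of $u$ to lie outside $A_1^2\cup A_2^2$ and $L[u](A_i^2)\in[\lambda_i-\delta,\lambda_i+\delta]$. Because $\leadsto$ is a partial order on $(C_j)_{j\in\mathcal J}$, each irreducible class is visited at most once by $u$, so $u$ decomposes canonically into maximal $A_i$-blocks (appearing in the order induced by the total order on $\mathcal J_\mu$) interspersed with short non-$A$ excursions.

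Stitching. I fix a finite family of ``dominant'' classes $(C^{(i)}_{j_k})_{k\leq K_i}\subseteq \mathcal J_{\mu_i}$ carrying $\mu_i$-mass $\geq 1-\delta$; $K_i$ depends only on $\mu$ and $\delta$. To define $v_i=\Phi_i(u)$ I concatenate the dominant $A_i$-blocks of $u$ in their order of appearance, insert pre-chosen bridge paths of bounded length between consecutive blocks with differing endpoints, prepend a bridge from a fixed $x_\star\in\supp\beta$ to the first block, and pad (or truncate) to exact length $t_i+1$. Such bridges exist and can be chosen once-for-all because $\mu_i$ is admissible: $\beta\leadsto C^{(i)}_{j_k}$ for each $k$, and the total order on $\mathcal J_{\mu_i}$ allows bridging consecutive dominant classes in the right direction. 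Bookkeeping using $t_i\approx n(\lambda_i-2\delta)$, the block-mass bounds above, and the $O(1)$ contribution of bridges/prefix/padding yields $\tvnorm{L[v_i]-\mu_i}\leq \tfrac{125}{\lambda_i^2}\delta$ for $n$ large.

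Probability comparison and main obstacle. I factor $\mathbb P_{n+1}(u)=\beta(u_1)\prod_{\text{blocks}}p(\text{block})\cdot \prod_{\text{inter-block}}p(\text{transition})$. Summing over $u\in\Phi^{-1}(v_1,v_2)$ amounts to fixing the dominant blocks (read off from $(v_1,v_2)$) and summing over the non-dominant inter-block segments; by the Markov property, each such sum is the total probability of paths between prescribed endpoints, hence $\leq 1$. Consequently,
\begin{equation*}
\mathbb P_{n+1}(\Phi^{-1}(v_1,v_2)) \;\leq\; \prod_{\text{blocks}}p(\text{block}) \;\leq\; c_{\mu,\delta}^{-1}\,\mathbb P_{t_1+1}(v_1)\,\mathbb P_{t_2+1}(v_2),
\end{equation*}
where $c_{\mu,\delta}:=\beta(x_\star)^2\cdot \prod p(\text{fixed bridges and padding})>0$ depends only on $\mu$ and $\delta$. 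Summing over $(v_1,v_2)$ in the image of $\Phi$ gives~\eqref{eq: decoupling ineq for the decoupling map} with $\widetilde C_n=c_{\mu,\delta}$, trivially satisfying~\eqref{eq: constant tilde Cn}. The hard part is exactly this probability comparison: the non-dominant excursions of $u$ are numerous (up to $O(n\delta)$) and carry arbitrary letters, so naively counting preimages would cost an exponential factor in $n$. Bounding the preimage \emph{probability} via the Markov property — rather than its cardinality — is the decisive step. Tracking approximation errors through slicing, stitching, and padding so that each $v_i$ lands within the radius $\tfrac{125}{\lambda_i^2}\delta$ is routine but laborious.
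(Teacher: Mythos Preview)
Your overall architecture matches the paper's: slice $u$ according to the two disjoint families of classes, stitch the pieces into two words of prescribed lengths, show the images land in the right balls, and bound the fibre probability of the resulting map. The paper does exactly this via the slicing map $F_{\mathcal J_i}$ and stitching map $G_{t_i}$ (Definitions~\ref{def: the slicing map} and~\ref{def: the stitching map}), with Lemmas~\ref{lemma: geographic ineq for the decoupling map} and~\ref{lemma: proba ineq for the decoupling map} playing the roles of your bookkeeping step and your probability comparison.

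The genuine gap is in your probability comparison, where you assert
\[
\mathbb P_{n+1}(\Phi^{-1}(v_1,v_2)) \;\leq\; \prod_{\text{blocks}}p(\text{block})
\]
with no combinatorial prefactor. Two issues conspire against this. First, the dominant blocks cannot simply be ``read off from $(v_1,v_2)$'': the stitching map is not injective, since the fixed bridge words can appear as substrings of genuine blocks, and the padding/truncation further obscures the parsing. Second, even if the blocks were uniquely determined, summing over preimages $u$ still requires summing over the \emph{lengths} of the inter-block segments (only their total is fixed at $n+1-\sum|\text{block}|$); your Markov-property argument gives $\leq 1$ only once those lengths are fixed. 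Both issues contribute polynomial-in-$n$ factors, which the paper tracks explicitly: Lemma~\ref{lemma: proba ineq for the slicing map} yields $\csli=(n+1)^{r+1}$ for the slicing side, and Lemma~\ref{lemma: proba ineq for the stitching map} yields a polynomial $\csti^{-1}$ for the stitching side, so that the paper's constant is $\widetilde C_n=\csti^2/\csli$, genuinely $n$-dependent. Your claim that $\widetilde C_n=c_{\mu,\delta}$ is a constant is therefore incorrect as stated. None of this is fatal to the strategy (polynomial factors still satisfy~\eqref{eq: constant tilde Cn}), but you must acknowledge and control them; the ``decisive step'' you single out is right in spirit but needs the combinatorial bookkeeping to be complete.
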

Taking Proposition~\ref{prop: P is decoupled} and Prop~\ref{prop: decoupling ineq for the decoupling map} for granted, we deduce the following properties of the rate function $\ith I2$.
\begin{proposition}
	\label{prop: properties of rate function -s}
	The rate function $\Ide$ of Theorem~\ref{theorem: Ln2 satisfies a weak LDP with rate function -s} satisfies the following properties.
	\begin{enumerate}
		\item 		
		\label{item: midpoint convexity of I2}
		Let $\mu\in \Ade$ and let $\mathcal F=\{\nu\in\Ade\ |\ \mathcal J_\nu\subseteq \mathcal J_\mu\}$. Then, $\Ide|_\mathcal F$ is convex. In particular, we have the following. Let $\lambda_1\in[0,1]$ and let $\lambda_2=(1-\lambda_1)$.
		Let $\mu_1,\mu_2\in \Ade$ be such that $\mu:=\lambda_1\mu_1+\lambda_2\mu_2\in \Ade$. Then,
		\begin{equation}
			\label{eq: convexity of I}
			\Ide(\mu)\leq \lambda_1\Ide(\mu_1)+\lambda_2\Ide(\mu_2).
		\end{equation}
		\item 
		\label{item: linearity of I2 between separated measures}
		Let $\lambda_1\in[0,1]$ and let $\lambda_2=(1-\lambda_1)$.
		Let $\mu_1,\mu_2\in \Ade$ be such that $\mu:=\lambda_1\mu_1+\lambda_2\mu_2\in \Ade$ and $\mathcal J_{\mu_1}\cap\mathcal J_{\mu_1}=\emptyset$.
		Then,
		\begin{equation}
			\label{eq: linearity of I2 between separated measures}
			\Ide(\mu)=\lambda_1 \Ide(\mu_1)+\lambda_2\Ide(\mu_2).
		\end{equation} 
		\item 
		\label{item: I2 infinite outside Adebal}
		Let $\mu\in \mathcal P(\spaces^2)\setminus \Adebal$. Then,  
		\begin{equation}
			\label{eq: I2 infinite outside Adebal}
			\Ide(\mu)=\infty. 
		\end{equation}
		\item
		\label{item: I = - sinfty}
		For all Borel set $A\subseteq \proba$, let $\underline s_\infty(A)$ be the supremum of $\underline{s}(A\cap \mathcal P(K^2))$ over all finite $K\subseteq \spaces$.
		Let $\mu\in \mathcal P(\spaces^2)$. Then, the function $r\mapsto \underline s_\infty(\mathcal B(\mu,r))$ is nondecreasing and 
		\begin{equation*}
			\Ide(\mu)=-\lim_{r\to 0}\underline s_\infty(\mathcal B(\mu,r)).
		\end{equation*}
	\end{enumerate} 
\end{proposition}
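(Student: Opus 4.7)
My plan is to derive all four properties from Propositions~\ref{prop: P is decoupled} and~\ref{prop: decoupling ineq for the decoupling map} by taking logarithms of the decoupled inequalities and passing to appropriate limits, in the same spirit as the proof of Theorem~\ref{theorem: Ln2 satisfies a weak LDP with rate function -s}. The main obstacle will be the case analysis in Property~\ref{item: I2 infinite outside Adebal}; the other three properties follow mechanically once the decoupled inequalities are in hand.

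For Property~\ref{item: midpoint convexity of I2}, I first establish midpoint convexity on $\mathcal F$. For $\mu_1,\mu_2\in\mathcal F$, the midpoint $\mu_0=\tfrac12(\mu_1+\mu_2)$ is pre-admissible with $\mathcal J_{\mu_0}\subseteq\mathcal J_{\mu_1}\cup\mathcal J_{\mu_2}\subseteq\mathcal J_\mu$, so $\mu_0\in\Ade$ and Proposition~\ref{prop: P is decoupled} applies with $\Kun=\spaces$. Taking logarithms of~\eqref{eq: decoupled inequality for P}, dividing by $t$ and sending $t\to\infty$, $n\to\infty$, then $\delta\to 0$ yields $\Ide(\mu_0)\leq\tfrac12\Ide(\mu_1)+\tfrac12\Ide(\mu_2)$; the two log-probability factors are handled by the inequality $\limsup(a_n+b_n)\geq\limsup a_n+\liminf b_n$ together with $\underline s(\mu_i)=\overline s(\mu_i)$. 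Lower semicontinuity of $\Ide$ and dyadic approximation upgrade midpoint convexity to full convexity on the convex set $\mathcal F$, and the bound~\eqref{eq: convexity of I} follows since $\mathcal J_{\mu_i}\subseteq\mathcal J_\mu$ whenever $\mu=\lambda_1\mu_1+\lambda_2\mu_2\in\Ade$.

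For Property~\ref{item: linearity of I2 between separated measures}, the direction $\leq$ is Property~\ref{item: midpoint convexity of I2}. For $\geq$, I apply Proposition~\ref{prop: decoupling ineq for the decoupling map} to $\mu_1,\mu_2$, take logarithms of~\eqref{eq: decoupling ineq for the decoupling map}, divide by $n$, use $t_i(n)/n\to\lambda_i-2\delta$ and~\eqref{eq: constant tilde Cn}, and send $n\to\infty$ then $\delta\to 0$ to obtain $\overline s(\mu)\leq\lambda_1\overline s(\mu_1)+\lambda_2\overline s(\mu_2)$, i.e.~\eqref{eq: linearity of I2 between separated measures}. For Property~\ref{item: I = - sinfty}, monotonicity of $r\mapsto\underline s_\infty(\mathcal B(\mu,r))$ is immediate, and $\underline s_\infty\leq\underline s$ gives $-\lim_r\underline s_\infty(\mathcal B(\mu,r))\geq\Ide(\mu)$. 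For the reverse, I apply Proposition~\ref{prop: P is decoupled} with $\mu_1=\mu_2=\mu$ and $\Kun=K$ for any finite $K\supseteq\Kdelta$; running the proof of Theorem~\ref{theorem: Ln2 satisfies a weak LDP with rate function -s} with events restricted to $\mathcal P(K^2)$ gives $\underline s(\mathcal B(\mu,20\delta)\cap\mathcal P(K^2))\geq\frac{1}{n(1-4\delta)}\log\mathbb P(L_n\in\mathcal B(\mu,\delta)\cap\mathcal P(K^2))$ for $n$ large. Taking $\sup_K$ on both sides and using $\sup_K\mathbb P(L_n\in A\cap\mathcal P(K^2))=\mathbb P(L_n\in A)$ (monotone convergence, since $L_n$ has almost surely finite support) yields $\underline s_\infty(\mathcal B(\mu,20\delta))\geq\frac{1}{n(1-4\delta)}\log\mathbb P(L_n\in\mathcal B(\mu,\delta))$; sending $n\to\infty$ then $\delta\to 0$ gives $\lim_r\underline s_\infty(\mathcal B(\mu,r))\geq s(\mu)$.

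For Property~\ref{item: I2 infinite outside Adebal}, it suffices to show $\mathbb P(L_n\in\mathcal B(\mu,\delta))=0$ for some $\delta>0$ and all $n$ large enough. I argue by case analysis on why $\mu\notin\Adebal$: (i) if $\mu$ is not balanced, with TV-gap $\alpha>0$ between its two marginals, any $\nu\in\mathcal B(\mu,\alpha/4)$ has marginal-gap at least $\alpha/2$, which contradicts the $1/n$-bound on the marginal-gap of $L_n$ as soon as $n>2/\alpha$; (ii) if some $(x,y)\in\supp\mu$ has $p(x,y)=0$, then $L_n(x,y)=0$ almost surely, and a ball of radius $\mu(x,y)/4$ around $\mu$ has zero probability; (iii) if $\mu\in\balanced\cap\ith\abscont 2\setminus\Ade$, then either $\mu$ puts mass on $B$ or on a transition between two distinct classes (each visited at most once by the chain, forcing $L_n\leq 1/n$ at that location), or $\mathcal J_\mu$ contains two classes incomparable under~$\leadsto$ (not both visited by any trajectory), or some $C_j\in\mathcal J_\mu$ is unreachable from $\beta$. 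In each sub-case, a small ball around $\mu$ carries zero probability for all large $n$, so $\Ide(\mu)=\infty$.
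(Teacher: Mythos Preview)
Your proof is correct and follows essentially the same route as the paper for Properties~\ref{item: midpoint convexity of I2}, \ref{item: linearity of I2 between separated measures}, and~\ref{item: I = - sinfty}: take logarithms of the decoupled/decoupling inequalities, divide by the appropriate scale, and pass to the iterated limits $t\to\infty$, $n\to\infty$, $\delta\to 0$. Two minor differences are worth noting. For Property~\ref{item: I = - sinfty}, the paper fixes $n$ and picks a single finite $\Kun\supseteq\Kdelta$ (depending on $n$) so that $\mathbb P_{n+1}(\closeword_{n,\Kun}(\mu,\delta))\geq\tfrac12\mathbb P_{n+1}(\closeword_n(\mu,\delta))$, whereas you take $\sup_K$ on both sides and use monotone convergence; both arguments are valid, though you should state explicitly that the constraint $K\supseteq\Kdelta$ does not affect the suprema. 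You also omit the case $\mu\notin\Ade$ in Property~\ref{item: I = - sinfty} (Proposition~\ref{prop: P is decoupled} requires $\mu\in\Ade$), but this is immediately handled by Property~\ref{item: I2 infinite outside Adebal}.

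For Property~\ref{item: I2 infinite outside Adebal}, your direct case analysis is correct and is in effect an inline proof of (one direction of) the characterization stated as Proposition~\ref{prop: characterization A2bal} in the appendix; the paper simply cites that proposition. Your case~(ii) ($\mu\notin\ith\abscont 2$) is logically redundant for the stated claim (a measure can be in $\Adebal$ without being in $\ith\abscont 2$), but it is harmless: you use it only to reduce case~(iii) to the situation $\mu\in\balanced(\spaces^2)\cap\ith\abscont 2\setminus\Ade$, which is a legitimate simplification.
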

One should read these properties with extra care. Property~\ref{item: midpoint convexity of I2} does not say that $\Ide$ is convex; see Example~\ref{ex: simple example three states}. The affinity stated in Property~\ref{item: linearity of I2 between separated measures} does not hold between every two measures; any two-states matrix-irreducible Markov chain is already a convincing counter-example. Also notice that Property~\ref{item: I2 infinite outside Adebal} only states a sufficient condition for $\ith I2$ to be infinite; see Remark~\ref{remark: I infinite on Adebal} and Example~\ref{ex: infinite entropy}.
\begin{proof}[Proof of Proposition~\ref{prop: properties of rate function -s}]
	\begin{enumerate}
		\item
		We begin with Property~\ref{item: midpoint convexity of I2}.
		Let $\nu_1,\nu_2\in \mathcal F$ and fix $\nu=\frac12(\nu_1+\nu_2)$. We first prove that 
		\begin{equation}
			\label{eqloc: midpoint convexity of I}
			\Ide(\nu)\leq\frac12(\Ide(\nu_1)+\Ide(\nu_2)).
		\end{equation}
		To do so, we use Proposition~\ref{prop: P is decoupled}.
		Considering~\eqref{eq: decoupled inequality for P} for $\nu, \nu_1,\nu_2$ with $\Kun=\spaces$, we have
		\begin{equation*}
			\mathbb P_{}(L_t\in\mathcal B(\nu,20\delta))\geq C_{n,t}\log\mathbb P(L_n\in\mathcal B(\nu_1,\delta))^{N_1}\mathbb P(L_n\in\mathcal B(\nu_2,\delta))^{N_2}.
		\end{equation*}
		By taking the logarithm of both sides, dividing by $t$ and letting $t\to \infty$, we get
		\begin{equation*}
			\underline s(\mathcal B(\nu,20\delta))\geq\lim_{t\to \infty}\frac 1t\log C_{n,t}+\frac{1}{2n(1-4\delta)}(\log\mathbb P(L_n\in\mathcal B(\nu_1,\delta))+\log\mathbb P(L_n\in\mathcal B(\nu_2,\delta))).
		\end{equation*}
		Taking the limit inferior of this expression as $n\to \infty$ yields
		\begin{equation*}
			\underline s(\mathcal B(\nu,20\delta))\geq \frac 1{2(1-4\delta)}(\underline s(\mathcal B(\nu_1,\delta))+\underline s(\mathcal B(\nu_2,\delta))).
		\end{equation*}
		Since $\Ide=-\underline s$, finally taking the limit as $\delta \to 0$ yields~\eqref{eqloc: midpoint convexity of I}.
		Let $\mu_1,\mu_2\in \mathcal F$. Then, ${\lambda\mu_1+(1-\lambda)\mu_2}\in \mathcal F$ for all $\lambda\in [0,1]$. Using a bisection argument based on~\eqref{eqloc: midpoint convexity of I}, together with the lower semicontinuity of $\irl$, we obtain, 
		for all $\lambda\in [0,1]$,
		\begin{equation*}
			\irl(\lambda\mu_1+(1-\lambda)\mu_2)\leq \lambda\irl(\mu_1)+(1-\lambda)\irl(\mu_2),
		\end{equation*} 
		which shows that $\Ide|_\mathcal F$ is convex. In particular, we have the following. For all $\lambda_1\in[0,1]$ and $\lambda_2=(1-\lambda_1)$,
		$\mu_1,\mu_2\in \Ade$ such that $\mu:=\lambda_1\mu_1+\lambda_2\mu_2\in \Ade$, both $\mu_1$ and $\mu_2$ are absolutely continuous with respect to $\mu$, hence they belong to $\mathcal F$ and satisfy~\eqref{eq: convexity of I}.
		\item
		We now prove Property~\ref{item: linearity of I2 between separated measures}. Let $\mu_1,\mu_2$ and $\lambda_1,\lambda_2$ be as in the statement of Property~\ref{item: linearity of I2 between separated measures}. By Property~\ref{item: midpoint convexity of I2}, we only have to prove that
		\begin{equation}
			\label{eqloc: concavity if I}
			\lambda_1\Ide(\mu_1)+\lambda_2\Ide(\mu_2)\leq \Ide(\mu).
		\end{equation}
		To do so, we use Proposition~\ref{prop: decoupling ineq for the decoupling map}. For convenience, we set $\delta_i=125\delta/\lambda_i^2$ for $i\in \{1,2\}$.
		Taking the logarithm on both sides in~\eqref{eq: decoupling ineq for the decoupling map}, dividing by $n$, and then letting $n\to\infty$ yields
		\begin{equation*}
			(\lambda_1-2\delta) \overline s(\mathcal B(\mu_1,\delta_1))+(\lambda_2-2\delta) \overline s(\mathcal B(\mu_2,\delta_2))
			\geq 0+\underline s(\mathcal B(\mu,\delta)).
		\end{equation*}
		Since $\Ide=-s$, taking the limit as $\delta\to 0$ yields~\eqref{eqloc: concavity if I}.
		\item
		Property~\ref{item: I2 infinite outside Adebal} is equivalent to $s(\mu)=-\infty$ for all $\mu \in\proba\setminus \Adebal$.
		Let $\mu \in\proba\setminus \Adebal$. by contrapositive of (\ref{item: sequence vn L[vn] to mu} $\Rightarrow$ \ref{item: mu admissible and absolutely continuous}) in Proposition~\ref{prop: characterization A2bal}, there exists $\ell\in \N$ and $\delta>0$ such that all words $w$ satisfying $|w|\geq \ell$ and $L[w]\in \mathcal B(\mu,\delta)$ must satisfy $\mathbb P_{|w|}(w)=0$. Hence, for all $n\geq \ell$, we have $\mathbb P(L_n\in \mathcal B(\mu,\delta))=0$.
		Therefore, $\overline s(\mathcal B(\mu,\delta))=-\infty$, implying $s(\mu)=-\infty$. 
		\item
		We now prove Property~\ref{item: I = - sinfty}.
		Let $\mu\in \proba$.
		The monotonicity of $r\mapsto \underline s _\infty(\mathcal B(\mu,r))$ is immediate by definition, hence Property~\ref{item: I = - sinfty} reformulates as 
		\begin{equation*}
			\lim_{r\to0}\underline s_\infty(\mathcal B(\mu,r))=\lim_{r\to0}\underline s(\mathcal B(\mu,r)).
		\end{equation*}
		The bound $\underline s_\infty(A)\leq \underline s(A)$ for all Borel sets $A\subseteq \proba$ is immediate, thus we have to show that
		\begin{equation}
			\label{eqloc: lim s infty geq lim s}
			\lim_{r\to0}\underline s_\infty(\mathcal B(\mu,r))\geq\lim_{r\to0}\underline s(\mathcal B(\mu,r)).
		\end{equation}
		When $\mu\notin \Adebal$, there is nothing to prove because the right-hand side limit is $s(\mu)=-\infty$ by Property~\ref{item: I2 infinite outside Adebal}. Assume that $\mu\in \Adebal$.
		Let $\delta>0$, and take $\Kdelta$ as given by Proposition~\ref{prop: P is decoupled}. Fix $n$ and $t$ as in Proposition~\ref{prop: P is decoupled}. There exists a finite set $\Kun\subseteq \spaces$, possibly depending on $n$, such that $\Kdelta \subseteq \Kun$ and 
		\begin{equation*}\label{eqloc: P(B inter H)> P(B)/2}
			\mathbb P_{n+1}( \closeword_{n,\Kun}(\mu,\delta))\geq \frac12\mathbb P_{n+1}( \closeword_n(\mu,\delta)).
		\end{equation*}
		Thus, by~\eqref{eq: decoupled inequality for P},
		\begin{equation*}
			\begin{split}
				\mathbb P_{t+1}(\closeword_{t,\Kun}(\mu,20\delta))
				&\geq C_{n,t}\mathbb P_{n+1}\left(\closeword_{n,\Kun}(\mu,\delta)\right)^{N}\\
				&\geq \frac{C_{n,t}}{2^{N}}\mathbb P_{n+1}(\closeword_n(\mu,\delta))^{N}.
			\end{split}
		\end{equation*}
		Taking the logarithm of both sides yields, dividing by $t$ and taking the limit inferior as $t\to\infty$ yields
		\begin{equation}\label{eq:chainsinf}
			\begin{split}
				\underline s_\infty(\mathcal B(\mu,20\delta))&\geq \underline s(\mathcal B(\mu,20\delta)\cap\mathcal P(\Kun^2))\\
				&\geq \lim_{t\to\infty}\frac 1t\log C_{n,t}
				-\frac{\log 2}{n(1-4\delta)}
				+\frac{1}{n(1-4\delta)}\log\mathbb P_n(\closeword_n(\mu,\delta)).
			\end{split}
		\end{equation}
		By further taking the limit inferior as $n\to\infty$ (both ends of the chain of inequalities are independent of $\Kun$, hence no problem ensues from the dependence of $\Kun$ on $n$), we get
		\begin{equation*}
			\underline s_\infty(\mathcal B(\mu,20\delta))\geq 
			\underline s(\mathcal B(\mu,\delta)).
		\end{equation*}
		We obtain~\eqref{eqloc: lim s infty geq lim s} by taking the limit as $\delta \to 0$.
	\end{enumerate}
\end{proof}
From now on, our goal is to prove Propositions~\ref{prop: P is decoupled} and~\ref{prop: decoupling ineq for the decoupling map}. This will be achieved in Sections~\ref{section: coupling map} and~\ref{section: decoupling map} respectively. 
\begin{remark} 
	Note that, while Proposition~\ref{prop: P is decoupled} is used in both the proofs the existence and properties of the RL function (Theorem~\ref{theorem: Ln2 satisfies a weak LDP with rate function -s} and Proposition~\ref{prop: properties of rate function -s}), Proposition~\ref{prop: decoupling ineq for the decoupling map} is only used to derive one property of the RL function (Property~\ref{item: linearity of I2 between separated measures} in Proposition~\ref{prop: properties of rate function -s}). The reader solely interested in the proof of the weak LDP can skip Section~\ref{section: decoupling map}.
\end{remark}

\subsection{Two preliminary examples}
\label{section: preliminary example}
Section~\ref{section: ruelle lanford functions} made clear that obtaining an inequality of the form~\eqref{eq: decoupled inequality for P} is the pivotal argument of our subadditive method. Before embarking in the proof of Proposition~\ref{prop: P is decoupled}, let us illustrate our strategy on simple examples.
\begin{example}
	\label{ex: aperiodic irreducible}
	Let $\spaces$ be finite, and assume that $(X_n)$ is matrix-irreducible and aperiodic. In other words, we assume that there exists $\tau>0$ such that, for all $x,y\in \spaces$, there exists $\xi_{x,y}\in \spaces^\tau$ satisfying $p(x\xi_{x,y} y)>0$. Let $\mu\in \proba$ and $\delta>0$. Then, for all $u,v\in \words$, there exists $\xi\in \spaces^\tau$ such that\footnote{Note that we used $\beta(v_1)\leq 1$.}
	\begin{equation}
		\label{eq: general decoupled inequality}
		\mathbb P_{|u|+|v|+\tau}(u\xi v)\geq c\mathbb P_{|u|}(u)\mathbb P_{|v|}(v),
		\qquad c:=\inf_{(x,y)\in\spaces^2}p(x\xi_{x,y}y)>0.
	\end{equation}
	Notice that if $u,v\in \closeword_n(\mu,\delta)$ for some $n\geq(2\tau+1)/2\delta$, then $u\xi v\in \closeword_{2n+\tau}(\mu,2\delta)$.
	Building on the above inequality, one can derive that for any family of words $(u^i)\in (\spaces ^n)^N$, there exists a family of words $(\xi^i)\in (\spaces^\tau)^{N-1}$ such that
	\begin{equation*}
		\mathbb P_{Nn+(N-1)\tau}(u^1\xi^1u^2\xi^2\ldots u^N)\geq c^{N-1}\prod_{i=1}^N\mathbb P_n(u^i).
	\end{equation*}
	By summing this inequality over $\closeword_{n}(\mu,\delta)^N$, one can find
	\begin{equation*}
		\mathbb P_{Nn+(N-1)\tau}(\closeword_{Nn+(N-1)\tau}(\mu,2\delta))\geq C_{n,N}\mathbb P_n(\closeword_{n}(\mu,\delta))^N,
	\end{equation*}
	where $C_{n,N}$ satisfies
	\begin{equation*}
		\lim_{n\to \infty}\lim_{N\to\infty}\frac 1{Nn+(N-1)\tau}\log C_{n,N}=0.
	\end{equation*}
	This inequality resembles~\eqref{eq: decoupled inequality for P} and is sufficient for using the machinery of Setion~\ref{section: ruelle lanford functions}. Therefore, $(L_n)$ satisfies the weak LDP.
\end{example}
This example is well-known in the literature.
A general construction for \emph{decoupled systems}, which include the case of Example~\ref{ex: aperiodic irreducible}, is provided in~\cite{papierdecouple}. The specific case of finite matrix-irreducible (but not necessarily aperiodic) Markov chains is mentioned in Example~2.20 as an application of the general construction. See also Examples~3 and~4 of~\cite{pfister2000}.

The central inequality in the construction of Example~\ref{ex: aperiodic irreducible} is~\eqref{eq: general decoupled inequality}. However, it is clear that the assumption of matrix-irreducibility was crucial in deriving such an inequality. In absence of irreducibility, how can we obtain an inequality that resembles~\eqref{eq: general decoupled inequality} enough to enable Proposition~\ref{prop: P is decoupled}?
The purpose of the rest of the section is to adapt this construction to reducible setups. As an introduction, let us illustrate the method with another simple, reducible, example.
\begin{example}
	\label{ex: preliminary example}
	Let $\spaces=\{1,2,3,4,5,6,7\}$ and consider a Markov chain represented on Figure~\ref{figure: preliminary example}, the initial measure $\beta$ being the uniform law on $\spaces$.
	\begin{figure}
		\centering
		\includegraphics{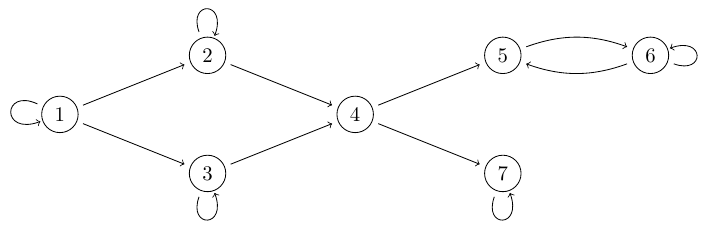}
		\caption{The 7-letter Markov chain of Example~\ref{ex: preliminary example}. The transition probabilities of each arrow do not matter for this example, as long as they are positive on each edge of the graph and zero otherwise.}
		\label{figure: preliminary example}
	\end{figure}
	The irreducible classes of the Markov chain are
	$(\{1\},\{2\},\{3\},\{5,6\},\{7\})$
	and $B=\{4\}$. The chain is neither matrix-irreducible nor $\varphi$-irreducible and thus, the existence of linking words $\xi$ as in Example~\eqref{ex: aperiodic irreducible} is not guaranteed. The two challenges brought by reducibility are the following. First, there is a notion of irreversibility. Since it is impossible to `come back' to $1$, the words $u=1112$ and $v=1245$ cannot possibly satisfy \eqref{eq: general decoupled inequality}.
	Second, some letters are incompatible with each other. Since neither $2\leadsto 3$ nor $3\leadsto 2$, no word of positive probability can contain simultaneously the letters $2$ and $3$. Thus we will not be able to derive an inequality of the form~\eqref{eq: general decoupled inequality} with the words $u=2222$ and $v=3333$.
	
	Let us consider a measure $\mu\in\Adebal$ and $\delta>0$. For instance, we take
	\begin{equation*}
		\mu=\frac14\delta_{1,1}+\frac18\delta_{2,2}+\frac14\delta_{5,6}+\frac14\delta_{6,5}+\frac18\delta_{6,6}\in\Adebal.
	\end{equation*}
	It is immediate that $\mu$ is balanced, preadmissible with $(C_j)_{j\in\mathcal J_{\mu}}=(\{1\},\{2\},\{5,6\})$, and admissible because $\beta\leadsto\{1\}\leadsto\{2\}\leadsto\{5,6\}$.
	
	Let $u=1112224565665665$ and $v=1111334565656566$. 
	Both words have their empirical measure close to $\mu$. Let us try to find a word $w$ such that $L[w]$ is almost as close to $\mu$, and $\mathbb P_{|w|}(w)$ is comparable to the product $\mathbb P_{|u|}(u)\mathbb P_{|v|}(v)$. The word $w$ will play the role of $u\xi v$ in~\eqref{eq: general decoupled inequality}. The key idea is to \emph{slice} words $u$ and $v$ into subwords that are easy to reassemble: we write
	\begin{equation*}
		u=(111)(222)(4)(565665665),\qquad v=(1111)(334)(565656566).
	\end{equation*}
	We can drop the subwords $4$ of $u$ and $334$ of $v$, as they consist of letters not appearing in the support of $\mu$ (in addition, the letter $3$ in $v$ is incompatible with the letter $2$ of $u$. Also notice that neither $u$ nor $v$ has the letter $7$, as it is incompatible with their letters $5$ and $6$). We then \emph{stitch} together the remaining subwords $111$, $222$, $565665665$, $1111$ and $565656566$ while respecting the total order on $(\{1\},\{2\},\{5,6\})$, and adding transition letters when needed. We set\footnote{One can notice that $w$ is shorter than $|u|+|v|$, in contrast with more standard subadditive arguments.}
	\begin{equation*}
		w=(111)(1111)(222)(4)(565665665)(6)(565656566).
	\end{equation*}
	The transition letters $4$ and $6$ were added in the word $w$ because $p(2,5)=p(5,5)=0$.
	The measure $L[w]$ is reasonably close to $\mu$. 
	We also have 
	\begin{align*}
		\mathbb P_{|w|}(w)
		&= kp(111)p(1111)p(222)p(565665665)p(565656566),
	\end{align*}
	where $k$ is a product of a few missing factors of the form $p(x,y)$ and $\beta(1)$.  
	Since
	\begin{equation*}
		\mathbb P_{|u|}(u)\leq p(111)p(222)p(565665665),\qquad \mathbb P_{|v|}(v)\leq p(1111)p(565656566),
	\end{equation*}
	we find $\mathbb P_{|w|}(w)\geq k\mathbb P_{|u|}(u)\mathbb P_{|v|}(v)$. This maneuver can be reproduced for any $u,v\in \words$ whose empirical measures are close to $\mu$. By bounding $k$ away from 0, we obtain an inequality that is similar to~\eqref{eq: general decoupled inequality}.
\end{example}
In Example~\ref{ex: preliminary example}, the construction of a longer word $w$ satisfying the desired properties resulted from two operations. First, the operation of \emph{slicing} the words $u$ and $v$ into smaller subwords, and second the operation of rearranging and \emph{stitching} those smaller subwords together. These operations were carried out in such a way that $p(w)$ is close to $p(u)p(v)$ and $L[w]$ is close to $\frac12L[u]+\frac 12L[v]$. The next section provides a general construction for these slicing and stitching operations.

\subsection{The slicing map and the stitching map}
\label{section: slicing and stitching}

We begin by introducing a \emph{slicing map} and a \emph{stitching map}, whose properties will be used to prove Proposition~\ref{prop: P is decoupled} in Section~\ref{section: coupling and decoupling maps}. The slicing map is used to turn a word $u$ into a collection of subwords $u^{j}$, which contains most of the letters of $u$ belonging to the class $C_j$. The stitching map is used to turn a collection of words into a longer word containing all of them as subwords.

In this section, $K$ is a finite subset of $\spaces$ and we denote by $\mathcal J_K$ the set $\{j\in\mathcal J\ |\ C_j\cap K\neq \emptyset\}$. Also in this section, we let $\mathcal J_0\subseteq \mathcal J_K$ be such that the order $\leadsto$ is total on $(C_j)_{j\in \mathcal J_0}$. Notice that $\mathcal J_K$ and $\mathcal J_0$ are necessarily finite. We also assume that they are not empty. For convenience, we rename the elements of $\mathcal J_0$ so that 
\begin{equation}
	\label{eq: class naming convention}
	\begin{cases}
		\mathcal J_0=\{1,\ldots,r\},\\C_j\leadsto C_{j+1}\ \forall j\in\{1,\ldots ,r-1\}.
	\end{cases}
\end{equation}
We denote $C_j\cap K$ by $K_j$ for all $j\in\mathcal J_0$. 
For later purposes, if $\beta\leadsto C_1$, we also denote by $K_0$ the singleton $\{z_0\}$ where $z_0$ is an arbitrary reference letter in $\supp\beta$ such that $z_0\leadsto C_1$.
\subsubsection{The slicing map}
\begin{definition}[The slicing map]
	\label{def: the slicing map}
	The slicing map is the map $F_{\mathcal J_0}:\wordspos\to \words^{\mathcal J_0}$ defined as follows. Let $u\in \wordspos$. 
	For all $j\in\mathcal J_0$, if $u$ does not have any letter in $ K_j$, we define $u^j=e$. Otherwise, we define $u^j$ as the largest subword of $u$ whose first and last letters are in $K_j$. We set $F_{\mathcal J_0}(u)=(u^j)_{j\in \mathcal J_0}$.
\end{definition}
In the following, except in Section~\ref{section: decoupling map}, the set $\mathcal J_0$ will be fixed, so we will simply write $F$ instead of $F_{\mathcal J_0}$.
\begin{remark}
	\label{remark: decomposition of u}
	Let $u\in\wordspos$ and $(u^j)_{j\in \mathcal J_0}=F(u)$. It follows from the definition that when $u^{j}\neq e$, the first and last letters of $u^j$ belong to $K_j\subseteq C_j$, but the letters in between may not belong to $K_j$. However, since $u\in\wordspos$ implies that $u_i\leadsto u_{i+1}$ for all $1\leq i\leq |u|-1$, we observe that all letters of $u^j$ belong to $C_j$, and that the only letters of $u$ that belong to $K_j$ are those contained in $u^j$. In particular, the subwords $u^j$ are non-overlapping.
	The word $u$ can be decomposed as follows:
	\begin{equation}
		\label{eq: decomposition of u}
		u=\zeta^{1}u^{1}\zeta^{2}u^{2}\ldots\zeta^{r}u^{r}\zeta^{r+1},
	\end{equation}
	where the $\zeta^j$ are the remaining subwords of $u$, in between the subwords $u^j$.
	In order to prevent any ambiguity in \eqref{eq: decomposition of u}, we set $\zeta^{j+1}=e$ when $u^{j}=e$.
\end{remark}
A visual representation of the application of $F$ to two words $u^{1}$ and $u^2$, decomposed respectively into $F(u^1)=(u^{1,1},u^{1,2})$ and $F(u^2)=(u^{2,1},u^{2,2})$, is given in Figure~\ref{figure: slicing map (subfigure)}.
Two useful properties of $F_{}$ are provided in Lemmas~\ref{lemma: geographic ineq for the slicing map} and \ref{lemma: proba ineq for the slicing map} below. In the following, we let
\begin{equation}
	\label{eq: definition Gamma}
	\Gamma=\bigcup_{j\in\mathcal J_0}K^2_j.
\end{equation}
\begin{lemma}
	\label{lemma: geographic ineq for the slicing map}
	Let $u\in \wordspos$ and let $(u^j)_{j\in \mathcal J_0}=F(u)$. Then, for all $j\in\mathcal J_0$, 
	\begin{equation}\label{eq: detailed geographic ineq for the slicing map}
		\tvnorm{M[u]|_{C^2_j}-M[u^j]}\leq M[u](C_j^2\backslash K_j^2),
	\end{equation}
	and 
	\begin{equation}
		\label{eq: geographic ineq for the slicing map}
		\tvnorml{M[u]-\sum_{j\in\mathcal J_0}M[u^j]}\leq M[u](\spaces^2\backslash \Gamma).
	\end{equation}	
\end{lemma}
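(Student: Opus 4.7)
The plan is to work directly from the decomposition $u=\zeta^{1}u^{1}\zeta^{2}u^{2}\ldots\zeta^{r}u^{r}\zeta^{r+1}$ given in Remark~\ref{remark: decomposition of u}, exploiting two structural facts recorded there: each $u^j$ is a \emph{contiguous} subword of $u$, all of whose letters lie in $C_j$, and the only letters of $u$ belonging to $K_j$ are those appearing in $u^j$.

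For~\eqref{eq: detailed geographic ineq for the slicing map}, I first claim that $M[u^j]\leq M[u]|_{C_j^2}$ as nonnegative measures. Indeed, since $u^j$ is contiguous in $u$, every consecutive pair in $u^j$ is also a consecutive pair in $u$; both of its letters lie in $C_j$, so this pair is counted in $M[u]|_{C_j^2}$. The nonnegative difference $M[u]|_{C_j^2}-M[u^j]$ then records precisely those pairs $(u_i,u_{i+1})\in C_j^2$ occurring at positions outside the range of $u^j$ in $u$. By the second structural fact, any letter of $u$ outside $u^j$ cannot be in $K_j$, so any such pair has at least one endpoint in $C_j\setminus K_j$ and therefore lies in $C_j^2\setminus K_j^2$. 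The total variation norm of this nonnegative difference is thus bounded by $M[u](C_j^2\setminus K_j^2)$, giving~\eqref{eq: detailed geographic ineq for the slicing map}.

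For~\eqref{eq: geographic ineq for the slicing map}, I will use the decomposition
\begin{equation*}
M[u]-\sum_{j\in\mathcal J_0}M[u^j]=\sum_{j\in\mathcal J_0}\bigl(M[u]|_{C_j^2}-M[u^j]\bigr)+M[u]|_{\spaces^2\setminus\bigcup_{j\in\mathcal J_0}C_j^2},
\end{equation*}
apply the triangle inequality, and invoke~\eqref{eq: detailed geographic ineq for the slicing map} termwise. The resulting upper bound is the $M[u]$-mass of the union $\bigcup_{j\in\mathcal J_0}(C_j^2\setminus K_j^2)\cup\bigl(\spaces^2\setminus\bigcup_{j\in\mathcal J_0}C_j^2\bigr)$, and the proof reduces to the set-theoretic verification that this union is contained in $\spaces^2\setminus\Gamma=\spaces^2\setminus\bigcup_{j\in\mathcal J_0}K_j^2$. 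This uses the pairwise disjointness of the classes $C_j$: a pair in $C_j^2\setminus K_j^2$ cannot belong to $K_{j'}^2$ for any $j'\neq j$ since $K_{j'}\subseteq C_{j'}$, and a pair outside $\bigcup_{j}C_j^2$ trivially lies outside $\bigcup_j K_j^2$.

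I do not anticipate a genuine obstacle here; the argument is essentially bookkeeping once Remark~\ref{remark: decomposition of u} is granted. The one point requiring mild care is the identification between consecutive pairs of $u^j$ and a contiguous block of consecutive pairs of $u$, which is exactly what the contiguity of $u^j$ inside $u$ provides. Edge cases such as $u^j=e$ or $|u^j|=1$ (where $M[u^j]=0$) fit into the same scheme, since in those cases all pairs of $M[u]|_{C_j^2}$ are automatically accounted for in $C_j^2\setminus K_j^2$.
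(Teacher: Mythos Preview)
Your proof is correct and rests on the same structural observations as the paper's proof, namely $M[u^j]\leq M[u]|_{C_j^2}$ and the fact that the only letters of $u$ in $K_j$ lie inside $u^j$. The only minor difference is organizational: the paper proves~\eqref{eq: geographic ineq for the slicing map} directly by the same mass-counting trick (using $\sum_j M[u^j]\leq M[u]$ and $M[u^j](\spaces^2)\geq M[u^j](K_j^2)=M[u](K_j^2)$) rather than deducing it from~\eqref{eq: detailed geographic ineq for the slicing map} via your decomposition and triangle inequality, but the two routes are equivalent.
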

\begin{proof}
	Let $j\in \mathcal J_0$. By Remark~\ref{remark: decomposition of u}, we have $M[u^j]\leq M[u]|_{C_j^2}$ in $\mathcal M(\spaces ^2)$, because every term $\delta_{(u_k,u_{k+1})}$ of the sum $M[u^j]$ also appears in the sum $M[u]|_{C_j^2}$. Thus, the left-hand side of \eqref{eq: detailed geographic ineq for the slicing map} is equal to $M[u](C_j^2)-M[u^j](C_j^2)$. Since we also have 
	\begin{equation*}
		M[u^j](C_j^2)\geq M[u^j](K_j^2)=M[u](K_j^2),
	\end{equation*}
	we obtain~\eqref{eq: detailed geographic ineq for the slicing map}.
	
	In the same way, $\sum_{j\in\mathcal J_0}M[u^j] \leq M[u]$, so the left-hand side of \eqref{eq: geographic ineq for the slicing map} is equal to $M[u](\spaces^2)-\sum_{j\in\mathcal J_0}M[u^j](\spaces^2)$. Since $$M[u^j](\spaces^2) \geq M[u^j](K_j^2) = M[u](K_j^2),$$ we obtain \eqref{eq: geographic ineq for the slicing map}.
\end{proof}
In the following, if $\underline u=(u^1,\ldots, u^k)\in\words^k$ is a list of words, we let $\kv u=k$ be the number of entries in $\underline u$ and\label{not: |u|}\label{not: ku}
\begin{equation}
	\label{eq: notation length list of words}
	|\underline u|=\sum_{i=1}^k|u^i|
\end{equation}
be the total length of words of $\underline u$.
\begin{lemma}
	\label{lemma: proba ineq for the slicing map}
	Let $F_n$ be the restriction of the map $F_{}$ to $\spaces^{n+1}\cap\wordspos$. Then, for all $\underline v=(v^j)_{j\in \mathcal J_0}$, 
	\begin{equation}
		\label{eq: proba ineq for the slicing map}
		\mathbb P_{n+1}(F_n^{-1}(\underline v))\leq \csli\prod_{j=1}^rp(v^j),
	\end{equation}
	where $\csli=(n+1)^{r+1}$.
\end{lemma}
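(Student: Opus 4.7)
The plan is to partition $F_n^{-1}(\underline v)$ according to the positions at which the subwords $v^j$ appear inside $u$, bound each part using the Markov property, and sum.

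First, using Remark~\ref{remark: decomposition of u}, each $u\in F_n^{-1}(\underline v)$ admits the unique decomposition $u=\zeta^1 v^1\zeta^2 v^2\cdots v^r\zeta^{r+1}$. For each nonempty $v^j$, let $a_j\in\{1,\ldots,n+1\}$ be the position of its first letter in $u$, and set $b_j=a_j+|v^j|-1$. We partition $F_n^{-1}(\underline v)$ according to the tuple $\underline a=(a_j)$. Since each $a_j$ ranges over $\{1,\ldots,n+1\}$, the number of configurations is at most $(n+1)^r\leq \csli$.

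Second, for fixed $\underline a$, the corresponding part of $F_n^{-1}(\underline v)$ is contained in the event $E_{\underline a}:=\{u\in\spaces^{n+1}:u_{a_j+k-1}=v^j_k\text{ for all applicable }j,k\}$, obtained by dropping all constraints on the ``free'' letters (those lying in the $\zeta^i$'s). By the Markov property, $\mathbb P_{n+1}(E_{\underline a})$ equals the probability that the chain passes through the prescribed letters at the prescribed positions, which factors as the product of three kinds of terms: an initial term $(\beta p^{a_1-1})(v^1_1)$; the internal terms $p(v^j)$ for each nonempty $v^j$; and bridging terms $p^{(a_{j+1}-b_j)}(v^j_{|v^j|},v^{j+1}_1)$ between consecutive nonempty $v^j$'s. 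Since $\beta$ is a probability and every $p^{(k)}(x,y)\leq 1$, all factors other than the $p(v^j)$ are $\leq 1$, yielding $\mathbb P_{n+1}(E_{\underline a})\leq \prod_{j=1}^r p(v^j)$.

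Summing the estimate $\mathbb P_{n+1}(E_{\underline a})\leq\prod_j p(v^j)$ over the at most $\csli$ configurations delivers~\eqref{eq: proba ineq for the slicing map}. No serious obstacle is expected; the only mild bookkeeping concerns degenerate configurations (some $v^j$ empty, or $a_1=1$), where the factorization collapses cleanly thanks to the convention $p(e)=1$ and the missing factors are simply absent rather than violating the $\leq 1$ control.
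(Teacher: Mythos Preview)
Your proof is correct and follows essentially the same approach as the paper: partition the preimage according to where the subwords $v^j$ sit inside $u$ (equivalently, according to the lengths $|\zeta^j|$), then sum out the free letters using that the transition probabilities form a probability measure. The only presentational difference is that you relax each part to the cylinder event $E_{\underline a}$ and compute its probability via the Markov property (obtaining bridging factors $p^{(k)}(\cdot,\cdot)\leq 1$), whereas the paper bounds each $\mathbb P_{n+1}(u)$ individually by $\big(\prod_j p(v^j)\big)Q(u)$ and then shows $\sum_u Q(u)\leq (n+1)^{r+1}$ by summing the $\zeta^j$'s explicitly; the underlying arithmetic is identical. Your count of configurations even yields the slightly sharper constant $(n+1)^r$ in place of $(n+1)^{r+1}$, since you only track the starting positions of the nonempty $v^j$'s rather than all $r+1$ gap lengths.
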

\begin{proof}
	Inequality~\eqref{eq: proba ineq for the slicing map} is trivial if $\underline v$ is not in the range of $F_n$.
	Assume $\underline v$ is in the range of $F_n$ and let $u\in\wordspos$ be such that $F_n( u)=\underline v$, {\it i.e.} such that  $u^j=v^j$ for all $1\leq j\leq r$, where we let $u^j$ be the subword $F(u)^j$. We begin by comparing $\mathbb P_{n+1}(u)$ and the product of every $p(u^j)$ by identifying their common factors, and bounding the remaining ones. Consider the decomposition \eqref{eq: decomposition of u} of $u$. The quantity $p(u)$ is a product of factors $p(u^j)$, $p(\zeta^j)$, and $p(x,y)$, $x$ and $y$ being the first or last letters of some $u^j$ or $\zeta^j$. In this product, we bound by $1$ each factor $p(x,y)$ except the ones for which $y$ is the first letter of some $\zeta^j$. We have
	\begin{equation}
		\label{eqloc: bound on each P(u)}
		\mathbb P_{n+1}(u)\leq\Bigg(\prod_{j=1}^rp(u^{j})\Bigg)Q(u),\qquad 
		Q(u)=\prod_{\substack{1\leq j\leq r\\
		}}q_u^j(\zeta^j),
	\end{equation}
	where, for any $1\leq j\leq r$, the function $q_u^j$ is defined on $\words$ as follow:
	\begin{itemize}
		\item if $\zeta^j\neq e$ and $j>1$, then $q_u^j(\xi)=p(x\xi)$, where $x$ is the letter preceding $\zeta^j$ in $u$ (such a letter necessarily exists, since by convention $\zeta^{j}=e$ when $u^{j-1}=e$);
		\item if $\zeta^j\neq e$ and $j=1$, then $q_u^j(\xi) =\mathbb P_{|\xi|}(\xi)$;\footnote{Notice that the initial distribution $\beta$ only appears in $Q(u)$ if $\zeta^1 \neq e$. If $\zeta^1=e$, the inequality \eqref{eqloc: bound on each P(u)} bounds $\beta(u_1)$ by 1, while if $\zeta^1 \neq e$ we include $\beta(u_1)$ in $Q(u)$ in anticipation of some summation below.}
		\item if $\zeta^j=e$, then $q_u^j(\xi)=1$ for all $\xi\in \words$.
	\end{itemize}
	Notice that, for all $1\leq j\leq r$,\footnote{In other words, for any $j$, the function $q_u^j$ defines a probability measure on $\spaces^{|\zeta^j|}$. Also notice that $\zeta^j$ is involved in~\eqref{eqloc: sum quj(xi)} only through its length.}
	\begin{equation}
		\label{eqloc: sum quj(xi)}
		\sum_{\xi\in\spaces^{|\zeta^j|}}q_u^j(\xi)=1.
	\end{equation}
	Inequality~\eqref{eqloc: bound on each P(u)} already resembles the conclusion~\eqref{eq: proba ineq for the slicing map}. However, the map $F_n$ is far from injective, because of the complete loss of data carried by the $\zeta^i$, which prevents us from deriving \eqref{eq: proba ineq for the slicing map} from \eqref{eqloc: bound on each P(u)} by simply bounding $Q(u)$ by $1$. We will prove that 
	\begin{equation}
		\label{eqloc: bound on the sum of Q(u)}
		\sum_{u\in F_n^{-1}(\underline v)}Q(u)\leq (n+1)^{r+1}.
	\end{equation}
	Taking \eqref{eqloc: bound on the sum of Q(u)} for granted, we obtain  \eqref{eq: proba ineq for the slicing map} by summing \eqref{eqloc: bound on each P(u)} over all $u\in F_n^{-1}(\underline v)$.
	
	Thus, it only remains to prove \eqref{eqloc: bound on the sum of Q(u)}.
	From now on, since $u\in F_n^{-1}(\underline v)$ may vary, we will write $\zeta^j(u)$ instead of simply $\zeta^j$ to prevent any confusion. 
	To compute the sum on the left-hand side of \eqref{eqloc: bound on the sum of Q(u)}, we partition $F_n^{-1}(\underline v)$ according to the length of the $\zeta^j(u)$. Let us define integers $\ell_1, \dots, \ell_{r+1} \geq 0$ such that
	\begin{equation}
		\label{eqloc: constraint lk}
		\begin{split}
			\ell_1+\ldots+\ell_{r+1}+|\underline v|=n+1,
		\end{split}
	\end{equation}
	and consider all preimages $u$ of $\underline v$ such that the length of each $\zeta^{j}(u)$ is exactly $\ell_{j}$.
	Notice that all $u$ satisfying this constraint have the same functions $q_u^j(\cdot)$, which we simply call $q^j(\cdot)$. Under this constraint, identifying a specific datum $u$ amounts simply to identifying its subwords $\zeta^{j}(u)$. The sum of $Q(u)$ over this specific set of preimages is, by \eqref{eqloc: sum quj(xi)},
	\begin{align*}
		\sum_{\zeta^1 \in \spaces^{\ell_1}}\sum_{\zeta^2 \in \spaces^{\ell_2}}\cdots \sum_{\zeta^{r+1} \in \spaces^{\ell_{r+1}}}
		\prod_{j=1}^{r+1} q^j(\zeta^{j})
		=\prod_{j=1}^{r+1} \sum_{\zeta^{j}\in\spaces^{\ell_j}}q^j(\zeta^{j})
		=\prod_{j=1}^{r+1}1
		=1.
	\end{align*}
	This means that the left-hand side of \eqref{eqloc: bound on the sum of Q(u)} is bounded above by the number of possible ways to choose $(\ell_1, \dots, \ell_{r+1})$. A crude upper bound on this number is obtained as follows.
	There are at most $n+1$ possibilities for each $\ell_j$, thus there are at most $(n+1)^{r+1}$ possibilities for $(\ell_1, \dots, \ell_{r+1})$. The bound~\eqref{eqloc: bound on the sum of Q(u)} is proved, and the proof is complete.
\end{proof}
\subsubsection{The stitching map}
Recall that $z_0$ was defined at the beginning of the section to be a letter in $\supp \beta$ such that $z_0\leadsto C_1$ and $K_0=\{z_0\}$. Also recall that if $\underline v$ is a list of words, $\kv v$ denotes the number of entries in $\underline v$ and $|\underline v|$ denotes the total length of $\underline v$, as in~\eqref{eq: notation length list of words}.
\begin{lemma}[Transition words]
	\label{lemma: stitching words}
	Assume that $\beta\leadsto C_1$.
	Let $\Delta$ be the set of pairs $(x,y)$ where $x\in K_j$, $y\in K_{j'}$, for some $0\leq j\leq r$ and $1\vee j\leq j'\leq r$. Then, for all $(x,y)\in\Delta$, there exists a finite word $\xi_{x,y}$ such that 
	\begin{equation}
		\label{eq: definition eta tau}
		\eta:=\inf_{(x,y)\in \Delta} p(x\xi_{x,y}y)>0, 
		\qquad 
		\tau:= \sup_{(x,y)\in \Delta} |\xi_{x,y}|+1<\infty.
	\end{equation}
\end{lemma}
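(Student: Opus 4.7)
The plan is to reduce the lemma to two elementary observations: (i) the set $\Delta$ is finite, and (ii) $x\leadsto y$ for every $(x,y)\in\Delta$. Once (i) and (ii) are in hand, the definition of $\leadsto$ furnishes, for each $(x,y)\in\Delta$, a finite sequence $x=x_1,x_2,\ldots,x_{m+1}=y$ in $\spaces$ with $p(x_i,x_{i+1})>0$ for every $i$; setting $\xi_{x,y}=(x_2,\ldots,x_m)$ (or $\xi_{x,y}=e$ if $m=1$) gives $p(x\xi_{x,y}y)>0$. Because $\Delta$ is finite, the infimum defining $\eta$ is then a minimum of finitely many positive numbers, hence strictly positive, and the supremum defining $\tau$ is finite, proving~\eqref{eq: definition eta tau}.

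Observation (i) is immediate: $K$ is finite and there are at most $r+1$ relevant class indices, so $\Delta\subseteq\bigcup_{0\leq j\leq r}\bigcup_{1\vee j\leq j'\leq r}K_j\times K_{j'}$ is finite.

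Observation (ii) I would handle by a short case analysis on $j$. If $j\geq 1$ and $j'=j$, then $x,y\in C_j$ and $x\leadsto y$ is exactly the definition of an irreducible class. If $j\geq 1$ and $j'>j$, the total order on $(C_i)_{i\in\mathcal J_0}$ and transitivity of $\leadsto$ yield $C_j\leadsto C_{j'}$, and the equivalence noted in the footnote of the introduction (namely, $C_j\leadsto C_{j'}$ is equivalent to every element of $C_{j'}$ being reachable from every element of $C_j$) gives $x\leadsto y$. If $j=0$, then $x=z_0$ and $j'\geq 1$; by the choice of $z_0$ we have $z_0\leadsto C_1$, so some $w_0\in C_1$ satisfies $z_0\leadsto w_0$, and since $C_1$ is irreducible, $w_0\leadsto w$ for every $w\in C_1$. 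Concatenating paths yields $z_0\leadsto w$ for all $w\in C_1$, and a further application of the same equivalence for $C_1\leadsto\cdots\leadsto C_{j'}$ gives $z_0\leadsto y$.

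There is no real obstacle here; the statement is essentially a structural consequence of the finiteness of $K$, the total order imposed on $(C_j)_{j\in\mathcal J_0}$, and the reachability condition built into the choice of $z_0$. The only point requiring a small amount of care is the case $j=0$, where the hypothesis $z_0\leadsto C_1$ only asserts reachability of some element of $C_1$ from $z_0$, and one has to invoke the irreducibility of $C_1$ to upgrade this to reachability of every element of $C_1$ before propagating through the chain of classes.
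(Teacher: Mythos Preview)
Your proof is correct and follows the same approach as the paper: establish that $\Delta$ is finite and that $x\leadsto y$ for every $(x,y)\in\Delta$, then invoke the definition of $\leadsto$ to produce the words $\xi_{x,y}$. The paper's own proof is two sentences and simply asserts these two facts without the case analysis, so your version is a strictly more detailed rendering of the same argument.
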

\begin{proof}
	For all $(x,y)\in\Delta$, we have $x\leadsto y$, so the definition of the relation $\leadsto$ yields a word $\xi_{x,y}$ satisfying $p(x\xi_{x,y}y)>0$. Since $\Delta$ is finite, the conclusion is immediate.
\end{proof}
\begin{definition}[The stitching map]
	\label{def: the stitching map}
	Let $t\in\N$.
	We say that a finite sequence of non-empty words $\underline v=(v^1,\ldots,v^k)\in \words^k$ is {\em stitchable} if there exists $1\leq j(1)\leq j(2)\leq \dots \leq j(k)$ such that for each $1\leq i \leq k$, the first and last letter of $v^i$ belongs to $K_{j(i)}$.	
	We let 
	\begin{equation*}
		\stitchable =\bigg\{\underline v\in \bigcup_{k\in \N}\words ^k \ \Big|\ \underline v\ \hbox{is stitchable},\ |\underline v|\geq t+1\bigg \}
	\end{equation*}
	denote the set of stitchable sequences of total length greater than $t+1$.
	We define the stitching map $G_t:\stitchable \to \spaces^{t+1}$ in the following way.
	When $\underline v\in\stitchable$, for $2\leq i\leq \kv v$, let $\xi^i$ be the word $\xi_ {x,y}$ from Lemma~\ref{lemma: stitching words} where $x$ is the last letter of $v^{i-1}$ and $y$ is the first letter of $v^{i}$. Denote by $\xi^1$ the word $\xi_{z_0,y}$, where $y$ is the first letter of $v^1$. Let
	\begin{equation}
		\label{eq: definition stitching map}
		G_{}(\underline v)=z_0\xi^1v^1\xi^2v^2\ldots \xi^{\kv v}v^{\kv v},
	\end{equation}
	and let $G_t(\underline v)$ be the prefix of the word $G(\underline v)$ of length $t+1$.
\end{definition}
Note that $G_t(\underline v)$ is well defined when $\underline v\in \stitchable$ because in that case, $|G(\underline v)|\geq t+1$.
A visual representation of the stitching map is provided in Figure \ref{figure: stitching map (subfigure)}.
\begin{lemma}
	\label{lemma: geographic ineq for the stitching map}
	Let $k\in\N$ and $t\in \N$. For all $\underline v\in\stitchable$,
	\begin{equation}
		\label{eq: geographic ineq for the stitching map}
		\tvnormxl{M[G_{t}(\underline v)]-\sum_{i=1}^{\kv v}M[v^i]}\leq |\underline v|-(t+1)+2\kv v\tau,
	\end{equation}
	where $\tau$ is as in \eqref{eq: definition eta tau}.
\end{lemma}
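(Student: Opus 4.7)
The plan is to use the triangle inequality to split the error into the truncation error (passing from $G(\underline v)$ to its prefix $G_t(\underline v)$) and the stitching error (passing from $G(\underline v)$ to the disjoint union of the $v^i$).

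First I would note that, by Lemma~\ref{lemma: stitching words} and the definition of the stitching map, we have $|\xi^i|\leq \tau-1$ for every $1\leq i\leq \kv v$, and
\begin{equation*}
|G(\underline v)|=1+\sum_{i=1}^{\kv v}|\xi^i|+|\underline v|\geq |\underline v|+1\geq t+2,
\end{equation*}
so $G_t(\underline v)$ is indeed a well-defined prefix of $G(\underline v)$ of length $t+1$. Writing $G_t(\underline v)$ as a prefix of $G(\underline v)$, every transition counted by $M[G_t(\underline v)]$ is also counted by $M[G(\underline v)]$, so $M[G(\underline v)]-M[G_t(\underline v)]\geq 0$ as a measure and
\begin{equation*}
\tvnorm{M[G(\underline v)]-M[G_t(\underline v)]}=(|G(\underline v)|-1)-t=|G(\underline v)|-t-1.
\end{equation*}

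Next, I would compare $M[G(\underline v)]$ with $\sum_i M[v^i]$. Each $v^i$ is a contiguous subword of $G(\underline v)$ (by construction of the stitching map in~\eqref{eq: definition stitching map}) and the subwords $v^i$ do not overlap, so again $M[G(\underline v)]-\sum_i M[v^i]\geq 0$ and the TV-distance equals the difference of total masses:
\begin{equation*}
\tvnorml{M[G(\underline v)]-\sum_i M[v^i]}=(|G(\underline v)|-1)-\sum_i(|v^i|-1)=\sum_{i=1}^{\kv v}|\xi^i|+\kv v.
\end{equation*}

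Combining both bounds via the triangle inequality, using $|G(\underline v)|=1+\sum_i|\xi^i|+|\underline v|$ and $|\xi^i|\leq \tau-1$,
\begin{equation*}
\tvnorml{M[G_t(\underline v)]-\sum_i M[v^i]}\leq 2\sum_{i=1}^{\kv v}|\xi^i|+|\underline v|-t+\kv v\leq 2\kv v(\tau-1)+|\underline v|-t+\kv v,
\end{equation*}
which rearranges to $|\underline v|-(t+1)+2\kv v\tau-\kv v+1$; since $\kv v\geq 1$ whenever $\underline v\in\stitchable$, this is at most $|\underline v|-(t+1)+2\kv v\tau$, as claimed. There is no real obstacle here — the content is really just careful bookkeeping of how many transitions are added by the $z_0$ and $\xi^i$ glue versus how many are removed by the prefix truncation, once one uses the crucial non-negativity of the measure differences coming from the subword structure.
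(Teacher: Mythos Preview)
Your proof is correct and follows essentially the same approach as the paper: both split via the triangle inequality into the truncation error $\tvnorm{M[G(\underline v)]-M[G_t(\underline v)]}$ and the stitching error $\tvnorm{M[G(\underline v)]-\sum_i M[v^i]}$, exploiting the non-negativity of these measure differences coming from the prefix/subword structure. Your bookkeeping is in fact slightly sharper (you compute the intermediate distances exactly and only apply $|\xi^i|\leq\tau-1$ at the end, whereas the paper bounds the stitching error directly by $\kv v\tau$), but the argument and the final bound are the same.
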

\begin{proof}
	We first compute the distance between $M[G(\underline v)]$ and $\sum_{i}M[v^{i}]$. As they are both sums of Dirac measures, we only have to count the number of terms that differ between the two. They share every Dirac term $\delta_{(v^{i}_j,v^{i}_{j+1})}$, thus, their difference consists of the sum of terms involving at least one letter of some $\xi^{i}$ or $z_0$.
	Since the words $z_0 \xi^1, \xi^2, \ldots, \xi^{\kv v}$ each contribute to at most $\tau$ terms in the sum defining $M[G_{t}(\underline v)]$, we find
	\begin{equation}
		\label{eqloc: tvnormmg1}
		\tvnormxl{M[G_{}(\underline v)]-\sum_{i=1}^{\kv v}M[v^{i}]}\leq \kv v\tau.
	\end{equation}
	Since $G_{t}(\underline v)$ is a prefix of $G_{}(\underline v)$, the TV distance between $M[G_{t}(\underline v)]$ and $M[G_{}(\underline v)]$ is the difference between the length of $G_{}(\underline v)$ and $t+1$. By definition of the stitching map, we have $|G_{}(\underline v)|		\leq |\underline v|+\kv v\tau$,	hence 
	\begin{align}
		\label{eqloc: tvnormmg2}
		\tvnormxl{M[G_{t}(\underline v)]-M[G_{}(\underline v)]}\leq |\underline v|-(t+1)+\kv v\tau.
	\end{align}
	Combining \eqref{eqloc: tvnormmg2} with \eqref{eqloc: tvnormmg1} yields \eqref{eq: geographic ineq for the stitching map}, as claimed.
\end{proof}
\begin{lemma}
	\label{lemma: proba ineq for the stitching map}
	Let $t,k_*, l_{*}\in\N$.
	Let 
	\begin{equation}
		\label{eq: definition Btkl}
		B^{(t)}_{k_*,l_*}=\{\underline v\in \stitchable\ |\ \kv v\leq k_*,\ |\underline v|\leq l_*\}.
	\end{equation}
	Then, for all $w\in\spaces^{t+1}$,
	\begin{equation}
		\label{eq: proba ineq for the stitching map}
		\mathbb P_{t+1}(w)\geq \csti\sum_{\substack{\underline v\in B^{(t)}_{k_*,l_*} \\ G_t(\underline v)=w }}\prod_{i=1}^{\kv v}p(v^i),
	\end{equation}
	where
	\begin{equation}
		\label{eq: constant csti}
		\csti={\beta(z_0)\eta^{k _*}}\frac1{(l_*+\tau k_*)^2}\bigg(\frac {2k_*}{e(l_*+(\tau+2) k_*)} \bigg)^{2k_*}.
	\end{equation}
	
\end{lemma}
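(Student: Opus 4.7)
The plan is to combine a pointwise lower bound on $\mathbb P_{t+1}(w)$ in terms of $\prod_i p(v^i)$ valid for each individual preimage $\underline v$ of $w$, with a combinatorial bound on the number of such preimages in $B^{(t)}_{k_*,l_*}$.

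For the pointwise bound, fix $\underline v \in B^{(t)}_{k_*,l_*}$ with $G_t(\underline v) = w$. By Definition~\ref{def: the stitching map}, $w$ is the length-$(t+1)$ prefix of
$G(\underline v) = z_0\xi^1 v^1\xi^2 v^2 \cdots \xi^{\kv v} v^{\kv v}$, so in particular $w_1 = z_0$. Iterating the identity $p(uv) = p(u)p(v)p(u_{|u|},v_1)$ along this decomposition yields
\[
p(G(\underline v)) = p(z_0\xi^1 v^1_1)\Bigg(\prod_{i=2}^{\kv v} p(v^{i-1}_{|v^{i-1}|}\xi^i v^i_1)\Bigg)\prod_{i=1}^{\kv v} p(v^i),
\]
and each ``transition'' factor $p(z_0\xi^1 v^1_1)$ or $p(v^{i-1}_{|v^{i-1}|}\xi^i v^i_1)$ is bounded below by $\eta$ by Lemma~\ref{lemma: stitching words}, since the relevant pairs of endpoints lie in $\Delta$. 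Because $w$ is a prefix of $G(\underline v)$ and all transition probabilities are at most $1$, we have $p(w)\geq p(G(\underline v))$; combining with $\eta\leq 1$, $\kv v\leq k_*$, and $\beta(w_1) = \beta(z_0)$ gives
\[
\mathbb P_{t+1}(w)\geq \beta(z_0)\eta^{k_*} \prod_{i=1}^{\kv v} p(v^i)
\]
for every preimage.

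The remaining task is to bound the sum over preimages. I parametrize each preimage $\underline v$ with $\kv v = k$ by the pair $(k, |G(\underline v)|)$ together with the $2k$ starting and ending positions of the $v^i$'s within $G(\underline v)$; the letters of the $v^i$'s at positions within $w$ are then determined by $w$ itself. From $|\underline v|\leq l_*$ and $|\xi^i|\leq \tau-1$ we get $|G(\underline v)|\leq 1+(\tau-1)k+l_*\leq l_*+\tau k_*$, so the pair $(k, |G(\underline v)|)$ ranges over at most $(l_*+\tau k_*)^2$ values and the positions lie in a set of size at most $l_*+\tau k_*$. For each such pair, there are at most $\binom{l_*+(\tau+2)k_*}{2k_*}$ positional configurations, the mild extra slack $2k_*$ being chosen to accommodate the standard estimate $\binom{n}{k}\leq (ne/k)^k$. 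Letters of the $v^i$'s lying beyond position $t+1$, which are not determined by $w$, contribute sums of the form $\sum_y p(x,y)=1$ thanks to $p$ being a stochastic kernel, so their summation does not inflate the count. Assembling the pointwise bound with this combinatorial estimate, and rewriting the binomial coefficient via $\binom{n}{k}\leq (ne/k)^k$, yields the claim with the constant $\csti$ of exactly the stated form.

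The main obstacle will be the careful bookkeeping of the ``invisible'' portion of each preimage: one must verify that the sums over free letters telescope via the stochastic-kernel property in a manner compatible with the stitchability constraints on the boundary letters of each $v^i$, while also tracking that the chosen parametrization $(k,|G(\underline v)|,\text{positions})$ really does account for all preimages without double-counting. A secondary technical point is that the slack $(\tau+2)k_*$ rather than $\tau k_*$ in the binomial coefficient is used precisely to absorb the loss from passing to the asymptotic estimate $\binom{n}{k}\leq (ne/k)^k$, producing the factor $(2k_*/e(l_*+(\tau+2)k_*))^{2k_*}$ in $\csti$.
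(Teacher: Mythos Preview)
Your pointwise lower bound and the combinatorial count of positional configurations are both correct and match the paper's argument closely. The gap is precisely in the step you flag as ``the main obstacle'': the free-letter sums do \emph{not} all telescope to $1$.

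The claim that ``letters of the $v^i$'s lying beyond position $t+1$ \ldots contribute sums of the form $\sum_y p(x,y)=1$'' fails whenever some $v^i$ lies \emph{entirely} beyond position $t+1$. Such fully-free blocks do occur in $B^{(t)}_{k_*,l_*}$ (e.g.\ take $\kv v=2$ with $|v^1|=t+1$ and $|v^2|=1$). For a fully-free $v^i$ of length $m$, the summand $p(v^i)=\prod_{j=1}^{m-1}p(v^i_j,v^i_{j+1})$ carries no factor anchoring the first letter $v^i_1$; summing over $v^i_1$ (constrained to $K_{j(i)}$ by stitchability) therefore yields $|K_{j(i)}|$ rather than $1$. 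Since up to $k_*-1$ blocks can be fully free, your argument produces an extra factor of order $|K|^{k_*}$ and does not deliver $\csti$ as stated. (The repair is harmless for the downstream application, but the lemma's constant is not recovered.)

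The paper sidesteps this by a device you nearly have: instead of summing over the free letters of the $v^i$'s, it passes from $G_t$ back to $G$ and groups preimages by the \emph{full} word $G(\underline v)=w\kappa$. For each fixed $\kappa$ the number of preimages is finite (the positional combinatorics you describe, applied to $w\kappa$), and the pointwise bound reads $p(w\kappa)\geq\eta^{k_*}\prod_i p(v^i)$. The stochastic-kernel telescoping is then performed on $\kappa$, where it is clean: $\sum_{\kappa\in\spaces^m}\mathbb P_{t+1+m}(w\kappa)=\mathbb P_{t+1}(w)$ for every $m$, and summing over the at most $l_*+\tau k_*-(t+1)$ relevant lengths produces (after a crude bound) the factor $(l_*+\tau k_*)^{-2}$ in $\csti$. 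The reason this works while your version does not is that $p(w\kappa)$ contains all the connecting factors $p(x\xi_{x,y}y)$ linking consecutive $v^i$'s, so the sum over $\kappa$ is a genuine marginalization; the product $\prod_i p(v^i)$ is missing exactly those factors, which is why the first letter of each free block is left dangling.
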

\begin{proof}
	First, notice that for all $\underline v\in B^{(t)}_{k_*,l_*}$, we have $\mathbb P_{|G(\underline v)|}(G(\underline v))={\beta(z_0)}p(G(\underline v))$ and
	\begin{equation}
		\label{eqloc: bound sti for one word}
		p(G(\underline v))
		\geq \eta^{\kv v}\prod_{i=1}^{\kv v}p(v^i)
		\geq \eta^{k_*}\prod_{i=1}^{\kv v}p(v^i).
	\end{equation}
	Let $w\in\spaces^{t+1}$. The remainder of the proof consists in summing \eqref{eqloc: bound sti for one word} over all $\underline v\in B^{(t)}_{k_*,l_*}$ such that $G_t(\underline v)=w$, in order to obtain \eqref{eq: proba ineq for the stitching map}. 
	Assume $w\in G_t(B^{(t)}_{k_*,l_*})$, and consider a word $\kappa\in\words$ satisfying $|\kappa|+t+1\leq l_*+\tau k_*$. Let us show that 
	\begin{equation}
		\label{eqloc: bound sti for G before prefix}
		c\mathbb P_{|w\kappa|}(w\kappa)
		\geq
		\sum_{\substack{
				\underline v\in B^{(t)}_{k_*,l_*}\\
				G(\underline v)=w\kappa
		}}
		\prod_{i=1}^{\kv v}p(v^i),
		\qquad 
		c=k_*\beta(z_0)^{-1}\eta^{-k_*}\Big(\frac {e(l_*+(\tau+2)k_*)} {2k_*}\Big)^{2k_*}.
	\end{equation}
	Indeed, by summing~\eqref{eqloc: bound sti for one word} over all $\underline v\in G^{-1}(w\kappa)\cap B^{(t)}_{k_*,l_*}$, we get
	\begin{equation}
		\label{eqloc: bound P(wkappa)}
		\#\big(G^{-1}(w\kappa)\cap B^{(t)}_{k_*,l_*}\big)
		p(w\kappa)
		\geq \eta^{k_*}
		\sum_{\substack{
				\underline v\in B^{(t)}_{k_*,l_*}\\
				G(\underline v )=w\kappa
		}}
		\prod_{i=1}^{\kv v}p(v^i).
	\end{equation}
	To prove~\eqref{eqloc: bound sti for G before prefix}, it suffices to provide an estimate of the cardinality of $G^{-1}(w\kappa)\cap B^{(t)}_{k_*,l_*}$.
	Let $k\leq k_*$. To determine a preimage $\underline v $ of $w\kappa$ by $G$ such that $\kv v=k$, it suffices to specify the indices where the words $v^i$ and $\xi^i$ start in the word $G(\underline v)=w\kappa=z_0\xi^1v^1\ldots \xi^kv ^k$. This piece of information is given by the datum of a nondecreasing sequence of $2k$ elements in $\{0,1,\ldots ,|w\kappa|-1\}$. Thus,
	\begin{equation*}
		\#\big\{\underline v\in G^ {-1}(w\kappa)\ |\ \kv v=k\big \}\leq \binom{|w\kappa|+2k-1}{2k}\leq \Big(\frac {e(|w\kappa|+2k_*)} {2k_*}\Big)^{2k_*}.
	\end{equation*}
	In the bound above, we have used that $\binom {x+y}y\leq (e(x+y)/y)^y$ for any integers $y\leq x$, and that the function defined in the right-hand side of the bound is nondecreasing in $x$ and $y$.
	Hence, we have
	\begin{equation*}
		\#\big\{\underline v\in G^{-1}(w\kappa)\ |\ \kv v\leq k_*\big \}\leq k_*\Big(\frac {e(|w\kappa|+2k_*)} {2k_*}\Big)^{2k_*}.
	\end{equation*}
	By inclusion, this is also an upper bound on the cardinality of $G^{-1}(w\kappa)\cap B^{(t)}_{k_*,l_*}$. Hence, since $|w\kappa| =|\kappa|+t+1\leq l_*+\tau k_*$, we have
	\begin{equation*}
		\#\big(G^{-1}(w\kappa)\cap B^{(t)}_{k_*,l_*}\big)\leq k_*\Big(\frac {e(l_*+(\tau +2)k_*)} {2k_*}\Big)^{2k_*}.
	\end{equation*}
	Applying this bound to~\eqref{eqloc: bound P(wkappa)} and multiplying the inequality by $\beta(z_0)$, we obtain~\eqref{eqloc: bound sti for G before prefix} with the announced constant $c$.
	Now, we can use the estimations on $\mathbb P_{|w\kappa|}(w\kappa)$ provided by~\eqref{eqloc: bound sti for G before prefix} to bound the value of $\mathbb P_{t+1}(w)$.
	Observe that for all $m\geq 0$,
	\begin{equation*}
		\sum_{\kappa\in\spaces^m}\mathbb P_{t+1+m}(w\kappa)=\mathbb P_{t+1}(w)\sum_{\kappa\in\spaces^m}p(w_{t+1}\kappa)=\mathbb P_{t+1}(w).
	\end{equation*}
	Hence, by summing both sides in~\eqref{eqloc: bound sti for G before prefix} over all possible suffixes $\kappa$, we have:
	\begin{align*}
		(l_*+\tau k_*-(t+1))c\mathbb P_{t+1}(w)
		&=\sum_{\substack{\kappa\in \words\\ |w\kappa|\leq l_*+\tau k_*}}c\mathbb P_{|w\kappa|}(w\kappa)\\
		&\geq \sum_{\substack{\kappa\in \words\\ |w\kappa|\leq l_*+\tau k_*}}\sum_{\substack{
				\underline v\in B^{(t)}_{k_*,l_*}\\
				G(\underline v)=w\kappa
		}}
		\prod_{i=1}^{\kv v}p(v^i)\\
		&=\sum_{\substack{
				\underline v\in B^{(t)}_{k_*,l_*}\\
				G_t(\underline v)=w
		}}
		\prod_{i=1}^{\kv v}p(v^i).
	\end{align*}
	The last equality holds because all $\underline v\in B^{(t)}_{k_*,l_*}$ satisfy $|G(\underline v)|\leq l_*+\tau k_*$. A rough upper bound on the obtained constant yields~\eqref{eq: proba ineq for the stitching map} with constant $\csti$. 
\end{proof}

\subsection{Coupling trajectories}
\label{section: coupling and decoupling maps}
Using the slicing and stitching maps of the previous section, we build here the \emph{coupling} and \emph{decoupling maps}. In Section~\ref{section: coupling map}, the properties of the coupling map are used to finally prove Proposition~\ref{prop: P is decoupled}, thus allowing the machinery of Section~\ref{section: ruelle lanford functions} to provide the existence of the RL function and the weak LDP for $(L_n)$. In Section~\ref{section: decoupling map}, the properties of the {decoupling map} yield Proposition~\ref{prop: decoupling ineq for the decoupling map}, which is used in the derivation of the additional properties of the rate function, in Proposition~\ref{prop: properties of rate function -s}.
\subsubsection{The coupling map and proof of Proposition~\ref{prop: P is decoupled}}
\label{section: coupling map}

The goal of this section is to prove Proposition~\ref{prop: P is decoupled}.
Let us fix $\delta$, $\mu_1$, $\mu_2$, $\mu$, $N$, $N_1$, and $N_2$ as in Proposition~\ref{prop: P is decoupled}.
Note that this choice of parameters yields the two convenient bounds\footnote{The first one is the reason why we chose $\delta<1/12$.}
\begin{equation}
	\label{eq: bounds on delta an Nn/t}
	\frac1{1-4\delta}\leq 1+6\delta,
	\qquad
	t\geq \frac n\delta\ \Rightarrow\ 	1\leq \frac{Nn}{t}\leq 1+8\delta.
\end{equation}
We also fix a finite set $K\subseteq \spaces$ such that 
\begin{equation}\label{eq:defKgdelta}
	\mu_1(K^2)\geq 1-\delta, \qquad \mu_2(K^2)\geq 1-\delta,
\end{equation}
and we set $\mathcal J_0=\mathcal J_\mu\cap \mathcal J_K$, where $\mathcal J_K=\{j\in j\ |\ K\cap C_j\neq \emptyset\}$. The measure $\mu$ being admissible, $\mathcal J_0$ satisfies the same assumptions as in Section \ref{section: slicing and stitching}, and we keep the notations of \eqref{eq: class naming convention}.
\begin{lemma}
	\label{lemma: reoredring the list}
	Let $k\in \N$. There exists a map $\sigma: F_{}(\wordspos)^k\to \bigcup_{i=0}^{rk}\wordspos^i$ such that, for all $\underline w\in F_{}(\wordspos)^k$, the list $\sigma(\underline w)$ is stitchable and the elements of $\sigma(\underline w)$ are exactly the non-empty elements of $\underline w$.
\end{lemma}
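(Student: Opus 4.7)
The plan is to give an explicit construction of $\sigma$ by flattening the $k$ input lists class-by-class. Write $\underline w=(w^a)_{1\leq a\leq k}$ where each $w^a=(w^{a,j})_{j\in\mathcal J_0}\in F(\wordspos)$ comes from applying the slicing map to some positive-probability word $u^a$. Then $\sigma(\underline w)$ is defined as the sequence obtained by concatenating, for $j=1,2,\ldots,r$ in increasing order of $j$, the non-empty entries of $(w^{1,j},w^{2,j},\ldots,w^{k,j})$ (taken in the order $a=1,\ldots,k$). In particular, if every $w^{a,j}$ is empty, $\sigma(\underline w)$ is the empty list, which lives in $\wordspos^0$; in general the output has length between $0$ and $rk$, so $\sigma(\underline w)\in \bigcup_{i=0}^{rk}\wordspos^i$ as required.

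Next I would verify the two required properties. First, by Definition~\ref{def: the slicing map} and Remark~\ref{remark: decomposition of u}, whenever $w^{a,j}\neq e$, its first and last letters belong to $K_j$, and all of its letters belong to $C_j$, so in particular $w^{a,j}\in\wordspos$ (since it is a subword of $u^a\in\wordspos$). This shows that every entry of $\sigma(\underline w)$ is a non-empty positive-probability word, and by construction the multiset of entries of $\sigma(\underline w)$ is exactly the multiset of non-empty entries of $\underline w$.

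Second, I would check stitchability in the sense of Definition~\ref{def: the stitching map}. With the entries listed by the class index $j$ in nondecreasing order, if I define $j(i)$ to be the class of the $i$-th entry of $\sigma(\underline w)$, then by construction $1\leq j(1)\leq j(2)\leq\cdots\leq j(l)\leq r$, and the first and last letters of the $i$-th entry lie in $K_{j(i)}$ by the observation above. Hence $\sigma(\underline w)$ is stitchable. Since the classes indexed by $\mathcal J_0$ are, by the convention~\eqref{eq: class naming convention}, totally ordered by $\leadsto$, this ordering is the natural one required by the stitching map.

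There is no real obstacle here: the lemma is essentially a bookkeeping statement, and the only subtlety is choosing a concrete ordering compatible with the total order on $(C_j)_{j\in\mathcal J_0}$. The lexicographic order on pairs $(j,a)$ works, but any order that processes all entries of class $j$ before those of class $j+1$ would do; the specific choice only matters inasmuch as it fixes a well-defined map $\sigma$.
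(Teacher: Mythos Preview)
Your proposal is correct and follows exactly the same approach as the paper: arrange the entries as a $k\times r$ matrix and read them column by column (i.e., class by class in the order $j=1,\ldots,r$, then by the index $a$ within each class), skipping empty words. Your write-up is actually more explicit than the paper's in verifying stitchability and membership in $\wordspos$, but the construction is identical.
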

\begin{proof}
	Let $\underline u=(u^1,\ldots ,u^k)\in \wordspos^k$.
	Applying $F$ to each word $u^i$ yields a $k\times r$ matrix $(u^{i,j})$ whose lines are $(u^{i,j})_{1\leq j\leq r}=F(u^i)$. If $u^{i,j}\neq e$, then the first and last letters of $u^{i,j}$ belong to $K_j$. We set $\sigma(F(u^1),\ldots, F(u^k))$ to the be the list obtained by reading the entries of the matrix $(u^{i,j})$ column by column, while skipping empty words.
\end{proof}
We introduce the \emph{coupling map} in Definition \ref{def: the coupling map} below. The coupling map takes $N$ words of length $n+1$ and turns them into one word of length $t+1$, using the slicing and stitching maps.
\begin{definition}[The coupling map]
	\label{def: the coupling map}
	Let $t, n\in \N$. 
	Let $\underline u=(u^1,\ldots, u^N)\in (\wordspos)^N$ satisfy $|u^i|=n+1$ for all $1\leq i\leq N$. 
	Let $\underline v=\sigma(F(u^1),\ldots, F(u^N))$, where $\sigma$ is the reordering map of Lemma~\ref{lemma: reoredring the list}.
	If $|\underline v|\geq t+1$, we set $\Psi_{n,t}(\underline u)=G_t(\underline v)$, where $G_t$ is the stitching map from Definition \ref{def: the stitching map}. We call 
	\begin{equation*}
		\Psi_{n,t}:(\wordspos \cap \spaces^{n+1})^N\to \words
	\end{equation*}
	the coupling map.
\end{definition}
\begin{figure}[thb]
	\centering 
	\includegraphics{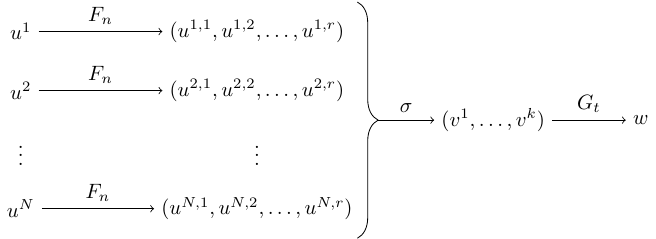}
	\caption{Definition of the coupling map.}
	\label{figure: sketch of coupling map}
\end{figure}
The definition of the coupling map as the composition of several maps is sketched in Figure~\ref{figure: sketch of coupling map}. Another useful illustration of the way the coupling map acts on trajectories is provided in Figure~\ref{figure: the coupling map}.
\begin{figure}[thb]
	\centering
	\subfigure[The action of the slicing map on two words $u^1$ and $u^2$.]{
		\includegraphics[width=0.8\textwidth]{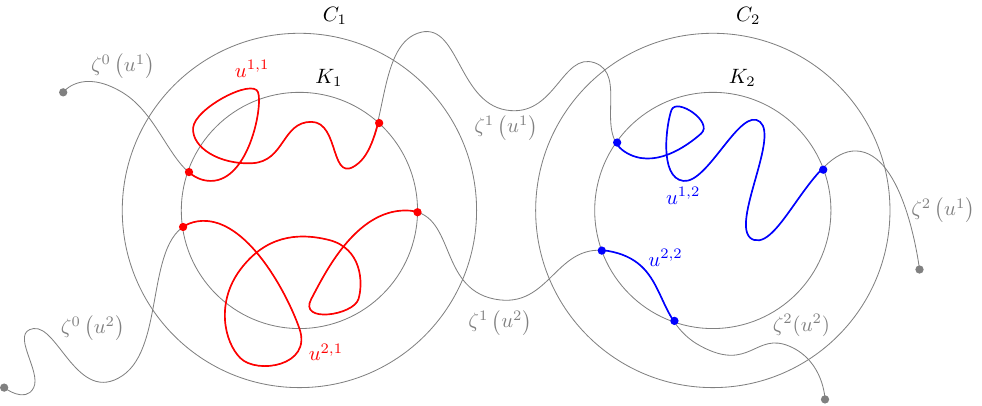}
		\label{figure: slicing map (subfigure)}
	}
	\vspace{0.5cm} 
	\subfigure[The action of the stitching map on a stitchable list $(v^1,v^2,v^3,v^4)$.]{
		\includegraphics[width=0.8\textwidth]{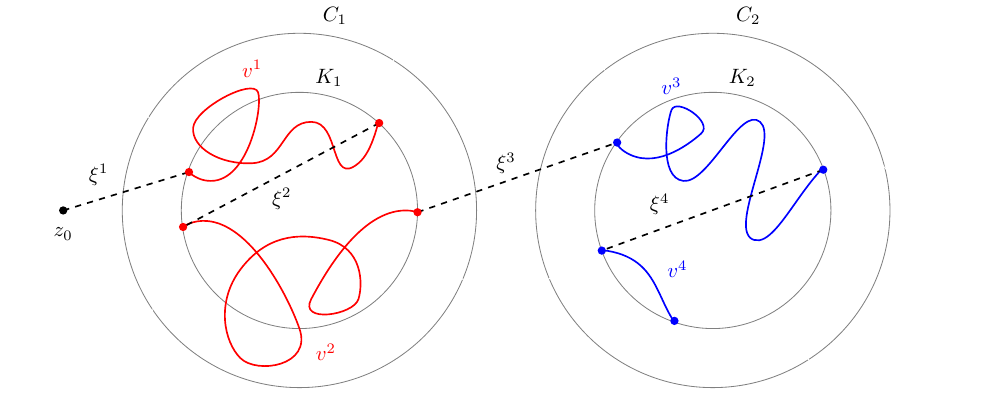}
		\label{figure: stitching map (subfigure)}
	}
	\caption{Example of use of the coupling map, with two words $u^1$ and $u^2$. The slicing map produces two subwords $u^{1,1}$, $u^{1,2}$ of $u^1$ and two subwords  $u^{2,1}$, $u^{2,2}$ of $u^2$ in~\eqref{figure: slicing map (subfigure)}. These are then reordered as the stitchable sequence $(v^1, \ldots, v^4)$ and reassembled by the stitching map in~\eqref{figure: stitching map (subfigure)}. Here, we assume that the length of the obtained word is $t+1$ so the operation of taking the prefix of length $t+1$ is not represented. Trajectories are depicted as smooth curves for easier readability, even though they are discrete.}
	\label{figure: the coupling map}
\end{figure}

\begin{lemma}
	\label{lemma: alphabet of the coupling map}
	There exists a finite set $\Kdelta\subseteq \spaces$ such that for all integers $n,t$ and $\Kun\supseteq \Kdelta$, for all $\underline u=(u^1,\ldots,u^N)\in(\Kun^{n+1})^N$, the word
	$\Psi_{n,t}(u)$ belongs to $\Kun^{t+1}$ if it exists. 
\end{lemma}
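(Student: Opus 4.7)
The plan is to exhibit $\Kdelta$ explicitly as a finite union built from the ingredients that enter the construction of $\Psi_{n,t}$ in Definition~\ref{def: the coupling map}, and then trace through the three constituent maps (slicing, reordering, stitching) to check that none of them can introduce a letter outside $\Kdelta$ when the input is already in $\Kun \supseteq \Kdelta$.

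More precisely, I would take
\begin{equation*}
\Kdelta = K \cup \{z_0\} \cup \bigcup_{(x,y) \in \Delta} \{\xi_{x,y}\text{-letters}\},
\end{equation*}
where $K$ is the finite set fixed in~\eqref{eq:defKgdelta}, $z_0$ is the reference letter chosen after~\eqref{eq: class naming convention}, and the $\xi_{x,y}$ are the transition words produced by Lemma~\ref{lemma: stitching words}. First I would check that $\Kdelta$ is finite: $K$ is finite by construction, $\{z_0\}$ is trivially finite, and $\Delta \subseteq (K \cup \{z_0\})^2$ is finite, so the associated family of transition words $(\xi_{x,y})_{(x,y) \in \Delta}$ is a finite family of finite words and contributes only finitely many letters. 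Crucially, this $\Kdelta$ is independent of $n$, $t$, and $\Kun$.

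Next I would fix $\Kun \supseteq \Kdelta$ and $\underline u = (u^1, \ldots, u^N) \in (\Kun^{n+1})^N$ and walk through $\Psi_{n,t}(\underline u)$. By Remark~\ref{remark: decomposition of u}, each component $u^{i,j}$ of $F(u^i)$ is literally a subword of $u^i$, hence has all its letters in $\Kun$. The reordering $\sigma$ just permutes these subwords, so the resulting stitchable list $\underline v$ still consists entirely of words with letters in $\Kun$. Finally, the stitching map assembles $G(\underline v) = z_0 \xi^1 v^1 \xi^2 v^2 \cdots \xi^{\kv v} v^{\kv v}$ as in~\eqref{eq: definition stitching map}, where $z_0 \in \Kdelta \subseteq \Kun$ and each $\xi^i$ is one of the $\xi_{x,y}$ from Lemma~\ref{lemma: stitching words}, whose letters lie in $\Kdelta \subseteq \Kun$ by construction. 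Therefore every letter of $G(\underline v)$ belongs to $\Kun$, and its prefix $G_t(\underline v) = \Psi_{n,t}(\underline u)$ lies in $\Kun^{t+1}$.

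There is essentially no technical obstacle here: this is bookkeeping on the atomic pieces of the coupling map. The only point worth flagging is that the finite set $\Kdelta$ depends on the fixed data $(\delta, \mu_1, \mu_2, \beta)$ (through $K$, $\mathcal J_0$, $z_0$, and the choice of transition words) but not on $n$, $t$, or the enlargement $\Kun$; this independence is exactly what is needed when Proposition~\ref{prop: P is decoupled} is invoked in the proof of Property~\ref{item: I = - sinfty} of Proposition~\ref{prop: properties of rate function -s}.
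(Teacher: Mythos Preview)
Your proof is correct and follows essentially the same approach as the paper: both define $\Kdelta$ as the union of $K$, $\{z_0\}$, and the letters appearing in the transition words $\xi_{x,y}$, then observe that every letter of $\Psi_{n,t}(\underline u)$ comes either from some $u^i$ or from a transition word (including the prefix $z_0$). Your version is slightly more careful in using the full index set $\Delta$ (which includes pairs starting at $z_0$) rather than just pairs in $K^2$, and in spelling out the trace through the three constituent maps, but the argument is the same.
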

\begin{proof}
	We fix $\Kdelta$ to be the union of $K$, $\{z_0\}$, and the set of all letters involved in the transition words $\xi_{x,y}$ with $x,y\in K$. Then, $S_1$ is a finite set. Let $\Kun\supseteq \Kdelta$ and let $\underline u=(u^1,\ldots, u^N)\in (\Kun ^{n+1})^N$. By construction, any letter in the word $\Psi_{n,t}(u)$ belongs either to a transition word $\xi_{x,y}$ or to some $u^i$. In both cases, it is an element of $\Kun$.
\end{proof}
\begin{lemma}
	\label{lemma: geographic ineq for the coupling map}
	Let $n,t$ be integers. Let $\underline u=(u^1,\ldots,u^N)\in (\spaces^{n+1})^N$ 
	satisfy $\sum_{i=1}^NM[u^i](\Gamma)\geq t+1$, where $\Gamma$ was defined in \eqref{eq: definition Gamma}. Then,
	$\Psi_{n,t}(\underline u)$ is well defined and 
	\begin{equation}
		\label{eq: geographic ineq for the coupling map}
		\tvnorml{M[\Psi_{n,t}(\underline u)]-\sum_{i=1}^N{M[u^i]}}\leq 2\big(Nn-(t+1)\big)+2N(r\tau+1).
	\end{equation} 
\end{lemma}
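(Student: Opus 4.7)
The plan is to reduce everything to the two geographic estimates already established for the slicing and stitching maps, via a single triangle inequality. Writing $\underline v=\sigma(F(u^1),\ldots,F(u^N))$, the list $\underline v$ consists precisely of the non-empty entries of the matrix $(u^{i,j})$, so
\begin{equation*}
\sum_{i=1}^{\kv v} M[v^i]\;=\;\sum_{i=1}^N\sum_{j=1}^r M[u^{i,j}],
\end{equation*}
and by the triangle inequality
\begin{equation*}
\tvnorml{M[\Psi_{n,t}(\underline u)]-\sum_{i=1}^N M[u^i]}
\leq
\tvnorml{M[G_t(\underline v)]-\sum_{i,j} M[u^{i,j}]}
+\sum_{i=1}^{N}\tvnorml{\sum_{j}M[u^{i,j}]-M[u^i]}.
\end{equation*}

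The first step is to check that $\Psi_{n,t}(\underline u)$ is well defined, i.e.\ that $\underline v\in \stitchable$, which amounts to showing $|\underline v|\geq t+1$. By the refined slicing estimate~\eqref{eq: detailed geographic ineq for the slicing map}, $M[u^{i,j}](\spaces^2)\geq M[u^i](K_j^2)$, so summing over $j\in \mathcal J_0$ (disjoint!) gives $\sum_j\tvnorm{M[u^{i,j}]}\geq M[u^i](\Gamma)$. Since $|\underline v|=\kv v+\sum_{i,j}\tvnorm{M[u^{i,j}]}\geq \sum_i M[u^i](\Gamma)$, the hypothesis $\sum_i M[u^i](\Gamma)\geq t+1$ yields the required bound; stitchability itself is automatic from the construction of $\sigma$ (which preserves the total order on $\mathcal J_0$).

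Then I would estimate each of the two pieces separately. The second piece decomposes as a sum over $i$ of the quantity controlled by Lemma~\ref{lemma: geographic ineq for the slicing map}: $\tvnorm{\sum_j M[u^{i,j}]-M[u^i]}\leq M[u^i](\spaces^2\setminus \Gamma) = n-M[u^i](\Gamma)$, hence summing gives at most $Nn-\sum_i M[u^i](\Gamma)\leq Nn-(t+1)$. The first piece is directly the stitching estimate of Lemma~\ref{lemma: geographic ineq for the stitching map} applied to $\underline v$: it is bounded by $|\underline v|-(t+1)+2\kv v\tau$. To turn this into the form stated in the lemma, I would bound $|\underline v|=\kv v+\sum_{i,j}\tvnorm{M[u^{i,j}]}\leq \kv v + Nn$ (using the slicing upper bound $\sum_j\tvnorm{M[u^{i,j}]}\leq \tvnorm{M[u^i]}=n$) and $\kv v\leq Nr$, producing a bound of the form $(Nn-(t+1))+O(Nr\tau)$.

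Adding the two contributions gives $2(Nn-(t+1))+O(N(r\tau+1))$, which matches \eqref{eq: geographic ineq for the coupling map}. The main source of friction is bookkeeping rather than any genuine obstacle: one must carefully handle the fact that slicing can lose at most $M[u^i](\spaces^2\setminus\Gamma)$ mass per word and that stitching inserts at most $\kv v$ transition blocks each of length at most $\tau$, while also truncating to a prefix of length $t+1$. Once these two accounting items are lined up with the hypothesis $\sum_iM[u^i](\Gamma)\geq t+1$, the bound follows mechanically.
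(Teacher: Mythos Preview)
Your proposal is correct and follows essentially the same route as the paper: split via the triangle inequality at $\sum_{i,j}M[u^{i,j}]$, control the slicing part by Lemma~\ref{lemma: geographic ineq for the slicing map} and the stitching part by Lemma~\ref{lemma: geographic ineq for the stitching map}, and verify $|\underline v|\geq t+1$ for well-definedness. The only minor difference is that the paper bounds $|\underline v|\leq N(n+1)$ directly (the $u^{i,j}$ are non-overlapping subwords of a word of length $n{+}1$), whereas your bound $|\underline v|\leq \kv v+Nn\leq Nr+Nn$ produces an extra $Nr$ instead of $N$; this only affects the exact constant (yours gives $2(Nn-(t+1))+Nr(1+2\tau)$ rather than the stated $2(Nn-(t+1))+2N(r\tau+1)$), which, as you note and as the paper itself remarks, is immaterial for the application.
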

\begin{proof}
	Let $\underline v=(v^1,\ldots, v^k)$ be the stitchable list $\sigma(F(u^1),\ldots, F(u^N))$ as in Definition~\ref{def: the coupling map}.\footnote{Since the number of empty words in the list may vary, $k$ is not fixed here. The number $k$ depends on the data $(u^1,\ldots, u^N)$.} 
	We first argue that $\underline v \in \stitchable$, so that $\Psi_{n,t}(\underline u)$ is well defined. By its definition, $\underline v$ is stitchable, hence it suffices to show that $|\underline v|\geq t+1$, {\it i.e.}~that
	\begin{equation}
		\label{eqloc: condition for |Psi(U)| > t}
		\sum_{i=1}^k|v^i|= \sum_{i=1}^N\sum_{j=1}^r \tvnorm{M[F_n(u^i)^j]}+k\geq t+1.
	\end{equation} 
	By Lemma~\ref{lemma: geographic ineq for the slicing map} and the assumption on $\underline u$,
	\begin{align*}
		\sum_{i=1}^N\sum_{j=1}^r \tvnorm{M[F_n(u^i)^j]}
		&\geq \sum_{i=1}^N\bigg(\tvnormm{M[u^i]}-\tvnorml{M[u^i]-\sum_{j=1}^rM[F_n(u^i) ^j]}\bigg)
		\\
		&\geq \sum_ {i=1}^N\left(n-(n-M[u^i](\Gamma))\right)\geq t+1.
	\end{align*}
	This shows \eqref{eqloc: condition for |Psi(U)| > t}, thus $\Psi_{n,t}(\underline u)$ is well defined.
	We now turn to the proof of \eqref{eq: geographic ineq for the coupling map}. By definition,
	\begin{equation*}
		\sum_{i=1}^N\sum_{j=1}^rM[F(u^i)^j]=\sum_{i=1}^k M[v^i],
		\qquad
		\Psi_{n,t}(\underline u)=G_t(\underline v).
	\end{equation*}
	On the one hand, by Lemma~\ref{lemma: geographic ineq for the stitching map},
	\begin{equation*}
		\tvnorml{M[\Psi_{n,t}(\underline u)]-\sum_{i=1}^kM[v^i]}\leq |\underline v|-(t+1)+2k\tau\leq N(n+1)-(t+1)+2Nr\tau.
	\end{equation*}
	On the other hand, by Lemma~\ref{lemma: geographic ineq for the slicing map}, 
	\begin{align*}
		\tvnorml{\sum_{i=1}^N\sum_{j=1}^rM[F_n(u^i)^j]-\sum_{i=1}^N{M[u^i]}}
		&\leq \sum_{i=1}^N\tvnorml{\sum_{j=1}^r M[F(u^i)^j]-M[u^i]}\\
		&\leq \sum_{i=1}^NM[u^i](\spaces^2\setminus \Gamma)\\
		&\leq Nn-(t+1).
	\end{align*}
	Combining these two bounds yields~\eqref{eq: geographic ineq for the coupling map} and completes the proof. 
\end{proof}
\begin{lemma}
	\label{lemma: proba ineq for the coupling map}
	For all $w\in\spaces^{t+1}$ in the range of $\Psi_{n,t}$, 
	\begin{equation}
		\label{eq: proba ineq for the coupling map}
		\mathbb P_{t+1}(w)\geq C_{n,t}\mathbb P_{n+1}^{\otimes N}\left(\Psi_{n,t}^{-1}(w)\right),
	\end{equation}
	for some constants $C_{n,t}$ satisfying \eqref{eq: constant Cnt}.
\end{lemma}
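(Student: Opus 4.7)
The plan is to unwind the definition $\Psi_{n,t}=G_t\circ\sigma\circ F_n^{\otimes N}$ and combine the probabilistic estimates already proved for the slicing map (Lemma~\ref{lemma: proba ineq for the slicing map}) and the stitching map (Lemma~\ref{lemma: proba ineq for the stitching map}). First I would expand the preimage as
\begin{equation*}
\mathbb P_{n+1}^{\otimes N}\big(\Psi_{n,t}^{-1}(w)\big)=\sum_{\underline u\in\Psi_{n,t}^{-1}(w)}\prod_{i=1}^N\mathbb P_{n+1}(u^i),
\end{equation*}
and partition this sum by the intermediate datum $\underline V=(F(u^1),\dots,F(u^N))$, grouping the inner sum over $\underline u$ with $F(u^i)=\underline v^i$ for each $i$. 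Applying Lemma~\ref{lemma: proba ineq for the slicing map} to each factor and using that $\sigma$ only reorders words (so the product $\prod_{i,j}p(v^{i,j})$ equals $\prod_{\ell=1}^{\kv v}p(v^\ell)$ where $\underline v=\sigma(\underline V)$), I would then rearrange by $\underline v$ to obtain
\begin{equation*}
\mathbb P_{n+1}^{\otimes N}\big(\Psi_{n,t}^{-1}(w)\big)\leq \csli^N\sum_{\underline v:\,G_t(\underline v)=w}\#\sigma^{-1}(\underline v)\prod_{\ell=1}^{\kv v}p(v^\ell).
\end{equation*}

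Second, I would bound the multiplicity $\#\sigma^{-1}(\underline v)$. Reconstructing $\underline V$ from $\underline v$ reduces to specifying, for each column $j\in\{1,\dots,r\}$, which subset of rows $i\in\{1,\dots,N\}$ contribute a non-empty entry $v^{i,j}$; the specific words are then forced by the column-by-column, row-ordered reading convention of $\sigma$. The number of such choices is at most $\binom{rN}{\kv v}\leq 2^{rN}$.

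Third, I would invoke Lemma~\ref{lemma: proba ineq for the stitching map} with $k_*=rN$ and $l_*=N(n+1)$, so that every stitchable list $\underline v$ arising as $\sigma(F(u^1),\dots,F(u^N))$ lies in $B^{(t)}_{k_*,l_*}$. Combining with the previous bound yields
\begin{equation*}
\mathbb P_{n+1}^{\otimes N}\big(\Psi_{n,t}^{-1}(w)\big)\leq \frac{\csli^N\,2^{rN}}{\csti}\,\mathbb P_{t+1}(w),
\end{equation*}
which is \eqref{eq: proba ineq for the coupling map} with $C_{n,t}=\csti\,\csli^{-N}\,2^{-rN}$. Finally, to verify \eqref{eq: constant Cnt}, I would use $N=\lceil(t+1)/(n(1-4\delta))\rceil\sim t/n$, $\csli=(n+1)^{r+1}$, and the explicit form \eqref{eq: constant csti} of $\csti$ with the above choice of $k_*,l_*$. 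All contributions to $\log C_{n,t}$ are either of order $N\log(Nn)=O((t/n)\log(tn))$ or of order $N$, so after dividing by $t$ and letting $t\to\infty$ we are left with a quantity of order $(\log n)/n$, which vanishes as $n\to\infty$.

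The main obstacle is the counting in the second step: $\sigma$ is far from injective, and a too-crude multiplicity bound would produce an extra factor that grows too fast on the exponential scale. The key observation making $2^{rN}$ sufficient is that $r=\card\mathcal J_0$ is a fixed finite constant depending only on $\mu$ and $K$, so the cost $rN\log 2$ is of the same order as the other $O(N)$ error terms and is absorbed harmlessly in $C_{n,t}$ after dividing by $t$ and sending $t\to\infty$, $n\to\infty$ in that order.
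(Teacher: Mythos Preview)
Your proof is essentially identical to the paper's: decompose $\Psi_{n,t}^{-1}(w)$ according to the intermediate stitchable list $\underline v=\sigma(F(u^1),\dots,F(u^N))$, apply the slicing bound (Lemma~\ref{lemma: proba ineq for the slicing map}) factorwise, bound the multiplicity $\#\sigma^{-1}(\underline v)\leq\binom{rN}{\kv v}\leq 2^{rN}$, and finish with the stitching bound (Lemma~\ref{lemma: proba ineq for the stitching map}) at $k_*=rN$, $l_*=N(n+1)$, obtaining the same constant $C_{n,t}=\csti\,2^{-rN}\csli^{-N}$. One small correction to your asymptotic bookkeeping: there is no contribution of order $N\log(Nn)$ --- such a term would \emph{not} vanish after dividing by $t$ and letting $t\to\infty$ at fixed $n$; in fact the $N$'s cancel inside the ratio $2k_*/(e(l_*+(\tau+2)k_*))$, so every term in $\log C_{n,t}$ is either linear in $N$ (with an $n$-dependent coefficient), of order $\log N$, or constant, and your conclusion that $\lim_{t\to\infty}\tfrac1t\log C_{n,t}$ is of order $(\log n)/n$ is correct.
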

\begin{proof}
	To prove~\eqref{eq: proba ineq for the coupling map}, we compute the probability of the preimage $\Psi_{n,t}^{-1}(w)$ of a fixed word $w$ in the range of $\Psi_{n,t}$. 
	Let $\underline u=(u^1,\ldots, u^N)$ be a preimage of $w$ by $\Psi_{n,t}$, that is to say $\underline u$ satisfies $G(\sigma(F_n(u^1),\ldots, F_n(u^N)))=w$. 
	Since we only consider words $u^i$ of length $n+1$, that are each sliced by $F_n$ into at most $r$ subwords, the stitchable sequence $\sigma(F_n(u^1),\ldots,F_n(u^N))$ is of total length at most $N(n+1)$ and has less than $Nr$ entries. In other words, every preimage $\underline u$ of $w$ by $\Psi_{n,t}$ satisfies
	\begin{equation}
		\label{eqloc: preimages of w by psi}
		\sigma(F_n(u^1),\ldots,F_n(u^N))\in B^{(t)}_{Nr,N(n+1)},
	\end{equation} 
	where $B ^{(t)}_{k_*,l_*}$ is defined by~\eqref{eq: definition Btkl}.  Then,  by~\eqref{eqloc: preimages of w by psi}, 
	\begin{align}
		\mathbb P_n^{\otimes N}(\Psi^{-1}_{n,t}(w))
		&=\sum_{\substack{
				\underline v\in \stitchable\\
				G_t(\underline v)=w
		}}
		\sum_{u\in \sigma^{-1}(\underline v)}\prod_{i=1}^N\mathbb P_n(F_n^{-1}(u^i))
		\nonumber\\
		&=\sum_{\substack{
				\underline v\in B^{(t)}_{Nr,N(n+1)}\\
				G_t(\underline v)=w
		}}
		\sum_{u\in \sigma^{-1}(\underline v)}\prod_{i=1}^N\mathbb P_n(F_n^{-1}(u^i)).
		\label{eqloc: probability of preimage of w}
	\end{align}
	Let $\underline v\in B^{(t)}_{Nr,N(n+1)}$ be such that $G_t(\underline v)=w$ and let $u=(u^{i,j})\in \sigma^{-1}(\underline v)$. 
	By Lemma \ref{lemma: proba ineq for the slicing map},
	\begin{equation*}
		\mathbb P_n(F^{-1}_n(u^i))\leq \csli\prod_{j=1}^rp(u^{i,j})
	\end{equation*}
	for all $1\leq i\leq N$.
	There is a one-to-one correspondence between the family of non-empty subwords $u^{i,j}$ and $(v^1,\ldots, v^{\kv v})$, hence,
	\begin{equation*}
		\prod_{i=1}^N\mathbb P_n(F^{-1}(u^i))\leq\csli^N\prod_{i=1}^N\prod_{j=1}^rp(u^{i,j})= \csli^N\prod_{j=1}^{\kv v}p(v^{j}).
	\end{equation*}
	Therefore, by injecting this bound into~\eqref{eqloc: probability of preimage of w}, we obtain
	\begin{equation}
		\label{eqloc: before bounding sigma {-1}(v)}
		\mathbb P_n^{\otimes N}(\Psi^{-1}_{n,t}(w))\leq 
		\sum_{\substack{
				\underline v\in B^{(t)}_{Nr,N(n+1)}\\
				G_t(\underline v)=w
		}}
		\#\sigma^{-1}(\underline v)\csli^N\Bigg(\prod_{j=1}^{\kv v}p(v^{j})\Bigg).
	\end{equation}
	We need a combinatorial argument to bound the factor $\#\sigma^{-1}(\underline v)$, {\it i.e.} the number of $N\times r$ matrices of words $U= (u^{i,j})$ such that $\sigma (U)=\underline v$.
	If we know which of the words $u^{i,j}$ are empty, there is only one possibility, that is filling the matrix of words with entries $v^l$ in the right order while leaving blank the specified entries. Hence $\#\sigma ^{-1}(\underline v)$ is simply the number of choices of which of the $Nr$ words are empty, knowing that there must be $Nr-\kv v$ of them. Using the crude bound
	\begin{equation*}
		\#\sigma ^{-1}(\underline v)=\binom{Nr}{\kv v}\leq 2^{Nr}
	\end{equation*}
	in~\eqref{eqloc: before bounding sigma {-1}(v)}, we obtain
	\begin{equation*}
		\mathbb P_n^{\otimes N}(\Psi^{-1}_{n,t}(w))\leq 2^{Nr}\csli^N
		\sum_{\substack{
				\underline v\in B^{(t)}_{Nr,N(n+1)}\\
				G_t(\underline v)=w
		}}
		\prod_{j=1}^{\kv v}p(v^{j}).
	\end{equation*}
	Hence, applying Lemma \ref{lemma: proba ineq for the stitching map} with $k_*=Nr$ and $l_*=N(n+1)$, we obtain
	\begin{equation*}
		\mathbb P_n^{\otimes N}(\Psi_{n,t}^{-1}(w))\leq \frac{2^{Nr}\csli^N}{\csti}\mathbb P_{t+1}(w).
	\end{equation*}
	The bound \eqref{eq: proba ineq for the coupling map} is proved with 
	\begin{equation}
		\label{eqloc: CnN'} 
		C_{n,t}=\frac {\csti}{2^{Nr}\csli^N}
		=2^{-Nr}(n+1)^{-N(r+1)}{(N(n+1+\tau r))^{-2}}\beta(z_0)\eta^{Nr}\bigg(\frac {2r}{e(n+1+(\tau+2) r)}\bigg)^{2Nr}.
	\end{equation}
	It remains to show that $C_{n,t}$ satisfies condition~\eqref{eq: constant Cnt}. 
	Notice that $C_{n,t}$ is actually a function of variables $(n, N)$ that only depends on $t$ through the definition of $N$. 
	By~\eqref{eq: definition Nnt}, at fixed $n$, we have
	\begin{equation*}
		\begin{split}
			\lim_{t\to\infty}&\frac1t\log C_{n,t}
			=\frac{1}{n(1-4\delta)}\lim_{N\to\infty}\frac 1N\log C_{n,t}\\
			&=\frac{1}{n(1-4\delta)}\bigg(
			-r\log2-(r+1)\log (n+1)+r\log \eta +2r\log \frac {2r}{e(n+1+(\tau+2) r)} 
			\bigg).
		\end{split}
	\end{equation*}
	Further taking the limit as $n\to\infty$, we get~\eqref{eq: constant Cnt}.
\end{proof}
We can finally prove Proposition~\ref{prop: P is decoupled}, using the properties of the coupling map established above.
\begin{proof}[Proof of Proposition \ref{prop: P is decoupled}]
	Set $n_0$ to be an integer such that $4(r\tau+1)/n_0\leq \delta$. For all $n\in \N$, set $t_n$ to be an integer such that $n/t_n\leq \delta$. Let $\Kdelta$ be given by Lemma~\ref{lemma: alphabet of the coupling map}. Let $n\geq n_0$, $t\geq t_n$, and $\Kun\supseteq \Kdelta$.
	Denote by $\mathcal W$ the set $\closeword_{n,\Kun}(\mu_1,\delta)^{N_1}\times \closeword_{n,\Kun}(\mu_2,\delta)^{N_2}$. 
	Proposition~\ref{prop: P is decoupled} relies on the following statement: 
	\begin{equation}
		\label{eqloc: psi(W) subset W}
		\Psi_{n,t}(\mathcal W)\subseteq \closeword _{\Kun,t}(\mu, 20\delta),
	\end{equation}
	which we will prove below. Once \eqref{eqloc: psi(W) subset W} is proved, we have 
	\begin{equation*}
		\mathbb P_{t+1}\left(\closeword_{t,\Kun}(\mu,20\delta)\right)
		\geq \mathbb P_{t+1}\left(\Psi_{n,t}(\mathcal W)\right),
	\end{equation*}
	hence by Lemma \ref{lemma: proba ineq for the coupling map},
	\begin{equation*}
		\begin{split} 
			\mathbb P_{t+1}\left(\closeword_{t,\Kun}(\mu,20\delta)\right)
			&\geq C_{n,t}\mathbb P_{n+1}^{\otimes N}\left( \Psi_{n,t}^{-1} \left( \Psi_{n,t}(\mathcal W) \right) \right) \\
			&\geq C_{n,t}\mathbb P_{n+1}^{\otimes N}\left(\mathcal W\right).
		\end{split} 
	\end{equation*}
	This yields \eqref{eq: decoupled inequality for P}, with the constant $C_{n,t}$ from Lemma~\ref{lemma: proba ineq for the coupling map}, which satisfies \eqref{eq: constant Cnt}. It only remains to prove~\eqref{eqloc: psi(W) subset W}, which we do now.
	Let $\underline u=(u^1,\ldots ,u^N)\in \mathcal W$. Equation \eqref{eqloc: psi(W) subset W} encapsulates three properties:
	\begin{enumerate}
		\item The word $\Psi_{n,t}(\underline u)$ is well defined. 
		\label{itemloc: psint exists}
		\item $L[\Psi_{n,t}(\underline u)]$ is at most $20\delta$-close to $\mu$.
		\label{itemloc: geographic for psint}
		\item Every letter of $\Psi_{n,t}(\underline u)$ belongs to $\Kun$.
		\label{itemloc: letters of psint}
	\end{enumerate}
	Once the existence of $\Psi_{n,t}(\underline u)$ is proved, Property~\ref{itemloc: letters of psint} will be an immediate consequence of Lemma \ref{lemma: alphabet of the coupling map}. We now prove Properties~\ref{itemloc: psint exists} and~\ref{itemloc: geographic for psint} by showing that $\underline u$ satisfies the assumptions of Lemma \ref{lemma: geographic ineq for the coupling map}, namely that $\sum_{i=1}^NM[u^i](\Gamma)\geq t+1$. 
	Let $i\leq N$ and let $\nu_i$ denote $\mu_1$ or $\mu_2$ depending on whether $i\leq N_1$ or $i>N_1$.
	Since $\nu_i$ is admissible, and by \eqref{eq:defKgdelta}, we have $\nu_i(\Gamma)=\nu_i(K ^2)\geq 1-\delta$. Thus,
	\begin{equation*}
		L[u^i]\left(\Gamma\right)\geq \nu_i\left(\Gamma\right)-|L[u^i]-\nu_i|_\mathrm{TV}\geq 1-\delta -\delta.
	\end{equation*}
	Hence $M[u^i](\Gamma)\geq n(1-2\delta)$, and
	\begin{equation*}
		\sum_{i=1}^NM[u^i](\Gamma)\geq Nn(1-2\delta)\geq t+1. 
	\end{equation*}
	By Lemma \ref{lemma: geographic ineq for the coupling map}, Property~\ref{itemloc: psint exists} holds. Another consequence of Lemma~\ref{lemma: geographic ineq for the coupling map} is that
	\begin{align}
		\tvnorm{L[\Psi_{n,t}(\underline u)]-\mu}
		&\leq \frac1{t}({Nn-(t+1)+2N(r\tau+1)})+\tvnorml{\frac 1t\sum_{i=1}^NM[u^i]-\mu}.\label{eqloc: triangle ineq for the coupling map}
	\end{align}
	Using~\eqref{eq: bounds on delta an Nn/t} and $n\geq n_0$, the first term on the right-hand side of \eqref{eqloc: triangle ineq for the coupling map} above is no larger than $1+8\delta-1+\delta$. Since $\underline u\in \mathcal W$, the second term satisfies 
	\begin{align*}
		\tvnorml{\frac 1t\sum_{i=1}^NM[u^i]-\mu}
		&\leq
		\frac 1t\sum_{i=1}^{N_1}\tvnorm{M[u^i]-n\mu_1}
		+\frac 1t\sum_{i=N_1+1}^{N_2}\tvnorm{M[u^i]-n\mu_2}\\
		&\phantom{=\frac 1t\sum\tvnorm{M[ui]-n\mu1}}
		+\tvnorml {\frac {N_1}tn\mu_1+\frac {N_2}tn\mu_2-\mu}
		\\
		&\leq \frac{N_1}tn\delta+\frac{N_2}tn\delta+\bigg|\frac {N_1}tn-\frac 12\bigg|+\bigg|\frac {N_2}tn-\frac 12\bigg|\\
		&\leq (1+8\delta)\delta+ \frac 128\delta+\frac128\delta+\delta.
	\end{align*}
	Combining the bounds on the two terms the right-hand side of \eqref{eqloc: triangle ineq for the coupling map}, we obtain
	\begin{equation*}
		\tvnorm{L[\Psi_{n,t}(\underline u)]-\mu}\leq 20\delta,
	\end{equation*}
	and we have finally shown Property~\ref{itemloc: geographic for psint}. This completes the proof of~\eqref{eqloc: psi(W) subset W} and concludes the proof of the proposition. 
\end{proof}

\begin{remark}
	In irreducible cases, the construction of the coupling map is considerably simplified, yet not trivial. 
	Assume here that there is only one irreducible class $C_1$ in~\eqref{eq: decomposition of S in irreducible classes} (in particular, this assumption is satisfied when the Markov chain is $\varphi$-irreducible).
	Then, the slicing map is reduced to simply removing a prefix and a suffix from the argument. The use of the slicing map on $\underline u:=(u^1,\ldots, u^N)$ results in a stitchable sequence of at most $N$ words and the coupled word $\Psi_{n,t}(\underline u)$ is simply the prefix of length $t$ of the word
	\begin{equation*}
		z_0\xi^{1}v^{1}\xi^{2} \ldots \xi^{v} v^{k},
	\end{equation*}
	where $(v_1,\ldots,v^k)$ is the list $(F(u^1),\ldots, F(u^N))$ with empty words removed. 
	
	If $\spaces$ is finite and the Markov chain is matrix-irreducible, as in Example 2.20 of~\cite{papierdecouple}, we recover the map $\psi_{n,t}(u)$ from~\cite{papierdecouple}. Indeed, one can take $C_1=K_1=K=\spaces$. Then, the slicing map is the identity, and we have
	\begin{equation*}
		\Psi_{n,t}(\underline u)=z_0\xi(z_0,u^1_1)\psi_{n,t}(\underline u),\qquad\underline u\in (\spaces^n)^N.
	\end{equation*}
\end{remark}

\subsubsection{The decoupling map and proof of Proposition~\ref{prop: decoupling ineq for the decoupling map}}
\label{section: decoupling map}
The construction of the previous section achieved to prove Proposition~\ref{prop: P is decoupled}, which is already enough to derive Theorem~\ref{theorem: Ln2 satisfies a weak LDP with rate function -s} in Section~\ref{section: ruelle lanford functions}. 
The goal of the present section is to prove Proposition~\ref{prop: decoupling ineq for the decoupling map}, which is used in the proof of Property~\ref{item: linearity of I2 between separated measures} of Proposition~\ref{prop: properties of rate function -s}. 
To do so, we present another construction that uses slicing and stitching maps.

Let $\lambda_1$, $\lambda_2$, $\delta$, $t_1$, $t_2$, $\mu_1$, $\mu_2$ and $\mu$ be as in Proposition~\ref{prop: decoupling ineq for the decoupling map}.
For convenience, we also set
\begin{equation*}
	\delta_i=\frac{125}{\lambda_i^2}\delta,\qquad i\in \{1,2\}.
\end{equation*}
Note that, by the choice of $\delta$ and the definition of $t_i$, if $n\geq 1/\delta$, we have the following two bounds for $i\in \{1,2\}$:\footnote{
	The first bound is the reason why we required $\delta$ to be no larger than $\lambda_i/12$.}
\begin{equation}
	\label{eq: remark bounds on n/ti and n-t1-t2}
	\frac1{\lambda_i}\leq \frac{n}{t_i}
	\leq \frac{1}{\lambda_i}\Big(1+\frac4{\lambda_i}\delta\Big),
	\qquad
	0\leq n- t_1-t_2\leq 8n\delta.
\end{equation}
Let $K\subseteq \spaces$ be a finite set such that $\mu(K^2)>1-\delta$.
We set
\begin{equation*}
	\mathcal J_i=\mathcal J_K\cap {\mathcal J_{\mu_i}},
	\qquad \Gamma_i=\bigcup_{j\in \mathcal J_i}K_j ^2,
	\qquad i\in\{1,2\},
\end{equation*}
where $\mathcal J_K=\{j\in j\ |\ K\cap C_j\neq \emptyset\}$.
We set $\mathcal J_0:=\mathcal J_1\cup \mathcal J_2$ and $r:=|\mathcal J_1\cup\mathcal J_2|<\infty$. 
For all $j\in \mathcal J_0$, let $K_j=K\cap C_j$.
In the following, when $\mathcal J'\subseteq \mathcal J$, and $\nu$ is a measure, we denote\label{not: muJ}
\begin{equation*}
	\nu|_{\mathcal J'}=\nu|_{\bigcup_{j\in{\mathcal J'}}C^2_j}.
\end{equation*}

In order to prove Proposition~\ref{prop: decoupling ineq for the decoupling map}, we introduce the \emph{decoupling map}. 
It acts in the opposite way of the coupling map: while the coupling map takes words approximating $\mu_1$ and $\mu_2$ and creates a word approximating $\mu$, the decoupling map takes a word approximating $\mu$ and creates two words approximating $\mu_1$ and $\mu_2$ respectively (given that $\mu = \lambda_1\mu_1 + \lambda_2\mu_2$ and $\mu_1, \mu_2$ are supported on disjoint classes).
\begin{definition}[The decoupling map] 
	\label{def: the decoupling map}
	Let $u\in\spaces^{n+1}\cap\wordspos$. Applying the slicing maps $F_{\mathcal J_1}$ and $F_{\mathcal J_2}$ from Definition~\ref{def: the slicing map} to $u$ defines two sequences $\underline v_1$ and $\underline v_2$ of subwords of $u$ indexed by $\mathcal J_1$ and $\mathcal J_2$, respectively. Both lists $\sigma(\underline v_1)$ and $\sigma(\underline v_2)$ are stitchable, where $\sigma$ is as in Lemma~\ref{lemma: reoredring the list}.\footnote{Here the action of $\sigma$ is simply to remove empty words, since the lists are already in the right order.} If $|\underline v_1|\geq t_1+1$ and $|\underline v_2|\geq t_2+1$, we define the words
	\begin{equation*}
		\widetilde\Psi_{1,n}(u)=G_{t_1}(\sigma(\underline v_1)),\qquad \widetilde\Psi_{2,n}(u)=G_{t_2}(\sigma(\underline v_2)),
	\end{equation*}
	where $G_{t_i}$ is the stitching map from Definition~\ref{def: the stitching map},
	and we call $\widetilde\Psi_n: u\mapsto \left(\widetilde \Psi_{1,n}(u),\widetilde \Psi_{2,n}(u)\right)$ the decoupling map.
\end{definition}
Beware that the decoupling map is by no means the inverse of the coupling map.
Figure~\ref{figure: decoupling map sketch} provides a sketch of the definition of the decoupling map.
\begin{figure}[thb]
	\centering
	\includegraphics{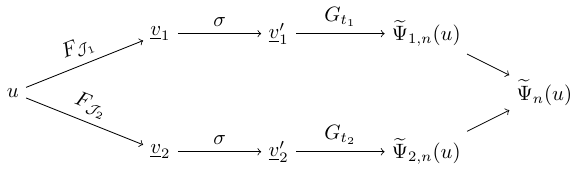}
	\caption{The definition of the decoupling map as composition of slicing and stitching maps.}
	\label{figure: decoupling map sketch}
\end{figure}
\begin{lemma}
	\label{lemma: geographic ineq for the decoupling map}
	Let $u\in\words$ be such that $M[u](\Gamma_1)\geq t_1+1$ and $M[u](\Gamma_2)\geq t_2+1$. Then, for $i\in \{1,2\}$, $\widetilde \Psi_{i,n}(u)$ is well defined and
	\begin{equation}
		\label{eq: geographic ineq for the decoupling map}
		\begin{split}
			\tvnorm{M\left[\widetilde\Psi_{i,n}(u)\right]-M[u]|_{\mathcal J_i}}&\leq 2\big(|u|-1-t_1-t_2\big)+2r\tau.
		\end{split}
	\end{equation}	
\end{lemma}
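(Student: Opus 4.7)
The plan is to mimic the strategy used for the coupling map in Lemma~\ref{lemma: geographic ineq for the coupling map}: apply the triangle inequality with the intermediate measure $\sum_{j\in \mathcal J_i} M[u^j]$, bound the stitching error via Lemma~\ref{lemma: geographic ineq for the stitching map} and the slicing error via Lemma~\ref{lemma: geographic ineq for the slicing map}. The novelty compared to the coupling case is that I have to exploit the disjointness of $\mathcal J_1$ and $\mathcal J_2$ (ensured by the assumption $\mathcal J_{\mu_1}\cap \mathcal J_{\mu_2}=\emptyset$) to couple the bounds on the two sides $i=1$ and $i=2$ through the single length $|u|$.

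First I would verify that $\widetilde\Psi_{i,n}(u)$ is well defined, i.e.\ that $|\underline v_i|\geq t_i+1$. The detailed slicing inequality~\eqref{eq: detailed geographic ineq for the slicing map} forces $\tvnorm{M[u^j]}\geq M[u](K_j^2)$ for every $j\in \mathcal J_i$; summing and using $|u^j|\geq \tvnorm{M[u^j]}$ gives $|\underline v_i|\geq M[u](\Gamma_i)\geq t_i+1$, so both $\widetilde\Psi_{1,n}(u)$ and $\widetilde\Psi_{2,n}(u)$ exist.

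Next, I would split
\begin{equation*}
\tvnorm{M[\widetilde\Psi_{i,n}(u)]-M[u]|_{\mathcal J_i}}
\leq \tvnorml{M[\widetilde\Psi_{i,n}(u)]-\sum_{j\in\mathcal J_i}M[u^j]}
+\tvnorml{\sum_{j\in\mathcal J_i}M[u^j]-M[u]|_{\mathcal J_i}}.
\end{equation*}
Lemma~\ref{lemma: geographic ineq for the stitching map} bounds the first term by $|\underline v_i|-(t_i+1)+2k_i\tau$ with $k_i\leq |\mathcal J_i|\leq r$. The detailed form of Lemma~\ref{lemma: geographic ineq for the slicing map}, summed over $j\in \mathcal J_i$ (the $C_j^2$ being pairwise disjoint), bounds the second term by $\tvnorm{M[u]|_{\mathcal J_i}}-M[u](\Gamma_i)$.

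The final step is where the disjointness $\mathcal J_1\cap \mathcal J_2=\emptyset$ is used. Because the subwords $u^j$ for $j\in \mathcal J_1$ and $j\in \mathcal J_2$ lie in disjoint classes and are therefore non-overlapping inside $u$, I get $|\underline v_1|+|\underline v_2|\leq |u|$ and $\tvnorm{M[u]|_{\mathcal J_1}}+\tvnorm{M[u]|_{\mathcal J_2}}\leq |u|-1$. Combined with the well-definedness bound $|\underline v_{3-i}|\geq t_{3-i}+1$ and the hypothesis $M[u](\Gamma_{3-i})\geq t_{3-i}+1$, this gives $|\underline v_i|-(t_i+1)\leq |u|-1-t_1-t_2$ and $\tvnorm{M[u]|_{\mathcal J_i}}-M[u](\Gamma_i)\leq |u|-1-t_1-t_2-1$. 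Adding these and using $k_i\leq r$ yields~\eqref{eq: geographic ineq for the decoupling map}. The main obstacle I expect is exactly this bookkeeping: the two summands of the triangle inequality each produce a term of order $|u|-1-t_1-t_2$, but only by feeding the ``other'' side's hypothesis $M[u](\Gamma_{3-i})\geq t_{3-i}+1$ into bounds involving the ``current'' side $i$, which is only permitted because $\mathcal J_1$ and $\mathcal J_2$ are disjoint.
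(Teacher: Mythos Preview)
Your proposal is correct and follows essentially the same approach as the paper: both proofs split via the triangle inequality at $\sum_{j\in\mathcal J_i}M[u^j]$, bound the stitching part by Lemma~\ref{lemma: geographic ineq for the stitching map} together with $|\underline v_1|+|\underline v_2|\leq |u|$, bound the slicing part by the detailed inequality~\eqref{eq: detailed geographic ineq for the slicing map} together with the disjointness of the classes indexed by $\mathcal J_1$ and $\mathcal J_2$, and in each case feed in the hypothesis $M[u](\Gamma_{3-i})\geq t_{3-i}+1$ from the other side. The only cosmetic difference is that the paper writes $\sum_{j\in\mathcal J_i}M[u](C_j^2)$ where you write $\tvnorm{M[u]|_{\mathcal J_i}}$, and the paper's well-definedness argument goes through the global bound~\eqref{eq: geographic ineq for the slicing map} rather than summing~\eqref{eq: detailed geographic ineq for the slicing map}; these are equivalent.
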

\begin{proof}
	Let $i\in\{1,2\}$. 
	Let $\underline v'_i=\sigma(F_{\mathcal J_i}(u))$. Since $\underline v'_i$ is stitchable, we only have to verify that $|\underline v'_i|\geq t_i+1$ in order for $\widetilde \Psi_{i,n}(u)$ to be well defined. Using the reverse triangular inequality, the bound~\eqref{eq: geographic ineq for the slicing map} in Lemma~\ref{lemma: geographic ineq for the slicing map} applied to $F_{\mathcal J_i}$ yields
	\begin{equation*}
		\tvnorm{M[u]}-\sum_{j\in\mathcal J_i}\tvnorm{M[u^j]}\leq M[u](\spaces^2\backslash\Gamma_i)\leq n-(t_i+1),
	\end{equation*}	
	so that indeed, we have in particular $|\underline v'_i|\geq \sum_{j\in\mathcal J_i}\tvnorm{M[u^j]}\geq t_i+1$.
	We now prove~\eqref{eq: geographic ineq for the decoupling map}.
	Since $M[u]|_{\mathcal J_i}$ is the sum of $M[u]|_{C_j^2}$ over $j\in \mathcal J_i$,
	the bound~\eqref{eq: detailed geographic ineq for the slicing map} of Lemma~\ref{lemma: geographic ineq for the slicing map} yields
	\begin{equation}
		\label{eqloc: MuJi -sum Muj}
		\tvnorml{M[u]|_{\mathcal J_i}-\sum_{j\in \mathcal J_i}M[u^j]}
		=\sum_{j\in\mathcal J_i}\tvnormm{M[u]|_{C_j ^2}-M[u^j]}
		\leq \sum_{j\in \mathcal J_i}M[u](C_j^2)-M[u](\Gamma_i).
	\end{equation} 
	The elements of $\spaces^2$ that are in a $C_j^2$ for some $j\in\mathcal J_i$ are not in any $K_{j'}^2$ with $j'\in \mathcal J_{3-i}$. Hence we have a bound on the first term of the right-hand side of~\eqref{eqloc: MuJi -sum Muj}:
	\begin{align*}
		\sum_{j\in \mathcal J_i}M[u](C_j^2)
		&\leq (|u|-1)-\sum_{j'\in \mathcal J_i}M[u](K_{j'}^2)
		\leq |u|-t_{3-i}.
	\end{align*}
	Also bounding the second term of the right-hand side of~\eqref{eqloc: MuJi -sum Muj} by $-(t_i+1)$, we have 
	\begin{equation}
		\label{eqloc: geographic decoupling ineq part 1}
		\tvnorml{M[u]|_{\mathcal J_i}-\sum_{j\in \mathcal J_i}M[u^j]}\leq |u|-t_{1}-t_2-1.
	\end{equation}
	Moreover, by Lemma \ref{lemma: geographic ineq for the stitching map}, 
	\begin{align}
		\tvnorml{M[\widetilde{\Psi}_{i,n}(u)]-\sum_{j\in\mathcal J_i}M[u^j]}&\leq |\underline v'_i|-(t_i+1)+2r\tau
		\nonumber\\
		&\leq |u|-(t_{1}+1)-(t_2+1)+2r\tau.
		\label{eqloc: geographic decoupling ineq part 2}
	\end{align}
	The last inequality holds because $|\underline v'_1|+|\underline v'_{2}|\leq |u|$. Combining~\eqref{eqloc: geographic decoupling ineq part 1} and~\eqref{eqloc: geographic decoupling ineq part 2}, we obtain~\eqref{eq: geographic ineq for the decoupling map}.
\end{proof}
\begin{lemma}
	\label{lemma: proba ineq for the decoupling map}
	For all $(w^1,w^2)\in \spaces^{t_1+1}\times \spaces^{t_2+1}$, 
	\begin{equation}
		\label{eq: proba ineq for the decoupling map}
		\mathbb P_{t_1+1}(w^1)\mathbb P_{t_2+1}(w^2)\geq \widetilde C_n\mathbb P_{n+1}(\widetilde \Psi_{n}^{-1}(w^1,w^2)),
	\end{equation}
	for some constants $\widetilde C_n$ satisfying \eqref{eq: constant tilde Cn}.
\end{lemma}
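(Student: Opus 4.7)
The plan is to run the argument of Lemma~\ref{lemma: proba ineq for the coupling map} in reverse, exploiting the fact that, since $\mathcal J_{\mu_1}\cap\mathcal J_{\mu_2}=\emptyset$, the decoupling map factors as $\widetilde\Psi_n = H\circ F_{\mathcal J_0}$ with $\mathcal J_0 := \mathcal J_1\cup\mathcal J_2$, and the map $H$ itself splits as two independent stitchings---one sending the $\mathcal J_1$-indexed sublist of $F_{\mathcal J_0}(u)$ through $\sigma$ and $G_{t_1}$ to produce $\widetilde\Psi_{1,n}(u)$, and the analogous one for $\widetilde\Psi_{2,n}(u)$. Consistency with Definition~\ref{def: the decoupling map} is immediate because each slice $u^j$ depends only on $K_j$ and not on the ambient index set.

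Starting from
\[
\mathbb P_{n+1}\bigl(\widetilde\Psi_n^{-1}(w^1,w^2)\bigr) = \sum_{\underline v\in H^{-1}(w^1,w^2)} \mathbb P_{n+1}\bigl(F_{\mathcal J_0}^{-1}(\underline v)\bigr),
\]
I would first apply Lemma~\ref{lemma: proba ineq for the slicing map} with $r=|\mathcal J_0|$ to obtain the pointwise bound $\mathbb P_{n+1}(F_{\mathcal J_0}^{-1}(\underline v))\leq (n+1)^{r+1}\prod_{j\in\mathcal J_0} p(v^j)$. The disjointness then implies that the constraint $H(\underline v)=(w^1,w^2)$ splits into the two independent conditions $G_{t_1}(\sigma(\underline v|_{\mathcal J_1}))=w^1$ and $G_{t_2}(\sigma(\underline v|_{\mathcal J_2}))=w^2$, involving disjoint sublists of $\underline v$; hence the remaining sum factorizes as a product over $i\in\{1,2\}$ of two sums, each of the form appearing in Lemma~\ref{lemma: proba ineq for the stitching map} when applied with $k_*=r$ and $l_*=n+1$, and therefore bounded above by a constant multiple of $\mathbb P_{t_i+1}(w^i)$.

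Combining the three estimates yields~\eqref{eq: proba ineq for the decoupling map} with a constant of the order of $\csti^2/(n+1)^{r+1}$, where each factor of $\csti$ is the stitching constant from~\eqref{eq: constant csti} taken with $k_*=r$ and $l_*=n+1$. Property~\eqref{eq: constant tilde Cn} then follows by direct inspection of~\eqref{eq: constant csti}: with $k_*=r$ fixed and $l_*=n+1$, $\log\csti$ is of order $-\log n$, so that $|\log \widetilde C_n|=O(\log n)=o(n)$. The step demanding most care is the combinatorial bookkeeping behind the factorization: one must verify that the index-tagged sublists inject into their $\sigma$-images (automatic since the class of each non-empty entry can be recovered from its letters) and that every such image lies in $B^{(t_i)}_{r,n+1}$ (which holds because the slices of a single word of length $n+1$ have total length at most $n+1$). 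Beyond this accounting, no new idea is required.
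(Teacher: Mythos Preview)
Your proposal is correct and follows essentially the same route as the paper: factor $\widetilde\Psi_n$ through the slicing map $F_{\mathcal J_0}$, apply Lemma~\ref{lemma: proba ineq for the slicing map}, use the disjointness $\mathcal J_1\cap\mathcal J_2=\emptyset$ to factorize the resulting sum, and then apply Lemma~\ref{lemma: proba ineq for the stitching map} twice with $k_*=r$ and $l_*=n+1$ to obtain $\widetilde C_n=\csti^2/\csli$. The combinatorial bookkeeping you flag (injectivity of $\sigma$ on index-tagged lists and membership in $B^{(t_i)}_{r,n+1}$) is exactly what the paper handles via the sets $\mathcal V_i$, $\mathcal V_i'$ and the bijection $h$.
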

\begin{proof}If $(w^1,w^2)$ is not in the range of $\widetilde \Psi_{n}$, \eqref{eq: proba ineq for the decoupling map} is trivial, so we now assume that $(w^1,w^2)$ is in the range of $\widetilde \Psi_{n}$. For convenience, we define, for $i\in\{1,2\}$,
	\begin{align*}
		\mathcal V_i&=\{\underline v\in \wordspos^{ \mathcal J_i}\ |\ G_{t_i}(\sigma(\underline v))=w^i, \ \forall j\in \mathcal J_i,\,v^j\in C_j^{|v^j|}\},\\
		\mathcal V'_i&= \{\underline v'\in \wordspos^{ \mathcal J'}\ |\ \mathcal J'\subseteq \mathcal J_i,\ G_{t_i}(\underline v')=w^i, \ \forall j\in \mathcal J',\,e\neq v^j\in C_j^{|v^j|}\}.
	\end{align*}
	Notice that $\sigma $ is bijective between $\mathcal V_i$ and $\mathcal V'_i$ since its action only consists in removing empty words from the list.
	Let $u\in \widetilde{\Psi}_n^{-1}(w^1,w^2)$.
	There are $\underline v_1 \in \mathcal V_1$ and $\underline v_2 \in \mathcal V_2$ such that $F_{\mathcal J_1}(u)=\underline v_1$ and $F_{\mathcal J_2}(u)=\underline v_2$. Notice that then, $F_{\mathcal J_0}(u)$ is the list $\underline v = (v^j)_{j\in \mathcal J_0}$ obtained by combining and reordering the (possibly empty) words of $\underline v_1$ and $\underline v_2$. Since $\mathcal J_1 \cap \mathcal J_2=\emptyset$, the map $h:(\underline v_1, \underline v_2)\mapsto \underline v$ is a bijection.
	It follows that 
	\begin{equation*}
		\mathbb P_{n+1}(\Psi_n^{-1}(w^1,w^2))=\sum_{{\substack{\underline v_1\in \mathcal V_1\\\underline v_2\in \mathcal V_2}}}
		\mathbb P_{n+1}(F_{n}^{-1}(h(\underline v_1,\underline v_2))),
	\end{equation*}
	where $F_n$ is the restriction of $F_{\mathcal J_0}$ to $\wordspos\cap \spaces^{n+1}$ as in Lemma~\ref{lemma: proba ineq for the slicing map}. By Lemma \ref{lemma: proba ineq for the slicing map} applied to each term of the sum, with $(v^j)_{j\in \mathcal J_0}:=h(\underline v_1,\underline v_2)$, we have
	\begin{align*}
		\mathbb P_n(\widetilde \Psi_n^{-1}(w^1,w^2))
		&\leq \csli \sum_{{\substack{\underline v_1\in \mathcal V_1\\\underline v_2\in \mathcal V_2}}}\prod _{j\in \mathcal J_0}p(v^j)
		\\&=\csli\Bigg(\sum_{\underline v_1\in \mathcal V_1}\prod _{j\in\mathcal J_1}p((\underline v_1)^j)\Bigg)\Bigg( \sum_{\underline v_2\in\mathcal V_2}\prod _{j\in\mathcal J_2}p((\underline v_2)^j)\Bigg)
		\\&=\csli\Bigg(\sum_{\underline v_1'\in\mathcal V'_1}\prod _{j=1}^{\kv{v_1'}}p((\underline v_1')^j)\Bigg)\Bigg( \sum_{\underline v_2'\in\mathcal V'_2}\prod _{j=1}^{\kv {v_2'}}p((\underline v_2')^j)\Bigg).
	\end{align*}
	We have $\mathcal V'_i\subseteq G_{t_i}^{-1}(w^i)$ for $i\in \{1,2\}$, so we can crudely bound the two last sums using Lemma~\ref{lemma: proba ineq for the stitching map}.
	By Lemma~\ref{lemma: proba ineq for the stitching map} with $k_*=r$ and $l_*=n+1$. We get
	\begin{equation*}
		\mathbb P_n(\Psi^{-1}_n(w_1,w_2))\leq \frac{\csli}{\csti^2}\mathbb P_{t_1}(w^1)\mathbb P_{t_2}(w^2).
	\end{equation*}
	The bound~\eqref{eq: proba ineq for the decoupling map} is proved with $\widetilde C_n=\csti^2/\csli$. It remains to show~\eqref{eq: constant tilde Cn}. The constant $\widetilde C_n$
	satisfies
	\begin{align*}
		\frac 1n\log\widetilde C_n
		=& 
		\frac {r+1}n\log (n+1)-\frac 4n\log (n+1+\tau r)\\&+\frac 2n\log\beta(z_0)+\frac {2r}n\log (\eta) +\frac {4r}n\log \frac {2r}{e(n+1+(\tau+2) r)}.
	\end{align*}
	This quantity vanishes in the limit as $n\to \infty$.
\end{proof}
\begin{proof}[Proof of Proposition~\ref{prop: decoupling ineq for the decoupling map}]
	Let $n_0$ be an integer such that $(2r\tau/t_1)\vee(2r\tau/t_2)\leq \delta$, with $\tau$ as in Lemma~\ref{lemma: stitching words}, and let $n\geq n_0$.
	We will show that 
	\begin{equation}
		\label{eqloc: inclusion for the decoupling map}
		\widetilde \Psi_n(\closeword_n(\mu,\delta))\subseteq \closeword_{t_1}(\mu_1,\delta_1)\times\closeword_{t_2}(\mu_2,\delta_2).
	\end{equation}
	Once \eqref{eqloc: inclusion for the decoupling map} has been proved, we can apply Lemma~\ref{lemma: proba ineq for the decoupling map} to obtain
	\begin{align*}
		\mathbb P_{t_1+1}(\closeword_{t_1}(\mu_1,\delta_1))\mathbb P_{t_2+1}(\closeword_{t_2}(\mu_2,\delta_2))
		&\geq\mathbb P_{t_1+1}\otimes \mathbb P_{t_2+1}({\widetilde \Psi}_n(\closeword_n(\mu, \delta)))\\
		&\geq \widetilde C_{n}\mathbb P_{n+1}(\closeword_n(\mu,\delta)).
	\end{align*}
	This gives us~\eqref{eq: decoupling ineq for the decoupling map} where the constant $\widetilde C_n$ satisfies \eqref{eq: constant tilde Cn}. 
	To check that~\eqref{eqloc: inclusion for the decoupling map} holds, we first need to make sure that $\widetilde \Psi_n(u)$ is well defined for all $u\in \closeword_n(\mu,\delta)$: since
	\begin{equation}
		\label{eqloc: L[u](Gammai)}
		L[u](\Gamma_i)\geq \mu(\Gamma_i)-\tvnorm{L[u]-\mu}\geq \lambda_i-\delta -\delta,
	\end{equation}	
	we have $M[u](\Gamma_i)\geq n(\lambda_i-2\delta)\geq t_i+1$, thus by Lemma~\ref{lemma: geographic ineq for the decoupling map}, $\widetilde{\Psi}_{n}(u)$ is well defined.
	We now prove~\eqref{eqloc: inclusion for the decoupling map}.
	Let $u\in \closeword_n(\mu,\delta)$ and $i\in \{1,2\}$. We have to compare $L\left[\widetilde\Psi_{i,n}(u)\right]$ and $\mu_i$. We shall first establish a few preliminary bounds.
	By~\eqref{eqloc: L[u](Gammai)}, we have 
	\begin{equation*}
		L[u]\bigg(\bigcup_{j\in \mathcal J_{\mu}}C_j^2\bigg)\geq 	L[u](\Gamma_1\cup\Gamma_2)\geq 1-4\delta.
	\end{equation*}
	Hence, since $\mathcal J_{\mu_1}$ and $\mathcal J_{\mu_2}$ are disjoint,
	\begin{equation*}
		\begin{split}
			\tvnorm{\lambda_1\mu_1-L[u]|_{\mathcal J_{\mu_1}}}
			+\tvnorm{\lambda_2\mu_2-L[u]|_{\mathcal J_{\mu_2}}}
			&=\tvnorm{\mu-L[u]|_{\mathcal J_{\mu_1}\cup \mathcal J_{\mu_2}}}
			\\&\leq \delta + 4\delta,
		\end{split}
	\end{equation*}
	thus both terms are no larger than $5\delta$. In particular, we have $L[u]|_{\mathcal J_{\mu_i}}(A)\leq \lambda_i\mu_i(A)+5\delta$ for all $A\subseteq \spaces^2$. Together with~\eqref{eqloc: L[u](Gammai)}, this implies 
	\begin{align*}
		\tvnorm{L[u]|_{\mathcal J_{\mu_i}}-L[u]|_{\mathcal J_{i}}}
		&=L[u]\bigg(\bigcup_{j\in\mathcal J_{\mu_i}}C_j^2\bigg)-L[u]\bigg(\bigcup_{j\in\mathcal J_{i}}C_j^2\bigg)
		\\&\leq \lambda_i+5\delta-(\lambda_i-2\delta)
		\\&=7\delta.
	\end{align*}
	We can now compare $L\left[\widetilde\Psi_{i,n}(u)\right]$ and $\mu_i$. 
	On the one hand, by Lemma~\ref{lemma: geographic ineq for the decoupling map},
	\begin{align*}
		\tvnorm{L\left[\widetilde\Psi_{i,n}(u)\right]-\frac n{t_i}L[u]|_{\mathcal J_i}}
		&\leq \frac 2{t_i}\big(n-t_1-t_2\big)+\frac{2r\tau}{t_i}
		\\&\leq \frac {n}{t_i}16\delta+\delta.
	\end{align*}
	The bound on $n-t_1-t_2$ is obtained thanks to the choice of $\delta$ and $n$ that enabled~\eqref{eq: remark bounds on n/ti and n-t1-t2}. The choice of $n\geq n_0$ also enabled the bound on $2r\tau/t_i$.
	On the other hand, 
	\begin{align*}
		\tvnorm{\frac n{t_i}L[u]|_{\mathcal J_i}-\mu_i}
		&\leq \frac n{t_i}\tvnorm{L[u]|_{\mathcal J_i}-L[u]|_{\mathcal J_{\mu_i}}}+\tvnorml{\Big(\frac1{\lambda_i}-\frac n{t_i}\Big)L[u]|_{\mathcal J_{\mu_i}}}
		\\&\qquad \qquad \qquad \qquad\qquad \qquad \qquad +\frac 1{\lambda_i}\tvnorm{L[u]|_{\mathcal J_{\mu_i}}-\lambda_i\mu_i}
		\\&\leq\frac n{t_i}7\delta +\frac {4}{\lambda^2_i}\delta +\frac{1}{\lambda_i}5\delta.
	\end{align*}
	The term $4\delta/\lambda_i ^2$ in the middle is obtained thanks to~\eqref{eq: remark bounds on n/ti and n-t1-t2}. The two other therms are obtained thanks to the preliminary bounds stated above in the proof.
	Combining these two bounds yields
	\begin{align*}
		\tvnorml{L\left[\widetilde\Psi_{i,n}(u)\right]-\mu_i}
		&\leq \frac{n}{t_i}23\delta+\frac {4}{\lambda^2_i}\delta +\frac{1}{\lambda_i}5\delta+\delta
		\\&\leq \frac {96}{\lambda^2_i}\delta +\frac{1}{\lambda_i}28\delta+\delta.
	\end{align*}
	The bound on $n/t_i$ is obtained thanks to~\eqref{eq: remark bounds on n/ti and n-t1-t2}.
	Roughly bounding both $1/\lambda_i$ and $1$ by $1/\lambda_i^2$, we get the bound $125\delta/\lambda_i^2$, which proves~\eqref{eqloc: inclusion for the decoupling map} and completes the proof.
\end{proof}
	\section{Identification of the rate function}
\label{section: identification rate function}
Theorem~\ref{theorem: intro weak LDP for Ln2} was only partially proven in the previous section; Theorem~\ref{theorem: Ln2 satisfies a weak LDP with rate function -s} states that $(\ith L2_n)$ satisfies the weak LDP with rate function $\Ide$, but it remains to prove~\eqref{eq: rate function I2}.
This is the goal of the current section.
For convenience, we set $ I:=\Ide$ in this section.
\subsection{Usual rate functions} 
\label{section: usual rate functions}
Let us properly define the functions involved in~\eqref{eq: rate function I2}.
In the following, $\mathbb E[\cdot]$ denotes the expectation associated to the law of $(X_n)$.\label{not: E} Fix $\mu\in\proba$. Recall that $\mathcal M(\spaces^2)$ denotes the space of finite signed measures on $\spaces^2$ and that $\bounded(\spaces^2)$ denotes the set of bounded functions on $\spaces ^2$. In addition, $\expbounded(\spaces^2):=\exp(\bounded(\spaces^2))$ denotes the set of positive functions on $\spaces^2$ that are bounded away from $0$ and $\infty$. The supremum norm on $\bounded(\spaces^2)$ is denoted $\|\cdot\|$.\footnote{We do not equip $\bounded(\spaces^2)$ with the topology induced by $\|\cdot\|$. See Appendix~\ref{section: duality}.}\label{not: uniform norm} The SCGF $\Lambda=\ith \Lambda 2$ of $(\ith L2_n)$ is defined by\label{not: Lambda}\label{not: expbounded}
\begin{equation}
	\label{eq: definition Lambda}
	\Lambda (V)=\limsup_{n\to \infty}\frac 1n\log\mathbb E\left[e^{n\langle L_n,V\rangle}\right],\qquad V\in \bounded(\spaces^2).
\end{equation}
We also define the truncated SCGF $\Lambda_\infty=\ith \Lambda2_\infty$ of $(\ith L2_n )$ by
\begin{equation}
	\Lambda_K (V)=\limsup_{n\to \infty}\frac 1n\log\mathbb E\left[e^{n\langle L_n,V\rangle}\indic_{\mathcal P(K ^2)}(L_n)\right],\qquad V\in \bounded(\spaces^2),\ K\subseteq \spaces\hbox{ finite},
\end{equation}
and\label{not: Lambda infty}
\begin{equation}
	\Lambda_\infty(V)=\sup\{\Lambda_K(V)\ |\ K\subseteq \spaces,\ |K|<\infty\}.
\end{equation}
Clearly, $\Lambda_K\leq\Lambda_\infty\leq \Lambda$. See Example~\ref{ex: right only random walk} for a situation in which $\Lambda_\infty$ and $\Lambda$ are not equal.
Their convex conjugates $\Lambda^*$ and $\Lambda_\infty^*$ are defined by Definition~\ref{def: convex conjugate} and are lower semicontinuous.\label{not: Lambda *}\label{not: Lambda infty *}

As defined in~\cite{DV1}, the DV entropy $\idv=\ith \idv 2$ is given by\footnote{ 
	The supremum in~\eqref{eq: definition DV entropy} is often taken over $\{\uf\in \expbounded(\spaces^2)\ |\ \uf\geq1\}$ instead of $\expbounded(\spaces^2)$. In fact, when $\uf\in \expbounded(\spaces^2)$, replacing $f$ by $f/\inf f$ does not change the value of the bracket in~\eqref{eq: definition DV entropy}, hence the two definitions agree.
}\label{not: IDV}
\begin{equation}
	\label{eq: definition DV entropy}
	\idv(\mu)=\sup_{\uf\in\expbounded(\spaces^2)}\Big\langle \mu, \log\frac{\uf}{P\uf}\Big\rangle, \qquad \mu\in \balanced (\spaces^2),
\end{equation} 
where\footnote{Any stochastic kernel $r$ on a space $\mathcal X$ defines an operator on $\mathcal E(\mathcal X)$ by $rf(x)=\sum_{y\in \mathcal X}r(x,y)f(y)$. The quantity $Pf(x,y)$ in~\eqref{eq: definition operator p} has the same definition with $\mathcal X=\spaces^2$ and $r$ the stochastic kernel of the Markov chain $((X_n,X_{n+1}))_{n\geq 1}$, defined by $r((x_1,y_1),(x_2,y_2))=\indic_{\{y_1=x_2\}}p(x_2,y_2)$. Notice that $Pf(x,y)$ does not depend on $x$.}
\begin{equation}
	\label{eq: definition operator p}
	Pf(x,y):=\ith P2f(x,y)=\sum_{z\in \spaces}p(y,z)f(y,z),\qquad (x,y)\in \spaces^2,\ \uf \in \expbounded(\spaces^2).
\end{equation}
The function $\idv$ is the standard (see for instance \cite{DZ,rassoul,deacosta2022}) DV entropy for the Markov chain $((X_n,X_{n+1}))_{n\geq 1}$. The function $J$ is lower semicontinuous as a pointwise supremum of a family of continuous functions.

If $\mu,\nu\in \mathcal P(\spaces^k)$ for some $k\in \N$, we denote by $\ent (\mu|\nu)$ the relative entropy of $\mu$ with respect to $\nu$, which is defined by\label{not: relative entropy}
\begin{equation}
	\label{eq: definition relative entropy S(mu|nu)}
	\ent(\mu|\nu)=
	\begin{cases}
		\sum_{u\in \spaces^k}\mu(u)\log\frac{\mu(u)}{\nu(u)},\quad&\hbox{if $\mu\ll \nu$},
		\\\infty,\quad &\hbox{otherwise}.
	\end{cases} 
\end{equation}
In this definition and throughout the rest of this paper, we adopt the conventions that $0\times\log 0=0$.\
The fact that $\ent (\mu|\nu)$ is well defined in $[0,+\infty]$ comes from properties of the real function $s\mapsto s\log s $, namely that it is bounded below and convex. Additional properties of the relative entropy can be found in Chapter~5 of~\cite{rassoul}.
We use $\ent (\cdot|\cdot)$ to define a function $R=\ith R2$ in the following way. Let $\mu\in\balanced (\spaces^2)$. When $r$ is a probability kernel on $\spaces$, 
we define the probability measure $\ith \mu 1 \otimes r$ on $\alphabet$ by $\ith \mu 1\otimes r(x,y)=\ith \mu 1(x)r(x,y)$.\label{not: muotimeq}
We define\label{not: R}
\begin{equation}
	\label{eq: definition R}
	R(\mu)= \ent (\mu|\ith \mu1\otimes p), \quad \mu\in \balanced (\spaces^2).
\end{equation}
By the variational formula for the relative entropy (see for instance Theorem~5.4 of~\cite{rassoul}), we also have, for all $\mu \in \mathcal P(\spaces^2)$,
\begin{equation*}
	R(\mu)=\sup_{V\in \bounded(\spaces^2)}\big(\langle \mu,V\rangle -\log \langle\ith \mu 1\otimes p, e^ V\rangle \big).
\end{equation*}
This implies in particular that $R$ is lower semicontinuous, since both the functions $\mu \mapsto\langle \mu,V\rangle$ and $\mu\mapsto  \langle \ith \mu 1\otimes p,e^ V\rangle$ are continuous for a fixed $V$.
Let $q$ be the stochastic kernel defined by $q(x,y)={\mu(x,y)}/{\ith \mu 1(x)}$ if $\ith \mu 1(x)>0$, and arbitrarily otherwise. It follows that $\ith \mu 1 \otimes q=\mu$.
If $\mu\in \ith \abscont 2$, then $\mu \ll \ith \mu 1\otimes p$ and we have the convenient formula
\begin{equation} 
	\label{eq: expression R2}
	R(\mu)=\sum_{x,y\in \spaces}\mu(x,y)\log \frac{q(x,y)}{p(x,y)}=\sum_{x\in\spaces}\ith \mu 1(x)\ent (q(x,\cdot)|p(x,\cdot)).
\end{equation}

As mentioned earlier, the convex conjugate of the SCGF, the DV entropy, and the function $\ith R2$ are standard in large deviations theory of Markov chains. For convenience, until the end of Section~\ref{section: identification rate function}, we will omit the exponent and simply write $\Lambda$, $\Lambda_\infty$, $\idv$ and $R$ instead of $\ith \Lambda2$, $\ith \Lambda2_\infty$, $\ith \idv2$ and $\ith R2$ respectively.
We propose a complete identification of $I$.
\begin{proposition}
	\label{prop: identification I2}
	The function $I$ satisfies
	\begin{equation}
		\label{eq: rate function I2 for Ln2}
		I(\mu)=
		\begin{cases}
			\Lambda^*_\infty(\mu)=\Lambda^*(\mu)=\idv(\mu)=R(\mu),\quad &\hbox{if $\mu\in \Adebal$},\\
			\infty,&\hbox{otherwise}.
		\end{cases}
	\end{equation}
\end{proposition}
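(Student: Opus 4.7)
Since Property~\ref{item: I2 infinite outside Adebal} of Proposition~\ref{prop: properties of rate function -s} already yields $I=\infty$ outside $\Adebal$, I only need to prove the chain of equalities on $\Adebal$. The plan is to establish the five inequalities
\begin{equation*}
I(\mu)\leq R(\mu)\leq J(\mu)\leq \Lambda^*(\mu)\leq \Lambda_\infty^*(\mu)\leq I(\mu),
\end{equation*}
which then collapse to equalities.

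Four of these inequalities are relatively routine. The trivial one is $\Lambda^*\leq\Lambda_\infty^*$, which follows from $\Lambda_\infty\leq \Lambda$. Next, $R\leq J$ comes from Jensen's inequality: for balanced $\mu$ and bounded $V$, concavity of $\log$ combined with the balanced condition $\ith \mu 2=\ith \mu 1$ gives $\log\langle\ith \mu 1\otimes p,e^V\rangle\geq \langle\mu,\log Pe^V\rangle$, so for each $V$ the variational integrand of $R$ is dominated by that of $J$. For $J\leq\Lambda^*$, I apply the standard exponential martingale argument: for $f\in\expbounded(\spaces^2)$, a telescoping reindexing shows that $\mathbb{E}\bigl[\prod_{i=1}^n f(X_i,X_{i+1})/Pf(X_i,X_{i+1})\bigr]$ is uniformly bounded in $n$ by $\|Pf\|/\inf Pf$, so $\Lambda(\log(f/Pf))\leq 0$ and $\Lambda^*(\mu)\geq \langle\mu,\log(f/Pf)\rangle$; taking the supremum over $f$ gives the claim. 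Finally, $\Lambda_\infty^*\leq I$ follows from a direct Chebyshev lower bound: on the event $L_n\in\mathcal{B}(\mu,\rho)\cap\mathcal{P}(K^2)$ one has $\langle L_n,V\rangle\geq\langle\mu,V\rangle-2\|V\|\rho$, yielding $\Lambda_K(V)\geq\langle\mu,V\rangle-2\|V\|\rho+\underline s(\mathcal{B}(\mu,\rho)\cap\mathcal{P}(K^2))$; supping over finite $K$ and letting $\rho\to 0$ while invoking Property~\ref{item: I = - sinfty} gives $\Lambda_\infty(V)\geq\langle\mu,V\rangle-I(\mu)$ for every bounded $V$, hence $\Lambda_\infty^*(\mu)\leq I(\mu)$.

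The hard part, and the main obstacle, is the remaining inequality $I\leq R$ on $\Adebal$. When $\mu\notin\ith \abscont 2$ it is trivial since $R(\mu)=\infty$, so assume $\mu\in\Adebal\cap\ith \abscont 2$. By Property~\ref{item: linearity of I2 between separated measures} of Proposition~\ref{prop: properties of rate function -s} and the parallel additivity of $R$ over the disjoint classes $C_{j_k}$ appearing in the decomposition of $\mu$ (the first marginal $\ith \mu 1$ splits cleanly, so the density $\mu(x,y)/(\ith \mu 1(x)p(x,y))$ is block-constant across classes), I reduce to the case where $\mu$ is supported on a single irreducible class $C_j$ with $\beta\leadsto C_j$. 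In that case, set $q(x,y)=\mu(x,y)/\ith \mu 1(x)$ and let $\mathbb{Q}$ denote the law of the Markov chain with kernel $q$ starting from $\ith \mu 1$; the balanced condition makes $\ith \mu 1$ stationary for $q$, so the ergodic theorem under $\mathbb{Q}$ gives $L_n\to\mu$ almost surely. A change-of-measure computation then yields
\begin{equation*}
\mathbb{P}(L_n\in\mathcal{B}(\mu,\rho))\geq c\,e^{-n(R(\mu)+\varepsilon)}\,\mathbb{Q}(L_{n-|\xi|}\in\mathcal{B}(\mu,\rho'))
\end{equation*}
for $n$ large, up to a subexponential prefactor $c$ arising from prepending a bridge word $\xi$ from $\supp\beta$ into $C_j$ provided by admissibility. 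Passing to the limits $n\to\infty$, $\rho\to 0$, and $\varepsilon\to 0$ closes the inequality. The subsidiary technical difficulties are the possible unboundedness of $\log(q/p)$, which I handle by truncating $q$ to exhausting finite subsets of $C_j$ and invoking monotone convergence, and the non-equivalence of initial distributions, which is absorbed by the bridge word.
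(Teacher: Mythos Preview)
Your cycle of inequalities is an attractive strategy, and four of the five links are correct and essentially standard (they appear, in one form or another, in the paper too). The problem is the link $I\leq R$, where your change-of-measure argument contains a genuine gap.

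The gap is the sentence ``the balanced condition makes $\ith \mu 1$ stationary for $q$, so the ergodic theorem under $\mathbb Q$ gives $L_n\to\mu$ almost surely.'' Stationarity does \emph{not} imply ergodicity, and restricting $\mu$ to a single irreducible class $C_j$ of $p$ does not force the $q$-chain to be irreducible. Concretely, take $C_j=\{1,2,3\}$ with all $p(i,j)>0$ and $\mu=\tfrac12\delta_{(1,1)}+\tfrac12\delta_{(3,3)}\in\Adebal\cap\ith\abscont2$. Then $q(1,1)=q(3,3)=1$, so under $\mathbb Q$ one has $L_n\in\{\delta_{(1,1)},\delta_{(3,3)}\}$ for all $n$, and $\mathbb Q(L_n\in\mathcal B(\mu,\rho))=0$ for small $\rho$; your lower bound on $\mathbb P$ is vacuous. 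A repair is possible---decompose $\mu$ further along the irreducible classes of $q$ (each component $\mu_\theta$ then has an ergodic $q$-chain, your argument gives $I(\mu_\theta)\leq R(\mu_\theta)$, and Property~\ref{item: midpoint convexity of I2} plus lower semicontinuity recombines them)---but this extra layer must be supplied. A related issue is your ``truncation to exhausting finite subsets of $C_j$'': the naive restriction $\mu|_{K^2}$ is generally not balanced, so you cannot replace $\mu$ by it without an approximation device such as the balanced-measure decomposition (Lemma~\ref{lemma: balanced measure decomposition}).

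The paper takes a different route that sidesteps both difficulties. Instead of proving $I\leq R$ directly, it shows $I=I^{**}$ on $\Ade$ (Proposition~\ref{prop: I=I**}, using Properties~\ref{item: midpoint convexity of I2} and~\ref{item: linearity of I2 between separated measures} together with a semicontinuity analysis of $\conv I$), then proves $I^*=\Lambda_\infty$ by a weak-LDP Varadhan argument (Proposition~\ref{prop: I = Lambda*}), and finally establishes $\Lambda_\infty^*=\Lambda^*=J=R$ on $\Adebal$ (Proposition~\ref{prop: equality of all rate functions on Adebal}), the last step using Lemma~\ref{lemma: balanced measure decomposition} to reduce to finitely supported measures. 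Your approach is more direct and would be shorter if it worked, but it needs the ergodic-decomposition patch and a careful finite-support approximation to go through.
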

\begin{proof}
	Property~\ref{item: I2 infinite outside Adebal} of Lemma~\ref{prop: properties of rate function -s} already showed that $\irl$ is infinite on the complement of $\Adebal$. The description of $I$ over $\Adebal$ requires a more involved proof. The conclusion comes as consequence of several propositions.
	\begin{itemize}
		\item By Proposition~\ref{prop: I=I**}, the function $I$ coincides with its own convex biconjugate $I^{**}$ on $\Ade$.
		\item In Proposition~\ref{prop: I = Lambda*}, we show that $I^*=\Lambda_\infty$ on $\bounded(\spaces^2)$.
		Hence, by the previous point, we have $I=I^{**}=\Lambda_\infty^*$ on $\Adebal$.
		\item Finally, Proposition~\ref{prop: equality of all rate functions on Adebal}, establishes that
		\begin{equation}
			\label{eq: equality of all rate functions on Adebal}
			\Lambda_\infty^*(\mu)=\Lambda^*(\mu)=\idv(\mu)=R(\mu),
			\qquad \mu\in \Adebal.
		\end{equation}
	\end{itemize} 
	Put together, these equalities yield the conclusion. We prove Propositions~\ref{prop: I=I**}, \ref{prop: I = Lambda*}, and~\ref{prop: equality of all rate functions on Adebal} in Sections~\ref{section: convexity}, \ref{section: varadhan}, and~\ref{section: alternative expressions of the rate function} below, respectively.
\end{proof}
\begin{remark}
	\label{remark: I infinite on Adebal}
	Note that Proposition~\ref{prop: identification I2} does not say that $I$ is finite on $\Adebal$. For instance, if a measure $\mu\in \Adebal$ does not belong to $\Adebal \cap\ith \abscont 2$, one can easily check that Equation~\eqref{eq: rate function I2 for Ln2} holds with 
	\begin{equation*}
		I(\mu)=\Lambda^*_\infty(\mu)=\Lambda^*(\mu)=\idv(\mu)=R(\mu)=\infty.
	\end{equation*}
	See Example~\ref{ex: infinite entropy} for a non-trivial example of a measure that belongs to $\Adebal\cap \ith \abscont2$ while having infinite rate function.
\end{remark}
\begin{corollary}
	\label{coro: I affine between classes}
	Let $\mu\in \Adebal$, and let $\tilde \mu_j=\mu(C_j^2)^{-1}\mu|_{C_j^2}$ for all $j\in \mathcal J_\mu$. We have
	\begin{equation}
		\label{eq: I affine between classes}
		I(\mu)=\sum_{j\in \mathcal J_\mu}\mu(C_j^2)I(\tilde\mu_j).
	\end{equation}
\end{corollary}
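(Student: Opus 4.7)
The plan is to reduce~\eqref{eq: I affine between classes} to an explicit computation by invoking the identification $I=R$ from Proposition~\ref{prop: identification I2}, after which the statement becomes the countable decomposition
\begin{equation*}
R(\mu)=\sum_{j\in\mathcal J_\mu}\mu(C_j^2)\,R(\tilde\mu_j),
\end{equation*}
which follows directly from the definition~\eqref{eq: definition R} of $R$ on the absolutely continuous case, together with a short coherence check on the degenerate case.

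Before that, I would verify that each $\tilde\mu_j$ lies in $\Adebal$, so that Proposition~\ref{prop: identification I2} can be applied to the right-hand side as well. By construction $\tilde\mu_j$ is a probability measure concentrated on $C_j^2$, so $\mathcal J_{\tilde\mu_j}=\{j\}$, the total-order requirement is vacuous on a singleton, and $\beta\leadsto C_j$ (inherited from admissibility of $\mu$) yields admissibility. Balance transfers from $\mu$ to $\tilde\mu_j$ thanks to pre-admissibility: for any $A\subseteq C_j$, the disjointness of the supports $(C_k^2)_{k\in\mathcal J_\mu}$ collapses both marginals of $\mu$ to $\mu(A\times C_j)$ and $\mu(C_j\times A)$, and these coincide by balance of $\mu$; the extension from $A\subseteq C_j$ to general $A\subseteq\spaces$ is immediate since $\tilde\mu_j$ is supported on $C_j^2$.

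Next I would compute $R(\mu)$ by cases. If $\mu\notin\ith\abscont 2$, I pick a pair $(x,y)$ with $p(x,y)=0<\mu(x,y)$; pre-admissibility places $x$ and $y$ in a common class $C_j$, whence $\tilde\mu_j\notin\ith\abscont 2$, and both $R(\mu)$ and the sum on the right equal $+\infty$. If $\mu\in\ith\abscont 2$, I would use the pointwise expression~\eqref{eq: expression R2}: for $x\in C_j$, pre-admissibility gives $\ith\mu 1(x)=\mu(C_j^2)\,\tilde\mu_j^{(1)}(x)$, so the $\mu$-kernel $q(x,y)=\mu(x,y)/\ith\mu 1(x)$ coincides on $C_j\times C_j$ with the analogous kernel associated to $\tilde\mu_j$. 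Partitioning the sum in~\eqref{eq: expression R2} along the irreducible classes and factoring out $\mu(C_j^2)$ produces the desired decomposition; exchanging the (possibly countable) summations is legitimate because all terms of $R$ are nonnegative.

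I do not anticipate a serious obstacle, since the argument is essentially bookkeeping once the identification $I=R$ has been secured. The only mildly delicate points are the transfer of balance to each $\tilde\mu_j$ and the uniform treatment of the non-absolutely-continuous case, both of which are direct consequences of the disjoint class structure forced by pre-admissibility.
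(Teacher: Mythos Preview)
Your proposal is correct and follows the same route as the paper: invoke Proposition~\ref{prop: identification I2} to replace $I$ by $R$, then verify that $R$ satisfies the affine decomposition over classes. The paper's proof consists of a single sentence to this effect; you have simply spelled out the verification that each $\tilde\mu_j\in\Adebal$ and the case analysis on $\ith\abscont2$, which the paper leaves implicit.
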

\begin{proof}
	Since the function $R$ satisfies~\eqref{eq: I affine between classes}, this is a direct consequence of Proposition~\ref{prop: identification I2}.
\end{proof}
\subsection{The convex biconjugate of $I$}
\label{section: convexity}
Let $I^{**}$ denote the convex biconjugate of $I$, defined on $\mathcal P(\spaces^2)$ as in Appendix~\ref{section: duality}.
In this section, we prove the following property of the rate function $I$.
\begin{proposition}
	\label{prop: I=I**}
	Let $\mu\in \Ade$. Then,
	\begin{equation}
		\label{eq: I=I**}
		I(\mu)=I^{**}(\mu).
	\end{equation}
\end{proposition}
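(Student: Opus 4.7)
The inequality $I^{**}(\mu) \leq I(\mu)$ is automatic, so the content is to prove the reverse for $\mu \in \Ade$. The plan is to combine the Fenchel--Moreau theorem, which is applicable only on a convex subdomain, with a penalty trick that transfers the resulting biconjugacy identity to the full space. Let $\mathcal F := \{\nu \in \Ade \mid \mathcal J_\nu \subseteq \mathcal J_\mu\}$. A direct check shows that $\mathcal F$ is exactly the set of probability measures supported on $A := \bigcup_{j \in \mathcal J_\mu} C_j^2$: such measures are automatically pre-admissible, and inherit the total order and $\beta$-reachability of their index set from $\mathcal J_\mu$. Hence $\mathcal F$ is convex and weakly closed in $\proba$. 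Define $\tilde I : \mathcal M(\spaces^2) \to [0,+\infty]$ by $\tilde I = I$ on $\mathcal F$ and $\tilde I = +\infty$ elsewhere. By Property~\ref{item: midpoint convexity of I2} of Proposition~\ref{prop: properties of rate function -s} together with the lower semicontinuity of $I$, $\tilde I$ is convex and lower semicontinuous.

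To apply Fenchel--Moreau I also need $\tilde I$ proper. For each $j \in \mathcal J_\mu$, irreducibility of $C_j$ provides a cycle $z_0, \ldots, z_k = z_0$ in $C_j$ with strictly positive transitions; the associated cycle measure $\nu_j := \frac{1}{k}\sum_{i=0}^{k-1} \delta_{(z_i,z_{i+1})}$ lies in $\Adebal \cap \ith \abscont 2$ with $R(\nu_j) < \infty$, and any convex combination of such $\nu_j$ over $j \in \mathcal J_\mu$ belongs to $\mathcal F$ with finite $I$ (using Property~\ref{item: linearity of I2 between separated measures}). Fenchel--Moreau in the locally convex space $\mathcal M(\spaces^2)$ (weak topology, dual $\bounded(\spaces^2)$) then yields $\tilde I = \tilde I^{**}$, i.e., $\tilde I(\mu) = \sup_{V \in \bounded(\spaces^2)} (\langle V, \mu\rangle - \tilde I^*(V))$ with $\tilde I^*(V) = \sup_{\nu \in \mathcal F}(\langle V, \nu\rangle - I(\nu))$.

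The crux is to transfer this identity from $\tilde I^*$ to $I^*$. Set $S_\mu := \bigcup_{j \in \mathcal J_\mu} C_j$, and for $V \in \bounded(\spaces^2)$ and $M > 0$ consider $V_M := V - M \indic_{(S_\mu \times S_\mu)^c}$. Since $\mu$ is supported on $S_\mu \times S_\mu$, $\langle V_M, \mu\rangle = \langle V, \mu\rangle$. For any $\nu \in \Adebal$, decompose $\nu = \nu_1 + \nu_2$ with $\nu_1 := \nu|_{S_\mu \times S_\mu}$ and $\nu_2 := \nu|_{(S_\mu \times S_\mu)^c}$. Since $\nu$ is supported on $\bigcup_{j \in \mathcal J_\nu} C_j^2$ and the classes are disjoint, $\nu_1$ (resp.\ $\nu_2$) is the sum of the $\nu|_{C_j^2}$ for $j \in \mathcal J_\nu \cap \mathcal J_\mu$ (resp.\ $\mathcal J_\nu \setminus \mathcal J_\mu$); both are balanced and admissible, with disjoint index sets. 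Property~\ref{item: linearity of I2 between separated measures} then yields the exact additivity $I(\nu) = \lambda_1 I(\tilde\nu_1) + \lambda_2 I(\tilde\nu_2)$, where $\lambda_i := \nu_i(\spaces^2)$ and $\tilde\nu_i := \nu_i/\lambda_i$ when $\lambda_i > 0$. Since $\tilde\nu_1 \in \mathcal F$ and $I(\tilde\nu_2) \geq 0$, a direct estimate gives
\begin{equation*}
\langle V_M, \nu\rangle - I(\nu) \leq \lambda_1 \tilde I^*(V) + \lambda_2(\|V\| - M),
\end{equation*}
which is bounded above by $\tilde I^*(V)$ as soon as $M \geq \|V\| - \tilde I^*(V)$. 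Taking the supremum over $\nu$, $I^*(V_M) \leq \tilde I^*(V)$, hence $I^{**}(\mu) \geq \langle V_M, \mu\rangle - I^*(V_M) \geq \langle V, \mu\rangle - \tilde I^*(V)$. Taking the supremum over $V$ gives $I^{**}(\mu) \geq \tilde I^{**}(\mu) = \tilde I(\mu) = I(\mu)$.

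The main obstacle is this last transfer step. It relies on the \emph{exact} additivity of $I$ on disjoint classes (Property~\ref{item: linearity of I2 between separated measures}), not merely on the convexity of Property~\ref{item: midpoint convexity of I2}: with only the convex inequality $I(\nu) \leq \lambda_1 I(\tilde\nu_1) + \lambda_2 I(\tilde\nu_2)$, the resulting estimate on $\langle V_M, \nu\rangle - I(\nu)$ would have the wrong sign and no finite penalty $M$ would suffice to dominate $I^*(V_M)$ by $\tilde I^*(V)$.
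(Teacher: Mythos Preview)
Your argument is essentially correct and takes a genuinely different route from the paper. The paper proves $I^{**}=\cl(\conv I)$ via Fenchel--Moreau and then shows, for $\mu\in\Ade$, both that $\conv I(\mu)=I(\mu)$ (using Property~\ref{item: midpoint convexity of I2}) and that $\conv I$ is lower semicontinuous at $\mu$ (this is the delicate step, where Property~\ref{item: linearity of I2 between separated measures} is used to split approximating measures into an ``on-$\mathcal J_\mu$'' part and an ``off-$\mathcal J_\mu$'' part). You instead localise: you restrict $I$ to the convex closed face $\mathcal F$, apply Fenchel--Moreau there, and then transfer the biconjugacy back to the global $I^{**}$ via the penalised potentials $V_M$. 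Both arguments hinge on the same two ingredients from Proposition~\ref{prop: properties of rate function -s}, and your observation that the transfer step genuinely needs the \emph{equality} in Property~\ref{item: linearity of I2 between separated measures} (not just the convexity inequality) is exactly right and mirrors the role this property plays in the paper's lower-semicontinuity lemma. Your approach is arguably more constructive, since the $V_M$ exhibit near-optimal test functions explicitly.

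There is one small circularity you should patch. To establish that $\tilde I$ is proper you invoke $R(\nu_j)<\infty$ for a cycle measure $\nu_j$ and conclude ``finite $I$''. But at this point in the logical flow the identification $I=R$ on $\Adebal$ has not yet been proved: Proposition~\ref{prop: identification I2} uses the present Proposition~\ref{prop: I=I**} as an ingredient. The fix is immediate and avoids $R$ altogether: since $\beta\leadsto C_j$, pick a prefix $\zeta\in\wordspos$ with $\zeta_1\in\supp\beta$ and last letter $z_0$, and set $w_n=\zeta(z_1\cdots z_{k-1}z_0)^n$. Then $L[w_n]\to\nu_j$ and $\tfrac{1}{|w_n|-1}\log\mathbb P_{|w_n|}(w_n)\to \tfrac1k\sum_{i=0}^{k-1}\log p(z_i,z_{i+1})>-\infty$, so $s(\nu_j)>-\infty$, i.e.\ $I(\nu_j)<\infty$. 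With this in place your properness claim stands, $\tilde I^*(V)>-\infty$ for every $V$, and the choice $M\geq\|V\|-\tilde I^*(V)$ is legitimate.
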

The identification $I=I^{**}$ does not hold on the full space $\mathcal P(\spaces^2)$, but rather on the smaller set of admissible measures. In fact, $I^{**}$ is convex, whereas we do not expect $I$ to be, by Property~\ref{item: I2 infinite outside Adebal} of Proposition~\ref{prop: properties of rate function -s}. 
Let $\conv I$ denote the convex envelope of $I$, which is the largest convex function under $I$ and satisfies
\begin{equation}
	\label{eq: def conv I}
	\begin{split}
		\conv I(\mu)=\inf\bigg\{\sum_{i=1}^k \lambda_iI(\mu_i)\ \Big|\ k\in \N,\ (\mu_i)\in& \proba^k, \ (\lambda_i)\in (0,1)^k,\\& \sum_{i=1}^k\lambda_i=1,\ 
		\sum_{i=1}^k \lambda_i\mu_i=\mu\bigg\},
		\qquad \mu\in \mathcal P(\spaces^2).
	\end{split}
\end{equation}
In the proof of Proposition~\ref{prop: I=I**}, we need properties of $I$ and $\conv I$, stated in Lemmas~\ref{lemma: I = sigmaco I} and~\ref{lemma: I** = sigmaco I} below.
\begin{proof}
	Let $\mu \in \Ade$.
	By Corollary~\ref{coro: fenchel moreau}, $I^{**}$ is the lower semicontinuous envelope of $\conv I$.
	By Lemma~\ref{lemma: I** = sigmaco I}, $\conv I$ agrees with its lower semicontinuous envelope on $\Ade$, thus $I^{**}(\mu)=\conv I(\mu)$. 
	Lemma~\ref{lemma: I = sigmaco I} yields the conclusion.
\end{proof}
\begin{lemma}
	\label{lemma: I = sigmaco I}
	Let $\mu\in \Ade$. Then, $I(\mu)=\conv I(\mu)$.
\end{lemma}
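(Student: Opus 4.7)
The inequality $\conv I(\mu)\leq I(\mu)$ is immediate from the trivial one-term decomposition $\mu=1\cdot \mu$ appearing in the infimum~\eqref{eq: def conv I}. The content of the lemma lies in the reverse bound, for which I would fix an arbitrary finite representation $\mu=\sum_{i=1}^k\lambda_i\mu_i$ with $k\in\N$, $\lambda_i\in(0,1)$, $\sum_i\lambda_i=1$, and $\mu_i\in\proba$, and show that $I(\mu)\leq\sum_{i=1}^k\lambda_i I(\mu_i)$. If some $\mu_i$ lies outside $\Adebal$, then Property~\ref{item: I2 infinite outside Adebal} of Proposition~\ref{prop: properties of rate function -s} gives $I(\mu_i)=\infty$ and the bound holds trivially; so I may assume $\mu_i\in\Adebal$ for every $i$.

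Next I would dispose of the case $\mu\in\Ade\setminus\Adebal$. Since $\balanced(\spaces^2)$ is a convex subset of $\proba$, any convex combination of elements of $\Adebal$ is itself balanced. Hence if $\mu$ is not balanced, no decomposition at all can satisfy $\mu_i\in\Adebal$ for all $i$, every admissible decomposition contributes $+\infty$ to the infimum in~\eqref{eq: def conv I}, and $\conv I(\mu)=\infty=I(\mu)$. I may therefore assume $\mu\in\Adebal$ and all $\mu_i\in\Adebal$.

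The central observation is that, by the minimality clause in Definition~\ref{def: admissible measures}, $\mu$ has no mass outside $\bigcup_{j\in\mathcal J_\mu}C_j^2$; since each $\mu_i$ is non-negative and $\mu=\sum_i\lambda_i\mu_i$ with $\lambda_i>0$, each $\mu_i$ also vanishes outside this union, forcing $\mathcal J_{\mu_i}\subseteq \mathcal J_\mu$. Thus every $\mu_i$ belongs to the set $\mathcal F=\{\nu\in\Ade\mid \mathcal J_\nu\subseteq\mathcal J_\mu\}$ appearing in Property~\ref{item: midpoint convexity of I2} of Proposition~\ref{prop: properties of rate function -s}. A straightforward check shows that $\mathcal F$ is convex: the support of a convex combination of two elements of $\mathcal F$ lies in $\bigcup_{j\in\mathcal J_{\nu_1}\cup\mathcal J_{\nu_2}}C_j^2\subseteq \bigcup_{j\in\mathcal J_\mu}C_j^2$, and the total-order and reachability requirements of admissibility are inherited from those satisfied by $\mathcal J_\mu$. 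Property~\ref{item: midpoint convexity of I2} then states that $I|_{\mathcal F}$ is convex, and an immediate induction on $k$ (using the convexity of $\mathcal F$ so that the partial barycenters formed along the way stay in $\mathcal F$) upgrades the two-term inequality to $I(\mu)\leq\sum_{i=1}^k\lambda_i I(\mu_i)$. Taking the infimum over decompositions yields $I(\mu)\leq\conv I(\mu)$, which completes the proof.

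There is no genuine obstacle beyond bookkeeping the index sets. The only subtle point is the step that confines the decomposition to $\mathcal F$: it relies crucially on the minimality built into the definition of pre-admissibility, without which a decomposition $\mu=\sum_i\lambda_i\mu_i$ with $\mu_i\in\Adebal$ could involve components with irrelevant classes and break the application of Property~\ref{item: midpoint convexity of I2}.
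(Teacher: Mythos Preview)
Your proof is correct and follows essentially the same route as the paper: both reduce to the convexity of $I|_{\mathcal F}$ stated in Property~\ref{item: midpoint convexity of I2} of Proposition~\ref{prop: properties of rate function -s}, after observing that every $\mu_i$ in a decomposition of $\mu$ lands in $\mathcal F$. The paper's version is slightly leaner: it skips your preliminary reductions to $\mu_i\in\Adebal$ and $\mu\in\Adebal$, since the bare fact $\lambda_i>0$ forces $\mu_i\ll\mu$, and absolute continuity with respect to an admissible $\mu$ already yields $\mu_i\in\Ade$ with $\mathcal J_{\mu_i}\subseteq\mathcal J_\mu$, i.e.\ $\mu_i\in\mathcal F$, without ever invoking Property~\ref{item: I2 infinite outside Adebal} or balancedness.
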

\begin{lemma}
	\label{lemma: I** = sigmaco I}
	Let $\mu\in \Ade$.
	Then, the function $\conv I$ is lower semicontinuous at $\mu$.
\end{lemma}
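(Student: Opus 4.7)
The goal is to show that if $\mu \in \Ade$ and $\mu_n \to \mu$ in $\proba$, then $\liminf_n \conv I(\mu_n) \geq \conv I(\mu)$. Using Lemma~\ref{lemma: I = sigmaco I} to identify $\conv I(\mu) = I(\mu)$, this reduces to proving $\liminf_n \conv I(\mu_n) \geq I(\mu)$. The overall strategy is to reduce any near-optimal decomposition of $\mu_n$ witnessing $\conv I(\mu_n)$ to the canonical one that splits $\mu_n$ along the irreducible classes of $\spaces$, and then pass to the limit class by class.

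Concretely, after extracting a subsequence I may assume $\conv I(\mu_n) < \infty$. Picking a decomposition $\mu_n = \sum_i \lambda_i \nu_i$ with $\nu_i \in \Adebal$ and $\sum_i \lambda_i I(\nu_i)$ close to $\conv I(\mu_n)$, I would first split each $\nu_i$ along its own classes as $\nu_i = \sum_{j \in \mathcal J_{\nu_i}} \nu_i(C_j^2)\, \tilde\nu_{i,j}$ with $\tilde\nu_{i,j}$ balanced, admissible, and supported on $C_j^2$, and establish the identity
\begin{equation*}
I(\nu_i) \;=\; \sum_{j \in \mathcal J_{\nu_i}} \nu_i(C_j^2)\, I(\tilde\nu_{i,j})
\end{equation*}
by iterating Property~\ref{item: linearity of I2 between separated measures} of Proposition~\ref{prop: properties of rate function -s} and extending to the countable case as discussed below. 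Swapping sums over $i$ and $j$ and then applying Property~\ref{item: midpoint convexity of I2} of the same proposition to the convex combinations within each class $C_j^2$, I would deduce
\begin{equation*}
\conv I(\mu_n) \;\geq\; \sum_{j \in \mathcal J} \mu_n(C_j^2)\, I\bigl(\tilde\mu_n^{\,j}\bigr), \qquad \tilde\mu_n^{\,j} := \mu_n|_{C_j^2}/\mu_n(C_j^2),
\end{equation*}
noting that $\mu_n|_{C_j^2}$ is automatically balanced as a convex combination of balanced measures (and that $\beta\leadsto C_j$ whenever $\mu_n(C_j^2)>0$, since some $\nu_i$ charges $C_j^2$).

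Next, I would pass to the limit inferior. Since $\mu_n \to \mu$ in total variation and $\mu \in \Ade$, for each $j \in \mathcal J_\mu$ we have $\mu_n(C_j^2) \to \mu(C_j^2) > 0$ and $\tilde\mu_n^{\,j} \to \tilde\mu_j$ in total variation, so lower semicontinuity of $I$ yields $\liminf_n \mu_n(C_j^2)\, I(\tilde\mu_n^{\,j}) \geq \mu(C_j^2)\, I(\tilde\mu_j)$. Applying Fatou's lemma with the counting measure on $\mathcal J$ (all terms non-negative), and then invoking the class decomposition identity \emph{for $\mu$ itself}, gives
\begin{equation*}
\liminf_n \conv I(\mu_n) \;\geq\; \sum_{j \in \mathcal J_\mu} \mu(C_j^2)\, I(\tilde\mu_j) \;=\; I(\mu),
\end{equation*}
which is the desired conclusion.

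The main technical obstacle is the extension of the binary affinity in Property~\ref{item: linearity of I2 between separated measures} to a full class-by-class decomposition when $\mathcal J_\nu$ is countably infinite. For any finite subset $\mathcal J' \subseteq \mathcal J_\nu$, induction on $|\mathcal J'|$ using Property~\ref{item: linearity of I2 between separated measures} yields the lower bound $I(\nu) \geq \sum_{j \in \mathcal J'} \nu(C_j^2)\, I(\tilde\nu_j)$ after discarding the non-negative remainder term, and the matching upper bound is obtained by applying the lower semicontinuity of $I$ to the normalized truncations of $\nu$ to $\bigcup_{j\in\mathcal J'}C_j^2$, which converge to $\nu$ in total variation as $\mathcal J' \uparrow \mathcal J_\nu$, combined with monotone convergence for the partial sums. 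Once this countable additivity is in hand, the remainder of the argument is a routine combination of Fatou's lemma with the convexity and affinity properties of $I$ already established in Proposition~\ref{prop: properties of rate function -s}.
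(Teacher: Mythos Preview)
Your argument is correct, but it takes a genuinely different route from the paper's. The paper performs only a \emph{binary} split: each piece $\mu_{n,i}$ of a near-optimal decomposition of $\mu_n$ is divided into its restriction to $D=\supp\mu$ (equivalently, to the union of classes $C_j^2$ with $j\in\mathcal J_\mu$) and the remainder. Property~\ref{item: linearity of I2 between separated measures} is applied once per piece, the remainder is discarded, and then all the ``on-$\mathcal J_\mu$'' parts are recombined into a \emph{single} measure using Property~\ref{item: midpoint convexity of I2} on the face $\{\nu\in\Ade:\mathcal J_\nu\subseteq\mathcal J_\mu\}$. Lower semicontinuity of $I$ is then applied just once to this combined measure, which converges to $\mu$.

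Your approach instead performs the full class-by-class decomposition of each $\nu_i$, regroups within each class, and passes to the limit class by class via Fatou. This requires you to extend Property~\ref{item: linearity of I2 between separated measures} to countably many classes and to invoke the identity $I(\mu)=\sum_{j\in\mathcal J_\mu}\mu(C_j^2)\,I(\tilde\mu_j)$ at the end---which is precisely Corollary~\ref{coro: I affine between classes} in the paper. The paper derives that corollary \emph{later}, as a consequence of the identification $I=R$; you instead establish it directly from Proposition~\ref{prop: properties of rate function -s} and lower semicontinuity, thereby avoiding circularity. So your route is heavier (the countable extension is extra work the paper sidesteps) but more structural, and it yields Corollary~\ref{coro: I affine between classes} as a free byproduct without relying on Section~\ref{section: alternative expressions of the rate function}.
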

\begin{proof}[Proof of Lemma~\ref{lemma: I = sigmaco I}]
	By definition, $\conv I(\mu)\leq I(\mu)$. Let us prove the reverse bound.
	Let $\delta>0$. By~\eqref{eq: def conv I}, there exist finite sequences $(\lambda_i)\in (0,1)^k$ and $(\mu_i)\in\proba^k$ such that 
	\begin{equation*}
		\sum_{i=1}^k \lambda_i=1,
		\quad \sum_{i=1}^k\lambda_i\mu_i=\mu,
		\quad \conv I(\mu)+\delta \geq \sum_{i=1}^k\lambda_i I(\mu_i).
	\end{equation*}
	Each $\mu_i$ must be absolutely continuous with respect to $\mu$. Hence, each $\mu_i$ is admissible and satisfies $\mathcal J_{\mu_i}\subseteq \mathcal J_\mu$. By Property~\ref{item: midpoint convexity of I2} of Proposition~\ref{prop: properties of rate function -s} and Jensen's inequality, we have 
	\begin{equation*}
		\conv I(\mu)+\delta\geq I\bigg(\sum_{i=1}^k \lambda_i\mu_i\bigg)=I(\mu).
	\end{equation*}
	This holds for every $\delta>0$, thus $\conv I(\mu)\geq I(\mu)$.
\end{proof}
\begin{proof}[Proof of Lemma~\ref{lemma: I** = sigmaco I}]
	Let $(\mu_n)$ be a sequence of probability measures such that $\mu_n\to \mu$. We have to prove that
	\begin{equation}
		\label{eq: semicontinuity of conv I}
		\liminf_{n\to\infty} \conv I(\mu_n)\geq \conv I(\mu).
	\end{equation}
	This is immediate when the left-hand side is infinite. Otherwise, by taking a subsequence if necessary, we can assume that $\conv I(\mu_n)<\infty$ for all $n\in \N$ without loss of generality. By~\eqref{eq: def conv I}, for all $n\in \N$, there exist finite sequences $(\lambda_{n,i})\in (0,1)^{k_n}$ and $(\mu_{n,i})\in \proba^{k_n}$ such that 
	\begin{equation}
		\label{eqloc: lower bound for conv I}
		\sum_{i=1}^{k_n}\lambda_{n,i}=1,
		\qquad \sum_{i=1}^{k_n}\lambda_{n,i}\mu_{n,i}=\mu_n,
		\qquad \conv I(\mu_n)+\frac 1n\geq\sum_{i=1}^{k_n}\lambda_{n,i}I(\mu_{n,i}). 
	\end{equation} 
	Since $\conv I(\mu_n)<\infty$, we must have $I(\mu_{n,i})<\infty$ for all $i\leq k_n$. Thus, by Property~\ref{item: I2 infinite outside Adebal} of Proposition~\ref{prop: properties of rate function -s}, the measure $\mu_{n,i}$ is admissible for all $i\leq k_n$. Let $D$ denote the support of $\mu$. For all $i\leq k_n$, let $q_{n,i}=\mu_{n,i}(D)$ and 
	\begin{equation*}
		\mu_{n,i}'=
		\begin{cases}
			\frac1{q_{n,i}}\mu_{n,i}|_D,\quad &\hbox{if $q_{n,i}\neq 0$},\\
			\mu,\quad &\hbox{otherwise},
		\end{cases}
		\qquad
		\mu_{n,i}''=
		\begin{cases}
			\frac1{1-q_{n,i}}\mu_{n,i}|_{\spaces^2\setminus D},\quad &\hbox{if $q_{n,i}\neq 1$},\\
			\mu,\quad &\hbox{otherwise}.
		\end{cases}
	\end{equation*}
	The measures $\mu_{n,i}'$ and $\mu_{n,i}''$ are admissible.
	We have $\mu_{n,i}=q_{n,i}\mu_{n,i}'+(1-q_{n,i})\mu_{n,i}''$ and 
	\begin{equation}
		\label{eqloc: I affine with qi}
		\begin{split}
			I(\mu_{n,i})= q_{n,i}I(\mu_{n,i}')+(1-q_{n,i})I(\mu_{n,i}'').
		\end{split}
	\end{equation}
	Indeed, this is immediate if $q_{n,i}\in \{0,1\}$. Otherwise, $q_{n,i}\in (0,1)$ and this is a consequence of Property~\ref{item: linearity of I2 between separated measures} of Proposition~\ref{prop: properties of rate function -s} with the partition $\mathcal J_{\mu_{n,i}}=\big(\mathcal J_{\mu_{n,i}}\cap \mathcal J_\mu\big)\cup \big(\mathcal J_{\mu_{n,i}}\cap \mathcal J_\mu^c\big)$, where $\mathcal J_\mu^c$ denotes the complement of $\mathcal J_\mu$ in $\mathcal J$. 
	Let
	\begin{equation*}
		p_n=\sum_{i=1}^{k_n}\lambda_{n,i}q_{n,i}=\mu_{n}(D).
	\end{equation*}
	Since $p_n\to 1$ as $n\to \infty$, we can set 
	\begin{equation*}
		\lambda'_{n,i}=\frac{\lambda_{n,i}q_{n,i}}{p_n},
	\end{equation*}
	for $n$ large enough.
	Then, we have 
	\begin{equation*}
		\begin{split}
			\mu_{n}&=p_n\sum_{i=1}^{k_n}\lambda'_{n,i}\mu_{n,i}'+\sum_{i=1}^{k_n}\lambda_{n,i}(1-q_{n,i})\mu_{n,i}'',\\
			\conv I(\mu_n)+\frac 1n
			&\geq p_n\sum_{i=1}^{k_n}\lambda'_{n,i}I(\mu_{n,i}')+ \sum_{i=1}^{k_n}\lambda_{n,i}(1-q_{n,i})I(\mu_{n,i}'')
			\geq p_n\sum_{i=1}^{k_n}\lambda'_{n,i} I(\mu_{n,i}').
		\end{split}
	\end{equation*}
	For all $i\leq k_n$, the measure $\mu_{n,i}'$ is an element of $\{\nu \in \Ade\ |\ \mathcal J_\nu\subseteq \mathcal J_ \mu\}$. 
	By Property~\ref{item: midpoint convexity of I2} of Proposition~\ref{prop: properties of rate function -s}, since $\sum_{i=1}^{k_n}\lambda'_{n,i}=1$, we have
	\begin{equation*}
		\conv I(\mu_n)+\frac 1n\geq p_n I\bigg( \sum_{i=1}^{k_n}\lambda'_{n,i}\mu_{n,i}'\bigg).
	\end{equation*}
	Moreover,
	\begin{align*}
		\tvnorml{\sum_{i=1}^{k_n}\lambda'_{n,i}\mu_{n,i}'-\mu}
		&\leq \tvnorml{\sum_{i=1}^{k_n}\lambda'_{n,i}\mu_{n,i}'-\mu_n}+\tvnorm{\mu-\mu_n}
		\\&= \tvnorml{\sum_{i=1}^{k_n}(\lambda'_{n,i}-\lambda_{n,i}q_{n,i})\mu_{n,i}'-\sum_{i=1}^{k_n}\lambda_{n,i}(1-q_{n,i})\mu_{n,i}''}+\tvnorm{\mu-\mu_n}
		\\&\leq  \sum_{i=1}^{k_n}(\lambda_{n,i}'-\lambda_{n,i}q_{n,i})+\sum_{i=1}^{k_n}\lambda_{n,i}(1-q_{n,i})+\tvnorm{\mu-\mu_n}
		\\&=2(1-p_n)+\tvnorm{\mu-\mu_n}.
	\end{align*}
	Since $p_n\to 1$ as $n\to \infty$, this proves that $\sum_{i=1}^{k_n}\lambda'_{n,i}\mu_{n,i}'\to\mu$ as $n\to \infty$. By the lower semicontinuity of $I$ and Lemma~\ref{lemma: I = sigmaco I}, we have
	\begin{equation*}
		\liminf_{n\to \infty} \conv I(\mu_n)\geq 	\liminf _{n\to \infty} p_nI\bigg( \sum_{i=1}^{k_n}\lambda'_{n,i}\mu_{n,i}'\bigg)\geq I(\mu)=\conv I(\mu).
	\end{equation*}
\end{proof}
\begin{remark}
	We have described $I^{**}$ over the set $\Ade$, which is enough for the purpose of the rest of this paper. However, it is fair to wonder whether the identification $I^{**}=\conv I$ also holds on the whole space $\mathcal P(\spaces^2)$. In full generality, it does not, as illustrated by Example~\ref{ex: right only random walk}. It turns out that $\conv I$ is not always lower semicontinuous on $\mathcal P(\spaces^2)$, preventing it to agree with $I^{**}$. One can actually show that $I^{**}=\sigmaco I$ on $\mathcal P(\spaces^2)$, where $\sigmaco I$ is the $\sigma$-convex envelope of $I$. In other words,
	\begin{equation*}
		\begin{split}
			I^{**}(\mu)=\sigmaco I(\mu)=\inf \bigg\{\sum_{m=1}^\infty \lambda_mI(\mu_m)\ \Big|\ (\mu_m)\in \proba^\N, &\ (\lambda_m)\in [0,1]^\N,\\& \sum_{m=1}^\infty\lambda_m=1,
			\ \sum_{m=1}^\infty \lambda_m\mu_m=\mu\bigg\}.
		\end{split}
	\end{equation*}
	For all $\mu\in \Ade$, we have $\sigmaco I(\mu)=\conv I(\mu)$.
\end{remark}

\subsection{A variation of Varadhan's lemma}
\label{section: varadhan}
The functional $\Lambda$ plays an important role in large deviations theory. In fact, one may expect that $I^*=\Lambda$ by Varadhan's Lemma.
However, standard versions of Varadhan's Lemma require the LDP to be full with good rate function, whereas Theorem~\ref{theorem: Ln2 satisfies a weak LDP with rate function -s} only provides the weak LDP.
The purpose of this section is to derive a version of Varadhan's lemma that fits the context of our weak LDP. This requires using the function $\Lambda_\infty$ instead of $\Lambda$. 

\begin{proposition}
	\label{prop: I = Lambda*}
	For all $V\in \bounded(\spaces^2)$, 
	$\irl^*(V)=\Lambda_\infty(V)$.
\end{proposition}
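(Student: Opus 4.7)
The plan is to show the two inequalities $\Lambda_\infty(V) \leq I^*(V)$ and $\Lambda_\infty(V) \geq I^*(V)$ separately, using respectively the weak LDP upper bound (on compact sets like $\mathcal P(K^2)$) and the refined lower bound provided by Property~\ref{item: I = - sinfty} of Proposition~\ref{prop: properties of rate function -s}.

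For the upper bound $\Lambda_\infty(V) \leq I^*(V)$, fix a finite $K\subseteq \spaces$ and $\epsilon>0$. Since $K$ is finite, $\mathcal P(K^2)$ is compact in $\proba$. A standard Varadhan-type covering argument then works: by lower semicontinuity of $I$ and continuity of $\nu\mapsto \langle \nu, V\rangle$, for each $\mu\in \mathcal P(K^2)$ one can choose an open ball $U_\mu$ small enough that $\langle \nu, V\rangle \leq \langle \mu,V\rangle+\epsilon$ and $I(\nu)\geq \min(I(\mu),1/\epsilon)-\epsilon$ for every $\nu$ in the closure $\overline{U_\mu}$. Extracting a finite subcover $U_{\mu_1},\dots,U_{\mu_m}$ of $\mathcal P(K^2)$, we bound $\mathbb E[e^{n\langle L_n,V\rangle}\indic_{\mathcal P(K^2)}(L_n)]$ by $\sum_i e^{n(\langle \mu_i,V\rangle+\epsilon)}\mathbb P(L_n\in \overline{U_{\mu_i}}\cap \mathcal P(K^2))$. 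Each set $\overline{U_{\mu_i}}\cap \mathcal P(K^2)$ is compact, so the weak LDP upper bound applies and yields $\Lambda_K(V)\leq \max_i(\langle \mu_i,V\rangle-I(\mu_i))+2\epsilon\leq I^*(V)+2\epsilon$. Letting $\epsilon\to 0$ and then taking the supremum over finite $K$ gives $\Lambda_\infty(V)\leq I^*(V)$.

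For the lower bound $\Lambda_\infty(V)\geq I^*(V)$, fix $\mu\in \proba$ and $\epsilon>0$; we will prove $\Lambda_\infty(V)\geq \langle \mu,V\rangle - I(\mu) - \epsilon$, which suffices since $\mu$ is arbitrary. By Property~\ref{item: I = - sinfty} of Proposition~\ref{prop: properties of rate function -s} together with the definition of $\underline s_\infty$, we can choose $r>0$ with $r\|V\|<\epsilon/3$ and a finite set $K\subseteq \spaces$ such that $\underline s(\mathcal B(\mu,r)\cap \mathcal P(K^2))\geq -I(\mu)-\epsilon/3$. Then, on the event $\{L_n\in \mathcal B(\mu,r)\}$, we have $\langle L_n,V\rangle \geq \langle \mu,V\rangle - r\|V\|$, so
\begin{equation*}
\mathbb E\bigl[e^{n\langle L_n,V\rangle}\indic_{\mathcal P(K^2)}(L_n)\bigr]\geq e^{n(\langle \mu,V\rangle -r\|V\|)}\,\mathbb P\bigl(L_n\in \mathcal B(\mu,r)\cap \mathcal P(K^2)\bigr).
\end{equation*}
Taking $\frac{1}{n}\log$ and the $\limsup$ as $n\to\infty$ yields $\Lambda_K(V)\geq \langle \mu,V\rangle - r\|V\| + \underline s(\mathcal B(\mu,r)\cap \mathcal P(K^2)) \geq \langle \mu,V\rangle - I(\mu) - \epsilon$, and hence $\Lambda_\infty(V)\geq \langle \mu,V\rangle - I(\mu) - \epsilon$.

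The only mildly delicate points are routine: in the upper bound, the use of lower semicontinuity has to be calibrated so that the bound on $I$ survives passing to the closure of the neighborhoods (hence choosing $U_\mu$ as an open ball strictly contained in a larger ball on which the bound holds), and one must be mindful that values $I(\mu)=\infty$ are handled via the truncation $\min(I(\mu),1/\epsilon)$. I do not anticipate any substantive obstacle, since the whole point of Property~\ref{item: I = - sinfty} was exactly to make the lower bound compatible with the truncated SCGF $\Lambda_\infty$.
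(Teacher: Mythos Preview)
Your proposal is correct and follows essentially the same approach as the paper: the paper also splits into the two inequalities (Lemmas~\ref{lemma: varadhan upper bound} and~\ref{lemma: varadhan lower bound}), proving $\Lambda_\infty\leq I^*$ via a Varadhan-type covering of the compact set $\mathcal P(K^2)$ and the weak LDP upper bound, and proving $\Lambda_\infty\geq I^*$ via Property~\ref{item: I = - sinfty} of Proposition~\ref{prop: properties of rate function -s}. The only cosmetic differences are that the paper covers the sublevel set $\{\mu\in\mathcal P(K^2):I(\mu)\leq\alpha\}$ and handles its complement separately (rather than truncating $I$ to $\min(I,1/\epsilon)$ as you do), and in the lower bound the paper fixes $r$ first, bounds $\Lambda_K$, and only then takes the supremum over $K$ before invoking $\underline s_\infty$; both variants are standard and equivalent.
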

\begin{proof}
	This is a direct consequence of Lemmas~\ref{lemma: varadhan upper bound} and \ref{lemma: varadhan lower bound} below.
\end{proof}
\begin{lemma}
	\label{lemma: varadhan upper bound}
	For all $V\in\bounded(\spaces^2)$, $\irl^*(V)\geq\Lambda_\infty(V)$. 
\end{lemma}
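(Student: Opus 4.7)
The plan is to reduce the claim to a Varadhan-type upper bound on each compact set $\mathcal P(K^2)$, for finite $K\subseteq \spaces$, and then take the supremum over $K$. The key observation is that although Theorem~\ref{theorem: Ln2 satisfies a weak LDP with rate function -s} only provides a \emph{weak} LDP (so the upper bound is limited to compact sets), the truncation $\indic_{\mathcal P(K^2)}$ built into the definition of $\Lambda_K$ confines us precisely to such a set: since $K$ is finite, $\mathcal P(K^2)$ is a compact subset of $\proba$, and no exponential tightness is needed.

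Fix $V\in \bounded(\spaces^2)$, a finite $K\subseteq \spaces$, and $\epsilon>0$. Using continuity of $\mu\mapsto \langle \mu,V\rangle$, I would cover $\mathcal P(K^2)$ by finitely many open balls $\mathcal B(\nu_i,\rho_i)$, $1\le i\le m$, on each of which this functional varies by at most $\epsilon$; such a finite subcover exists by compactness of $\mathcal P(K^2)$. This yields the crude bound
\begin{equation*}
\mathbb E\big[e^{n\langle L_n,V\rangle}\indic_{\mathcal P(K^2)}(L_n)\big] \leq \sum_{i=1}^m e^{n(\langle \nu_i,V\rangle + \epsilon)}\mathbb P\big(L_n\in \overline{\mathcal B(\nu_i,\rho_i)}\cap \mathcal P(K^2)\big).
\end{equation*}
Each set $\overline{\mathcal B(\nu_i,\rho_i)}\cap \mathcal P(K^2)$ is closed in the compact set $\mathcal P(K^2)$, hence compact. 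Applying the weak LDP upper bound of Theorem~\ref{theorem: Ln2 satisfies a weak LDP with rate function -s} to each term, together with the principle of the largest term (the $\limsup\frac1n\log$ of a finite sum equals the maximum of the $\limsup\frac1n\log$s of the terms), gives
\begin{equation*}
\Lambda_K(V) \leq \max_{1\le i\le m}\Big(\langle \nu_i,V\rangle + \epsilon - \inf_{\mu\in \overline{\mathcal B(\nu_i,\rho_i)}\cap \mathcal P(K^2)} I(\mu)\Big).
\end{equation*}

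To conclude, for every $i$ and every $\mu$ in $\overline{\mathcal B(\nu_i,\rho_i)}$ the defining property of the cover yields $\langle \nu_i,V\rangle\leq \langle \mu,V\rangle + \epsilon$, so the bracketed quantity in the max is bounded by $\sup_\mu(\langle \mu,V\rangle - I(\mu)) + 2\epsilon = I^*(V)+2\epsilon$. Letting $\epsilon\to 0$ gives $\Lambda_K(V)\leq I^*(V)$, and taking the supremum over finite $K$ yields the desired inequality $\Lambda_\infty(V)\leq I^*(V)$. The argument is essentially a textbook adaptation of Varadhan's lemma; I do not expect a serious obstacle, and the only conceptual point worth emphasising — and the reason $\Lambda_\infty$ rather than $\Lambda$ appears in the identification of $I$ — is that without exponential tightness the compact truncation $\indic_{\mathcal P(K^2)}$ is genuinely needed in order for the weak LDP upper bound to bite.
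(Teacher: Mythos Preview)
Your proof is correct and follows the same core strategy as the paper: exploit the compactness of $\mathcal P(K^2)$ (for finite $K$) to run a finite-cover Varadhan upper bound using only the weak LDP, then take the supremum over $K$. The paper's version differs only in that it introduces an auxiliary level parameter $\alpha$, covers just the sublevel set $\{\mu\in\mathcal P(K^2):I(\mu)\le\alpha\}$ by neighbourhoods that also control $I$ from below, handles the complement $C$ via a separate bound $\|V\|-\alpha$, and finally takes $\alpha$ large. That level-set decomposition is the standard device for proving Varadhan's upper bound on non-compact spaces with a good rate function; since here $\mathcal P(K^2)$ is already compact, your direct cover of the whole of $\mathcal P(K^2)$ is legitimate and in fact slightly cleaner---the paper's $\alpha$-split is a harmless vestige of the general argument and is not actually needed.
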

The proof of Lemma~\ref{lemma: varadhan upper bound} is a simple adaptation of Varadhan's upper bound to the weak LDP; see for example Lemma 4.3.6 in \cite{DZ}. The compactness properties usually obtained from the goodness of the rate function in Varadhan's upper bound are here obtained through the definition of $\Lambda_\infty$.
\begin{proof}
	Let $K\subseteq \spaces$ be a finite set and $V\in\bounded(\spaces^2)$. Let $\delta>0$.
	As $\irl$ is lower semicontinuous and $\langle \cdot, V\rangle$ is continuous, for every $\mu\in \proba$, there exists an open neighborhood $A_\mu$ of $\mu$ such that 
	\begin{equation}
		\inf_{\nu\in \overline A_\mu}\irl(\nu)\geq \irl(\mu)-\delta ,\quad \sup_{\nu\in\overline A_\mu}\langle \nu,V\rangle\leq \langle \mu, V\rangle+\delta.
	\end{equation}
	Let $\alpha\in\mathbb R$.
	The set $\mathcal L:=\{\mu\in \mathcal P(K ^2)\ |\ I(\mu)\leq \alpha\}$ is a compact subset of $\mathcal P(\spaces^2)$, hence there exists a finite sequence $\mu_1,\ldots,\mu_k\in\mathcal L$ such that $\mathcal L\subseteq \bigcup_{i=1}^k A_{\mu_i}$. 
	Denote by $C$ the complement in $\mathcal P(K^2)$ of this union, which is compact.
	We have
	\begin{equation*}
		\mathcal P(K^2)=\Bigg(\bigcup_{i=1}^k\mathcal P(K^2)\cap\overline A_{\mu_i}\Bigg)\cup C.
	\end{equation*}
	Therefore we have
	\begin{align}
		\mathbb E\left[e^{n\langle L_n,V\rangle }\indic_{\mathcal P(K^2)}(L_n)\right]
		&\leq \sum_{i=1}^k\mathbb E\left[e^{n\langle L_n,V\rangle }\indic_{\mathcal P(K^2)\cap \overline A_{\mu_i}}(L_n)\right]+\mathbb E\left[e^{n\langle L_n,V\rangle }\indic_{C}(L_n)\right]\nonumber\\
		&
		\leq \sum_{i=1}^k e^{n(\langle \mu_i, V\rangle +\delta)}\mathbb P (L_n\in\mathcal P(K^2)\cap \overline A_{\mu_i})
		\\&
		\qquad \qquad\qquad \qquad \qquad \qquad +e^{n\|V\|}\mathbb P(L_n\in C).
		\label{eqloc: expectation for Lambda_infty in Varadhan upper bound}
	\end{align}
	We use Theorem~\ref{theorem: Ln2 satisfies a weak LDP with rate function -s}. For all $i\in \{1,\ldots, k\}$, by the weak LDP upper bound applied on the compact set $\mathcal P (K^2)\cap \overline A_{\mu_i}$,
	\begin{equation*}
		\limsup_{n\to \infty}\frac 1n\log \mathbb P (L_n\in\mathcal P(K^2)\cap \overline A_{\mu_i})=-\inf_{\mu\in \mathcal P(K^2)\cap \overline A_{\mu_i}}\irl(\mu)\leq -\irl(\mu_i)+\delta.
	\end{equation*}
	Therefore,
	\begin{align*}
		\limsup_{n\to\infty}\frac1n\log \bigg(\sum_{i=1}^k e^{n(\langle \mu_i, V\rangle +\delta)}\mathbb P (L_n\in\mathcal P(K^2)\cap A_{\mu_i})\bigg)
		&\leq \max_{1\leq i\leq k}(\langle \mu_i, V\rangle +2\delta-\irl(\mu_i))
		\\&\leq \sup_{\mu\in \mathcal P(K^2)}(\langle \mu, V\rangle -\irl (\mu))+2\delta\\
		&\leq \sup_{\mu\in \mathcal P(\spaces^2)}(\langle \mu, V\rangle -\irl (\mu))+2\delta.
	\end{align*}
	We also handle the last term on the right-hand side of~\eqref{eqloc: expectation for Lambda_infty in Varadhan upper bound} with the weak LDP of Theorem~\ref{theorem: Ln2 satisfies a weak LDP with rate function -s}. By definition, $\irl (\mu)>\alpha$ for all $\mu\in C$, thus the weak LDP upper bound applied on the compact set $C$ yields
	\begin{equation*}
		\limsup_{n\to \infty}\frac 1n\log e^{n\|V\|}\mathbb P \left(L_n\in C\right)\leq \|V\|-\inf_{\mu\in C}\irl(\mu)\leq \|V\|-\alpha.
	\end{equation*}
	Combining these bounds and~\eqref{eqloc: expectation for Lambda_infty in Varadhan upper bound}, we get
	\begin{equation*}
		\begin{split}
			\Lambda_K(V)
			&=\limsup_{n\to \infty}\frac 1n\log \mathbb E\left[e^{n\langle L_n,V\rangle }\indic_{\mathcal P(K^2)}(L_n)\right]\\
			&\leq \left(\|V\|-\alpha\right)\vee\left(\sup_{\mu\in \proba}(\langle \mu, V\rangle-\irl(\mu)) +2\delta\right)\\
			&=\sup_{\mu\in \proba}(\langle \mu, V\rangle-\irl(\mu)) +2\delta,
		\end{split}
	\end{equation*}
	where the last inequality holds for a choice of $\alpha<\infty$ large enough.
	We get $\Lambda_K(V)\leq \irl^*(V)$ by taking the limit as $\delta\to0$. Since this holds for any finite set $K$, we have $\Lambda_\infty(V)\leq \irl^*(V)$.
\end{proof}
\begin{lemma}
	\label{lemma: varadhan lower bound}
	For all $V\in\bounded(\spaces^2)$, $\irl^*(V)\leq\Lambda_\infty(V)$.
\end{lemma}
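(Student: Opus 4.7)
The plan is to establish the bound $\langle \mu,V\rangle - \irl(\mu)\leq \Lambda_\infty(V)$ for every $\mu\in \proba$ with $\irl(\mu)<\infty$, and then take the supremum over $\mu$. Since $\irl$ is infinite outside $\Adebal$ by Property~\ref{item: I2 infinite outside Adebal} of Proposition~\ref{prop: properties of rate function -s}, only $\mu\in \Adebal$ contribute, which is the setting in which we can exploit Property~\ref{item: I = - sinfty} of that same proposition.

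Fix $\mu\in \Adebal$ with $\irl(\mu)<\infty$ and $\epsilon>0$. By Property~\ref{item: I = - sinfty} of Proposition~\ref{prop: properties of rate function -s}, we have
\begin{equation*}
\lim_{r\to 0}\underline s_\infty(\mathcal B(\mu,r))=-\irl(\mu),
\end{equation*}
so there exists $\delta>0$ and a finite set $K\subseteq \spaces$ such that
\begin{equation*}
\underline s\big(\mathcal B(\mu,\delta)\cap \mathcal P(K^2)\big)\geq -\irl(\mu)-\epsilon.
\end{equation*}

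For any $\nu\in \mathcal B(\mu,\delta)$, since $\tvnorm{\cdot}$ is half the $\ell^1$-norm on $\proba$, we have $|\langle \nu-\mu,V\rangle|\leq 2\|V\|\delta$. Restricting the expectation defining $\Lambda_K$ to the event $\{L_n\in \mathcal B(\mu,\delta)\cap \mathcal P(K^2)\}$ and using this pointwise lower bound on $\langle L_n,V\rangle$, we get
\begin{equation*}
\mathbb E\big[e^{n\langle L_n,V\rangle}\indic_{\mathcal P(K^2)}(L_n)\big]
\geq e^{n(\langle \mu,V\rangle-2\|V\|\delta)}\,\mathbb P\big(L_n\in \mathcal B(\mu,\delta)\cap \mathcal P(K^2)\big).
\end{equation*}
Taking $\frac1n\log$ and the limit inferior as $n\to\infty$ yields
\begin{equation*}
\Lambda_\infty(V)\geq \Lambda_K(V)\geq \langle \mu,V\rangle -2\|V\|\delta + \underline s\big(\mathcal B(\mu,\delta)\cap \mathcal P(K^2)\big)\geq \langle \mu,V\rangle -2\|V\|\delta -\irl(\mu)-\epsilon.
\end{equation*}

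Letting $\delta\to 0$ (which only tightens the requirement on $K$, already allowed by Property~\ref{item: I = - sinfty}) and then $\epsilon\to 0$ gives $\Lambda_\infty(V)\geq \langle \mu,V\rangle -\irl(\mu)$. Taking the supremum over $\mu$ delivers $\Lambda_\infty(V)\geq \irl^*(V)$. The only subtlety is ensuring that the finite set $K$ can always be chosen so that the relevant ball intersects $\mathcal P(K^2)$ with sufficiently high asymptotic probability, but this is precisely the content of Property~\ref{item: I = - sinfty}, which is why we relied on that formulation rather than on a direct weak-LDP lower bound (whose open set would live in $\proba$ rather than in some $\mathcal P(K^2)$).
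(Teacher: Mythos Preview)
Your proof is correct and follows essentially the same route as the paper's: bound the expectation from below by restricting to $\{L_n\in\mathcal B(\mu,\delta)\cap\mathcal P(K^2)\}$, use continuity of $\langle\cdot,V\rangle$, and invoke Property~\ref{item: I = - sinfty} of Proposition~\ref{prop: properties of rate function -s} to pass from $\underline s(\mathcal B(\mu,\delta)\cap\mathcal P(K^2))$ to $-I(\mu)$. The only cosmetic point is that your quantifier order (``there exists $\delta$ and $K$'' followed by ``letting $\delta\to 0$'') should really be ``for every $\delta>0$ there exists $K$'', which is exactly what the monotonicity in Property~\ref{item: I = - sinfty} gives and what your parenthetical already explains; the paper avoids this wrinkle by choosing the radius $r$ independently of $K$ first and then taking the supremum over $K$.
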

The lower bound $\irl ^*\leq \Lambda$ of Varadhan's lemma holds even in the context of the weak LDP because it only relies on the LDP lower bound (see Lemma 4.3.4 of~\cite{DZ}). In order to obtain the stronger bound $I^*\leq \Lambda_\infty$, we follow the same approach. For this bound, we shall use the formula of Property~\ref{item: I = - sinfty} in Proposition~\ref{prop: properties of rate function -s}.
\begin{proof}
	Let $V\in\bounded(\spaces^2)$, and let $\mu\in \proba$. Fix $\delta>0$ and let $K$ be a finite subset of $\spaces$. There exists $r>0$, independent of $K$, such that the open ball $\mathcal B(\mu,r)$ satisfies 
	\begin{equation*}
		\inf_{\mathcal B(\mu,r)\cap \mathcal P (K^2)}\langle\cdot, V\rangle\geq \inf_{\mathcal B(\mu,r)}\langle\cdot, V\rangle \geq \langle \mu, V\rangle-\delta.
	\end{equation*}
	Therefore, by the definition of $\underline s$ (see Definition~\ref{def: ruelle lanford functions}),
	\begin{equation}
		\label{eqloc: lower bound for Lambda K}
		\begin{split}
			\Lambda_K(V)&\geq \liminf_{n\to\infty}\frac 1n\log \mathbb E\left[e^{n\langle L_n,V\rangle }\indic_{\mathcal B(\mu,r)\cap \mathcal P (K^2)}(L_n)\right]\\
			&\geq \inf_{\mathcal B(\mu,r)\cap \mathcal P (K^2)}\langle \cdot, V\rangle+\underline s(\mathcal B(\mu,r)\cap \mathcal P(K^2))\\
			&\geq \langle \mu, V\rangle-\delta+\underline s(\mathcal B(\mu,r)\cap\mathcal P(K^2)).
		\end{split}
	\end{equation}
	Notice that, since $K$ is independent of $r$, the supremum of the right-hand side of~\eqref{eqloc: lower bound for Lambda K} over all finite $K\subseteq \spaces$ is $\langle \mu, V\rangle-\delta+\underline s_\infty(\mathcal B(\mu,r))$, where $\underline s_\infty$ is as in Property~\ref{item: I = - sinfty} of Proposition~\ref{prop: properties of rate function -s}.
	Therefore, by Property~\ref{item: I = - sinfty} of Proposition~\ref{prop: properties of rate function -s},
	\begin{equation*}
		\Lambda_\infty(V)\geq \langle \mu, V\rangle-\delta+\underline s_\infty(\mathcal B(\mu,r))\geq \langle \mu, V\rangle-\delta-I(\mu).
	\end{equation*}
	The proof is completed by taking the limit as $\delta\to 0$ and the supremum over all $\mu\in \proba$.
\end{proof}
\subsection{Alternative expressions of the rate function}
\label{section: alternative expressions of the rate function}
The goal of this section is to prove the chain of identities~\eqref{eq: equality of all rate functions on Adebal}.
We will need some properties of balanced measures stated in Appendix~\ref{section: balanced measures}.
We will also need the following expressions for $\Lambda$ and $\Lambda_\infty$. Fix a bounded function $V\in\bounded(\spaces^2)$ and a finite set $K\subseteq\spaces$. Define two operators $P^V$ and $P_K^V$ by 
\begin{equation*}
	P^V\uf(x,y)=e^{V(x,y)}P\uf(x,y),\qquad P_K^V(x,y)=\indic_{K^2}(x,y)e^{V(x,y)}P\uf(x,y),
	\qquad f\in \expbounded(\spaces^2),
\end{equation*}
where $Pf$ is as in~\eqref{eq: definition operator p}.
These operators satisfy
\begin{equation*}
	\langle \ith \beta 2,(P^V)^n1\rangle=\mathbb E\left[e^{n\langle L_n,V\rangle }\right],\qquad \langle \ith \beta 2 ,(P^V_K)^n1\rangle=\mathbb E\left[e^{n\langle L_n,V\rangle }\indic_{\mathcal P(K^2)}(L_n)\right],
\end{equation*}
where $\ith \beta 2:=\beta\otimes p$. It follows that
\begin{equation}
	\label{eq: lambda expressed as a limit with a operator PV}
	\Lambda(V)=\limsup_{n\to \infty}\frac 1n\log \langle \ith \beta 2,(P^V)^n1\rangle,\qquad \Lambda_K(V)=\limsup_{n\to \infty}\frac 1n\log \langle \ith \beta 2 ,(P^V_K)^n1\rangle.
\end{equation}

\begin{proposition}
	\label{prop: equality of all rate functions on Adebal}
	Equality~\eqref{eq: equality of all rate functions on Adebal} holds. In other words, 
	$\Lambda_\infty^*(\mu)=\Lambda^*(\mu)=\idv(\mu)=R(\mu)$ 
	for all $\mu\in \Adebal$.
\end{proposition}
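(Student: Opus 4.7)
The plan is to establish the chain of inequalities
$\Lambda^*(\mu)\leq \Lambda_\infty^*(\mu) \leq R(\mu) = J(\mu) \leq \Lambda^*(\mu)$
for $\mu\in\Adebal$, which forces equality throughout. The leftmost inequality is immediate from $\Lambda\geq \Lambda_\infty$. The equality $J=R$ on balanced measures and the rightmost inequality are classical. The main obstacle, on which the rest of the proof rests, is the middle inequality $\Lambda_\infty^*(\mu)\leq R(\mu)$: by Propositions~\ref{prop: I=I**} and~\ref{prop: I = Lambda*} we already know $\Lambda_\infty^*=I$ on $\Ade$, so this amounts to proving the LDP lower bound $I(\mu)\leq R(\mu)$, i.e.\ $\underline s(\mu)\geq -R(\mu)$.

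For the classical pieces, I would first prove $J\leq\Lambda^*$ by the standard substitution: for $f\in\expbounded(\spaces^2)$, set $V:=\log(f/Pf)\in\bounded(\spaces^2)$; then $P^V f=f$ by direct computation, so $(P^V)^n 1 \leq f/\min f$ is bounded uniformly in $n$, which through~\eqref{eq: lambda expressed as a limit with a operator PV} gives $\Lambda(V)\leq 0$, hence $\Lambda^*(\mu)\geq \langle\mu,V\rangle-\Lambda(V)\geq\langle\mu,\log(f/Pf)\rangle$, and taking the supremum over $f$ yields $J(\mu)\leq \Lambda^*(\mu)$. For $J=R$ on balanced measures, I would optimize the variational formula. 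Using the balanced property to rewrite the second marginal, $\langle\mu,\log(f/Pf)\rangle = \sum_{x,y}\mu(x,y)\log f(x,y) - \sum_y\mu^{(1)}(y)\log Pf(y)$, and for each fixed $y$ the slice $f(y,\cdot)$ can be optimized independently: a Lagrange multiplier calculation gives the optimum $f(y,z)\propto q(y,z)/p(y,z)$ with $q(y,z):=\mu(y,z)/\mu^{(1)}(y)$, and substitution yields $J(\mu)=R(\mu)$ via~\eqref{eq: expression R2}. A standard truncation handles the case where $q/p$ is not in $\expbounded(\spaces^2)$, and when $\mu\notin\ith\abscont2$ a direct choice of $f$ large at a pair $(x_0,y_0)$ with $\mu(x_0,y_0)>0=p(x_0,y_0)$ shows $J(\mu)=R(\mu)=+\infty$.

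For the main obstacle $I(\mu)\leq R(\mu)$, my plan is to reduce first to the case of a single irreducible class. Iterating Property~\ref{item: linearity of I2 between separated measures} of Proposition~\ref{prop: properties of rate function -s} gives $I(\mu)=\sum_{j\in\mathcal J_\mu}\mu(C_j^2)I(\tilde\mu_j)$ with $\tilde\mu_j:=\mu|_{C_j^2}/\mu(C_j^2)$, and a direct computation from~\eqref{eq: expression R2} gives the corresponding additive identity for $R$. Combining this with the stitching map of Section~\ref{section: slicing and stitching} to concatenate trajectories across distinct classes in the total order dictated by admissibility, it suffices to prove $I(\tilde\mu_j)\leq R(\tilde\mu_j)$ when $\tilde\mu_j\in\Adebal$ is supported on a single class $C_j$ with $\beta\leadsto C_j$. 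For such a $\tilde\mu_j$, the classical change-of-measure argument applies: let $\mathbb Q$ denote the law of the Markov chain started at a reference point $x_0\in C_j$ with kernel $q$; this chain is irreducible with invariant measure $\tilde\mu_j^{(1)}$, so $\mathbb Q(L_n\in\mathcal B(\tilde\mu_j,\delta))\to 1$ by the ergodic theorem. Prepending a bounded-length bridge from some $x\in\supp\beta$ to $x_0$ (which exists because $\beta\leadsto C_j$) contributes only $O(1)$ to the log-probability, and on trajectories $\delta$-close to $\tilde\mu_j$ the Radon--Nikodym derivative satisfies $\log \d\mathbb P/\d\mathbb Q = -n\langle L_n,\log(q/p)\rangle+O(1)$, which converges to $-nR(\tilde\mu_j)(1+o(1))$ as $\delta\to 0$ then $n\to\infty$. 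This yields $\underline s(\tilde\mu_j)\geq -R(\tilde\mu_j)$, closing the chain.
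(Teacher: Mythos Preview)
Your classical pieces ($J\le\Lambda^*$ via the substitution $V=\log(f/Pf)$, and $J=R$ on balanced measures) are fine and match the paper. The real gap is in the change-of-measure step. The $q$-chain defined by $q(x,y)=\tilde\mu_j(x,y)/\tilde\mu_j^{(1)}(x)$ need \emph{not} be irreducible on $\supp\tilde\mu_j^{(1)}$, even when $\tilde\mu_j$ is supported on a single $p$-irreducible class $C_j$. Concretely, take $C_j=\{1,2\}$ with $p(a,b)=\tfrac12$ for all $a,b\in\{1,2\}$ and $\tilde\mu_j=\tfrac12\delta_{(1,1)}+\tfrac12\delta_{(2,2)}$: this is balanced, admissible and in $\ith\abscont2$, yet $q(1,\cdot)=\delta_1$ and $q(2,\cdot)=\delta_2$, so the $q$-chain has two absorbing states. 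Started at $x_0=1$ one gets $\mathbb Q(L_n\in\mathcal B(\tilde\mu_j,\delta))\to 0$, not $1$, and the ergodic-theorem step collapses. A second problem appears when $\supp\tilde\mu_j$ is infinite: $\log(q/p)$ is then generally unbounded, so $|L_n-\tilde\mu_j|_{\mathrm{TV}}<\delta$ does not control $\langle L_n-\tilde\mu_j,\log(q/p)\rangle$, and the identification $\log\tfrac{\d\mathbb P}{\d\mathbb Q}\approx -nR(\tilde\mu_j)$ is unjustified.

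The paper avoids both issues by never passing through $I$ or a tilted chain. It proves $J(\mu)\ge\Lambda_\infty^*(\mu)$ directly: first for finitely supported $\mu\in\Adebal$, by constructing the resolvent $f=\sum_{n\ge0}e^{-\lambda n}(P_K^V)^n 1$ for $\lambda>\Lambda_\infty(V)$, checking $f\in\expbounded(\spaces^2)$ (this is where admissibility, i.e.\ reachability of $K$ from $\beta$, enters), and computing $\langle\mu,\log(f/Pf)\rangle\ge\langle\mu,V\rangle-\lambda$; then for general $\mu$, by approximating via the balanced-measure decomposition of Lemma~\ref{lemma: balanced measure decomposition}, which supplies finitely supported $\mu_n\to\mu$ with $R(\mu_n)\to R(\mu)$, and concluding by lower semicontinuity. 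Your route could plausibly be repaired along similar lines---reduce first to finitely supported $\mu$ via Lemma~\ref{lemma: balanced measure decomposition}, then further to single minimal cycles, where the $q$-chain \emph{is} irreducible (a deterministic cycle)---but as written, the irreducibility claim is the missing idea.
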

\begin{proof}
	The identity $\idv(\mu)=R(\mu)$ holds in all generality and is proved in Theorem~13.1 of~\cite{rassoul}, where $I$ is our $\idv$, $\nu_\mathcal Y$ denotes the first marginal of a probability measure $\nu$, and $\kappa$ is the second marginal of $\nu$. In this case, the condition $\nu_\mathcal Y=\kappa$ corresponds to $\nu\in\balanced(\spaces^2)$, and $H(\nu|Q)$ becomes our $R$. We do not repeat the proof here.

	Another inequality that does not need further development is $\Lambda_\infty\geq \Lambda$, which is immediate since $\Lambda\geq \Lambda_\infty$ by definition. As a consequence, in order to complete the proof, it suffices to prove that, for all $\mu\in \Adebal$,
	\begin{equation}
		\label{eqloc: bound DV Lambda Lambda infty}
		\Lambda^*(\mu)\geq \idv(\mu)\geq\Lambda^*_\infty(\mu).
	\end{equation}
	We prove this chain of inequalities in three steps. The first step is to show $\Lambda^*(\mu)\geq \idv(\mu)$, the second step is to show $\idv(\mu)\geq\Lambda_\infty^*(\mu)$ under assumption that $\mu$ has a finite support, and the third step consists in approximating $\mu$ by finitely supported measures in the general case to obtain $\idv(\mu)\geq\Lambda_\infty^*(\mu)$ for arbitrary $\mu\in \Adebal$.
	
	{\it Step 1.} We first show that $\Lambda^*(\mu)\geq \idv(\mu)$.\footnote{The argument of Step 1 is standard. See for instance Lemma 4.1.36 of~\cite{deuschel1989} or Theorem~4.1 of~\cite{deacosta2022}}
	Let $f\in \expbounded(\spaces^2)$.
	The function $V=\log \frac \uf{P\uf}$ is bounded and satisfies $(P^V)^n\uf=\uf$. Hence, as $\uf$ is bounded above and below, Equation \eqref{eq: lambda expressed as a limit with a operator PV} yields
	\begin{align*}
		\Lambda(V)&=\limsup_{n\to \infty}\frac1n\log \langle\ith \beta2, (P^V)^n1\rangle\\
		&= \limsup_{n\to \infty}\frac1n\log\langle\ith \beta2,(P^V)^n \uf\rangle\\
		& = \limsup_{n\to \infty}\frac1n\log\langle\ith \beta2, \uf\rangle=0.
	\end{align*}
	Therefore,
	\begin{equation*}
		\Lambda^*(\mu) \geq \langle\mu,V\rangle=\Big\langle \mu, \log\frac{\uf}{P\uf}\Big\rangle.
	\end{equation*}
	This holds true for any $\uf\in\expbounded(\spaces^2)$, hence we get $\Lambda^*(\mu)\geq \idv(\mu)$.
	
	{\it Step 2}. We prove $\idv(\mu)\geq \Lambda_\infty^*(\mu)$ under the assumption that $\mu$ has finite support.\footnote{The method used in Step 2 is a variation of a common argument. See for instance Lemma 4.1.36 of~\cite{deuschel1989}, Lemma~3.3 of~\cite{wu2005}, or Theorem~4.1 of~\cite{deacosta2022}. The big difference is the definition of the SCGF involved in the inequality with $\idv$, since $\Lambda_\infty$ is not mentioned by these references. See also our proof of Proposition~\ref{prop: rate functions for occupation time}.} In this case, there exists a finite $K\subseteq \spaces$ satisfying $\mu(K^2)=1$. Without loss of generality, since $\mu\in \Ade$, we can assume that $K^2$ is contained in the union of $C_j^2$ with $j\in {\mathcal J}_\mu$, hence any state of $ K$ is reachable form $\beta$. 
	For any $g\in \expbounded(\spaces^2)$, we have $
	\langle \ith \beta 2,P^n g\rangle =\langle\ith \beta 2P^n,g\rangle$, where $\nu P$ is defined on $\proba$ by $\nu P(x,y)=\sum_{x'\in \spaces}\nu(x',x)p(x,y)$.
	The measure $\ith \beta 2P^n$ is supported by pairs of states $(x,y)$ such that $x$ (resp. $y$) is reachable from $\beta$ in $n$ steps (resp. $n+1$ steps).
	Fix $V\in\bounded(\spaces^2)$, and let $\lambda>\Lambda_\infty(V)\geq \Lambda_K(V)$. We define the function
	\begin{equation*}
		\uf=\sum_{n=0}^\infty e^{-\lambda n}(P^V_K)^n1,
	\end{equation*}
	and we show that $f\in \expbounded(\spaces^2)$. Since $\uf$ is bounded below by $1$ and coincides with 1 outside $K^2$, it suffices to show that $f$ is finite on $K^2$.
	By~\eqref{eq: lambda expressed as a limit with a operator PV}, we have 
	\begin{equation*}
		\limsup_{n\to\infty}\frac 1n\log \langle \ith \beta 2,e^{-n\lambda}(P_K^V)^n1\rangle =\Lambda_K(V)-\lambda<0,
	\end{equation*}
	thus $\langle \ith \beta2, \uf\rangle<\infty$. It follows that $\uf$ is finite on the support of $\ith \beta2$. Moreover, for all $(x,y)\in K^2$, 
	\begin{align}
		\label{eqloc: Pu=A(u-1)}
		P\uf(x,y)=e^{-V(x,y)}P_K^V\uf(x,y)=e^{\lambda-V(x,y)}(\uf(x,y)-1).
	\end{align}
	It follows that $\uf$ is also finite on the support of $\ith \beta 2P$ because
	\begin{align*}
		\langle \ith \beta 2P,\uf\rangle
		&=\ith \beta 2((K^2)^c)+\langle \ith \beta 2P,\uf\indic _{K^2}\rangle
		\\&=\ith \beta 2((K^2)^c)+\langle \ith \beta 2,P\uf\indic_{K^2}\rangle
		\\&\leq 1+e^{\lambda+\|V\|}\langle\ith \beta 2,\uf\rangle<\infty.
	\end{align*}
	By induction, $\uf$ is finite on the support of $\ith \beta 2 P^n$ for all $n$, thus $f$ is finite on the set of all pairs reachable from $\beta$. Therefore $\uf$ is finite on $K^2$ and we can conclude that $\uf\in\expbounded(\spaces^2)$. Therefore, by definition of $\idv(\mu)$ and~\eqref{eqloc: Pu=A(u-1)},
	\begin{align*}
		\idv(\mu)\geq \Big\langle \mu,\log\frac{\uf}{P\uf}\Big\rangle&=\sum_{(x,y)\in K^2}\mu(x,y)\log \frac{\uf(x,y)}{P\uf(x,y)}\\
		&=\sum_{(x,y)\in K^2}\mu(x,y)(V(x,y)-\lambda) +\sum_{(x,y)\in K^2}\mu(x,y)\log \frac{\uf(x,y)}{\uf(x,y)-1}\\
		&\geq \langle\mu,V\rangle -\lambda+0.
	\end{align*}
	The inequality $\idv(\mu)\geq \langle\mu,V\rangle -\lambda$ holds true for all $\lambda >\Lambda_{\infty}(V)$, hence $\idv(\mu)\geq \langle\mu,V\rangle -\ \Lambda_{\infty}(V)$.
	Therefore, $\idv(\mu)\geq\Lambda_{\infty}^*(\mu)$.
	
	At this point, we have proven~\eqref{eqloc: bound DV Lambda Lambda infty}~---~and thus~\eqref{eq: equality of all rate functions on Adebal}~---~on the set of measures of $\Adebal$ that have finite support.
	
	{\it Step 3.} We turn to the general case, where the support of $\mu$ is no longer assumed to be finite. Consider the sequence $(\mu_n)$ given by Lemma~\ref{lemma: balanced measure decomposition} in Appendix \ref{section: balanced measures}. By Property~\ref{item: support of mun} of Lemma~\ref{lemma: balanced measure decomposition}, each $\mu_n$ has finite support and belongs to $\Adebal$ (the admissibility of $\mu_n$ follows from the admissibility of $\mu$, since $\mu_n$ is absolutely continuous with respect to $\mu$). Hence, by the previous step, 
	we have $R(\mu_n)=\idv(\mu_n)=\Lambda^*(\mu_n)$. 
	By Property~\ref{item: mun to mu and R(mun) to R(mu} of Lemma~\ref{lemma: balanced measure decomposition} and the lower semicontinuity of $\Lambda^*$, we have 
	\begin{equation*}
		\idv(\mu)=R(\mu)=\lim_{n\to\infty }R(\mu_n)=\lim_{n\to\infty}\Lambda^*(\mu_n)\geq \Lambda^*(\mu).
	\end{equation*}
	This completes the proof.
\end{proof}
\begin{remark}
	\label{remark: about step 3 of IDV=Lambda*}
	Step 3 of the proof of Proposition~\ref{prop: equality of all rate functions on Adebal} is one reason why we had to work with pair measures from the beginning. Indeed, Lemma~\ref{lemma: balanced measure decomposition} only works with pair measures, and has no counterpart with the standard occupation measures $(\ith L1_n)$. 
\end{remark}
	\section{Level-2 weak LDP}
\label{section: contraction}
\subsection{A weak contraction principle}
In order to deduce the weak LDP for $(\ith L1_n)$ from that of $(\ith L2_n)$, we will use a variation of the contraction principle, adapted from the usual one and stated in Lemma~\ref{lemma: contraction principle} below. It is comparable to Theorem~3.3 of~\cite{pfisterlewis1995}.\footnote{Theorem~3.3 of~\cite{pfisterlewis1995}, uses the assumption that $\mathcal X$ and $\mathcal Y$ are locally compact to prove that the contracted rate function is lower semicontinuous. In our Lemma~\ref{lemma: contraction principle}, we drop this assumption and replace it with the hypothesis that they are metric.}
\begin{lemma}[Weak contraction principle]
	\label{lemma: contraction principle}
	Let $\mathcal X$ and $\mathcal Y$ be metric spaces, and $(A_n)$ a sequence of random variables taking values in $\mathcal X$ and satisfying the weak LDP with rate function $I_\mathcal{X}$. Let $F:\mathcal X\to\mathcal Y$ be a continuous function such that $F^{-1}(K)$ is compact for every compact set $K$ of $\mathcal Y$. Then, $(F(A_n))$ satisfies the weak LDP in $\mathcal Y$ with rate function $I_{\mathcal Y}$ defined by
	\begin{equation}
		I_{\mathcal Y}(y)=\inf_{x\in F^{-1}(y)}I_{\mathcal X}(x).
	\end{equation}
\end{lemma}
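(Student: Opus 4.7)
The plan is to verify the three requirements of the weak LDP for $(F(A_n))$ separately, namely the lower semicontinuity of $I_\mathcal{Y}$, the upper bound on compacts, and the lower bound on open sets. Two of these follow by essentially pushing forward the weak LDP for $(A_n)$ through $F$, while lower semicontinuity is where the properness of $F$ enters crucially.

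For the upper bound on a compact $K \subseteq \mathcal Y$, I would simply use $\mathbb P(F(A_n) \in K) = \mathbb P(A_n \in F^{-1}(K))$. By the properness assumption, $F^{-1}(K)$ is compact in $\mathcal X$, so the weak LDP upper bound for $(A_n)$ applies and yields
\begin{equation*}
\limsup_{n\to \infty} \frac 1n \log \mathbb P(F(A_n) \in K) \leq -\inf_{x\in F^{-1}(K)} I_\mathcal X(x) = -\inf_{y\in K}I_\mathcal Y(y),
\end{equation*}
the last identity being just the definition of $I_\mathcal Y$. For the lower bound on an open $U \subseteq \mathcal Y$, the continuity of $F$ makes $F^{-1}(U)$ open, so an analogous rewriting, combined with the weak LDP lower bound for $(A_n)$, gives the desired inequality.

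The main obstacle, and really the only nontrivial step, is the lower semicontinuity of $I_\mathcal Y$. Here the standard full-LDP argument leans on goodness of $I_\mathcal X$, which we do not assume. Instead I would exploit properness as follows. Fix $y_n \to y$ in $\mathcal Y$ and, passing to a subsequence, assume $I_\mathcal Y(y_n)$ converges to $\liminf_n I_\mathcal Y(y_n)$, which I may assume finite. The set $F^{-1}(y_n)$ is compact (as the preimage of the compact singleton $\{y_n\}$), and $I_\mathcal X$ is lower semicontinuous, so the infimum $I_\mathcal Y(y_n)$ is attained at some $x_n \in F^{-1}(y_n)$. The union $K := \{y\} \cup \{y_n : n \in \N\}$ is compact in $\mathcal Y$ (since $y_n \to y$), hence $F^{-1}(K)$ is compact in $\mathcal X$ and contains the whole sequence $(x_n)$. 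Extracting a convergent subsequence $x_{n_k} \to x$, continuity of $F$ forces $F(x) = y$, i.e.\ $x \in F^{-1}(y)$. Lower semicontinuity of $I_\mathcal X$ then gives
\begin{equation*}
I_\mathcal Y(y) \leq I_\mathcal X(x) \leq \liminf_{k \to \infty} I_\mathcal X(x_{n_k}) = \liminf_{n\to\infty} I_\mathcal Y(y_n),
\end{equation*}
which is the required lower semicontinuity.

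Once these three ingredients are in place, the weak LDP for $(F(A_n))$ with rate function $I_\mathcal Y$ follows directly from the definition. I expect the semicontinuity argument to be the only delicate piece; the rest is a routine pushforward, and the metrizability of $\mathcal X$ and $\mathcal Y$ is used only to have sequential characterizations of compactness and lower semicontinuity.
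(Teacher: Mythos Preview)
Your proposal is correct and follows essentially the same route as the paper: push the upper and lower bounds through $F$ using properness and continuity respectively, and prove lower semicontinuity by extracting a convergent subsequence of (near-)minimizers inside the compact set $F^{-1}(\{y\}\cup\{y_n\})$. The only minor difference is that you observe the infimum defining $I_{\mathcal Y}(y_n)$ is actually attained (since $F^{-1}(\{y_n\})$ is compact and $I_{\mathcal X}$ is lower semicontinuous), whereas the paper works with $\tfrac{1}{n}$-approximate minimizers; your version is a slight simplification but otherwise the arguments coincide.
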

\begin{proof}
	We only have to ensure that $I_\mathcal Y$ is lower semicontinuous. Once this is proved,
	it suffices to reproduce any proof of a full contraction principle while replacing the word ``closed'' with ``compact''. See for instance Theorem~III.20 of~\cite{denhollander2008} or Theorem~4.1.2 of~\cite{DZ}.
	
	Let $(y_n)$ be a sequence of elements of $\mathcal Y$ converging to some $y\in \mathcal Y$. We must show that 
	\begin{equation}
		\label{eqloc: contraction lsc goal}
		I_\mathcal Y(y)\leq \liminf_{n\to \infty}I_\mathcal Y(y_n).
	\end{equation}
	If the right-hand side of~\eqref{eqloc: contraction lsc goal} is infinite, this is clear. Otherwise, 
	consider a subsequence $(y_{\varphi(n)})$ satisfying $I_\mathcal Y(y_{\varphi(n)})<\infty$ for all $n$ and such that $ \liminf_{n\to \infty}I_\mathcal Y(y_n)= \lim_{n\to \infty}I_\mathcal Y(y_{\varphi(n)})$. Then $y_{\varphi(n)}\to y$ as $n\to \infty$. By definition of $I_\mathcal Y$, there exists a sequence $(x_n)$ of elements of $\mathcal X$ such that, for all $n$, $F(x_n)=y_{\varphi(n)}$ and 
	\begin{equation*}
		I_\mathcal X(x_n)\leq I_\mathcal Y(y_{\varphi(n)})+\frac 1n.
	\end{equation*} 
	The sequence $(x_n)$ takes values in the compact set $F^{-1}\left(\{y_{\varphi(n)}\ |\ n\in \N\}\cup\{y\}\right)$. Hence it has a subsequence $(x_{\psi(n)})$ that converges to a certain $x\in\mathcal X$ as $n\to\infty$. Since $F$ is continuous, $F(x)=y$. The function $I_\mathcal X$ being lower semicontinuous, we get
	\begin{align*}
		I_\mathcal Y(y)&\leq I_\mathcal X(x)\\
		&\leq\liminf_{n\to \infty}I_{\mathcal X}(x_{\psi(n)})\\
		&\leq \liminf_{n\to \infty}\Big(I_{\mathcal Y}(y_{\psi\circ \varphi(n)})+\frac 1n \Big)\\
		&=\lim_{n\to \infty}I_\mathcal Y(y_{\varphi(n)})\\
		&=\liminf_{n\to \infty}I_\mathcal Y(y_n).
	\end{align*}
\end{proof}
\begin{remark}
	\label{remark: no level 1 LDP} 
	It is common to derive the full level-1 LDP as a corollary of the full level-2 LDP. For this, the proof would consist in applying Lemma~\ref{lemma: contraction principle}, or another variant of the contraction principle to the continuous function $F: \mu\mapsto\langle \ith \mu1, f\rangle$, where $f:\spaces\to R^d$ is any function.
	However, this is not possible in general. For instance, in Example~\ref{ex: bryc and dembo's counterexample for level 1}, the level-1 weak LDP fails even though the level-2 weak LDP holds.
	
	In fact, the route based on Lemma~\ref{lemma: contraction principle} is successful only in the case of a finite $\spaces$,\footnote{One can show that the critical assumption that $F^{-1}(K)$ is compact for every compact set of $\mathcal Y$ in Lemma~\ref{lemma: contraction principle} is satisfied if and only if $\spaces$ is finite.}
	which is detailed in Appendix~\ref{section: finite case}.
\end{remark}
\subsection{Weak LDP for the occupation times}
\label{section: weak LDP for Ln1}
In this section, we aim to prove Theorem~\ref{theorem: intro weak LDP for Ln1}, namely, to show that $(\ith L1_n)$ satisfies the weak LDP with rate function $\ith I1$ satisfying~\eqref{eq: rate functions I1}. Let us define the functions involved in~\eqref{eq: rate functions I1}. The function $\ith \Lambda1$ is the SCGF of $(\ith L1_n)$, defined as\label{not: Lambda 1}
\begin{equation}
	\label{eq: definition Lambda1}
	\ith \Lambda 1(V)=\limsup_{n\to \infty}\frac 1n\log \mathbb E\left[e^{n\langle \ith L1_n,V\rangle}\right], \qquad V\in \bounded(\spaces).
\end{equation} 
Its convex conjugate is defined by Definition~\ref{def: convex conjugate}.
The kernel $p$ defines an operator on $\bounded(\spaces)$ by $pf(x)=\sum_{y\in \spaces}p(x,y)f(y)$.
For any function $V\in\bounded(\spaces)$, let $p^V$ denote the operator defined by 
$p^Vf(x)=e^{V(x)}pf(x)$. Like in~\eqref{eq: lambda expressed as a limit with a operator PV}, 
\begin{equation*}
	\langle\beta,(p^V)^n1\rangle=\mathbb E_\beta\left[e^{n\langle \ith L1_n,V\rangle}\right],
\end{equation*}
and $\ith \Lambda1$ satisfies the equation
\begin{equation}
	\label{eq: expression of Lambda1 with operator pV}
	\ith\Lambda1(V)=\limsup_{n\to \infty}\frac 1n\log\langle \beta ,(p^V)^n1\rangle. 
\end{equation}
Let $\expbounded(\spaces)$ denote the set of positive functions on $\spaces$ that are bounded away from $0$ and $\infty$.\label{not: expbounded 1}
The DV entropy $\ith J1$ is defined by\label{not: IDV 1}
\begin{equation}
	\label{eq: definition IDV1}
	\ith \idv 1(\mu)=\sup _{\uf \in \expbounded(\spaces)}\Big\langle \mu,\log \frac{f}{pf}\Big \rangle  =\sup _{\uf \in \expbounded(\spaces)} \sum_{x\in\spaces}\mu(x)\log \frac{\uf(x)}{p\uf(x)} 
	,\qquad \mu \in \mathcal P(\spaces).
\end{equation}
The functional $\ith R1$ is defined on $\mathcal P(\spaces)$ as the contraction of the function $\ith R2$ defined in~\eqref{eq: definition R}:\label{not: R1}
\begin{equation}
	\label{eq: definition R1}
	\ith R1(\mu)=\inf\{\ith R2(\nu)\ |\ \nu\in\balanced(\spaces^2),\ \ith \nu 1=\mu\}
	,\qquad \mu \in \mathcal P(\spaces).
\end{equation}
All these functions coincide on $\Aun$, as proved in Proposition~\ref{prop: rate functions for occupation time} below.

Let $\pi:\balanced(\spaces^2)\to \mathcal P(\spaces)$ be the map defined by $\pi(\mu)=\ith\mu 1$. It is continuous with respect to the TV metric. Our strategy to prove Theorem~\ref{theorem: intro weak LDP for Ln1} consists in applying the contraction principle of Lemma~\ref{lemma: contraction principle} to $\pi$, and use Proposition~\ref{prop: rate functions for occupation time} to make the rate function explicit.\label{not: pi1}
Lemma~\ref{lemma: Ln1 satisfies a weak LDP} and Proposition~\ref{prop: rate functions for occupation time} are stated and proved below.
\begin{proof}[Proof of Theorem~\ref{theorem: intro weak LDP for Ln1}]
	By Lemma~\ref{lemma: Ln1 satisfies a weak LDP}, $(\ith L1_n)$ satsifies a weak LDP with rate function given by~\eqref{eq: I1 = inf I2}.
	It remains to show~\eqref{eq: rate functions I1}. Let $\mu\in \mathcal P(\spaces)$. If $\mu\notin \Aun$, by Proposition~\ref{prop: projection of A2bal}, there are no $\nu\in \Adebal$ such that $\mu=\ith \nu 1$, therefore by~\eqref{eq: I1 = inf I2}, we have $\ith I1(\mu)=\infty$. If $\mu\in \Aun$, by~\eqref{eq: I1 = inf I2} and the equality $\ith I 2=\ith R2$ of Theorem~\ref{theorem: intro weak LDP for Ln2}, we have $\ith I1(\mu)=\ith R1(\mu)$. The remaining equalities follow from Proposition~\ref{prop: rate functions for occupation time}.
\end{proof}
\begin{lemma}
	\label{lemma: Ln1 satisfies a weak LDP}
	The sequence $(\ith L1_n)$ satisfies a weak LDP in $\mathcal P(\spaces)$ with rate function $\ith I1$ given by
	\begin{equation}
		\label{eq: I1 = inf I2}
		\ith I 1(\mu)=\inf\{\Ide(\nu)\ |\ \nu\in\balanced(\spaces^2),\ \ith \nu 1=\mu\}.
	\end{equation}
\end{lemma}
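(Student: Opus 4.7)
The plan is to derive the weak LDP for $(\ith L1_n)$ from Theorem~\ref{theorem: Ln2 satisfies a weak LDP with rate function -s} via a contraction along the continuous first-marginal map $\pi:\nu\mapsto\ith\nu1$ from $\proba$ to $\probaun$, which satisfies $\pi(\ith L2_n)=\ith L1_n$. The weak contraction principle (Lemma~\ref{lemma: contraction principle}) does not apply directly, because $\pi^{-1}(K)$ is not compact in $\proba$ for a compact $K\subseteq\probaun$: the second marginal is entirely unconstrained. I will bypass this by exploiting two key facts: $\Ide$ is infinite outside $\Adebal$ (Property~\ref{item: I2 infinite outside Adebal} of Proposition~\ref{prop: properties of rate function -s}), and the two marginals of $\ith L2_n$, namely $\frac1n\sum_{k=1}^n\delta_{X_k}$ and $\frac1n\sum_{k=2}^{n+1}\delta_{X_k}$, differ by at most $2/n$ in total variation.

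For the lower bound, if $U\subseteq\probaun$ is open then $\pi^{-1}(U)$ is open in $\proba$, so the weak LDP for $(\ith L2_n)$ gives
\begin{equation*}
\liminf_{n\to\infty}\frac1n\log\mathbb P(\ith L1_n\in U)\geq -\inf_{\nu\in\pi^{-1}(U)}\Ide(\nu)=-\inf_{\mu\in U}\ith I1(\mu),
\end{equation*}
since restricting the infimum to balanced $\nu$ recovers $\ith I1$ by definition. For the upper bound, fix a compact $K\subseteq\probaun$ and a small $\epsilon>0$, and let $K_\epsilon$ denote the closed $\epsilon$-neighborhood of $K$ in TV distance, which is tight. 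Set
\begin{equation*}
\widetilde K_\epsilon=\{\nu\in\proba\ |\ \ith\nu1\in K,\ \ith\nu2\in K_\epsilon\}.
\end{equation*}
This set is closed and its marginals both lie in tight sets, hence $\widetilde K_\epsilon$ is tight, and therefore compact in $\proba$. Whenever $n\geq 2/\epsilon$, the inclusion $\{\ith L1_n\in K\}\subseteq\{\ith L2_n\in\widetilde K_\epsilon\}$ holds deterministically. Applying the weak LDP upper bound to the compact set $\widetilde K_\epsilon$, and noting that for a balanced $\nu$ one has $\ith\nu2=\ith\nu1$, so that $\widetilde K_\epsilon\cap\balanced(\spaces^2)=\{\nu\in\balanced(\spaces^2)\ |\ \ith\nu1\in K\}$, one obtains
\begin{equation*}
\limsup_{n\to\infty}\frac1n\log\mathbb P(\ith L1_n\in K)\leq -\inf_{\nu\in\widetilde K_\epsilon}\Ide(\nu)=-\inf_{\mu\in K}\ith I1(\mu).
\end{equation*}

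It remains to verify that $\ith I1$ is lower semicontinuous, which follows by the same tightness argument as in the proof of Lemma~\ref{lemma: contraction principle}. Given $\mu_n\to\mu$ in $\probaun$ with $L:=\liminf\ith I1(\mu_n)<\infty$, I extract a subsequence along which $\ith I1(\mu_n)\to L$ and choose $\nu_n\in\balanced(\spaces^2)$ with $\ith\nu_n1=\mu_n$ and $\Ide(\nu_n)\leq\ith I1(\mu_n)+1/n$; since both marginals of $\nu_n$ equal $\mu_n$ and $(\mu_n)$ is tight in $\probaun$, the sequence $(\nu_n)$ is tight in $\proba$, and passing to a further subsequence yields a limit $\nu\in\balanced(\spaces^2)$ with $\ith\nu1=\mu$. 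Lower semicontinuity of $\Ide$ then gives $\ith I1(\mu)\leq\Ide(\nu)\leq L$. The main technical subtlety is the upper bound, where the non-compactness of $\pi^{-1}(K)$ in $\proba$ has to be absorbed into an $\epsilon$-loss, allowed by the near-balanced character of $\ith L2_n$ and the compactness of $\widetilde K_\epsilon$.
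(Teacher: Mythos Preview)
Your proof is correct, and the underlying tightness idea is the same as the paper's, but the execution differs. The paper introduces the balanced modification $\tilde L^{(2)}_n = L^{(2)}_n - \tfrac1n\delta_{(X_n,X_{n+1})} + \tfrac1n\delta_{(X_n,X_1)}\in\balanced(\spaces^2)$, checks that it shares the same RL function (hence the same weak LDP) as $L^{(2)}_n$, and then applies the weak contraction principle (Lemma~\ref{lemma: contraction principle}) to $\pi$ restricted to $\balanced(\spaces^2)$, where $\pi^{-1}(K)$ \emph{is} compact because both marginals of a balanced measure coincide. You instead keep $L^{(2)}_n$ as is and argue the LDP bounds and the lower semicontinuity of $\ith I1$ directly, manufacturing compactness by hand via $\widetilde K_\epsilon$ and using Property~\ref{item: I2 infinite outside Adebal} to reduce all infima to balanced measures. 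The paper's route is more modular (it reuses Lemma~\ref{lemma: contraction principle} as a black box), while yours avoids the auxiliary sequence at the cost of reproving the pieces of that lemma. One small remark: your closing sentence about ``absorbing an $\epsilon$-loss'' is slightly misleading---since $\widetilde K_\epsilon\cap\balanced(\spaces^2)=\{\nu\in\balanced(\spaces^2):\ith\nu1\in K\}$ does not depend on $\epsilon$, there is in fact no loss at all, and you never need to let $\epsilon\to0$.
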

\begin{proof}
	We only have to apply Lemma~\ref{lemma: contraction principle} to $\pi$.
	Since $\pi$ is defined on $\balanced(\spaces^2)$, we shall work only with balanced measures.
	In order to do so, we set
	\begin{equation*}
		\ith {\tilde L}2_n=\ith L 2_n-\frac 1n\delta_{(X_n,X_{n+1})}+\frac 1n\delta_{(X_n,X_1)}\in\balanced(\spaces^2).
	\end{equation*} 
	We have $\ith L1_n=\pi(\ith{\tilde L}2_n)$. 
	Moreover, for all $\mu\in \proba$, all $\delta>0$ and all $n\geq 2/\delta$, since the TV distance between $\ith L 2_n$ and $\ith{\tilde L}2_n$ is at most $\frac 2n$, we have
	\begin{equation*}
		\mathbb P(\ith { L}2_n\in \mathcal B(\mu,\delta))\leq \mathbb P(\ith {\tilde L}2_n\in \mathcal B(\mu,2\delta))\leq\mathbb P(\ith { L}2_n\in \mathcal B(\mu,3\delta)).
	\end{equation*}
	Thus,
	\begin{equation*}
		\underline s(\mu)
		\leq \lim_{\delta\to 0}\liminf_{n\to \infty} \frac 1n\log \mathbb P(\ith {\tilde L}2_n\in \mathcal B(\mu,2\delta))
		\leq \lim_{\delta\to 0}\limsup_{n\to \infty} \frac 1n\log \mathbb P(\ith {\tilde L}2_n\in \mathcal B(\mu,2\delta))
		\leq \overline s(\mu),
	\end{equation*}
	where $\underline s$ and $\overline s$ are as in Definition~\ref{def: ruelle lanford functions}. Since $\underline s=\overline s$ by Theorem~\ref{theorem: Ln2 satisfies a weak LDP with rate function -s}, $(\ith {\tilde L}2_n)$ has a RL function that coincides with $s=\underline s=\overline s$. Therefore, the sequences $(\ith L 2_n)$ and $(\ith {\tilde L}2_n)$ satisfy the same weak LDP by Lemma~\ref{lemma: RL implies LDP}.
	It only remains to check  that $\pi$ satisfies the assumptions of Lemma~\ref{lemma: contraction principle}.
	By Prokhorov's theorem any set $K\subseteq \mathcal P(\spaces)$ is compact if and only if it is tight.
	Let $\epsilon >0$ and let $K$ be compact. Then, by tightness, there exists a finite set $\Kdelta\subseteq \spaces$ such that every $\mu\in K$ satisfies $\mu(\Kdelta^c)\leq \epsilon$. Let $ \nu \in \pi^{-1}(K)\subseteq\balanced(\spaces^2)$. Then,
	\begin{equation*}
		\nu((\Kdelta\times \Kdelta)^c)=\nu((\Kdelta^c\times \spaces)\cup (\spaces\times \Kdelta^c))\leq \ith \nu 1(\Kdelta^c)+\ith \nu 1(\Kdelta^c)\leq 2\epsilon.
	\end{equation*}
	Therefore $\pi^{-1}(K)$ is tight, and thus compact. 
	Thus, by Theorem~\ref{theorem: intro weak LDP for Ln2} and Lemma~\ref{lemma: contraction principle}, $(\ith L1_n)$ satisfies the weak LDP with rate function $\ith I1$ defined by~\eqref{eq: I1 = inf I2}.
\end{proof}
\begin{proposition}
	\label{prop: rate functions for occupation time}
	Let $\mu\in \Aun$. Then,
	\begin{equation}
		(\ith \Lambda1)^*(\mu)=\ith \idv 1(\mu)=\ith R1(\mu).
	\end{equation}
\end{proposition}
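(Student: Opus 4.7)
The plan is to establish the circular chain of inequalities
\begin{equation*}
\ith \idv 1(\mu)\;\le\;(\ith\Lambda 1)^*(\mu)\;\le\;\ith R1(\mu)\;\le\;\ith \idv1(\mu),
\end{equation*}
which, together with Lemma~\ref{lemma: Ln1 satisfies a weak LDP}, closes the identification. Both the first and second inequalities will be relatively standard adaptations of the arguments already used at level 2 in Propositions~\ref{prop: I = Lambda*} and~\ref{prop: equality of all rate functions on Adebal}; the third (closing) inequality will be the core difficulty.

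For the first inequality, I will use the ``eigenfunction'' trick already seen in Step~1 of the proof of Proposition~\ref{prop: equality of all rate functions on Adebal}. For any $f\in\expbounded(\spaces)$, the bounded function $V:=\log(f/pf)$ satisfies $p^V f=f$ and hence $(p^V)^n f=f$ for every $n$; since $f$ is bounded above and below, the expression~\eqref{eq: expression of Lambda1 with operator pV} yields $\ith\Lambda1(V)=0$, so $(\ith\Lambda1)^*(\mu)\ge\langle\mu,V\rangle=\langle\mu,\log(f/pf)\rangle$. Taking the supremum over $f$ gives the inequality.

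For the second inequality, I will lift the problem to level~$2$. Given $V\in\bounded(\spaces)$, set $\tilde V(x,y):=V(x)\in\bounded(\spaces^2)$. A direct computation shows $\langle \ith L2_n,\tilde V\rangle=\langle\ith L1_n,V\rangle$, hence $\ith\Lambda1(V)=\ith\Lambda2(\tilde V)$. For any $\nu\in\balanced(\spaces^2)$ with $\ith\nu1=\mu$, balancedness gives $\langle\nu,\tilde V\rangle=\langle\mu,V\rangle$, so that
\begin{equation*}
\langle\mu,V\rangle-\ith\Lambda1(V)=\langle\nu,\tilde V\rangle-\ith\Lambda2(\tilde V)\le(\ith\Lambda2)^*(\nu).
\end{equation*}
For $\mu\in\Aun$, I will argue that the infimum defining $\ith R1(\mu)$ is effectively attained over $\Adebal$: if $\nu$ is balanced with $\ith\nu1=\mu\in\Aun$ and $\ith R2(\nu)<\infty$, then $\nu\ll\ith\mu1\otimes p$ forces $\supp\nu$ into the edges of the graph, and together with the balancedness and the total order $\leadsto$ on $(C_j)_{j\in\mathcal J_\mu}$, this rules out inter-class edges, so that $\nu\in\Adebal$. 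Using Theorem~\ref{theorem: intro weak LDP for Ln2} to identify $(\ith\Lambda2)^*=\ith R2$ on $\Adebal$ and then taking the infimum over $\nu$ and the supremum over $V$ will give $(\ith\Lambda1)^*(\mu)\le \ith R1(\mu)$.

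The hard step is the closing inequality $\ith R1(\mu)\le\ith\idv1(\mu)$, which I expect to be the main obstacle. The idea is to exhibit, for $f\in\expbounded(\spaces)$ that is near-optimal in the variational formula~\eqref{eq: definition IDV1}, a near-minimizing candidate for $\ith R1$. Formally, set the twisted kernel $q_f(x,y):=f(y)p(x,y)/pf(x)$ and the candidate measure $\nu_f(x,y):=\ith\mu1(x)q_f(x,y)$. A direct computation using~\eqref{eq: expression R2} shows $\ith R2(\nu_f)=\langle\mu,\log(f/pf)\rangle$ whenever $\nu_f$ is balanced, i.e.\ whenever $\ith\mu1$ is $q_f$-invariant. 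The delicate point is that balancedness is not automatic for a generic $f$: one must adjust $f$ (or use a Lagrangian / convex-duality argument) to meet this constraint while preserving near-optimality. For finitely supported $\mu\in\Aun$, the variational problem reduces to a finite-dimensional concave maximization on $\supp\mu$ whose Euler-Lagrange equation is precisely the balancedness condition, so the construction can be carried out explicitly. For general $\mu\in\Aun$, I would approximate $\mu$ by finitely supported admissible measures $\mu_n\to\mu$ with $\ith\idv1(\mu_n)\to \ith\idv1(\mu)$ (paralleling the use of Lemma~\ref{lemma: balanced measure decomposition} in Step~3 of the proof of Proposition~\ref{prop: equality of all rate functions on Adebal}) and then use the lower semicontinuity of $\ith R1$, inherited from $\ith R2$ via the definition~\eqref{eq: definition R1}. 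The technical challenge is to choose the approximating $\mu_n$ so as to remain in $\Aun$ while converging in total variation and in $\ith\idv1$.
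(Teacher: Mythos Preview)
Your first two steps are correct and your Step~2 is a genuinely different (and rather elegant) route from the paper's. The paper proves $(\ith\Lambda1)^*(\mu)\le\ith\idv1(\mu)$ directly, by introducing the auxiliary quantity
\[
\ith{\widetilde\Lambda}1(V)=\sup_{x\in\spaces_\beta}\limsup_{n\to\infty}\tfrac1n\log\mathbb E_x\big[e^{n\langle\ith L1_n,V\rangle}\big],
\]
showing $\ith\Lambda1\ge\ith{\widetilde\Lambda}1$ via a change-of-initial-point argument, and then establishing $\ith\idv1(\mu)\ge(\ith{\widetilde\Lambda}1)^*(\mu)$ by the resolvent construction $f_n=\sum_{k=0}^n e^{-\lambda k}(p^V)^k1$ together with Fatou's lemma. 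Your lifting-to-level-2 argument is more conceptual and reuses Theorem~\ref{theorem: intro weak LDP for Ln2} rather than redoing a resolvent computation; your observation that any balanced $\nu$ with $\ith\nu1=\mu\in\Aun$ and $\ith R2(\nu)<\infty$ must lie in $\Adebal$ (no inter-class edges, by the total order on $(C_j)_{j\in\mathcal J_\mu}$ and balancedness) is exactly the ingredient needed to make it work.

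Your Step~3, however, has a genuine gap. The finite-support case is plausible but underspecified: the Euler--Lagrange maximizer of $f\mapsto\langle\mu,\log(f/pf)\rangle$ need not lie in $\expbounded(\spaces)$ (it may vanish or blow up at points of $\supp\mu$), so the balancedness of $\nu_f$ at the optimum is not automatic. More seriously, the infinite-support case relies on finding finitely supported $\mu_n\to\mu$ in $\Aun$ with $\ith\idv1(\mu_n)\to\ith\idv1(\mu)$. This is exactly the kind of level-1 analogue of Lemma~\ref{lemma: balanced measure decomposition} that the paper flags as unavailable (see the remark following Proposition~\ref{prop: equality of all rate functions on Adebal} and the footnote in the paper's proof of the present proposition): the cycle decomposition is intrinsically a pair-measure construction, and there is no obvious way to control $\ith\idv1$ along a naive truncation-and-renormalize sequence in $\mathcal P(\spaces)$. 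Lower semicontinuity of $\ith R1$ (which you can indeed get from Lemma~\ref{lemma: contraction principle} applied to $\pi$) gives you the wrong direction here; what you need is $\limsup_n\ith\idv1(\mu_n)\le\ith\idv1(\mu)$, which you have not justified.

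That said, your Step~3 is precisely the inequality $\ith R1\le\ith\idv1$, which is one half of the standard identity $\ith\idv1=\ith R1$ valid on all of $\mathcal P(\spaces)$ (no admissibility needed). The paper simply cites this from Theorem~13.1 of~\cite{rassoul}. If you do the same, your circular chain closes and yields a valid alternative proof: cite $\ith\idv1=\ith R1$, use your Step~2 for $(\ith\Lambda1)^*\le\ith R1$, and Step~1 for $\ith\idv1\le(\ith\Lambda1)^*$. The trade-off is that the paper's resolvent argument is more self-contained for the specific inequality $(\ith\Lambda1)^*\le\ith\idv1$, whereas your route leans more heavily on the level-2 theorem already established.
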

\begin{proof}
	The equality $\ith \idv 1(\mu)=\ith R1(\mu)$ holds in all generality in $\mathcal P(\spaces)$.
	For the proof of this equality, we refer the reader to Theorem~13.1 of~\cite{rassoul}. To apply Equation~(13.5) of~\cite{rassoul} in our context, take $I_\mathcal X$ as our $\ith \idv 1$ and $I$ as our $\idv=\ith \idv 2$. Then, Equation~(13.5) yields 
	\begin{equation*}
		\ith \idv 1(\mu)=\inf_{\nu\in\pi^{-1}(\mu)}\ith \idv2 (\nu)=\inf_{\nu\in\pi^{-1}(\mu)}\ith R2 (\nu)=\ith R1(\mu).
	\end{equation*}
	
	Let us show that $(\ith\Lambda1)^*(\mu)=\ith \idv 1(\mu)$. The first inequality $(\ith \Lambda1)^*(\mu)\geq \ith \idv 1(\mu)$ is handled as usual:\footnote{See the first step of the proof of Proposition~\ref{prop: equality of all rate functions on Adebal}, or Lemma 4.1.36 of~\cite{deuschel1989} and Theorem~4.1 of~\cite{deacosta2022}} for any $\uf\in \expbounded(\spaces)$, we set $V=\log \frac \uf{p\uf}\in\bounded(\spaces)$, which satisfies $p^V\uf =\uf$. Thus, as in the first step of the proof of Proposition~\ref{prop: equality of all rate functions on Adebal}, by~\eqref{eq: expression of Lambda1 with operator pV}, we have 
	\begin{align*}
		\ith \Lambda1(V)&=\limsup_{n\to \infty}\frac1n\log \langle\beta , (p^V)^n1\rangle\\
		&= \limsup_{n\to \infty}\frac1n\log\langle\beta,(p^V)^n \uf\rangle\\
		& = \limsup_{n\to \infty}\frac1n\log\langle\beta, \uf\rangle=0.
	\end{align*}
	Therefore,
	\begin{equation*}
		(\ith \Lambda1)^*(\mu)\geq  \langle \mu, V\rangle=\Big\langle \mu,\log \frac{f}{pf}\Big \rangle,
	\end{equation*}
	hence $ (\ith \Lambda1)^*(\mu)\geq \ith \idv 1(\mu)$.
	We turn to the converse inequality.\footnote{To prove the converse inequality, we wish to reproduce the proof of Proposition~\ref{prop: equality of all rate functions on Adebal}, \emph{i.e.}~to find a quantity $\alpha$ such that $(\ith \Lambda1)^{*}(\mu)\leq \alpha\leq \ith J1(\mu)$. In the proof of Proposition~\ref{prop: equality of all rate functions on Adebal}, this quantity was $(\ith \Lambda2_\infty)^*(\mu)$, satisfying $(\ith \Lambda2)^*(\mu)\leq (\ith \Lambda2_\infty)^*(\mu)\leq \ith J2(\mu)$. However, due to the lack of counterpart of Lemma~\ref{lemma: balanced measure decomposition} in $\mathcal P(\spaces)$, the argument here to handle measures with infinite support is slightly different. This argument is more standard; see~\cite{wu2005}. The same argument could be used to prove that $(\ith \Lambda2)^*(\mu)\leq \ith J2(\mu)$ in Proposition~\ref{prop: equality of all rate functions on Adebal}, but it would not provide any equality implying $(\ith\Lambda2_\infty)^*(\mu)$.} Let $\spaces_\beta$ be the set of elements $x\in \spaces$ such that $\beta\leadsto x$. Since $\mu \in \Aun$, we have $\supp\mu\subseteq\spaces_\beta$. For every $V\in \bounded(\spaces)$, we define
	\begin{equation}
		\ith {\widetilde\Lambda}1(V)=\sup_{x\in\spaces_\beta}\limsup_{n\to \infty}\frac1n\log \mathbb E_x\left[e^{n\langle \ith L1_n, V\rangle }\right],
	\end{equation}
	where $\mathbb E_x[\cdot]$ denotes the expectation with respect to the law of the Markov chain with kernel $p$ and initial measure $\delta_x$. We will show that 
	\begin{equation}
		\label{eqloc: Lambda ^* < Lambda tilde^* < IDV}
		\ith \idv 1(\mu)\geq (\ith {\widetilde \Lambda}1)^*(\mu)\geq (\ith \Lambda1)^*(\mu).
	\end{equation}
	For the second inequality, it suffices to show that $ \ith \Lambda 1(V)\geq\ith{\widetilde \Lambda}1(V)$ for all $V\in \bounded(\spaces)$.
	Let $V\in \bounded(\spaces)$ and $\delta>0$. There exists $x\in\spaces_\beta$ such that
	\begin{equation*}
		\limsup_{n\to \infty}\frac 1n\log \mathbb E_{x}\left[e^{\langle \ith L1_{n},V\rangle}\right] \geq \ith {\widetilde \Lambda}1(V)-\delta.
	\end{equation*}
	Since $\beta\leadsto x$, there exists a finite sequence $\supp \beta \ni x_1,x_2,\ldots, x_k=x$ satisfying $\mathbb P_k(x_1\ldots x_k)>0$.
	By the Markov property, we get
	\begin{align*}
		\mathbb E\left[e^{\langle \ith L1_n,V\rangle }\right]
		&\geq \mathbb E\left[e^{\langle \ith L1_n,V\rangle }\indic _{\{X_1=x_1,\ldots,X_k=x_k\}}\right]\\
		&=\mathbb E_{x}\left[e^{\langle \ith L1_{n-k+1},V\rangle}\right]e^{V(x_1)+\ldots+V(x_{k-1})}\mathbb P_{k}(x_1\ldots x_k).
	\end{align*}
	Hence,
	\begin{align*}
		\ith \Lambda 1(V)
		\geq \limsup_{n\to \infty}\frac 1n\log \mathbb E_{x}\left[e^{\langle \ith L1_{n-k+1},V\rangle}\right] +0
		\geq \ith {\widetilde \Lambda}1(V)-\delta.
	\end{align*}
	This holds for any $\delta>0$, thus $\ith \Lambda 1(V)\geq \ith{\tilde \Lambda}1(V)$.\footnote{Lemma~3.3 of~\cite{wu2005} provides an alternative proof of this inequality. This inequality can be strict; see Example~\ref{ex: right only random walk}.} We turn to the first inequality in~\eqref{eqloc: Lambda ^* < Lambda tilde^* < IDV}.
	We want to optimize the bracket in the definition of $\ith \idv 1$ in~\eqref{eq: definition IDV1}. Since $\supp\mu\subseteq \spaces_\beta$, we can consider functions of $\expbounded(\spaces_\beta)$ instead of functions of $\expbounded(\spaces)$ in~\eqref{eq: definition IDV1}. 
	Let $V\in \bounded(\spaces)$ and $\lambda>\ith{\widetilde\Lambda}1(V)$. Let 
	\begin{equation*}
		f_n(x)=\sum_{k=0}^n e^{-\lambda k}(p^V)^k1(x),\qquad x\in \spaces_\beta.
	\end{equation*}
	Since the sum is finite, we have $f_n\in\expbounded(\spaces_\beta)$, with the bound $f_n\geq f_0=1$.
	The sequence $(f_n)$ converges pointwise over $\spaces_\beta$ because, for all $x\in \spaces_\beta$,
	\begin{align*}
		\limsup_{k\to \infty}\frac 1k\log \left (e^{-k\lambda} (p^V)^k1(x)\right)
		&=\limsup_{k\to \infty}\frac 1k\log \left (e^{-k\lambda}\mathbb E_{x}\left[e^{n\langle {\ith L1_n},V\rangle}\right] \right)\\
		&\leq \ith{ \widetilde \Lambda}1(V)-\lambda\\&<0.
	\end{align*}
	We call $f$ the pointwise limit of $(f_n)$ on $\spaces_\beta$. 
	It is not necessarily bounded so we cannot use it in the optimization of the bracket in~\eqref{eq: definition IDV1}. However, for every $n\geq 1$, we have
	$$
	\ith \idv1(\mu)\geq \Big \langle \mu, \log \frac{f_n}{pf_n}\Big \rangle=\sum_{x\in\spaces_\beta}\mu(x)\log \frac{f_n(x)}{p f_n(x)} .
	$$
	Moreover, we have, for all $n\in \N$,
	\begin{equation*}
		pf_n=e^{-V}p^Vf_n=e^{V-\lambda}(f_{n+1}-1).
	\end{equation*}
	Furthermore, for all $n\in \N$,
	\begin{equation*}
		f_{n+1}-f_n=e^{-\lambda(n+1)}(p^V)^{n+1}1\leq e^{-\lambda +\|V\|}e^{-\lambda n}(p^V)^n1\leq e^{-\lambda +\|V\|}f_n,
	\end{equation*}
	thus 
	\begin{equation*}
		\frac{f_n}{pf_n}=e^{\lambda-V}\frac{f_n}{f_{n+1}-1}\geq e^{\lambda-V}\frac{f_n}{(1+e^{-\lambda +\|V\|})f_n-1}\geq 1.
	\end{equation*}
	Thus, by Fatou's lemma,
	\begin{align*}
		\ith \idv1(\mu)
		&\geq \liminf_{n\to\infty }\sum_{x\in \spaces_\beta} \mu(x)\log\frac{f_n(x)}{pf_n(x)}\\
		&\geq \sum_{x\in \spaces_\beta} \mu(x) \liminf_{n\to\infty}\log\frac{f_n(x)}{pf_n(x)}\\
		&= \langle \mu, V\rangle -\lambda +\sum_{x\in \spaces_\beta} \mu(x) \lim_{n\to\infty}\log\frac{f_n(x)}{f_{n+1}(x)-1}\\
		&=\langle \mu, V\rangle -\lambda +\sum_{x\in \spaces_\beta} \mu(x) \log\frac{f(x)}{f(x)-1}
		\geq \langle \mu, V\rangle -\lambda.
	\end{align*}
	This holds\footnote{The equality line holds because $f$ is never infinite. If there exists $x$ in $\supp mu$ such that $f(x)=1$, the sum is infinite and the final inequality still holds.} for every $\lambda>\ith{\widetilde\Lambda}1$, thus $\ith \idv1(\mu)\geq \langle \mu, V\rangle -\ith{\widetilde\Lambda}1$ and ultimately $\ith \idv 1(\mu)\geq (\ith {\widetilde \Lambda}1)^*(\mu)$. The proof is complete.
\end{proof}
Before leaving this section, let us add a corollary of Lemma~\ref{lemma: Ln1 satisfies a weak LDP} as a remark about $\ith I1$. Like $\ith I2$, the function $\ith I1$ can be decomposed over irreducible classes. 
\begin{corollary}
	Let $\mu\in \Aun$ and let $\tilde \mu_j={\mu(C_j)}^{-1}\mu|_{C_j}$ for all $j\in \mathcal J_\mu$. We have
	\begin{equation*}
		\ith I 1(\mu)=\sum_{j\in\mathcal J_\mu}\mu(C_j) \ith I1(\tilde \mu_j).
	\end{equation*}
\end{corollary}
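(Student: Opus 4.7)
The plan is to deduce this decomposition directly from Lemma~\ref{lemma: Ln1 satisfies a weak LDP} (which expresses $\ith I1$ as the contraction of $\ith I2$ along $\pi:\nu\mapsto\ith\nu1$) and from Corollary~\ref{coro: I affine between classes} (which gives the class-wise affinity of $\ith I2$ on $\Adebal$). The strategy is to show that both the balanced constraint and the marginal constraint $\ith\nu1=\mu$ split across irreducible classes, so that the infimum defining $\ith I1(\mu)$ splits as a sum of independent class-wise infima that are precisely the $\ith I1(\tilde\mu_j)$.

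First, I would start from the representation $\ith I1(\mu)=\inf\{\ith I2(\nu)\mid \nu\in\balanced(\spaces^2),\ \ith\nu1=\mu\}$ given by~\eqref{eq: I1 = inf I2}, and note that since $\mu\in\Aun$ the infimum need only be taken over $\nu\in\Adebal$ (any $\nu$ outside $\Adebal$ contributes $+\infty$ by Theorem~\ref{theorem: intro weak LDP for Ln2}). For such $\nu$, the balanced constraint forces $\ith\nu2$ to be supported on $\supp\ith\mu1\subseteq\bigcup_{j\in\mathcal J_\mu}C_j$, so in fact $\mathcal J_\nu=\mathcal J_\mu$ and $\nu(C_j^2)=\mu(C_j)$ for every $j\in\mathcal J_\mu$. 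Writing $\tilde\nu_j=\mu(C_j)^{-1}\nu|_{C_j^2}$, each $\tilde\nu_j$ lives in $\Adebal$ with $\mathcal J_{\tilde\nu_j}=\{j\}$, is individually balanced, and has first marginal $\tilde\mu_j$.

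Next, I would apply Corollary~\ref{coro: I affine between classes} to rewrite
\[
\ith I2(\nu)=\sum_{j\in\mathcal J_\mu}\mu(C_j)\,\ith I2(\tilde\nu_j),
\]
so that
\[
\ith I1(\mu)=\inf_{(\tilde\nu_j)_j}\sum_{j\in\mathcal J_\mu}\mu(C_j)\,\ith I2(\tilde\nu_j),
\]
where the infimum runs over families $(\tilde\nu_j)$ with $\tilde\nu_j\in\balanced(C_j^2)\cap\Adebal$ and $\ith{\tilde\nu_j}1=\tilde\mu_j$. Since both the balanced condition and the marginal condition are imposed class by class, and since the admissibility of the reassembled measure $\nu=\sum_j\mu(C_j)\tilde\nu_j$ is automatic (the order $\leadsto$ is inherited from the total order on $(C_j)_{j\in\mathcal J_\mu}$ coming from $\mu\in\Aun$), the choice of each $\tilde\nu_j$ is independent of the others. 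Therefore the infimum exchanges with the sum:
\[
\ith I1(\mu)=\sum_{j\in\mathcal J_\mu}\mu(C_j)\inf_{\tilde\nu_j}\ith I2(\tilde\nu_j).
\]
Finally, applying~\eqref{eq: I1 = inf I2} again to each $\tilde\mu_j\in\Aun$ identifies $\inf_{\tilde\nu_j}\ith I2(\tilde\nu_j)=\ith I1(\tilde\mu_j)$, which yields the claim.

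The only genuinely delicate point is verifying that the infimum truly splits, i.e.\ that an arbitrary admissible balanced $\nu$ with $\ith\nu1=\mu$ necessarily decomposes as $\nu=\sum_j\mu(C_j)\tilde\nu_j$ with each $\tilde\nu_j$ admissible-and-balanced on $C_j^2$ with marginal $\tilde\mu_j$; this reduces to the observation that $\ith\nu1$ and $\ith\nu2$ agree and are both supported in $\bigcup_jC_j$, so the mass on each $C_j^2$ is already balanced and equal to $\mu(C_j)$. Once this structural fact is in hand, the rest is bookkeeping and the corollary falls out.
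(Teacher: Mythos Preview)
Your proof is correct and follows essentially the same approach as the paper: both start from the contraction formula~\eqref{eq: I1 = inf I2}, observe that any feasible $\nu\in\Adebal$ with $\ith\nu1=\mu$ decomposes class by class with $\nu(C_j^2)=\mu(C_j)$, apply Corollary~\ref{coro: I affine between classes} to split $\ith I2(\nu)$, and then exchange the infimum with the sum by independence of the class-wise constraints. The paper's write-up is slightly terser but the structure is identical.
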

\begin{proof} 
	We use the expression of $\ith I1(\mu)$ provided by~\eqref{eq: I1 = inf I2} in Lemma~\ref{lemma: Ln1 satisfies a weak LDP}.
	The set $\pi^{-1}(\mu)$ is exactly the set of all $\nu\in\Adebal$ such that $\mathcal J_\nu=\mathcal J_\mu$ and for all $j\in \mathcal J_\mu$, $\nu(C_j^2)=\mu(C_j)$ and $\pi({\mu(C_j)}^{-1}\nu|_{C_j^2})=\tilde\mu_j$.
	Therefore, by Corollary~\ref{coro: I affine between classes},
	\begin{align*}
		\ith I1(\mu)
		&=\inf\bigg\{\sum_{j\in\mathcal J_\mu}\mu(C_j)\ith I2\left(\frac1{\mu(C_j)}\nu|_{C_j^2}\right)\ \Big|\ \nu\in \pi^{-1}(\mu)\bigg\}\\
		&=\inf\bigg\{\sum_{j\in\mathcal J_\mu}\mu(C_j)\ith I2(\nu_j)\ \Big|\ \forall j\in\mathcal J_\mu,\ \nu_j\in \pi^{-1}\left(\tilde \mu_j\right)\bigg\}\\
		&=\sum_{j\in\mathcal J_\mu}\mu(C_j)\inf \{\ith I2(\nu_j)\ |\ \nu_j\in \pi^{-1}\left(\tilde \mu_j\right)\}\\
		&=\sum_{j\in\mathcal J_\mu}\mu(C_j) \ith I1(\tilde \mu_j).
	\end{align*}
\end{proof}
\section{Level-3 weak LDP}
\label{section: weak LDP on the process level}
In this section, we prove Theorem~\ref{theorem: intro weak LDP for Lninfty}.
We follow the projective limit approach of Section 4.4 of~\cite{deuschel1989}, and of Sections 6.5.2 and 6.5.3 of~\cite{DZ}. Since we work with weak LDPs and without the uniformity assumption {\bf(U)} used in these books, we need to slightly adapt their propositions. In Section~\ref{section: weak LPD for Lnk}, we derive the weak LDPs for the $k$-th empirical measures $(\ith Lk_n)$, and derive convenient expressions for the associated rate functions. In Section~\ref{section: dawson gartner}, we turn these weak LDPs into the weak LDP for the sequence $(\ith L\infty_n)$.
\subsection{Weak LDP for the $k$-th empirical measures}
\label{section: weak LPD for Lnk}
In this section, we work in $\mathcal P(\spaces^k)$ with $k\geq 3$.
We use the notations of Section~\ref{section: notations}.
For all $n\in \N$, the $k$-th empirical measure of a word $w\in \spaces^{n+k-1}$ is defined as 
\label{not: Lnk}
\begin{equation*}
	\ith Lk[w]=\frac 1n\sum_{i=1}^n\delta_{(w_i,\ldots, w_{i+k-1})}.
\end{equation*}
The $k$-th empirical measure of $(X_n)$ at time $n$ is denoted $\ith Lk_n$ and is defined as the $k$-th empirical measure of the word $(X_1,\ldots, X_{n+k-1})$. The sequence $(\ith Lk_n)$ is the object of interest in this section.

Our goal is to translate the weak LDP for  $(\ith L2_n)$ of Theorem~\ref{theorem: intro weak LDP for Ln2} to the weak LDP for $(\ith Lk_n)$. To achieve this, we consider the Markov chain $(Y_n)$ on $\spaces^{k-1}$ defined by $Y_n=(X_n,\ldots ,X_{n+k-2})$, whose pair empirical measures carry the same information as $(\ith Lk_n)$, as we will see. Using the notation of~\eqref{eq: definition of p over words}, the initial measure of the Markov chain $(Y_n)$ is given by $\beta_Y(u)=\beta(u_1)p(u)$ and its transition kernel by
\begin{equation*}
	p_Y(u,v)=p(u_{k-1},v_{k-1})\prod_{i=1}^{k-2}\indic_{\{u_{i+1}=v_i\}}, \qquad u,v\in \spaces^{k-1}.
\end{equation*}
Notice that a word $u\in \spaces^{k-1}$ is reachable from $v\in \spaces^{k-1}$ for $p_Y$ if and only if there exists a word $w\in \wordspos$ of which $u$ is prefix and $v$ is suffix ($u$ and $v$ may overlap in $w$).
The pair empirical measure associated to this Markov chain is
\begin{equation}
	\ith L{2}_{Y,n}=\frac{1}{n}\sum_{i=1}^n\delta_{(Y_n,Y_{n+1})}=\frac{1}{n}\sum_{i=1}^n\delta_{((X_n,\ldots, X_{n+k-2}),(X_{n+1},\ldots ,X_{n+k-1}))}.
\end{equation}
According to Theorem~\ref{theorem: intro weak LDP for Ln2}, $(\ith L{2}_{Y,n})$ satisfies the weak LDP with an explicitly known rate function. In Proposition~\ref{prop: weak LDP for Lnk} below, we will prove that this weak LDP is actually the weak LDP for $(\ith Lk_n)$. 
Before stating Proposition~\ref{prop: weak LDP for Lnk}, let us introduce some notations. As in \eqref{eq: definition absolutely continuous measures}, let\label{not: abscont k}
\begin{equation}
	\label{eq: definition abscont k}
	\ith \abscont k=\{\mu\in \spaces^k\ |\ \forall u\in \spaces^k,\, p(u)=0 \Rightarrow \mu(u)=0\}.
\end{equation}
Let $\mu\in \mathcal P(\spaces^k)$. For any $k'\leq k$, we define a measure $\ith \mu {k'}\in \mathcal P(\spaces^{k'})$ by setting 
\begin{equation*}
	\ith \mu {k'}(A)=\mu(A\times \spaces^{k-k'}),\qquad A\subseteq \spaces^{k'}.
\end{equation*}
We denote by $\pi_{k,k'}:\mathcal P(\spaces^k)\to \mathcal P(\spaces^{k'})$ the continuous map $\nu\mapsto\ith \nu {k'}$.\label{not: pikk}
As in~\eqref{eq: definition mu balanced in S2}, we say that the measure $\mu$ is balanced, and we write $\mu\in \balanced(\spaces ^k)$ if\label{not: Pbalk}
\begin{equation}
	\label{eq: definition balanced on Sk}
	\mu(A\times \spaces)=\mu(\spaces\times A),\qquad A\subseteq \spaces ^{k-1}.
\end{equation}
Like previously, we denote by $\ith \Abal k$ the set of measures of $\mathcal P(\spaces^k)$ that are both admissible and balanced. Note that $\balanced(\spaces^k)$ and $\ith \Abal k$ are both closed. If $\mu$ is balanced, 
by induction on~\eqref{eq: definition balanced on Sk}, one can show that $\ith \mu {k'}$ satisfies
\begin{equation*}
	\ith \mu{k'}(A)=\mu(\spaces^{k_1}\times A\times \spaces^{k_2}),\qquad A\subseteq \spaces^{k'},
\end{equation*}
for all $k',k_1,k_2\geq 0$ such that $k_1+k_2=k-k'$.
It follows that $\ith \mu {k'}$ is balanced in $\mathcal P(\spaces^{k'})$ when $\mu$ is balanced in $\mathcal P(\spaces^k)$. 

Before stating~Proposition~\ref{prop: weak LDP for Lnk}, we also need to define some rate functions.
Like previously, we define the $k$-th SCGF by\label{not: Lambda k}
\begin{equation}
	\ith \Lambda k(V)=\limsup_{n\to\infty}\frac 1n\log \mathbb E\left[e^{n\langle\ith Lk_n,V\rangle}\right],
	\qquad V\in \bounded(\spaces^k).
\end{equation}
Its convex conjugate $(\ith \Lambda k)^*$ is given by Definition~\ref{def: convex conjugate}.
Let $\expbounded(\spaces^k)$ denote the set of positive functions on $\spaces^k$ that are bounded away from $0$ and $\infty$.\label{not: expbounded k}
We define, for all $\mu\in \mathcal P(\spaces^k)$,\footnote{The operator $\ith Pk$ corresponds to the operator $P$ of~\eqref{eq: definition operator p}, but for the Markov chain $((X_n,\ldots, X_{n+k-1}))$. Hence $\ith Jk$ is just the DV entropy relative to this Markov chain. Also notice that the quantity $\ith Pkf(u)$ does not depend on $u_1$.}\label{not: IDV k}
\begin{align}
	\ith \idv k(\mu)&=\sup_{f\in\expbounded(\spaces^k)}
	\Big \langle\mu,\log \frac{f}{\ith Pk f}\Big \rangle,
	\qquad \ith Pk f: u\mapsto {\sum_{y\in\spaces}p(u_k,y)f(u_2,\ldots ,u_k,y)},
\end{align}
and \label{not: Rk}
\begin{align}
	\ith Rk(\mu)&=\ent (\mu|\ith \mu {k-1}\otimes p).
\end{align} 
Note that when $\mu\in \ith \abscont k$, we have $\mu\ll \ith \mu {k-1}\otimes p$ and 
\begin{equation}
	\label{eq: expression Rk}
	\ith Rk (\mu)=\sum_{u\in \spaces^{k-1}}\sum_{y\in \spaces}\ith \mu {k}(uy)\log\frac{\ith\mu k(uy)}{\ith\mu{k-1}(u)p(u_{k-1},y)}.
\end{equation}
In the case $k=2$, the rate functions $(\ith \Lambda 2)^*$, $\ith \idv 2$ and $\ith R2$ are necessarily infinite outside $\ith \abscont 2$; see Remark~\ref{remark: I infinite on Adebal}. As one can see in Remark~\ref{remark: infinite rate functions k}, in the general case, the set $\ith \abscont k$ plays the same role for $(\ith \Lambda k)^*$, $\ith \idv k$ and $\ith Rk$.
\begin{remark}
	\label{remark: infinite rate functions k}
	As for the $k=2$ case, it is easy to check that for all $\mu\in \mathcal P(\spaces^k)$ that does not belong to $\ith \abscont k$, we have
	\begin{equation*}
		(\ith \Lambda k)^*(\mu)
		=\ith \idv k(\mu)=\ith Rk(\mu)=\infty.
	\end{equation*}
\end{remark}
We can now get to the main proposition of this section.
\begin{proposition}
	\label{prop: weak LDP for Lnk}
	The sequence $(\ith L k_n)$ satisfies the weak LDP with rate function $\ith Ik$ given by
	\begin{equation}
		\label{eq: weak LDP for Lnk}
		\ith Ik (\mu)=
		\begin{cases}
			(\ith \Lambda k)^*(\mu)
			=\ith\idv k(\mu)=\ith Rk(\mu),\quad &\hbox{if $\mu\in\ith\Abal k$},\\
			\infty,\quad &\hbox{otherwise}.
		\end{cases}
	\end{equation}
\end{proposition}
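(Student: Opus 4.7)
The plan is to reduce Proposition~\ref{prop: weak LDP for Lnk} to Theorem~\ref{theorem: intro weak LDP for Ln2} applied to the auxiliary Markov chain $(Y_n)$ on $\spaces^{k-1}$ introduced above. The key observation is that consecutive states $Y_n$ and $Y_{n+1}$ always overlap on $k-2$ coordinates, so the pair empirical measure of $(Y_n)$ is always supported on the closed set
\begin{equation*}
    \mathcal O = \{(u,v) \in (\spaces^{k-1})^2 \mid u_{i+1} = v_i,\ 1 \leq i \leq k-2\}.
\end{equation*}
The map $\Phi: \mathcal O \to \spaces^k$ defined by $\Phi(u,v) = (u_1, \ldots, u_{k-1}, v_{k-1})$ is a bijection, so its pushforward is a homeomorphism between $\mathcal P(\mathcal O)$ and $\mathcal P(\spaces^k)$ for the TV topology, and by direct inspection $\ith Lk_n = \Phi_*(\ith L2_{Y,n})$ for every $n \geq 1$.

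Applying Theorem~\ref{theorem: intro weak LDP for Ln2} to $(Y_n)$ yields the weak LDP for $(\ith L2_{Y,n})$ in $\mathcal P((\spaces^{k-1})^2)$ with some rate function $I_Y$, which is infinite outside $\mathcal P(\mathcal O)$ since any $\nu\notin\mathcal P(\mathcal O)$ fails to be absolutely continuous with respect to the kernel $p_Y$ (cf.~Remark~\ref{remark: I infinite on Adebal}). Transporting this weak LDP along the homeomorphism $\Phi_*$ yields the weak LDP for $(\ith Lk_n)$ with rate function $\ith Ik(\mu) := I_Y(\Phi_*^{-1}(\mu))$ for $\mu \in \mathcal P(\spaces^k)$. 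No version of the contraction principle is needed because $\Phi_*$ is a bijection between the relevant spaces.

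It remains to identify this rate function with the announced expressions. The dictionary is as follows: the irreducible classes of $(Y_n)$ on $\spaces^{k-1}$ are precisely the sets $D_j = \{u \in C_j^{k-1} : p(u) > 0\}$ indexed by the irreducible classes $C_j$ of $(X_n)$, and the reachability relation for $(Y_n)$ among the $D_j$ matches $\leadsto$ among the corresponding $C_j$; moreover $\beta_Y \leadsto D_j$ iff $\beta \leadsto C_j$. Consequently, $\nu \in \mathcal P(\mathcal O)$ is admissible and balanced for $(Y_n)$ if and only if $\mu := \Phi_*(\nu)$ belongs to $\ith\Abal k$. On such measures, one checks directly that the first marginal of $\nu$ corresponds to $\ith \mu {k-1}$, that $\ith \nu 1 \otimes p_Y$ pushes forward to $\ith\mu{k-1}\otimes p$, and hence that $R_Y(\nu) = \ith Rk(\mu)$. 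Similarly, using test functions of the form $f\circ\Phi$ with $f\in\expbounded(\spaces^k)$ (respectively $V\circ\Phi$ with $V\in\bounded(\spaces^k)$), and exploiting that only values on $\mathcal O$ matter when evaluating $J_Y(\nu)$ or $\Lambda_Y(\cdot)$ on admissible measures, we obtain $J_Y(\nu) = \ith Jk(\mu)$ and $\Lambda_Y^*(\nu) = (\ith \Lambda k)^*(\mu)$. The equality of the three expressions in~\eqref{eq: weak LDP for Lnk} then follows from Theorem~\ref{theorem: intro weak LDP for Ln2} applied to $(Y_n)$.

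The main obstacle is the bookkeeping in the third step: one must verify carefully that every admissible measure in $\ith\Abal k$ is in the range of $\Phi_*$ restricted to balanced admissible measures for $(Y_n)$, and that the pre-admissibility/ordering/reachability conditions from Definition~\ref{def: admissible measures} applied to $(Y_n)$ on $\spaces^{k-1}$ translate into the corresponding conditions on $\spaces^k$. Once this dictionary is firmly established, the identification of the three classical expressions of $\ith Ik$ is a direct transport of the $k=2$ case.
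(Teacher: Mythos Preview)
Your approach matches the paper's exactly: transport the weak LDP for $(\ith L2_{Y,n})$ through the homeomorphism $\Phi_*$ between $\mathcal P(\mathcal O)$ and $\mathcal P(\spaces^k)$, identify the irreducible classes of $(Y_n)$ as $D_j = C_j^{k-1}\cap\{u:p(u)>0\}$, and then check the rate-function identities term by term.

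One correction is needed in your dictionary. The biconditional ``$\nu$ is admissible and balanced for $(Y_n)$ iff $\Phi_*(\nu)\in\ith\Abal k$'' fails in one direction: if $\mu\in\ith\Abal k$ charges some $w\in C_j^k$ with $p(w)=0$, then $\Phi_*^{-1}(\mu)$ charges a pair $(u,v)$ with $u\notin D_j$ or $v\notin D_j$ (since the classes $D_j$ exclude words of zero $p$-probability), so $\nu$ is \emph{not} pre-admissible for $(Y_n)$. The paper resolves this by proving the equivalence only on $\ith\Abal k\cap\ith\abscont k$ and treating $\mu\in\ith\Abal k\setminus\ith\abscont k$ separately, invoking Remark~\ref{remark: infinite rate functions k} to observe that $(\ith\Lambda k)^*(\mu)=\ith Jk(\mu)=\ith Rk(\mu)=\infty$ there, while $I_Y(\Phi_*^{-1}(\mu))=\infty$ as well since $\Phi_*^{-1}(\mu)\notin\mathcal A^{(2)}_{\mathrm{bal},Y}\cap\ith\abscont 2_Y$. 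With this adjustment your sketch is complete and coincides with the paper's argument.
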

The proof of this proposition is rather technical. It consists in applying a contraction principle to a map $\Phi$ that turns $\ith L2_{n,Y}$ into $\ith Lk_n$. Then, it must be checked that the obtained rate function matches the claim~\eqref{eq: weak LDP for Lnk}.\footnote{A brief analysis of the reducibility structure of the Markov chain $(Y_n)$ reveals that its irreducible classes are slightly more intricate than $C_j^{k-1}$, which complicates the definition of admissible measures for $(Y_n)$.}
\begin{proof}
	Let
	\begin{equation*}
		\phi:
		\begin{cases}
			\quad U&\to \ \spaces^k,\\
			(u,v)&\mapsto \ u_1v,
		\end{cases}
		\qquad U=\{(u,v)\in( \spaces^{k-1})^2\ |\ u_{i+1}=v_i\ 1\leq i\leq k-2\}.
	\end{equation*}
	The application $\varphi$ is a one-to-one correspondence between $U$ and $\spaces^k$. For all $\nu\in \mathcal P(U)$, let $\Phi(\nu):=\nu\circ\varphi^{-1}\in \mathcal P(\spaces^k)$.
	The application $\Phi$ is a one-to-one correspondence between $\mathcal P(U)$ and $\mathcal P(\spaces^k)$
	(the inverse application is defined by $\Phi^{-1}(\mu)=\mu\circ\varphi\in \mathcal P(U)$ for all $\mu\in \mathcal P(\spaces^k)$).
	It is even a homeomorphism since both the maps $\Phi$ and $\Phi^{-1}$ preserve the TV distance.
	Moreover,
	if $v\in \spaces^{k}$, we have $\Phi(\delta_{\phi^{-1}(v)})=\delta_v$.
	It follows that
	\begin{equation}
		\label{eq: Phi(Lnk)=Ln2}
		\ith Lk _n=\frac{1}{n}\sum_{i=1}^{n}\Phi(\delta_{\varphi^{-1}(X_i\ldots X_{i+k-1})})=\Phi(\ith L2_{Y,n}).
	\end{equation}
	By Theorem~\ref{theorem: intro weak LDP for Ln2}, $(\ith L 2_{Y,n})$ satisfies the weak LDP on $\mathcal P((\spaces^{k-1})^2)$. Since the measure $\ith L2_{Y,n}$ always belongs to the closed set $\mathcal P(U)$, this weak LDP is actually a weak LDP on $\mathcal P(U)$ equipped by the induced weak topology. We denote by $I_Y$ the associated rate function. Therefore, by~\eqref{eq: Phi(Lnk)=Ln2} and since $\Phi$ is a homeomorphism, $(\ith Lk_n)$ satisfies the weak LDP with rate function $\ith I k=I_Y\circ\Phi^{-1}$.\footnote{This can be seen as a consequence of Lemma~\ref{lemma: contraction principle}.}\label{not: Ik}
	
	It remains to identify $\ith Ik$, \emph{i.e.}~to establish~\eqref{eq: weak LDP for Lnk}.
	Let $\mathcal A_{Y,\mathrm{bal}}^{(2)}$ denote the set of admissible, balanced measures of $\mathcal P((\spaces^{k-1})^2)$ relative to $(Y_n)$, which naturally appears in the expression of $I_Y$ given by~\eqref{eq: rate function I2 for Ln2}.
	We must first precisely describe this set of measures. We now prove that
	\begin{align}
		\label{eqloc: Phi(A2Y)= Ak}
		\Phi(\mathcal A_{\mathrm{bal},Y}^{(2)}\cap\ith \abscont 2_Y)&= \ith \Abal k\cap \ith \abscont k,
		\\
		\nonumber
		\ith \abscont 2_Y&:=\{\nu\in \mathcal P(U)\ |\ \forall (u,v)\in U,\, p_Y(u,v)=0\Rightarrow\nu(u,v)=0\}.
	\end{align}
	For convenience, we set $T_{k-1}:=\wordspos\cap \spaces^{k-1}=\{u\in \spaces^{k-1}\ |\ p(u)>0\}$. 
	Let $\nu\in \mathcal P(U)$. The following holds.
	\begin{itemize}
		\item $\nu\in \ith \abscont 2_Y$ if and only if $\Phi(\nu)\in \ith \abscont k$. Indeed, 
		for all $(u,v)\in U$,
		\begin{equation*}
			\nu(u,v)=0\Leftrightarrow \Phi(\nu)(\varphi(u,v))=0,
			\qquad p(u)p(v)=0 \Leftrightarrow p(\varphi(u,v))=0.
		\end{equation*}
		\item 
		$\nu$ is balanced in $\mathcal P((\spaces^{k-1})^2)$ if and only if $\Phi(\nu)$ is balanced in $\mathcal P(\spaces^k)$.\footnote{The question of whether $\nu$ is balanced requires seeing $\nu$ as a measure in $\mathcal P((\spaces^{k-1})^2)$ with support in $U$ instead of a measure of $\mathcal P(U)$.}
		Indeed, for all $A\subseteq \spaces^{k-1}$, 
		\begin{equation*}
			\begin{split}
				\Phi(\nu)(A\times \spaces)&=\nu(\phi^{-1}(A\times \spaces))=\nu((A\times \spaces^{k-1})\cap U)=\nu(A\times \spaces^{k-1}),\\
				\Phi(\nu)(\spaces\times A)&=\nu(\phi^{-1}( \spaces\times A))=\nu((\spaces^{k-1}\times A)\cap U)=\nu( \spaces^{k-1}\times A).
			\end{split}
		\end{equation*} 
		Hence, $\nu$ satisfies~\eqref{eq: definition mu balanced in S2} if and only if $\Phi(\nu)$ satisfies~\eqref{eq: definition balanced on Sk}. 
		\item Let $(C_{Y,j})_{j\in \mathcal J_Y}$ be the sequence of irreducible classes of the Markov chain $(Y_n)$, as in~\eqref{eq: decomposition of S in irreducible classes}. We have $\mathcal J_Y=\mathcal J$ and 
		\begin{equation}
			\label{eqloc: Cjy}
			C_{Y,j}=C_{j}^{k-1}\cap T_{k-1}, \qquad j\in \mathcal J_Y.
		\end{equation}
		Indeed, let $u,v\in C_j^{k-1}\cap T_{k-1}$. Then, $u_{k-1}\leadsto v_1$, thus there exists a word $\xi$ such that $p(u_{k-1}\xi v_1)>0$. Since $p(u)>0$ and $p(v)>0$, we have $w:=u\xi v\in \wordspos$, implying that $v$ is reachable from $u$ for $p_Y$. In the same way, $u$ is reachable from $v$ for $p_Y$, thus $u$ and $v$ are in the same irreducible class for $p_Y$.
		Now let $u,v\in \spaces^{k-1}$ be in the same class for $p_Y$. 
		There exist $w,w'\in \wordspos$ such that $u$ is a prefix of $w$ and a suffix of $w'$ and $v$ is a prefix of $w'$ and a suffix of $w$. Using these two words, we can construct a word $w''\in \wordspos$ of the form $w''=u\xi v\zeta u\xi v$. The fact that $p(w'')>0$ implies that $u,v\in T_{k-1}$ and that any letter of $u$ or $v$ is reachable form any letter of $u$ or $v$. Therefore, all the letters of $u$ and $v$ are in the same irreducibility class. This proves~\eqref{eqloc: Cjy}. 
		Therefore, $\nu$ is admissible if and only if
		\begin{equation}
			\label{eqloc: nu admissible in P(U)}
			\sum_{j\in \mathcal J_\nu}^{}\nu((C_{j}^{k-1})^2\cap T_{k-1}^2)=1,
		\end{equation}
		where $\mathcal J_\nu$ satisfies the admissibility condition of Definition~\ref{def: admissible measures}.
	\end{itemize}
	Let $\mu\in \mathcal P(\spaces^k)$ and let $\nu=\Phi^{-1}(\mu)\in \mathcal P(U)$. 
	If $\mu\in \ith \Abal k\cap \ith \abscont k$, then $\nu\in \balanced((\spaces^{k-1})^2)\cap \ith \abscont 2_Y$ and $
	\sum_{j\in \mathcal J_\mu}\mu(C_j^k)=1.$
	Since $\nu\in \ith \abscont 2_Y$, we have $\supp\nu\subseteq T_{k-1}^2$.
	It follows that 
	\begin{equation*}
		\sum_{j\in \mathcal J_\mu}\nu((C_{j}^{k-1})^2\cap T_{k-1}^2)=\sum_{j\in \mathcal J_\mu}\nu((C_{j}^{k-1})^2)=\sum_{j\in \mathcal J_\mu}\mu(C_j^k)=1.
	\end{equation*}
	Since $\mathcal J_\mu$ satisfies the admissibility conditions of Definition~\ref{def: admissible measures}, the measure $\nu$ is admissible. 
	Reciprocally, if $\nu \in \ith {\mathcal A}{2}_{\mathrm{bal},Y}\cap \ith \abscont 2_Y$, then $\mu\in \balanced(\spaces^k)\cap \ith \abscont k$ and~\eqref{eqloc: nu admissible in P(U)} holds.
	Since $\nu \in \ith \abscont 2_Y$, we have $\supp\nu\subseteq T_{k-1}^2$.
	It follows that 
	\begin{equation*}
		\sum_{j\in \mathcal J_\nu}\mu(C_j^k)=\sum_{j\in \mathcal J_\nu}\nu((C_{j}^{k-1})^2)=
		\sum_{j\in \mathcal J_\nu}\nu((C_{j}^{k-1})^2\cap T_{k-1}^2)=1.
	\end{equation*}
	Since $\mathcal J_\nu$ satisfies the admissibility conditions of Definition~\ref{def: admissible measures}, the measure $\mu$ is admissible.
	This achieves to prove~\eqref{eqloc: Phi(A2Y)= Ak}.
	
	Now we can prove~\eqref{eq: weak LDP for Lnk}.
	Let $\mu \in \mathcal P(\spaces^k)$ and let $\nu=\Phi^{-1}(\mu)$. If $\mu\notin\ith \Abal k\cap \ith \abscont k$, then by~\eqref{eqloc: Phi(A2Y)= Ak}, $\nu\notin\mathcal A_{\mathrm{bal},Y}^{(2)}\cap\ith \abscont 2_Y$, thus $\ith Ik(\mu)=I_Y(\nu)=\infty$. Indeed, by Proposition~\ref{prop: identification I2} and Remark~\ref{remark: I infinite on Adebal}, $I_Y$ is infinite outside of $\mathcal A_{\mathrm{bal},Y}^{(2)}\cap\ith \abscont 2_Y$.
	By Remark~\ref{remark: infinite rate functions k}, this proves~\eqref{eq: weak LDP for Lnk} for $\mu\notin \Adebal\cap \ith \abscont k$.
	Assume now that $\mu\in \ith \Abal k\cap \ith \abscont k$. Applying Proposition~\ref{prop: identification I2} to the chain $(Y_n)$ shows that
	\begin{equation} 
		I_Y(\nu) = (\ith \Lambda2_{\infty,Y})^*(\nu) =(\ith \Lambda2_Y)^*(\nu)=\ith \idv2_Y(\nu)=\ith R2_Y(\nu),
	\end{equation}
	where the subscript $Y$ was added to distinguish the rate functions corresponding to $(Y_n)$ instead of $(X_n)$. Thus, in order to complete the proof of \eqref{eq: weak LDP for Lnk}, we need to identify $(\ith \Lambda2_{\infty,Y})^*(\nu) =(\ith \Lambda k_\infty)^*(\mu)$, $(\ith\Lambda2_{Y})^*(\nu) =(\ith \Lambda k)^*(\mu)$, $\ith\idv2_Y(\nu) = \ith\idv k(\mu)$ and $\ith R2_Y(\nu)=\ith Rk(\mu)$.
	\begin{enumerate}
		\item We begin with $(\ith \Lambda2_{Y})^*(\nu) =(\ith \Lambda k)^*(\mu)$. For all $V\in \bounded((\spaces^{k-1})^2)$ and all $\gamma\in \mathcal P(U)$, we have 
		\begin{equation}
			\label{eqloc: change var V gamma}
			\langle \gamma,V\rangle=\langle \Phi(\gamma),V\circ \varphi^{-1}\rangle.
		\end{equation}
		By definition, 
		\begin{equation}
			\label{eqloc: def Lambda Y *}
			(\ith\Lambda2_{Y})^*(\nu)=\sup_{ V\in\bounded((\spaces^{k-1})^2)}\left(\langle \nu,V\rangle -\limsup_{n\to\infty}\frac 1n\log\mathbb E\left[e ^{n\langle \ith L2_{n,Y}, V\rangle}\right]\right).
		\end{equation}
		Since both $\ith L2_{n,Y}$ and $\nu$ belong to $\mathcal P(U)$, the quantity being optimized in~\eqref{eqloc: def Lambda Y *} does not depend on the values of $V$ outside $U$. Hence, the supremum can be taken over $V\in \bounded (U)$. Thus, 
		since the application $V\mapsto V\circ \varphi ^{-1}$ is a bijection between $\bounded(U)$ and $\bounded(\spaces^k)$,
		Identity~\eqref{eqloc: change var V gamma} shows that the expressions of $(\ith \Lambda2_{Y})^*(\nu)$ and of $(\ith \Lambda k)^*(\mu)$ coincide.
		\item We now show $\ith \idv 2_Y(\nu)=\ith Jk(\mu)$. 
		By definition, 
		\begin{align}
			\label{eqloc: def idv Y }
			\ith \idv 2_Y(\nu)=\sup_{\uf \in \expbounded((\spaces^{k-1})^2)}
			\Big \langle \nu, \log \frac f{P_Y f}\Big \rangle,
		\end{align}
		where\footnote{The function $P_Yf$ does not depend on its first variable.}
		\begin{equation*}
			P_Yf:(u,v)\mapsto \sum_{y\in \spaces}p(v_{k-1},y)f(v,v_2\ldots v_{k-1}y).
		\end{equation*}
		Since $\nu$ belongs to $\mathcal P(U)$, the quantity being optimized in~\eqref{eqloc: def idv Y } does not depend on the values of $f$ outside $U$.
		Hence, the supremum can be taken over $f\in \expbounded (U)$. 
		Since $f\mapsto f\circ \varphi^{-1}$ is a bijection between $\expbounded(U)$ and $\expbounded(\spaces^k)$, and after noticing that
		\begin{equation*}
			(P_Y f)\circ \varphi^{-1}(w)=\sum_{y\in \spaces}p(w_k,x)f(w_2\ldots w_k,w_3\ldots w_ky)=\ith Pk (f\circ \varphi^{-1})(w),
			\qquad w\in \spaces^k,
		\end{equation*}
		Identity~\eqref{eqloc: change var V gamma} shows again that the expressions of $\ith \idv 2_Y(\nu)$ and of $\ith \idv k(\mu)$ coincide. 
		\item
		We now show $\ith R2_Y(\nu)=\ith Rk(\mu)$. The measure $\nu$ belongs to $\ith \abscont 2_Y$, thus by~\eqref{eq: expression Rk},
		\begin{align*}
			\ith R2_Y(\nu)
			&=\sum_{(u,v)\in U}\nu(u,v)\log\frac{\nu(u,v)}{\ith \nu {1}(u) p_Y(u,v)}.
		\end{align*}
		For all $(u,v)\in U$, we have $p_Y(u,v)=p(u_{k-1},v_{k-1})$. Moreover, for all $u\in \spaces^{k-1}$, 
		\begin{equation*}
			\ith \nu1(u)=\sum_{\substack{v\in \spaces^{k-1}\\(u,v)\in U}}\nu(u,v)=\sum_{y\in \spaces}\nu(u,u_2\ldots u_{k-1}y)=\sum_{y\in \spaces}\mu(uy)=\ith \mu {k-1}(u).
		\end{equation*}
		By replacing $p_Y(u,v)$ and $\ith \nu1(u)$ by these values in the expression of $\ith R2_Y(\nu)$, since $(u,v)\mapsto (u,v_k)$ is a bijection between $U$ and $\spaces^{k-1}\times \spaces$, we find that the expression of $\ith R2_Y(\nu)$ and the expression~\eqref{eq: expression R2} of $\ith Rk (\mu)$ coincide.
	\end{enumerate}
\end{proof}
Let us also mention a relation between $\ith R k$ and relative entropy, which will be useful in the proof of Proposition~\ref{prop: alternative expression for R infty} below. 
Notice that both sides in~\eqref{eq: relative entropy for Lnk} can be $\infty$.
\begin{lemma}
	\label{lemma: relative entropy for Lnk}
	Let $\mu\in \balanced(\spaces^{k})$ be absolutely continuous with respect to $\mathbb P_{k}$. Then, 
	\begin{equation}
		\label{eq: relative entropy for Lnk}
		\ent (\mu|\mathbb P_{k})=\ent(\ith \mu{k-1}|\mathbb P_{k-1})+\ith R k(\mu).
	\end{equation}
\end{lemma}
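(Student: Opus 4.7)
The approach is the standard chain rule for relative entropy, applied to the factorization $\mathbb P_k = \mathbb P_{k-1} \otimes p$. Concretely, from the definition~\eqref{eq: definition Pt}, for any $u = (v, y) \in \spaces^{k-1} \times \spaces$,
\begin{equation*}
\mathbb P_k(u) = \beta(u_1)p(u_1,u_2)\cdots p(u_{k-1},u_k) = \mathbb P_{k-1}(v) \, p(v_{k-1}, y).
\end{equation*}
So $\ith Rk(\mu)$ and $\ent(\mu|\mathbb P_k)$ are relative entropies with respect to two reference measures that differ only by replacing the first marginal $\mathbb P_{k-1}$ by $\ith \mu{k-1}$, and the identity should fall out by a one-line logarithm decomposition.

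First I would handle the absolute continuity issue. Assuming $\mu \ll \mathbb P_k$, I would check that $\ith \mu{k-1} \ll \mathbb P_{k-1}$ (if $\mathbb P_{k-1}(v) = 0$ then $\mathbb P_k(v,y) = 0$ for every $y$, so $\mu(v,y) = 0$ for every $y$, hence $\ith \mu{k-1}(v) = 0$) and $\mu \ll \ith \mu{k-1} \otimes p$ (if $\ith \mu{k-1}(v) = 0$ then $\mu(v, y) = 0$; if $p(v_{k-1}, y) = 0$ then $\mathbb P_k(v, y) = 0$, so $\mu(v, y) = 0$). In particular both $\ith Rk(\mu) = \ent(\mu|\ith \mu{k-1} \otimes p)$ and $\ent(\ith \mu{k-1}|\mathbb P_{k-1})$ are well-defined elements of $[0, +\infty]$.

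Next, for any $u = (v,y) \in \supp \mu$, all of $\mu(u)$, $\mathbb P_k(u)$, $\ith \mu{k-1}(v)$, $\mathbb P_{k-1}(v)$, and $p(v_{k-1}, y)$ are strictly positive, and the elementary identity
\begin{equation*}
\log \frac{\mu(u)}{\mathbb P_k(u)} = \log \frac{\mu(u)}{\ith \mu{k-1}(v)\, p(v_{k-1},y)} + \log \frac{\ith \mu{k-1}(v)}{\mathbb P_{k-1}(v)}
\end{equation*}
holds. Multiplying by $\mu(u)$ and summing over $u \in \spaces^k$ (with $0 \cdot \log 0 = 0$ as usual), the left-hand side becomes $\ent(\mu | \mathbb P_k)$ and the first sum on the right-hand side becomes $\ith Rk(\mu)$ by~\eqref{eq: expression Rk}. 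The second sum is handled by noting that the summand depends only on $v$, so summing over $y$ first and using $\sum_y \mu(v,y) = \ith \mu{k-1}(v)$ gives exactly $\ent(\ith \mu{k-1}|\mathbb P_{k-1})$.

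There is essentially no hard step: the only point to be watchful about is that the three sums could individually be $+\infty$. Since all three relative entropies are nonnegative and absolute continuity ensures each sum is a well-defined element of $[0, +\infty]$, the termwise identity on $\supp \mu$ sums (or diverges) consistently on both sides, yielding~\eqref{eq: relative entropy for Lnk} as an equality in $[0, +\infty]$. The balancedness of $\mu$ is not needed for this particular identity; it enters only in the later use of the lemma, where $\ith \mu{k-1}$ is itself balanced in $\mathcal P(\spaces^{k-1})$.
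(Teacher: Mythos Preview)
Your proposal is correct and follows the same approach as the paper: factor $\mathbb P_k(v,y) = \mathbb P_{k-1}(v)\,p(v_{k-1},y)$, split the logarithm, and sum. The paper is a bit more explicit about the one point you flag as delicate---since the individual summands are signed, splitting the sum requires checking that the negative parts are summable, which the paper does via the bound $s\log s \geq -e^{-1}$ and by recognizing the inner sum over $y$ as a relative entropy---but your sketch identifies this issue correctly and the argument goes through.
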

\begin{proof} 
	For all $u\in \supp\ith \mu {k-1}$, let $q(u,\cdot)$ be the probability measure on $\spaces$ defined by the relation $\mu(uy)=\ith \mu {k-1}(u)q(u,y)$ for all $y\in\spaces$. 
	Since $\mu\in \ith \abscont k$, Equation~\eqref{eq: expression Rk} holds.
	Equation~\eqref{eq: relative entropy for Lnk} is the result of the following computations, which we justify below.
	\begin{align} 
		\ent (\mu|\mathbb P_k)
		&=\sum_{u}\ith \mu {k-1}(u)\sum_{y}q(u,y)\log\frac{\mu (uy)}{\mathbb P_k (uy)}
		\nonumber
		\\&=\sum_{u}\ith \mu {k-1}(u)\sum_{y}q(u,y)\left(\log\frac{\ith \mu {k-1}(u)}{\mathbb P_{k-1}(u)}+\log\frac{q(u,y)}{p(u_{k-1},y)}\right)
		\label{eqloc: S(mu Pk) 1}
		\\&=\sum_{u}\ith \mu {k-1}(u)\bigg(\log\frac{\ith \mu{k-1}(u)}{\mathbb P_{k-1}(u)}+\sum_{y}q(u,y)\log\frac{q(u,y)}{p(u_{k-1},y)}\bigg)
		\label{eqloc: S(mu Pk) 2}
		\\&=\sum_{u}\ith \mu {k-1}(u)\log\frac{\ith \mu{k-1}(u)}{\mathbb P_{k-1}(u)}
		+\sum_{u}\ith \mu {k-1}(u)\sum_{y}q(u,y)\log\frac{q(u,y)}{p(u_{k-1},y)}
		\label{eqloc: S(mu Pk) 3}
		\\&=\ent (\ith \mu {k-1}(u)|\mathbb P_{k-1})+\ith R{k}(\mu).
		\nonumber
	\end{align}
	In these computations, the sums are taken for $u\in\supp\ith \mu {k-1}$ and $y\in\supp q(u,\cdot)$.
	The transitions from \eqref{eqloc: S(mu Pk) 1} to \eqref{eqloc: S(mu Pk) 2} and from \eqref{eqloc: S(mu Pk) 2} to \eqref{eqloc: S(mu Pk) 3} must be justified. Let $u\in \supp \ith \mu {k-1}$.
	Then, the series 
	\begin{equation*}
		\sum_{y\in\supp q(u,\cdot)}q(u,y)\log \frac{\ith \mu {k-1}(u)}{\mathbb P_{k-1}(u)}= \log \frac{\ith \mu {k-1}(u)}{\mathbb P_{k-1}(u)}
	\end{equation*}
	is absolutely convergent in $(-\infty,+\infty)$, since $\ith \mu {k-1}$ is absolutely continuous with respect to $\mathbb P_{k-1}$. Moreover, the sum
	\begin{equation*}
		\sum_{y\in \supp q(u,\cdot)}q(u,y)\log \frac{q(u,y)}{p(u_{k-1},y)}=\ent(q(u,\cdot)|p(u_{k-1},\cdot))
	\end{equation*}
	is properly defined in $[0,+\infty]$ as a relative entropy, because $q(u,\cdot)$ is absolutely continuous with respect to $p(u_{k-1},\cdot)$.
	This justifies that the expression within brackets in~\eqref{eqloc: S(mu Pk) 2} is properly defined in $(-\infty,+\infty]$ for every $u$, which justifies the transition from \eqref{eqloc: S(mu Pk) 1} to \eqref{eqloc: S(mu Pk) 2}. 
	In~\eqref{eqloc: S(mu Pk) 3}, notice that both 
	\begin{equation*}
		u\mapsto\frac{\ith \mu {k-1}(u)}{\mathbb P_{k-1}(u)}\log \frac{\ith \mu {k-1}(u)}{\mathbb P_{k-1}(u)},
		\qquad 
		u\mapsto \sum_{y\in\supp q(u,\cdot)}q(u,y)\log\frac{q(u,y)}{p(u_{k-1},y)},
	\end{equation*}
	are bounded below; the former by $\inf_{s>0}s\log s=-e^{-1}$, and the latter by 0 as a relative entropy. Hence the two sums in~\eqref{eqloc: S(mu Pk) 3} are well defined in $(-\infty,+\infty]$, and the transition from~\eqref{eqloc: S(mu Pk) 2} to~\eqref{eqloc: S(mu Pk) 3} is justified.
\end{proof}
\subsection{Weak LDP for the empirical process}
\label{section: dawson gartner}
In this section, we prove Theorem~\ref{theorem: intro weak LDP for Lninfty}. We follow the projective limit approach of Sections 6.5.2 and 6.5.3 of~\cite{DZ}, Section 4.4 of~\cite{deuschel1989}, or originally Theorem~3.3 of~\cite{dawsongartner1987}. 
We equip $\spaces^{\N}$ with the product topology, which is metrized by the distance $\ith d{\infty}$ given by $\ith d\infty (\omega,\omega')=2^{-n}$, where $n=\inf\{k\in \N\ |\ \omega_k\neq \omega'_k\}$. 
Since $\spaces$ is separable, the Borel $\sigma$-algebra of $ \spaces^\N$ is generated by the product of Borel $\sigma$-algebras of $\spaces$; see Theorem~1.10 of~\cite{parthasarathy2005}.
Let $\mathcal P(\spaces^{\N})$ denote the space of probability measures on $\spaces^\N$. \label{not: Pcal infty}\label{not: Mcal infty} Let $\continuous(\spaces^\N)$ and $\unifcont(\spaces^\N)$ respectively denote the space of bounded continuous functions and of bounded, uniformly continuous, functions on $\spaces^\N$.\label{not: Ccal}\label{not: Cu}
We define the dual pairing\label{not: pairing infty} 
\begin{equation}
	\label{eq: dual pariing process level}
	(V,\mu)\mapsto \langle \mu, V\rangle=\int_{\spaces^\N}V\d \mu,\qquad V\in \mathcal V,\ \mu\in \mathcal P(\spaces^\N),
\end{equation}
where $\mathcal V=\continuous(\spaces^\N)$ or $\mathcal V=\unifcont(\spaces^\N)$.
Functions $V\in \bounded(\spaces^k)$ for some $k\in \N$ can be seen as functions of $\unifcont(\spaces^k)$ that depend only on their first $k$ variables, satisfying $\langle \mu, V\rangle =\langle \ith \mu k, V\rangle$ for $\mu \in\mathcal P(\spaces^\N)$. We have
\begin{equation}
	\label{eq: inclusions C(SN)}
	\bigcup_{k\geq 2}\bounded(\spaces^k)\subseteq \unifcont(\spaces^\N)\subseteq\continuous (\spaces^\N).
\end{equation} 
The weak topology on $\mathcal P(\spaces^\N)$ is the coarsest topology that makes each $\langle \cdot, V\rangle$ continuous, for all $V\in \mathcal V$ (by the Portmanteau theorem, both $\mathcal V=\continuous(\spaces^\N)$ and $\mathcal V=\unifcont(\spaces^\N)$ define the same weak topology; see Theorem~D.10 of~\cite{DZ} or Theorem~6.1 of~\cite{parthasarathy2005}).
We equip $\mathcal P(\spaces^\N)$ with this topology.
By Lemma 6.5.14 of~\cite{DZ}, the topological space $\mathcal P(\spaces^\N)$ is the projective limit of $(\mathcal P(\spaces^k))_{k\geq 2}$. This means that the weak topology is generated by sets 
\begin{equation*}
	\left\{\mu\in\mathcal P(\spaces^\N),\ \left|\langle \ith \mu k, V\rangle -x\right|<\epsilon\right\},
	\qquad k\geq 2,\ V\in \bounded(\spaces^k),\ x\in \mathbb R,\ \epsilon>0.
\end{equation*}

Let $T$ denote the shift map on $\spaces^\N$ defined by $T(x_1,x_2,\ldots)=(x_2,x_3,\ldots)$, and let $\balanced(\spaces^\N)$ denote the set of shift-invariant measures of $\mathcal P(\spaces^{\N})$. \label{not: Pbalinfty}
For $\mu\in\mathcal P(\spaces^{\N})$ and $k\geq 1$, we define $\ith \mu k\in\mathcal P(\spaces^k)$ by setting $\ith \mu k(A)=\mu(A\times \spaces^\N)$ for all $A\subseteq \spaces^k$. When $\mu$ is shift-invariant, $\ith \mu k$ is balanced. Conversely, if every $\ith \mu k$ is balanced, then $\mu$ is shift-invariant because, for all $A\subseteq \spaces^k$, by~\eqref{eq: definition balanced on Sk},
\begin{equation*}
	\mu(T^{-1}(A\times \Omega))=\mu(\spaces \times A\times \Omega)=\ith\mu{k+1}(\spaces\times A)=\ith \mu{k+1}(A\times \spaces)=\mu(A\times \Omega).
\end{equation*}
We denote by $\pi_k: \mathcal P(\spaces^\N)\to \mathcal P(\spaces^k)$ the continuous map $\mu\mapsto \ith \mu k$. Notice that $\pi_{k'}=\pi_{k,k'}\circ \pi_k$ for all $1\leq k'\leq k$, where $\pi_{k,k'}$ is a continuous map defined in Section~\ref{section: weak LPD for Lnk}.\label{not: pik}
Notice that two measures $\mu,\nu$ are equal in $\balanced (\spaces^\N)$ if and only if $\pi_k(\mu)=\pi_k(\nu)$ for all $k\geq 2$.
Recall that the empirical process $(\ith L\infty_n)$ is defined as an element of $\mathcal P(\spaces^\N)$ by~\eqref{eq: definition empirical measures level 1 2 3}.
Clearly, $\pi_k(\ith L\infty_n)=\ith Lk_n$.
Our goal is to prove Theorem~\ref{theorem: intro weak LDP for Lninfty}, \emph{i.e.}~to derive the weak LDP in $\mathcal P(\spaces^\N)$ for $(\ith L\infty_n)$ and prove Equation~\eqref{eq: rate functions Iinfty}. 

We shall first define the rate functions appearing in~\eqref{eq: rate functions Iinfty}. Let $\mu\in\mathcal P(\spaces^\N)$. We define the level-3 SCGF by\label{not: Lambda level 3}
\begin{align}
	\ith \Lambda \infty(V)&=\limsup_{n\to\infty}\frac 1n\log \mathbb E\left[e^{n\langle \ith L\infty_n, V\rangle}\right],
	\qquad V\in \continuous(\Omega).
	\label{eq: definition lambda infty}
\end{align}
We let\footnote{If we equip $\continuous(\Omega)$ with the weak topology induced by $\langle\cdot, \cdot\rangle$ and $\mathcal P(\Omega)$, the function $(\ith \Lambda \infty)^*$ is the convex conjugate of $\ith \Lambda \infty$.
}
\label{not: Lambda * level 3}
\begin{align}
	\label{eq: definition Lambda infty *}
	(\ith \Lambda\infty)^*(\mu)&
	=\sup_{V\in\continuous(\spaces^\N)}\left(\langle \mu, V\rangle -\ith \Lambda \infty (V)\right),\qquad \mu\in \mathcal P(\spaces^\N).
\end{align}
Let $\spaces^{-\N_0}=\{\ldots, \omega_{-2},\omega_{-1},\omega_{0}\ |\ \omega_i\in \spaces, \, i\in \N_0\}$, equipped with the product topology and the distance $\ith d\infty_-$ given by $\ith d\infty_-(\omega,\omega')= 2^n$ where $n=\sup\{k\in -\N_0\ |\ \omega_k\neq \omega_k'\}$.
Since $\mu$ is shift invariant, Kolmogorov's extension theorem ensures that we can define a unique measure $\mu^*\in \mathcal P(\spaces^{-\N_0})$ satisfying
\begin{equation*}
	\mu^*(\spaces^{-\N}\times\{u\})=\ith\mu{k}(u),
	\qquad u\in \spaces^k,\ k\geq 1.
\end{equation*}
Let $\expbounded(\spaces^{-\N_0}):=\exp(\continuous(\spaces^{-\N_0}))$ denote the set of continuous positive functions on $\spaces^{-\N_0}$ that are bounded away from $0$ and $\infty$.
Let\label{not: IDV infty}
\begin{equation}
	\label{eq: definition idv infty}
	\ith \idv \infty(\mu)=\sup_{\uf\in \expbounded(\spaces^{-\N_0})}\Big\langle\mu^*, \log \frac{f}{\ith P\infty f}\Big \rangle,
\end{equation}
where
\begin{equation*}
	\ith P\infty g(\omega)=\sum_{y\in \spaces}p(\omega _0,y)g(\omega y), \qquad g\in \expbounded(\spaces^{-\N_0}).
\end{equation*}
In the definition of $\ith P\infty$ above, $\omega y$ denotes the word indexed by $-\N_0$ whose letters are $(\omega y)_i=y$ if $i=0$ and $(\omega y)_i=\omega_{i+1}$ if $i<0$. The symbol $\langle\cdot,\cdot\rangle$ in~\eqref{eq: definition idv infty} denotes the same integral as in~\eqref{eq: dual pariing process level} but indexed over $\spaces^{-\N_0}$.
The function $\ith \idv \infty$ is the level-3 DV entropy.\footnote{This function is not common in the literature.}
We also define $\ith R\infty$ like in in Sections 4.4 of~\cite{deuschel1989} and~6.5.3 of~\cite{DZ}. Using again Kolmogorov's extension Theorem, we define a measure $\nu^*\in\mathcal P(\spaces^{-\mathbb N})$ by
\begin{equation*}
	\nu^*(\spaces^{-\N_0}\times \{u\})
	=\ith \mu {k-1}(u_1\ldots u_{k-1})p(u_{k-1},u_k), \qquad u\in\spaces^k, \ k\geq 2.
\end{equation*}
If $\mu^*$ is absolutely continuous with respect to $\nu^*$, let $\frac {\d \mu^*}{\d \nu^*}$ denote the Radon-Nykodym derivative of $\mu^*$ with respect to $\nu^*$.
We set $\ith R\infty(\mu)$ as the relative entropy of $\mu^*$ with respect to $\nu^*$, that is\label{not: R infty}
\begin{equation}
	\label{eq: definition R infty}
	\ith R\infty (\mu)
	=
	\begin{cases}
		\int_{\spaces^{-\N_0}}\log \frac {\d \mu^*}{\d \nu^*} \d \mu^*,\quad &\hbox{if $\mu^*\ll \nu^*$},\\
		\infty,\quad &\hbox{otherwise}.
	\end{cases}
\end{equation}
These three definitions of rate functions are natural extensions of~\eqref{eq: definition Lambda}, \eqref{eq: definition DV entropy}, and~\eqref{eq: definition R}.
The last definition needed in the statement of Theorem~\ref{theorem: intro weak LDP for Lninfty} is the definition of $\ith {\mathcal A} \infty$ and $\ith \Abal\infty$. This definition is also a natural extension of Definition~\ref{def: admissible measures}. In fact, $\ith \Abal \infty$ is the projective limit of the sequence $(\ith \Abal k)$; see Appendix~\ref{section: admissible measures on the process level}.
\begin{definition}[Admissibility]
	\label{def: admissibility on the process level}
	\label{not: Abal infty}
	\label{not: Ainfty}
	Let $\mu\in \mathcal P (\spaces^{\N})$. We say that $\mu$ is \emph{pre-admissible} if there exists a set of indices $\mathcal J_\mu\subseteq \mathcal J$ such that 
	\begin{equation}
		\label{eq: admissibility on the process level}
		\mu=\sum_{j\in \mathcal J_\mu}\mu|_{C_j^\N},
	\end{equation} 
	in which case we impose that $\mathcal J_\mu$ is minimal. 
	If $\mu$ is pre-admissible, we say that $\mu$ is \emph{admissible} if the order $\leadsto$ is total on $(C_j)_{j\in\mathcal J_\mu}$ and $\beta\leadsto C_j$ for all $j\in {\mathcal J}_\mu$. 
	We denote by $\ith \Ab \infty$ the set of all admissible measures of $\mathcal P (\spaces^{\N})$, and we set $\ith \Abal \infty=\ith \Ab \infty\cap\balanced(\spaces ^{\N})$.
\end{definition}
Now we can turn to the proof of Theorem~\ref{theorem: intro weak LDP for Lninfty}. We first use a variant of the Dawson-Gärtner Theorem to derive the weak LDP with an abstract rate function and second compute this rate function with Lemma~\ref{lemma: rate functions for Lninfty} and Proposition~\ref{prop: alternative expression for R infty}, which are stated and proved below.
\begin{proof}[Proof of Theorem~\ref{theorem: intro weak LDP for Lninfty}]
	Let $\ith I\infty :\mathcal P(\spaces^\N)\to [0,+\infty]$ be defined by
	\begin{equation}
		\label{eq: dawson gartner rate function}
		\ith I\infty(\mu)= 
		\sup_{k\geq 2}\ith I k (\ith \mu k),\qquad \mu\in \mathcal P(\spaces^\N),
	\end{equation}
	where $\ith I k$ is the rate function of Proposition~\ref{prop: weak LDP for Lnk}.
	Since it is the supremum of a family of lower semicontinuous functions, this function is lower semicontinuous. We begin by proving that $(\ith L\infty_n)$ satisfies the weak LDP with rate function $\ith I\infty$, by a variant of the Dawson-Gärtner Theorem.
	The proof of the LDP lower bound with $\ith I\infty$ is the classical proof for the lower bound of the Dawson-Gärtner Theorem, as in Theorem 4.6.1 of~\cite{DZ} and Exercise 2.1.21 of~\cite{deuschel1989}, and we do not repeat it here.
	It remains to show that the weak LDP upper bound holds with $\ith I\infty $.
	The proof of the upper bound is similar to the one of~\cite{DZ} or~\cite{deuschel1989}, up to some adaptations required by the fact that we only have weak LDPs and no good rate function.\footnote{By definition of the weak LDP upper bound, we do not need the compactness usually brought by the goodness of the rate function.} Let $K$ be a compact subset of $\mathcal P(\spaces^\N)$, and $\alpha\in \mathbb R$. For $k\in \N$, we set $\mathcal L_k=\{\mu \in \mathcal P(\spaces^k)\ |\ \ith I k(\mu )\leq \alpha \}$. We also set $\mathcal L_\infty=\{\mu \in \mathcal P(\spaces^\N)\ |\ \ith I \infty(\mu )\leq \alpha \}$.
	By definition of $\ith I\infty$, a measure $\mu\in \mathcal P¨(\spaces^\N)$ satisfies $\ith I\infty(\mu)\leq \alpha$ if and only if it satisfies $\ith Ik(\ith \mu k)\leq \alpha$ for all $k$. Therefore, $K\cap \mathcal L_\infty$ is the projective limit of closed sets $\pi_k(K)\cap\mathcal  L_k$,~\emph{i.e.}
	\begin{equation*}
		K\cap \mathcal L_\infty= \bigcap_{k\geq 2}\pi_k^{-1}(\pi_k(K)\cap\mathcal  L_k).
	\end{equation*}
	Assume that $\alpha<\inf_{\mu\in K} \ith I\infty(\mu) $. It follows that $K\cap\mathcal L_ \infty$ is empty.
	Each $\pi_k(K)\cap\mathcal  L_k$ is compact, thus by property of the projective limit of compact sets, one of them must be empty; see Theorem~B.4 in~\cite{DZ} or TG~I.6.8 in~\cite{bourbakiTG}.
There exists $k\geq 2$ such that $\pi_k(K)\cap\mathcal  L_k$ is empty, implying that $\ith Ik(\mu)>\alpha$ for all $\mu \in \pi_k(K)$.
By Proposition~\ref{prop: weak LDP for Lnk}, the weak LDP upper bound for $(\ith L k_n)$ holds with $\ith Ik$. 
Thus,
\begin{align*}
	\limsup_{n\to\infty}\frac 1n\log \mathbb P(\ith L\infty_n\in K)
	&\leq \limsup_{n\to\infty}\frac 1n\log \mathbb P(\ith Lk_n\in \pi_k(K))
	\leq-\alpha.
\end{align*} 
This holds true as long as $\alpha<\inf_{\mu \in K} \ith I\infty$, thus
\begin{equation*}
	\limsup_{n\to\infty}\frac 1n\log \mathbb P(\ith L\infty_n\in K)\leq-\inf_{\mu \in K} \ith I\infty.
\end{equation*} 
Both the lower and upper bound hold with $\ith I \infty$, hence $(\ith L\infty_n)$ satisfies the weak LDP with rate function $\ith I\infty$.
We now prove that $\ith I\infty$ satisfies~\eqref{eq: rate functions Iinfty} and \eqref{eq: level 3 relative entropy}.
If $\mu\notin\ith \Abal \infty$, then by Proposition~\ref{lemma: A infty projective limit of Ak}, there exists $k$ such that $\ith \mu k\notin \ith \Abal k$, therefore $\ith I \infty(\ith \mu \infty)\geq \ith I k(\ith \mu k)=\infty$. Assume now that $\mu\in \ith \Abal\infty$. By  Proposition~\ref{prop: weak LDP for Lnk} and Lemma~\ref{lemma: rate functions for Lninfty}, we have
\begin{align*}
	\ith I\infty(\mu)
	&=\sup_ {k\geq 2}\sup_ {V_k\in\bounded(\spaces^k)}(\langle \mu, V_k\rangle -\ith \Lambda k(V_k))
	=(\ith\Lambda\infty)^*(\mu),
	\\
	\ith I\infty(\mu)
	&=\sup_ {k\geq 2}\sup_ {\uf_k\in\expbounded(\spaces^k)}\Big\langle\ith \mu k,\log \frac{f_k}{\ith Pkf_k}\Big\rangle
	=\ith \idv\infty (\mu),
	\\
	\ith I\infty(\mu)
	&=\sup_{k\geq 2}\ith Rk(\ith \mu k)=\ith R\infty(\mu),
\end{align*}
which establishes \eqref{eq: rate functions Iinfty}.
Finally, the identity \eqref{eq: level 3 relative entropy} is proved in Proposition~\ref{prop: alternative expression for R infty} below.
\end{proof}
\begin{lemma}
\label{lemma: rate functions for Lninfty}
Let $\mu\in \mathcal P(\spaces^\N)$. 
\begin{enumerate}
	\item $\ith \Lambda\infty$ satisfies
	\begin{equation}
		(\ith \Lambda\infty)^*(\mu)
		=\sup_{V\in\unifcont(\spaces^\N)}\left(\langle \mu, V\rangle -\ith \Lambda \infty (V)\right)
		=\sup_{k\geq 2}\sup_{V_k\in\bounded(\spaces^k)} \left(\langle\ith \mu k, V_k\rangle -\ith \Lambda k (V_k)\right).
		\label{eq: alternative expression level-3 convex conjugation}
	\end{equation}
	\item 
	Let $\expbounded_\mathrm{u}(\spaces^{-\N_0}):=\exp(\unifcont(\spaces^{-\N_0}))$ denote the set of uniformly continuous positive functions on $\spaces^{-\N_0}$ that are bounded away from $0$ and $\infty$. Then, 
	$\ith \idv \infty$ satisfies
	\begin{align}
		&\ith \idv \infty(\mu)
		=\sup_{\uf\in \expbounded_\mathrm{u}(\spaces^{-\N_0})}\Big\langle \mu^*,\log \frac{f}{\ith P\infty f}\Big \rangle
		=\sup_{k\geq 2}\sup_{\uf_k\in \expbounded(\spaces^k)}\Big \langle \ith \mu k,\log \frac {f_k}{\ith P k f_k}\Big\rangle.
		\label{eq: alternative expression level-3 IDV}
	\end{align}
	\item
	\label{item: pinsker lemma}
	The sequence $(\ith Rk(\ith \mu k))$ is nondecreasing and $\ith R\infty$ satisfies 
	\begin{equation}
		\ith R\infty(\mu)=\lim_{k\to\infty}\ith Rk(\ith \mu k).
	\end{equation}
\end{enumerate}
\end{lemma}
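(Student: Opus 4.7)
The overall strategy is to exploit the fact that, at the point where Lemma~\ref{lemma: rate functions for Lninfty} is invoked in the proof of Theorem~\ref{theorem: intro weak LDP for Lninfty}, the weak LDP for $(\ith L\infty_n)$ has already been established with rate function $\ith I\infty(\mu) = \sup_k \ith Ik(\ith \mu k)$. By Proposition~\ref{prop: weak LDP for Lnk}, each $\ith Ik(\ith \mu k)$ coincides with $(\ith \Lambda k)^*(\ith \mu k)$, $\ith \idv k(\ith \mu k)$, and $\ith Rk(\ith \mu k)$ on $\ith\Abal k$. I will identify each of the infinite-level rate functions with this baseline by sandwiching between cylinder-function expressions and Varadhan-type bounds coming from the weak LDP.

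For part~(1), the inclusions $\bigcup_k \bounded(\spaces^k) \subseteq \unifcont(\spaces^\N) \subseteq \continuous(\spaces^\N)$ are immediate since every cylinder function is locally constant for $\ith d{\infty}$. For a cylinder $V \in \bounded(\spaces^k)$, one has $\langle \ith L\infty_n, V\rangle = \langle \ith Lk_n, V\rangle$, so $\ith\Lambda\infty(V) = \ith\Lambda k(V)$ and $\langle \mu, V\rangle = \langle \ith \mu k, V\rangle$. Combined with Proposition~\ref{prop: weak LDP for Lnk} this gives
\begin{equation*}
	\sup_k\sup_{V_k\in\bounded(\spaces^k)} \big(\langle \ith\mu k, V_k\rangle - \ith\Lambda k(V_k)\big) = \sup_k (\ith\Lambda k)^*(\ith\mu k) = \ith I\infty(\mu).
\end{equation*}
On the other side, the weak LDP lower bound applied to small open balls $U$ around $\mu$ yields $\ith\Lambda\infty(V) \geq \inf_{\nu \in U}\langle \nu, V\rangle - \ith I\infty(\mu)$ for every $V\in \continuous(\spaces^\N)$; letting $U$ shrink to $\{\mu\}$ and using continuity of $\langle \cdot, V\rangle$ gives $(\ith\Lambda\infty)^*(\mu)\leq\ith I\infty(\mu)$, closing the chain.

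For part~(2), a direct computation shows that for $f_k\in\expbounded(\spaces^k)$ lifted to a cylinder on $\spaces^{-\N_0}$ depending on the last $k$ coordinates, $\ith P\infty f_k(\omega) = \ith Pk f_k(\omega_{-k+1},\ldots,\omega_0)$, and the shift-invariance of $\mu$ gives $\langle \mu^*, \log(f_k/\ith P\infty f_k)\rangle = \langle \ith\mu k, \log(f_k/\ith Pk f_k)\rangle$. Combined with Proposition~\ref{prop: weak LDP for Lnk}, this identifies the supremum over cylindrical $f_k$ with $\ith I\infty(\mu)$, and since every cylinder lies in $\expbounded_\mathrm{u}(\spaces^{-\N_0})$, the only remaining task is to show $\ith\idv\infty(\mu)\leq\ith I\infty(\mu)$. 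This is the main technical obstacle: the natural route is to adapt Step~2 of the proof of Proposition~\ref{prop: equality of all rate functions on Adebal} at the process level, fixing $f\in\expbounded(\spaces^{-\N_0})$, setting $V = \log(f/\ith P\infty f)$, and showing that an appropriate resolvent-style construction produces a test function against which a Varadhan-type bound yields $\langle \mu^*, V\rangle \leq (\ith\Lambda\infty)^*(\mu) = \ith I\infty(\mu)$. The reduction from $\expbounded$ to $\expbounded_\mathrm{u}$ can alternatively be handled by exploiting tightness of $\mu^*$: on a compact set carrying most of its mass, $f$ is uniformly continuous, and cylinder approximation yields the claim by dominated convergence.

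For part~(3), the monotonicity is most transparent through the shift-invariant extension $\mu^*$. Writing $q_k(u,\cdot) = \ith\mu k(u\,\cdot)/\ith\mu{k-1}(u)$ for the conditional law of $X_k$ given $X_1,\ldots,X_{k-1}$ under $\mu$, shift-invariance yields
\begin{equation*}
	\ith Rk(\ith\mu k) = \mathbb E_{\mu^*}\big[\ent\big(q_k(X_{-k+1},\ldots,X_{-1},\cdot)\,\big|\,p(X_{-1},\cdot)\big)\big].
\end{equation*}
The tower property $q_k = \mathbb E_{\mu^*}[q_{k+1}\mid X_{-k+1},\ldots,X_{-1}]$ combined with convexity of $\ent(\,\cdot\,|\,p(X_{-1},\cdot))$ in its first argument (Jensen) gives $\ith Rk(\ith\mu k)\leq \ith R{k+1}(\ith\mu{k+1})$. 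For the limit, on the cylinder $\sigma$-algebra $\mathcal F_k$ generated by the last $k$ coordinates, the Radon--Nikodym derivative $\d\mu^*/\d\nu^*$ restricts to $q_k(X_{-k+1},\ldots,X_{-1},X_0)/p(X_{-1},X_0)$ (by comparing the cylinder masses), whose $\mu^*$-expectation equals $\ith Rk(\ith\mu k)$. Since $\bigvee_k \mathcal F_k$ is the full Borel $\sigma$-algebra of $\spaces^{-\N_0}$, the classical monotonicity of relative entropy under increasing $\sigma$-algebras (equivalently, $L^1(\mu^*)$-convergence of the log-Radon--Nikodym martingale) yields $\ith R\infty(\mu) = \lim_k \ith Rk(\ith\mu k)$; the singular case $\mu^*\not\ll\nu^*$ is handled by observing that the singularity is witnessed on some finite-$k$ cylinder, forcing $\ith Rk(\ith\mu k)\to\infty$.
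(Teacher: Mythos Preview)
Your overall strategy (routing everything through $\ith I\infty$ via the already-established weak LDP) is different from the paper's and has a scope issue. The paper proves parts~(1) and~(2) by a direct, LDP-free approximation: given $V\in\continuous(\spaces^\N)$ (resp.\ $f\in\expbounded(\spaces^{-\N_0})$), set $V_k(x)=\inf\{V(\omega):\omega_1=x_1,\ldots,\omega_k=x_k\}$ (resp.\ $f_k(x)=\inf\{f(\omega):\omega_{1-k}=x_1,\ldots,\omega_0=x_k\}$). Then $V_k\leq V$ gives $\ith\Lambda k(V_k)\leq\ith\Lambda\infty(V)$ (resp.\ $f_k\leq f$ gives $\langle\mu^*,\log\ith P\infty f\rangle\geq\langle\ith\mu k,\log\ith Pk f_k\rangle$), while $V_k\uparrow V$ (resp.\ $\log f_k\uparrow\log f$) handles the other term by monotone convergence. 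This works for \emph{every} $\mu\in\mathcal P(\spaces^\N)$. Your sandwich, by contrast, relies on $\ith I\infty(\mu)=\sup_k(\ith\Lambda k)^*(\ith\mu k)$, which via Proposition~\ref{prop: weak LDP for Lnk} holds only when each $\ith\mu k\in\ith\Abal k$; for $\mu\notin\ith\Abal\infty$ you get $\ith I\infty(\mu)=\infty$ while $\sup_k(\ith\Lambda k)^*(\ith\mu k)$ may be finite, and the chain breaks. This restricted version does suffice for the application in Theorem~\ref{theorem: intro weak LDP for Lninfty}, but it does not prove the lemma as stated.

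For part~(2) specifically, your sketch is also too vague to stand as a proof: the ``resolvent-style'' route would require a process-level analogue of Step~2 of Proposition~\ref{prop: equality of all rate functions on Adebal} that the paper never develops, and the tightness/uniform-continuity route does not obviously control $\log\ith P\infty f$ in the right direction under cylinder approximation of $f$. The paper's infimum trick sidesteps both issues in two lines, precisely because $f_k\leq f$ makes the $\ith P\infty$-term go the favourable way while monotone convergence handles $\log f_k$.

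For part~(3), the paper simply cites Pinsker's lemma (Lemma~6.5.13 of~\cite{DZ}); your Jensen-plus-martingale argument is a correct and more explicit proof of monotonicity and of the limit in the absolutely continuous case. However, your handling of the singular case is wrong as stated: it is \emph{not} true that $\mu^*\not\ll\nu^*$ is ``witnessed on some finite-$k$ cylinder'' (Kakutani-type product examples give mutually singular measures with all finite marginals mutually absolutely continuous). The correct argument is the contrapositive of what you already have: if $\sup_k\ith Rk(\ith\mu k)<\infty$, the density martingale is $L^1$-bounded, forcing $\mu^*\ll\nu^*$; hence $\mu^*\not\ll\nu^*$ implies $\ith Rk(\ith\mu k)\to\infty$.
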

\begin{proof}
\begin{enumerate}
	\item 
	By~\eqref{eq: inclusions C(SN)}, since $\ith \Lambda \infty (V)=\ith \Lambda k(V)$ for all $k\geq 2$ and $V\in \bounded (\spaces^k)$, each quantity in~\eqref{eq: alternative expression level-3 convex conjugation} is no larger that the one on its left. We show show that the rightmost quantity in~\eqref{eq: alternative expression level-3 convex conjugation} is as large as the leftmost one.
	Let $V\in \continuous(\spaces^\N)$, and set, for all $x\in \spaces^k$,
	\begin{equation*}
		V_k(x)=\inf\{V(\omega)\ |\ \omega_1=x_1,\ldots,\omega_k=x_k\}, \quad k\geq 2.
	\end{equation*}
	The functions $V_k$ belongs to $\bounded(\spaces^k)$. When seen as an element of $\continuous(\spaces^\N)$, it satisfies $V_k\leq V$, thus $\ith\Lambda k(V_k)\leq \ith \Lambda \infty(V)$. Moreover, the sequence $(V_k)$ is nondecreasing, thus by monotone convergence $\langle \mu, V_k\rangle\to \langle \mu, V\rangle $ as $k\to\infty$. 
	Therefore, the rightmost term in~\eqref{eq: alternative expression level-3 convex conjugation} is bounded below by
	\begin{equation*}
		\limsup_{k\to \infty}\big(\langle \ith \mu k, V_k\rangle -\ith \Lambda k(V_k)\big)\geq \langle \mu, V\rangle -\ith \Lambda \infty (V).
	\end{equation*}
	Taking the supremum over $V\in \continuous(\spaces^\N)$ yields the desired bound.
	\item 
	For all $k\geq 2$ and all $f\in \expbounded(\spaces^k)$, define $f^{*,k}$ as a function on $\spaces^{-\N_0}$ by $f^{*,k}(\omega)=f(\omega_{1-k},\omega_{-k},\ldots,\omega_0)$. This is an uniformly continuous function that is bounded from above and below, hence we have
	\begin{equation*}
		\bigcup_{k\geq 2}\expbounded(\spaces^k)\subseteq \expbounded_\mathrm{u}(\spaces^{-\N_0})\subseteq \expbounded(\spaces^{-\N_0}).
	\end{equation*}
	Also notice that 
	\begin{equation}
		\begin{split}
			\langle \mu^*,\log f^{*,k}\rangle
			=\langle \ith \mu k,\log f\rangle,
			\qquad
			\langle\mu^*,\log\ith P\infty f^{*,k}\rangle
			=\langle\ith \mu k\log \ith Pkf\rangle .
		\end{split}
	\end{equation}
	This proves that each quantity in~\eqref{eq: alternative expression level-3 IDV} is no larger than the one on its left.
	We now show that the rightmost quantity in~\eqref{eq: alternative expression level-3 IDV} is as large as the leftmost one.
	Let $f\in\expbounded(\spaces^{-\N_0})$, and set, for all $x\in\spaces^k$,
	\begin{equation*}
		f_k(x)=\inf\{f(\omega)\ |\ 
		\omega_{1-k}=x_1,\omega_{2-k}=x_{2},\ldots ,\omega_0=x_k,\}.
	\end{equation*}
	The function $f_k$ belongs to $\expbounded(\spaces^k)$ and satisfies $f_k^{*,k}\leq f$, thus 
	\begin{align*}
		\langle\mu^*,\log \ith P\infty f\rangle
		\geq \langle\mu^*,\log \ith P\infty f_k ^{* ,k}\rangle
		=\langle\ith \mu k,\log \ith Pkf_k\rangle .
	\end{align*}
	Moreover, the sequence $(\log f_k^{*,k} )$ is nondecreasing, thus by monotone convergence,
	\begin{equation*}
		\langle \ith \mu k,\log f_k\rangle=\langle\mu^*, \log f_k^{*,k}\rangle \to \langle \mu^*,\log f\rangle,
		\qquad k\to \infty.
	\end{equation*}
	Therefore, the rightmost term in~\eqref{eq: alternative expression level-3 IDV} is bounded below by
	\begin{equation*}
		\limsup_{k\to \infty}\big( \langle \ith \mu k,\log f_k\rangle -\langle \ith \mu k,\log \ith Pkf_k\rangle \big) \geq \langle\mu^*, \log f\rangle -\langle \mu^*,\log \ith P\infty f\rangle.
	\end{equation*}
	Taking the supremum over $f\in \expbounded(\spaces^{-\N_0})$ yields the desired bound.
	\item 
	Property~\ref{item: pinsker lemma} is known as Pinsker's Lemma.
	We refer the reader to Lemma~6.5.13 in~\cite{DZ} or to~\cite{pinsker1964} for a proof.
\end{enumerate}
\end{proof}
\begin{proposition}
\label{prop: alternative expression for R infty}
Let $\mu\in\balanced(\spaces^\N)$, and assume that $\ent (\ith \mu 1|\beta)<\infty$. Then,
\begin{equation}
	\label{eq: alternative expression for R infty}
	\ith R\infty(\mu) =\lim_{k\to\infty}\frac 1k\ent  (\ith \mu k |\mathbb P_k),
\end{equation}
where $\ent (\ith \mu k|\mathbb P_k)$ denotes the relative entropy of $\ith \mu k$ with respect to $\mathbb P_k$. 
\end{proposition}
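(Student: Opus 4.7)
The plan is to establish a telescoping chain-rule identity
\begin{equation*}
\ent(\ith\mu k|\mathbb P_k)=\ent(\ith\mu 1|\beta)+\sum_{j=2}^{k}\ith Rj(\ith\mu j), \qquad k\geq 2,
\end{equation*}
by iterating Lemma~\ref{lemma: relative entropy for Lnk}, then divide by $k$, send $k\to \infty$, and invoke the monotonicity and convergence of $(\ith Rj(\ith \mu j))_j$ provided by Property~\ref{item: pinsker lemma} of Lemma~\ref{lemma: rate functions for Lninfty} together with Cesàro's theorem. Since $\mathbb P_1=\beta$ (by~\eqref{eq: definition Pt}) and since $\mu$ is shift-invariant, each marginal $\ith \mu k$ is balanced, so Lemma~\ref{lemma: relative entropy for Lnk} applies provided we can guarantee $\ith \mu k \ll \mathbb P_k$ at each step.

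First I would treat the benign case where $\ith \mu k\ll \mathbb P_k$ for every $k\geq 1$; this is automatic at $k=1$ from the hypothesis $\ent(\ith \mu 1|\beta)<\infty$, and an induction using Lemma~\ref{lemma: relative entropy for Lnk} propagates absolute continuity whenever the increments $\ith Rj(\ith \mu j)$ stay finite. In this regime the displayed identity holds in $[0,+\infty]$, dividing by $k$ yields $k^{-1}\ent(\ith\mu 1|\beta)\to 0$, and since Property~\ref{item: pinsker lemma} of Lemma~\ref{lemma: rate functions for Lninfty} tells us that $(\ith Rj(\ith \mu j))_j$ is nondecreasing with limit $\ith R\infty(\mu)$, the Cesàro averages $k^{-1}\sum_{j=2}^k \ith Rj(\ith \mu j)$ converge to the same limit $\ith R\infty(\mu)$. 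This delivers~\eqref{eq: alternative expression for R infty}.

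It remains to rule out the opposite case, namely that there exists a smallest $k_0\geq 2$ with $\ith \mu {k_0}\not\ll \mathbb P_{k_0}$; here I must show that both sides of~\eqref{eq: alternative expression for R infty} equal $+\infty$. Since $\mathbb P_{k_0}(u)=\beta(u_1)p(u)$ and since $\ith \mu 1\ll \beta$ (because $\ent(\ith \mu 1|\beta)<\infty$), the failure must come from a word $u$ with $\ith \mu{k_0}(u)>0$ but $p(u_i,u_{i+1})=0$ for some $i$. By shift-invariance of $\mu$, the marginal of $\mu$ on the pair of consecutive coordinates $(i,i+1)$ coincides with $\ith \mu 2$, so $\ith \mu 2(u_i,u_{i+1})>0$ while $\ith \mu 1\otimes p(u_i,u_{i+1})=0$. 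Hence $\ith \mu 2$ is not absolutely continuous with respect to $\ith \mu 1\otimes p$, forcing $\ith R2(\ith \mu 2)=\infty$ and, by monotonicity (Property~\ref{item: pinsker lemma} of Lemma~\ref{lemma: rate functions for Lninfty}), $\ith R\infty(\mu)=\infty$. On the other hand, $\ent(\ith \mu k|\mathbb P_k)=\infty$ for every $k\geq k_0$, so the left-hand side of~\eqref{eq: alternative expression for R infty} is identically $+\infty$ from $k_0$ onward and its limit is $\infty$ as well.

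The main obstacle is the bookkeeping in $[0,+\infty]$ around the iteration of Lemma~\ref{lemma: relative entropy for Lnk}: one must check that the chain-rule identity remains meaningful when some increments are infinite and, crucially, that a defect in absolute continuity at level $k_0$ really forces $\ith R\infty(\mu)=\infty$. The leverage to do this is shift-invariance, which collapses any such defect down to level $k=2$, where the comparison $\ith \mu 2$ vs.\ $\ith \mu 1\otimes p$ is controlled directly by $\ith R2$. Everything else is straightforward use of Cesàro convergence for the monotone sequence $(\ith Rj(\ith \mu j))_j$.
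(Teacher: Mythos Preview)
Your proposal is correct and follows essentially the same approach as the paper: iterate the chain-rule identity of Lemma~\ref{lemma: relative entropy for Lnk} to telescope $\ent(\ith\mu k|\mathbb P_k)$, then combine with the monotonicity and convergence of $(\ith Rj(\ith\mu j))_j$ from Property~\ref{item: pinsker lemma} of Lemma~\ref{lemma: rate functions for Lninfty} and Cesàro. The only difference is the case split: the paper separates ``some $\ith Rk(\ith\mu k)=\infty$'' from ``all $\ith Rk(\ith\mu k)<\infty$'', which makes the infinite case immediate without needing your shift-invariance reduction to level~$2$; your split on absolute continuity $\ith\mu k\ll\mathbb P_k$ is equally valid but slightly more laborious in the failure case.
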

\begin{proof}
Assume first that $\ith Rk(\ith \mu k)=\infty$ for some $k$. Then, Property~\ref{item: pinsker lemma} of Lemma~\ref{lemma: rate functions for Lninfty}, $\ith R{l}(\ith \mu {l})=\ith R\infty( \mu)=\infty$ for all $l\geq k$. Since $\ent(\ith \mu k|\mathbb P_k)\geq 0$, by Lemma~\ref{lemma: relative entropy for Lnk}, $\ent (\ith \mu {l}|\mathbb P_{l})=\infty$ for all $l\geq k$, and \eqref{eq: alternative expression for R infty} holds. 
Assume now that $\ith R k(\ith \mu k)$ is finite for all $k$.
Then, by Lemma~\ref{lemma: relative entropy for Lnk}, the relative entropy $\ent (\ith \mu k|\mathbb P_k)$ is either finite for all $k\geq 1$, or infinite for all $k\geq 1$. As we assumed that $\ent (\ith \mu 1|\mathbb P_1)=\ent (\ith \mu 1|\beta)<\infty$, the second case is ruled out. Therefore $\ith R k(\ith \mu k)$ can always be expressed as the telescopic difference between $\ent (\ith \mu k|\mathbb P_k)$ and $\ent (\ith \mu {k-1}|\mathbb P_{k-1})$. Hence
$\lim_{k\to \infty}\frac 1k \ent  (\ith \mu k|\mathbb P_k)$ is the Cesaro limit of the sequence $\ith Rk(\ith \mu k)$, which is also its regular limit $\ith R\infty(\mu)$ by Property~\ref{item: pinsker lemma} of Lemma~\ref{lemma: rate functions for Lninfty}.
\end{proof}

\appendix
\renewcommand\thesection{\Alph{section}}
\renewcommand\thetheorem{\thesection.\arabic{theorem}}
\section{Admissible measures}
\subsection{Balanced measures}
\label{section: balanced measures}
We call \emph{minimal cycle} any word of the form $u=vv_1$ with $v\in\words$, such that every letter of $v$ appears exactly once in $v$.
Denote by $\mathcal C$ the set of all minimal cycles $u$ satisfying $p(u)>0$.\footnote{Or equivalently, the set of all minimal cycles $u$ satisfying $L[u]\in \ith \abscont 2$.}
In this section, we present a construction that was mentioned by de La Fortelle and Fayolle in Propositions~2 and~6 of~\cite{fortelle2002} under the name of balanced measure decomposition. We provide here a detailed proof, including the  following crucial lemma, which seems to have been overlooked in~\cite{fortelle2002}. Notations are defined in Section~\ref{section: notations}.

\begin{lemma}
	\label{lemma: existence of a minimal cycle}
	Let $\nu\in \mathcal M(\spaces^2)$ be a balanced non-negative measure. If $\nu\neq 0$, then there exists a minimal cycle $v$ and $\alpha >0$ such that $\nu\geq \alpha M[v]$ in $\mathcal M(\spaces^2)$.
\end{lemma}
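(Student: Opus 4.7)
The plan is to reduce the statement to the purely graph-theoretic claim that the support graph $G = (V, E)$ of $\nu$, where $V = \supp \ith \nu 1$ and $E = \supp \nu$, contains a simple directed cycle. By balancedness, $V$ also coincides with $\supp \ith \nu 2$, so every vertex of $V$ has both an outgoing and an incoming edge in $E$. Once such a cycle through distinct vertices $x_1, x_2, \ldots, x_r$, with each pair $(x_i, x_{i+1})$ and also $(x_r, x_1)$ in $E$, is produced, the corresponding minimal cycle word $v = x_1 x_2 \cdots x_r x_1$ satisfies $\nu \geq \alpha M[v]$ with $\alpha$ taken to be the minimum of $\nu$ over the $r$ edges of the cycle; this $\alpha$ is strictly positive as a finite minimum of positive quantities.

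To produce the cycle I would recast $\nu$ as a stationary Markov chain on $V$. For $x \in V$, setting $q(x, y) = \nu(x, y) / \ith \nu 1(x)$ defines a stochastic kernel on $V$, and the balanced condition $\ith \nu 1 = \ith \nu 2$ is exactly the invariance identity $\ith \nu 1 q = \ith \nu 1$. Thus $\ith \nu 1$ is a nonzero finite invariant measure for the Markov chain on $V$ with kernel $q$, and the problem of finding a cycle in $G$ becomes the problem of showing that some state $x \in V$ is revisited by this chain with positive probability.

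The key classical input is that every state in the support of a finite invariant measure of a countable Markov chain is recurrent. The short proof is by contradiction through the Green function $G(x, x) = \sum_{n \geq 0} q^n(x, x)$: transience of some $x \in V$ would force $G(x, x) < \infty$, and combined with the elementary bound $G(y, x) \leq G(x, x)$ for all $y$ and the invariance identity $\ith \nu 1(x) = \sum_y \ith \nu 1(y) q^n(y, x)$ summed over $n \geq 0$, this would yield $\infty = \sum_{n \geq 0} \ith \nu 1(x) \leq \ith \nu 1(V) G(x, x) < \infty$, a contradiction. Recurrence of any fixed $x \in V$ then produces a finite sequence $x = y_0, y_1, \ldots, y_n = x$ with $\nu(y_i, y_{i+1}) > 0$ for each $i$, i.e., a closed walk in $(V, E)$ from which a simple cycle is extracted in the standard way. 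The main obstacle is precisely this recurrence input: it is classical but requires care, whereas the rest is a clean dictionary between the flow-theoretic picture of $\nu$ and the probabilistic picture of a stationary Markov chain.
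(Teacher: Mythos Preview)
Your proposal is correct and follows essentially the same route as the paper: define the kernel $q(x,y)=\nu(x,y)/\ith\nu1(x)$, observe that balancedness makes $\ith\nu1$ invariant for $q$, use this to produce a closed walk in the support of $\nu$, and extract a minimal cycle. The only difference is in the recurrence step: the paper argues more directly that $\mathbb E\big[\sum_{n\geq1}\indic_{\{Z_n=z\}}\big]=\sum_{n\geq1}\ith\nu1(z)=\infty$ forces $\mathbb P(N\geq 2)>0$ (since $N\leq 1$ a.s.\ would give $\mathbb E[N]\leq 1$), whereas you invoke the full Green-function characterization of transience; both work, but the paper's version is a one-line shortcut that avoids the inequality $G(y,x)\leq G(x,x)$.
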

\begin{proof}
	Since $\nu$ is finite and non-zero, we can assume that $\nu\in \balanced(\spaces^2)$ without loss of generality. Let $q$ be a probability kernel defined by $q(x,y)=\nu(x,y)/\ith \nu1(x)$ if $x\in \supp \ith \nu 1$ and arbitrarily otherwise. We have $\ith \nu 1 \otimes q=\nu$.\footnote{$\ith \nu1\otimes q$ denotes the measure on $\spaces^2$ defined by $\ith \nu 1\otimes q(x,y)=\ith \nu 1(x)q(x,y)$, as in Section~\ref{section: usual rate functions}.}
	Let $(Z_n)$ be the Markov chain defined on $\spaces$ by the initial measure $\ith \nu 1$ and the kernel $q$. Since $\nu$ is balanced, $\ith \nu1$ is stationary for the kernel $q$.
	Consider a state $z\in \supp\ith \nu 1\neq\emptyset$. Denoting by $\mathbb E_{\nu}[\cdot]$ the expectation associated to the Markov chain $(Z_n)$, we have
	\begin{equation*}
		\label{eqloc: expectation sum an}
		\mathbb E_\nu\Bigg[\sum_{n=1}^\infty \indic_{\{Z_n=z\}}\Bigg]=\sum_{n=1}^\infty \ith \nu 1 (z)=\infty.
	\end{equation*}
	Therefore, there is a positive probability that $(Z_n)$ visits $z$ several times. This means that there exists a word $u$, whose first and last letter are $z$, satisfying $q(u_i,u_{i+1})>0$ for all $i\leq |u|-1$.
	In particular, there exists a minimal cycle $v$ satisfying $q(v_{i},v_{i+1})>0$ for all $i\leq |v|-1$. Hence, by definition of $\ith \nu 1$ and by induction, for all $i\leq |v|-1$,
	\begin{equation*}
		\ith \nu 1(v_i)\geq \ith \nu 1(v_{i-1})q(v_{i-1},v_i)\geq \ldots\geq\ith \nu 1(v_1)q(v_1,v_2)\ldots q(v_{i-1},v_i)>0.
	\end{equation*}
	Therefore,
	\begin{equation*}
		\alpha:=\min _{1\leq i\leq |v|-1} \ith \nu 1(v_i)q(v_{i},v_{i+1})>0,
	\end{equation*}
	and we have 
	$\nu\geq \alpha M[v]$.
\end{proof}
Lemma~\ref{lemma: balanced measure decomposition} provides a useful way to approximate certain balanced measures.\footnote{As mentioned by de La Fortelle and Fayolle, since measures of the form $L[v]$, where $v\in \mathcal C$, are extreme points of the convex set $\balanced(\spaces)$, this result can also be seen as an extreme point decomposition. When $\spaces$ is finite, $\balanced(\spaces)$ is compact and the decomposition is simply an application of the Krein-Milman theorem.} Recall that $\ith \abscont 2$ was defined in~\eqref{eq: definition absolutely continuous measures} and $\ith R2$ in \eqref{eq: definition R}.
Notice that, by Property~\ref{item: support of mun}, each $\mu_n$ is (pre-)admissible whenever $\mu$ is (pre-)admissible.
\begin{lemma}
	\label{lemma: balanced measure decomposition}
	Let $\mu\in\balanced(\spaces^2)\cap \ith \abscont 2$.
	There exists a sequence $(\mu_n)\in \proba^\N$ satisfying the following properties:
	\begin{enumerate}
		\item 
		\label{item: support of mun}
		For all $n\in \N$, the measure $\mu_n$ is balanced, absolutely continuous with respect to $\mu$, and of finite support.
		\item $\mu_n\to \mu$ and $\ith R2(\mu_n)\to \ith R2(\mu)$ as $n\to \infty$.
		\label{item: mun to mu and R(mun) to R(mu}
		\item 
		\label{item: mun decomposed in minimal admissible cycle}
		For all $n\in \N$, $\mu_n$ is of the form 
		\begin{equation}\label{eq: cycle decomposition}
			\mu_n=\sum_{k=1}^n\alpha_{n,k}M[ u^k],
		\end{equation}
		where $u^k\in \mathcal C$ and $\alpha_{k,n}\in [0,1]$ for all $1\leq k\leq n$. 
	\end{enumerate} 
\end{lemma}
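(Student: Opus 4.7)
The plan is to construct $(\mu_n)$ by iteratively subtracting minimal cycles from $\mu$ using Lemma~\ref{lemma: existence of a minimal cycle}, and then to verify the three stated properties in turn. The set $\mathcal{C}$ of minimal cycles $w$ with $p(w)>0$ is countable; enumerate it as $(w^{(j)})_{j\geq 1}$ and fix a schedule $j:\N\to \N$ visiting every integer infinitely often. Starting from $\nu_0 := \mu$, define recursively
\begin{equation*}
\alpha_k := \min_{i} \nu_{k-1}\bigl(w^{(j(k))}_i, w^{(j(k))}_{i+1}\bigr), \qquad \nu_k := \nu_{k-1} - \alpha_k M\bigl[w^{(j(k))}\bigr], \qquad k\geq 1.
\end{equation*}
Each $\nu_k$ is a nonnegative balanced measure dominated by $\mu$, since balance is preserved by subtracting $M[w]$ for any cycle $w$ (the measure $M[w]$ is itself balanced). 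Discarding the steps with $\alpha_k=0$ and relabelling, I write the cycles actually used as $u^1,u^2,\ldots$ with coefficients $\alpha_1,\alpha_2,\ldots>0$.

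The pointwise monotone limit $\nu_\infty := \lim_k \nu_k$ exists and is nonnegative and balanced. I claim $\nu_\infty = 0$: otherwise Lemma~\ref{lemma: existence of a minimal cycle} would furnish a cycle $w\in \mathcal{C}$ and some $\alpha>0$ with $\nu_\infty\geq \alpha M[w]$, hence $\nu_k \geq \alpha M[w]$ for every $k$. Then at each of the infinitely many steps with $w^{(j(k))}=w$ we would extract at least $\alpha$, forcing $\sum_k \alpha_k\bigl(|w^{(j(k))}|-1\bigr) = +\infty$, which contradicts the uniform bound $\sum_k \alpha_k\bigl(|w^{(j(k))}|-1\bigr) \leq \tvnorm{\mu} = 1$. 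It follows that $\tilde\mu_n := \sum_{k=1}^n \alpha_k M[u^k]$ increases pointwise to $\mu$, and $c_n := \tvnorm{\tilde\mu_n}$ increases to $1$. Setting $\mu_n := \tilde\mu_n/c_n$ and $\alpha_{n,k} := \alpha_k/c_n$, the bounds $\alpha_{n,k}\in [0,1]$ and the TV-convergence $\mu_n\to \mu$ are immediate, and Properties~\ref{item: support of mun} and~\ref{item: mun decomposed in minimal admissible cycle} follow directly from the construction.

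The remaining point, $\ith R 2(\mu_n)\to \ith R 2(\mu)$, is the delicate step. Lower semicontinuity of $\ith R 2$ yields $\liminf_n \ith R 2(\mu_n)\geq \ith R 2(\mu)$, which settles the case $\ith R 2(\mu)=\infty$. When $\ith R 2(\mu)<\infty$, I would write
\begin{equation*}
\ith R 2(\mu_n) = \sum_{x,y}\mu_n(x,y)\log\frac{\mu(x,y)}{\mu^{(1)}(x)\,p(x,y)} \;+\; \sum_{x}\mu_n^{(1)}(x)\,\ent\bigl(q_n(x,\cdot)\,\big|\,q(x,\cdot)\bigr),
\end{equation*}
where $q(x,y)=\mu(x,y)/\mu^{(1)}(x)$ and $q_n(x,y)=\mu_n(x,y)/\mu_n^{(1)}(x)$ are the associated probability kernels. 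For the first sum, the integrand is dominated pointwise by $c_1^{-1}\mu(x,y)\bigl|\log\bigl(\mu(x,y)/(\mu^{(1)}(x)p(x,y))\bigr)\bigr|$, whose negative part is controlled via $s\log s\geq -e^{-1}$ and whose positive part is integrable since $\ith R 2(\mu)<\infty$; dominated convergence then gives convergence to $\ith R 2(\mu)$. For the second sum, for each $x\in\supp\mu^{(1)}$ one has $q_n(x,\cdot)\to q(x,\cdot)$ pointwise, and eventually $q_n/q\leq 2c_1^{-1}$ on each fibre, so a fibrewise dominated convergence gives $\ent(q_n(x,\cdot)\,|\,q(x,\cdot))\to 0$; a further dominated convergence over $x$, using $\mu_n^{(1)}\leq c_1^{-1}\mu^{(1)}$, shows that the second sum vanishes.

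The main obstacle is precisely this entropy convergence: the cycle-peeling itself is transparent once a fair schedule is fixed, but dominating the $s\log s$ integrand in $\ith R 2$ and handling the lack of continuity of relative entropy require the split and the fibrewise dominated-convergence argument sketched above.
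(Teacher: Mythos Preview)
Your argument is correct and follows a genuinely different route from the paper's. Two main differences are worth highlighting. First, the paper enumerates $\mathcal C=\{u^n\mid n\in\N\}$ and uses each cycle \emph{once}, greedily building $\nu_n=\sum_{k\le n}a_kM[u^k]\uparrow\mu$; you instead peel cycles off $\mu$ with a schedule visiting each cycle infinitely often. Both reach the same conclusion via Lemma~\ref{lemma: existence of a minimal cycle}. Second, and more consequentially, the paper does \emph{not} normalise by dividing; it sets $\mu_n=\nu_n+\frac{1-\nu_n(\spaces^2)}{|u^1|-1}M[u^1]$, dumping all missing mass on the first cycle. This buys the pointwise inequality $\mu_n\le\mu$ off the finite set $U^2$ of pairs from $u^1$, which makes the dominated--convergence bounds for the entropy terms especially clean (split into the finite set $U^2$ and its complement). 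Your division by $c_n$ yields only $\mu_n\le c_1^{-1}\mu$ globally, but that is still enough. The entropy splits also differ: the paper uses $\ith R2(\mu_n)=\ent(\mu_n\mid\ith\mu1\otimes p)-\ent(\ith\mu1_n\mid\ith\mu1)$, whereas you use $\ith R2(\mu_n)=\langle\mu_n,\log\frac{q}{p}\rangle+\sum_x\ith\mu1_n(x)\,\ent(q_n(x,\cdot)\mid q(x,\cdot))$; both are legitimate.

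There is one underspecified step in your sketch. For the outer dominated convergence over $x$, the bound $\ith\mu1_n\le c_1^{-1}\ith\mu1$ alone is not enough: your fibrewise bound $q_n/q\le 2c_1^{-1}$ holds only for $n\ge N(x)$, so it does not directly produce an $n$--uniform dominating function. The fix is short: since $\tilde\mu_n\le\mu$ gives $q_n(x,y)/q(x,y)\le \ith\mu1(x)/\tilde\mu^{(1)}_n(x)$, one has $\ent(q_n(x,\cdot)\mid q(x,\cdot))\le\log\bigl(\ith\mu1(x)/\tilde\mu^{(1)}_n(x)\bigr)$, hence
\[
\ith\mu1_n(x)\,\ent(q_n(x,\cdot)\mid q(x,\cdot))\ \le\ c_n^{-1}\,\tilde\mu^{(1)}_n(x)\log\frac{\ith\mu1(x)}{\tilde\mu^{(1)}_n(x)}\ \le\ c_1^{-1}e^{-1}\,\ith\mu1(x),
\]
using $s\log(1/s)\le e^{-1}$ with $s=\tilde\mu^{(1)}_n(x)/\ith\mu1(x)$. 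This is summable in $x$ and uniform in $n$, so the outer dominated convergence goes through. (Alternatively and even faster: your second sum equals $\ent(\mu_n\mid\mu)-\ent(\ith\mu1_n\mid\ith\mu1)\le\ent(\mu_n\mid\mu)\le-\log c_n\to0$.)
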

\begin{proof}
	We first construct the words $u^k$ and coefficients $\alpha_{n,k}$ of Property~\ref{item: mun decomposed in minimal admissible cycle}.
	Since $\mathcal C$ is countable, we can index its elements and write $\mathcal C$ as $\{u^n\ |\ n\in \N\}$. 
	We define by induction a sequence of nonnegative numbers $(a_n)$ and a sequence of balanced measures\footnote{Not necessarily probability measures.} $(\nu_n)_{n\geq 0}$ by $\nu_0=0$ and
	\begin{align*}
		\nu_n&=\sum_{k=1}^n{a_k}M[u^k],\qquad n\geq 1\\
		a_{n+1}&=\min\left\{\mu(u ^n_i,u ^n_{i+1})-\nu_n(u^ n_i, u ^n_{i+1})\,\middle|\, 1\leq i\leq | u^n|-1\right\},\qquad n\geq 0.
	\end{align*}
	Since minimal cycles go through each of their letters exactly once, $M[u^{n+1}](x,y)\leq 1$ for all $n\in \N$ and all $x,y\in \spaces$. Therefore, $\nu_{n+1}(x,y)$ is equal either to $\nu_n(x,y)$ or $\nu_n(x,y)+a_{n+1}$, implying by induction that $0\leq \nu_n\leq \mu$ in $\mathcal M(\spaces^2)$ for all $n\geq 0$.
	Let $\nu\in \mathcal M(\spaces^2)$ be the limit of $(\nu_n)$. It must satisfy $0\leq \nu \leq \mu$.
	If $\nu\neq\mu$, by Lemma~\ref{lemma: existence of a minimal cycle}, there exists a minimal cycle $v$ and $\alpha>0$ such that $\mu-\nu\geq \alpha M[v]$. Since then $\mu\geq \alpha M[v]$ and $\mu\in \ith \abscont 2$, the minimal cycle $v$ must be an element of $\mathcal C$, thus $v=u^n$ for some $n\in \N$.
	Therefore, there exists an index $i$ such that $a_{n+1}=\mu(u ^n_{i}, u ^n_{i+1})-\nu_n( u ^n_{i}, u^ n_{i+1})$, thus
	\begin{align*}
		\mu(u^n_i,u^n_{i+1})-\nu(u^n_i,u^n_{i+1})
		&\geq \mu(u^n_i,u^n_{i+1})-\nu_{n+1}(u^n_i,u^n_{i+1})
		\\&=\mu(u^n_i,u^n_{i+1})-\nu_{n}(u^n_i,u^n_{i+1})-a_{n+1}
		\\&=0.
	\end{align*}
	Since $M[u^n](u^n_i,u^n_{i+1})=1$, this contradicts $\mu-\nu\geq \alpha M[u^n]$. Thus $\nu=\mu$.
	
	We now define the measures $\mu_n$ of the form \eqref{eq: cycle decomposition}. For technical reasons, they are obtained from $\nu_n$ not by simple normalization, but by adding the ``missing probability'' $1-\nu_n(\spaces^2)$ to the first cycle $u^{1}$ instead.
	Up to reindexing of $(u^n)$, we can assume $a_1>0$ without loss of generality, which ensures that $M[u^{1}]$ is absolutely continuous with respect to $\mu$. Let, for all $n\in \N$,
	\begin{equation}
		\label{eqloc: definition mu_n}
		\begin{split}
			\alpha_{n,1}= a_1+\frac{1-\nu_n(\spaces^2)}{|u^1|-1},\qquad
			\alpha_{n,k}=a_k,\qquad 2\leq k\leq n,
		\end{split}
	\end{equation}
	and let $\mu_n$ be the measure defined by~\eqref{eq: cycle decomposition} with these coefficients.
	By construction, the measure $\mu_n$ satisfies Properties~\ref{item: support of mun} and~\ref{item: mun decomposed in minimal admissible cycle}. 
	
	We now prove Property~\ref{item: mun to mu and R(mun) to R(mu}. The first part of Property~\ref{item: mun to mu and R(mun) to R(mu} is also a direct consequence of the construction of $(\mu_n)$, because 
	\begin{equation*}
		\mu_n=\nu_n+\frac{1-\nu_n(\spaces^2)}{|u^1|-1}M[u^1]\to \mu+0,\qquad n\to \infty.
	\end{equation*}
	For later purposes, note that if $U$ denotes the set of letters in the word $u ^1$, then for all $n\in \N$ and all $x,y\in \spaces$,
	\begin{equation} 
		\label{eqloc: mun < mu}
		\begin{split}
			(x,y)\notin U^2\ \Longrightarrow&\ \mu_n(x,y)=\nu_n(x,y)\leq \mu(x,y),\\
			x\notin U\ \Longrightarrow&\ \ith \mu 1_n(x)=\ith \nu1_n(x)\leq \ith \mu 1(x).
		\end{split}
	\end{equation}
	It remains to prove the convergence of $\ith R2(\mu_n)$ to $\ith R 2(\mu)$.
	If $\ith R2(\mu)=\infty$, by lower semicontinuity of $\ith R2$, it follows from $\mu_n\to \mu$ that $\ith R2(\mu_n)\to \infty$ as $n\to \infty$. Now, assume that $\ith R2(\mu)$ is finite. In particular, $\mu\in \ith \abscont 2$ and~\eqref{eq: expression R2} holds. Equation~\eqref{eq: expression R2} also holds for $\mu_n$ for all $n\in \N$ because $\mu_n\ll\mu$.
	For all $n\in \N$, we have
	\begin{align}
		\ith R2(\mu_n)&
		= R_1(\mu_n)-R_2(\mu_n),\label{eqloc: decompsoition R(mu_n)}\\
		R_1(\mu_n)&:=\ent (\mu_n|\ith \mu 1\otimes p)=\sum_{(x,y)\in\spaces ^2}\mu_n(x,y)\log\frac{\mu_n(x,y)}{\ith \mu 1(x)p(x,y)},
		\nonumber\\
		R_2(\mu_n)&:=\ent (\ith \mu1_n |\ith \mu1)=\sum_{x\in\spaces}\ith \mu1_n(x)\log\frac{\ith \mu1_n(x)}{\ith \mu 1(x)}.
		\nonumber
	\end{align}
	Since $\mu_n$ is finitely supported, both $R_1(\mu_n)$ and $R_2(\mu_n)$ are finite sums, thus they are finite.
	Let us next compute the limit of $R_1(\mu_n)$. We have
	\begin{align}
		R_1(\mu_n)&=
		\sum_{(x,y)\in U^2}k_n(x,y)
		+\sum_{(x,y)\in (U^2)^c}k_n(x,y),\label{eqloc: decomposition premier term d'entropie}\\
		k_n(x,y)&=\mu_n(x,y)\log\frac{\mu_n(x,y)}{\ith \mu1(x)p(x,y)}.\nonumber
	\end{align}
	Since it involves a finite number of terms, the first sum converges as $n\to \infty$. 
	Let $f:s\mapsto s\log s$ and $g:s\mapsto \max(e^{-1},|f(s)|)$, defined on $[0,+\infty)$.\footnote{We recall the convention that $0\log0=0$.} We have $|f| \leq g$, and the function $g$ is nondecreasing. Hence, for all $(x,y)\in (U^2)^c$,
	\begin{align*}
		|k_n(x,y)|
		&=\ith \mu 1(x)p(x,y)\Big|f\Big(\frac{\mu_n(x,y)}{\ith \mu 1(x)p(x,y)}\Big)\Big|
		\\&\leq \ith \mu 1(x)p(x,y)g\Big(\frac{\mu_n(x,y)}{\ith \mu 1(x)p(x,y)}\Big)
		\\&\leq \ith \mu 1(x)p(x,y)g\Big(\frac{\mu(x,y)}{\ith \mu 1(x)p(x,y)}\Big).
	\end{align*}
	The last line used~\eqref{eqloc: mun < mu} and the monotonicity of $g$.
	Since $g$ coincides with $f$ on $[1,+\infty)$ and with $e^{-1}$ on $[0,1]$, we get, for all $(x,y)\in (U^c)^2$,\
	\begin{equation*}
		|k_n(x,y)|\leq k(x,y):=
		\begin{cases}
			\mu(x,y)\log\frac{\mu(x,y)}{\ith \mu 1(x)p(x,y)},
			\qquad &\hbox{if $\mu(x,y)> \ith \mu 1(x,y)p(x,y)$},\\
			e^{-1}\ith \mu 1(x)p(x,y),\qquad &\hbox{otherwise}.
		\end{cases} 
	\end{equation*} 
	Since $\ith \mu 1\otimes p$ is a probability measure, we have
	\begin{equation*}
		\sum_{\substack{(x,y)\in (U^2)^c\\\mu(x,y)\leq \ith \mu 1(x)p(x,y)}}e^{-1}\ith \mu 1(x)p(x,y)\leq e^{-1}<\infty,
	\end{equation*}
	and since the negative terms in the first sum of the expression of $\ith R2$ given in~\eqref{eq: expression R2} cannot get smaller than $-e^{-1}\ith \mu 1(x)p(x,y)$, we have
	\begin{equation*}
		\sum_{\substack{(x,y)\in (U^2)^c\\\mu(x,y)> \ith \mu 1(x)p(x,y)}}\mu(x,y)\log\frac{\mu(x,y)}{\ith \mu 1(x)p(x,y)}
		\leq \ith R2(\mu)+e^{-1}<\infty.
	\end{equation*}
	Hence the sum of $k(x,y)$ over ${(x,y)\in (U^c)^2}$ is finite. Therefore, by dominated convergence, we get $R_1(\mu_n)\to \ith R2(\mu)$ as $n\to\infty$.
	Let us compute the limit of $R_2(\mu_n)$.
	We have
	\begin{align}
		R_2(\mu_n)
		&=\sum_{x\in U}\ith \mu1_n(x)\log\frac{\ith \mu1_n(x)}{\ith \mu 1(x)}+
		\sum_{x\in U^c}\ith \mu1_n(x)\log\frac{\ith \mu1_n(x)}{\ith \mu 1(x)}.\label{eqloc: decomposition du 2eme terme dd'entropie}
	\end{align}
	Since it involves a finite number of terms which vanish when $n\to \infty$, the first sum vanishes as $n\to \infty$. 
	By~\eqref{eqloc: mun < mu} and since $f$ is bounded below by $-e^{-1}$, we have, for all $x\in U^c$, 
	\begin{equation*}
		-e^{-1}\ith \mu 1(x)\leq\ith \mu1_n(x)\log \frac{\ith \mu 1_n(x)}{\ith \mu 1(x)}\leq 0.
	\end{equation*}
	Thus, by dominated convergence, the second sum vanishes too as $n\to \infty$.
	Therefore, $R_2(\mu_n)\to 0$ as $n\to \infty$, which completes the proof of Property~\ref{item: mun to mu and R(mun) to R(mu}.
\end{proof}

\subsection{Admissible measures for finite $k$}
\label{section: admissible measures}
This appendix is dedicated to the study of the set $\Ak$ of Definition~\ref{def: admissible measures}. As mentioned in the introduction, the cases $k=1$ and $k=2$ are the most important ones for the main part of this paper. Larger values of $k$ are only considered to obtain the level-3 weak LDP in Section~\ref{section: weak LDP on the process level}. 

Properties of pre-admissibility and admissibility of a measure $\mu\in \mathcal P(\spaces^k)$ only depend on the support of $\mu$. In particular, $\mu$ is pre-admissible if and only if
\begin{equation}
	\label{eq: mu pre-admissible}
	\sum_{j\in \mathcal J}\mu(C_j^k)=1.
\end{equation}
If so, $\mu$ is admissible if and only if for any given $j_1,j_2\in \mathcal J_\mu$, we have $\beta\leadsto C_{j_1}\leadsto C_{j_2}$ or $\beta\leadsto C_{j_2}\leadsto C_{j_1}$. 
For all $k\geq 2$, we denote by $\ith \Abal k$ the set of measures that are admissible and balanced; see Section~\ref{section: weak LPD for Lnk}. We will also consider the set $\ith \abscont k$ defined in~\eqref{eq: definition absolutely continuous measures} and~\eqref{eq: definition abscont k}.
\begin{remark} 
	\label{remark: other definitions of admissibility}
	In \cite{fortelle2002}, \cite{wu2005} and Corollary 13.6 of~\cite{rassoul}, admissible measures are defined as elements of $\Ade\cap \ith \abscont 2$ rather than simply elements of $\Ade$. 
	Elements of $\Ade\cap \ith \abscont 2$ and of $\Ade$ differ in the fact that the latter may have transitions that are not `allowed' under $p$.
	This apparent conflict between definitions of admissibility is without consequences, since $R(\mu)=\idv(\mu)=\Lambda^*(\mu)=\infty$ anyway for all $\mu \in \Ade\setminus \ith \abscont2$ (see Remark~\ref{remark: I infinite on Adebal}). In particular, the value of the right-hand side of~\eqref{eq: rate function I2} and~\eqref{eq: rate function I2 for Ln2} remains unchanged when $\Adebal$ is replaced by $\Adebal\cap \ith \abscont2$.
\end{remark}
\begin{proposition}
	\label{prop: Ade closed}
	$\Ak$ and $\ith {\mathcal A}k_{\mathrm {bal}} $ are closed in $\mathcal P(\spaces^k)$.
\end{proposition}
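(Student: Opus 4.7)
The plan is to show that if $(\mu_n)$ is a sequence in $\Ak$ converging in $\mathcal P(\spaces^k)$ (i.e.\ in TV distance, since $\spaces^k$ is discrete) to some limit $\mu$, then $\mu$ itself lies in $\Ak$. The closure of $\ith{\mathcal A}^k_{\mathrm{bal}}$ then follows immediately from the already-noted fact that $\balanced(\spaces^k)$ is closed, so that $\ith{\mathcal A}^k_{\mathrm{bal}} = \Ak\cap\balanced(\spaces^k)$ is an intersection of two closed sets.

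I would first check that $\mu$ is pre-admissible. Since TV-convergence implies $\mu_n(A)\to\mu(A)$ uniformly in $A\subseteq\spaces^k$, and since $\mu_n\big(\bigcup_{j\in\mathcal J}C_j^k\big)=1$ for every $n$ by pre-admissibility of $\mu_n$, the criterion~\eqref{eq: mu pre-admissible} gives $\mu\big(\bigcup_{j\in\mathcal J}C_j^k\big)=1$, so $\mu$ is pre-admissible. The minimality requirement in Definition~\ref{def: admissible measures} then identifies the index set as
\begin{equation*}
\mathcal J_\mu=\{j\in\mathcal J\ |\ \mu(C_j^k)>0\}.
\end{equation*}

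Next I would verify the two admissibility conditions. Fix $j\in\mathcal J_\mu$. Because $\mu(C_j^k)>0$ and $\mu_n(C_j^k)\to\mu(C_j^k)$, there exists $n$ large enough that $\mu_n(C_j^k)>0$, hence $j\in\mathcal J_{\mu_n}$; the admissibility of $\mu_n$ then yields $\beta\leadsto C_j$, a property that depends only on $j$ and $\beta$ and transfers instantly to $\mu$. Similarly, given any two indices $j_1,j_2\in\mathcal J_\mu$, for $n$ large enough both belong to $\mathcal J_{\mu_n}$, and the totality of $\leadsto$ on $(C_j)_{j\in\mathcal J_{\mu_n}}$ gives $C_{j_1}\leadsto C_{j_2}$ or $C_{j_2}\leadsto C_{j_1}$. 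Since this holds for every pair in $\mathcal J_\mu$, the order $\leadsto$ is total on $(C_j)_{j\in\mathcal J_\mu}$, so $\mu\in\Ak$.

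There is no real obstacle in this argument: both admissibility conditions are pointwise in $j$ (or pairwise in $(j_1,j_2)$), and they each involve only classes with strictly positive $\mu$-mass, which are forced to appear in $\mathcal J_{\mu_n}$ for $n$ large. The only mild care is that $\mathcal J_\mu$ may be countably infinite, but since totality of an order and the relations $\beta\leadsto C_j$ are properties that only need to be checked one $j$ (resp.\ one pair $(j_1,j_2)$) at a time, no uniformity in $j$ is required.
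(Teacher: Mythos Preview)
Your proof is correct and follows essentially the same approach as the paper: reduce to closedness of $\Ak$ via the closedness of $\balanced(\spaces^k)$, take a convergent sequence $(\mu_n)\subset\Ak$, pass to the limit to get pre-admissibility, and then for any $j_1,j_2\in\mathcal J_\mu$ find a single $n$ with $\mu_n(C_{j_1}^k),\mu_n(C_{j_2}^k)>0$ to inherit the admissibility conditions from $\mu_n$. The only cosmetic difference is that the paper checks pre-admissibility pointwise on $u\notin\bigcup_j C_j^k$ rather than via the set $\bigcup_j C_j^k$, and it handles the $\beta\leadsto C_j$ condition together with totality in one stroke.
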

\begin{proof}
	Since $\balanced(\spaces^k)$ is closed in $\mathcal P(\spaces^k)$, it suffices to show that $\Ak$ is closed.
	Let $(\mu_n)$ be a sequence in $\Ak$ that converges to some $\mu\in\mathcal P(\spaces ^k)$ as $n\to \infty$. 
	For all $u\in \spaces^k$ that does not belong to $C_j^k$ for any $j\in \mathcal J$, we have $\mu_n(u)=0$ for all $n$, thus $\mu(u)=0$. Therefore, $\mu$ is pre-admissible.
	Let $j_1,j_2\in \mathcal J_\mu$. There exists $n\in \N$, such that $\mu_n(C_{j_1}^k)>0$ and $\mu_n(C_{j_2}^k)>0$. Since $\mu_n$ is admissible, $\beta\leadsto C_{j_1}\leadsto C_{j_2}$ or $\beta\leadsto C_{j_2}\leadsto C_{j_1}$. Therefore, $\mu$ is admissible. 
\end{proof}
\begin{remark}
	Sets of the form $\{\nu\in \Ak\ |\ \mathcal J_\nu\subseteq \mathcal J_\mu\}$ for $\mu\in \Ak$ are convex faces of the (possibly infinite dimensional) simplex $\mathcal P(\spaces^k)$. The set $\Ak$, which may be nonconvex, can always be written as a union of faces:
	\begin{equation*}
		\Ak=\bigcup_{\mu\in \Ak}\{\nu\in \Ak\ |\ \mathcal J_\nu\subseteq \mathcal J_\mu\}.
	\end{equation*}
	A visual representation of this decomposition is provided in Figure~\ref{figure: Ab for the diamond shaped MC}.
	\begin{figure}[thb]
		\centering
		\includegraphics{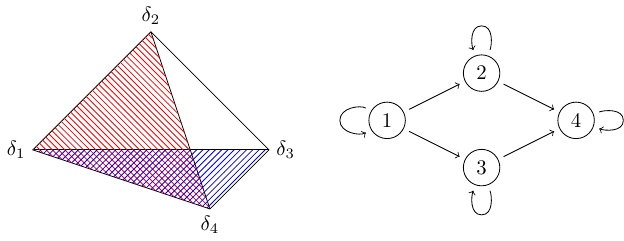}
		\caption{Let $(X_n)$ be the Markov chain on $\spaces=\{1,2,3,4\}$ whose allowed transitions are given by the graph on the right-hand side of the figure, and whose initial measure is $\beta=\delta_1$. The values of transition probabilities do not matter for this example as long as they are positive on each edge of the graph and null otherwise. The set $\mathcal P(\spaces)$ is a (finite-dimensional) simplex with extreme points $\{\delta_1,\delta_2,\delta_3,\delta_4\}$. The set $\Aun$ is the hatched domain, that is the union of the faces $\{\nu\in \Aun\ |\ \mathcal J_\nu\subseteq \mathcal J_{\frac13(\delta_1+\delta_2+\delta_4)}\}$ and $\{\nu\in \Aun\ |\ \mathcal J_\nu\subseteq \mathcal J_{\frac13(\delta_1+\delta_3+\delta_4)}\}$.}
		\label{figure: Ab for the diamond shaped MC}
	\end{figure}
\end{remark}
The following two propositions are characterizations of $\Aun$ and of $\Adebal\cap \ith \abscont 2$.
Recall that, if $\mu\in \Adebal$, the measure $\ith \mu 1\in \Aun$ is defined by~\eqref{eq: definition mu balanced in S2}.
\begin{proposition}
	\label{prop: projection of A2bal}
	We have $\Aun=\{\ith \mu 1\ |\ \mu\in \Adebal\}$.
\end{proposition}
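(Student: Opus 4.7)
The proof of $\Aun = \{\ith \mu 1 \mid \mu \in \Adebal\}$ is a direct double inclusion, and my plan is as follows.

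For the inclusion $\{\ith \mu 1 \mid \mu \in \Adebal\} \subseteq \Aun$, I would take $\mu \in \Adebal$ and observe that the decomposition $\mu = \sum_{j \in \mathcal J_\mu} \mu|_{C_j^2}$ implies, by taking first marginals, $\ith \mu 1 = \sum_{j \in \mathcal J_\mu} (\mu|_{C_j^2})^{(1)}$, where each $(\mu|_{C_j^2})^{(1)}$ is supported in $C_j$. This shows $\ith \mu 1$ is pre-admissible with $\mathcal J_{\ith \mu 1} \subseteq \mathcal J_\mu$. Since $\leadsto$ is already total on $(C_j)_{j \in \mathcal J_\mu}$ and $\beta \leadsto C_j$ for every $j \in \mathcal J_\mu$, these properties restrict to $\mathcal J_{\ith \mu 1}$, yielding $\ith \mu 1 \in \Aun$.

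For the reverse inclusion $\Aun \subseteq \{\ith \mu 1 \mid \mu \in \Adebal\}$, given $\mu \in \Aun$ I need to exhibit a balanced admissible lift $\nu \in \Adebal$ with first marginal $\mu$. The idea is to build $\nu$ class by class via a simple product construction: for every $j \in \mathcal J_\mu$ with $\mu(C_j) > 0$, define
\begin{equation*}
\nu|_{C_j^2}(x,y) = \frac{\mu(x)\mu(y)}{\mu(C_j)}, \qquad (x,y) \in C_j^2,
\end{equation*}
and set $\nu = \sum_{j \in \mathcal J_\mu} \nu|_{C_j^2}$. A direct computation of marginals gives, for $x \in C_j$, $\ith \nu 1(x) = \sum_{y \in C_j} \mu(x)\mu(y)/\mu(C_j) = \mu(x)$, and symmetrically for the second marginal, so $\nu$ is balanced with $\ith \nu 1 = \mu$. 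Moreover $\nu$ is by construction concentrated on $\bigcup_{j \in \mathcal J_\mu} C_j^2$, so it is pre-admissible with $\mathcal J_\nu = \mathcal J_\mu$, and admissibility of $\mu$ directly transfers to admissibility of $\nu$.

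There is no real obstacle here: the only thing to be slightly careful about is that the product construction makes sense even when $C_j$ is infinite (the sum $\sum_{(x,y) \in C_j^2} \mu(x)\mu(y)/\mu(C_j)$ converges to $\mu(C_j)$ by Fubini in $\ell^1$), and that $\nu$ is genuinely a probability measure ($\sum_j \mu(C_j) = 1$ since $\mu$ is pre-admissible). Both are immediate, so the proof reduces to writing down these two short arguments.
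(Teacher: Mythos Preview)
Your proof is correct and follows the same double-inclusion structure as the paper. The only substantive difference is the choice of lift in the direction $\Aun \subseteq \{\ith \mu 1 \mid \mu \in \Adebal\}$: the paper uses the diagonal embedding $\nu(x,y) = \indic_{\{x=y\}}\mu(x)$, whereas you use the class-wise product $\nu|_{C_j^2}(x,y) = \mu(x)\mu(y)/\mu(C_j)$. Both constructions produce a balanced measure with the correct marginal and the same index set $\mathcal J_\nu = \mathcal J_\mu$; the diagonal lift is slightly more economical (no normalization needed, and it makes the identity $\nu(C_j^2)=\mu(C_j)$ immediate), but your product lift is equally valid and the verification you sketch goes through without issue. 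In the forward direction you only record $\mathcal J_{\ith\mu1}\subseteq\mathcal J_\mu$, while the paper notes the equality $\ith\mu1(C_j)=\mu(C_j^2)$ and hence $\mathcal J_{\ith\mu1}=\mathcal J_\mu$; your weaker inclusion is already sufficient for admissibility, so this is not a gap.
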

\begin{proof}
	If $\nu\in\Aun$, consider the measure $\mu\in \balanced (\spaces^2)$ defined by $\mu(x,y)=\indic_{\{x=y\}}\nu(x)$. The measure $\mu$ satisfies $\ith \mu 1=\nu$. Moreover, $\mu(C_j^2)=\nu(C_j)$ hence $\mu$ is pre-admissible with $\mathcal J_{\mu}=\mathcal J_{\nu}$, and therefore admissible.\footnote{Recall that the definition of admissibility of $\mu$ does not require $\mu\in \ith \abscont 2$, which is why checking admissibility is so easy. See Remark~\ref{remark: other definitions of admissibility}.}
	Conversely, let $\mu\in \Adebal$. Since $\ith \mu 1(C_j)=\mu(C_j^2)$, the measure $\ith \mu 1$ is pre-admissible with $\mathcal J_{\ith \mu 1}=\mathcal J_{\mu}$, thus it is admissible.
\end{proof}
Proposition~\ref{prop: characterization A2bal} characterizes $\Adebal$ in an intuitive way. Although we prove here the full equivalence for the sake of completeness, we will only use the simpler implication (\ref{item: sequence vn L[vn] to mu} $\Rightarrow$ \ref{item: mu admissible and absolutely continuous}). 
\begin{proposition}
	\label{prop: characterization A2bal} 
	Let $\mu\in\mathcal P(\spaces^2)$. The following properties are equivalent.
	\begin{enumerate}
		\item There exists a sequence of words $(w^m)$ with $|w^m|\to \infty$ such that $\mathbb P_{|w^m|}(w^m)>0$ for all $m$ and $L[w^m]\to \mu$ as $m\to \infty$.\footnote{The notation $L[w^m]$ is introduced in~\eqref{eq: notation L[u] M[u]}}
		\label{item: sequence vn L[vn] to mu}
		\item $\mu\in \Adebal\cap \ith \abscont 2$.
		\label{item: mu admissible and absolutely continuous}
	\end{enumerate}
\end{proposition}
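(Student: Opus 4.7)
My plan is to prove the two implications separately. The implication (1) $\Rightarrow$ (2) is elementary and uses only the irreversibility of class-crossing transitions under positive-probability trajectories. The implication (2) $\Rightarrow$ (1) relies on the balanced measure decomposition of Lemma~\ref{lemma: balanced measure decomposition} combined with an explicit stitching construction in the spirit of Section~\ref{section: slicing and stitching}.

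For (1) $\Rightarrow$ (2), I will use that convergence $L[w^m] \to \mu$ in the weak topology on $\proba$ is equivalent to pointwise convergence, so $\mu(x,y) = \lim_m L[w^m](x,y)$ for each $(x,y)$. Absolute continuity $\mu \in \ith \abscont 2$ is immediate: $p(x,y) = 0$ combined with $\mathbb P_{|w^m|}(w^m) > 0$ forces $L[w^m](x,y) = 0$ for all $m$. Balance follows from the bound $\tvnorm{\ith{L[w^m]}{1} - \ith{L[w^m]}{2}} \leq 2/(|w^m|-1) \to 0$ on the two marginals of $L[w^m]$. For pre-admissibility, I will observe that any transition $(x,y)$ with $x \in B$, or $y \in B$, or $x \in C_{j_1}, y \in C_{j_2}$ with $j_1 \neq j_2$ is irreversible, in the sense that it can occur at most once in any positive-probability word: the chain never revisits states of $B$ (since $x \not\leadsto x$ there), and cannot return to a class it has left (by maximality of irreducible classes). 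Hence $L[w^m](x,y) \leq 1/(|w^m|-1) \to 0$, so $\mu(x,y) = 0$ for every such pair, which gives pre-admissibility. Admissibility then follows: for each $j \in \mathcal J_\mu$, pick $(x,y) \in C_j^2$ with $\mu(x,y) > 0$; for large $m$, the transition appears in $w^m$, and since $w^m$ begins in $\supp \beta$, we get $\beta \leadsto x$ and thus $\beta \leadsto C_j$. For $j_1, j_2 \in \mathcal J_\mu$, large $m$ forces $w^m$ to visit both classes, and the same irreversibility argument yields either $C_{j_1} \leadsto C_{j_2}$ or the reverse.

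For (2) $\Rightarrow$ (1), I will invoke Lemma~\ref{lemma: balanced measure decomposition} to approximate $\mu$ in TV by finitely supported balanced measures $\mu_n = \sum_{k=1}^n \alpha_{n,k} M[u^k]$, where $u^k$ are minimal cycles with $p(u^k) > 0$. Since each $\mu_n$ is absolutely continuous with respect to $\mu$, it is itself admissible with $\mathcal J_{\mu_n} \subseteq \mathcal J_\mu$ finite. Fixing $n$, I will group cycles by their class and order them along the total order $\leadsto$ on $(C_j)_{j \in \mathcal J_{\mu_n}}$, then concatenate $T_{n,k} := \lfloor N \alpha_{n,k} \rfloor$ consecutive traversals of each cycle $u^k$, inserting transition words (whose existence follows from admissibility, via Lemma~\ref{lemma: stitching words}) between cycles in different classes, and prepending a letter $x_0 \in \supp \beta$ together with a transition word from $x_0$ to the first cycle. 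This yields a word $w^{n,N}$ with $\mathbb P_{|w^{n,N}|}(w^{n,N}) > 0$, length $|w^{n,N}| = N + O_n(1)$, and $M[w^{n,N}] = \sum_k T_{n,k} M[u^k] + E_{n,N}$ where $E_{n,N}$ is bounded independently of $N$. A direct computation gives $L[w^{n,N}] \to \mu_n$ in TV as $N \to \infty$, and a diagonal extraction $w^m = w^{m, N_m}$ with $N_m$ large enough produces the sought sequence.

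The main obstacle is the stitching construction in (2) $\Rightarrow$ (1), where cycles must be organized in a class-respecting order while maintaining positive probability via appropriate transition letters; however, this is essentially a repackaging of the stitching map already developed in Section~\ref{section: slicing and stitching}, so no new technical ingredient is required. The implication (1) $\Rightarrow$ (2), which is the only one actually used in the main body of the paper (see Property~\ref{item: I2 infinite outside Adebal} of Proposition~\ref{prop: properties of rate function -s}), is conceptually simpler and rests entirely on the irreversibility observation.
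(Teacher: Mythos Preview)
Your proposal is correct and follows essentially the same route as the paper: the implication (1)$\Rightarrow$(2) via the irreversibility of class-crossing transitions and closedness of $\balanced(\spaces^2)$, and (2)$\Rightarrow$(1) via Lemma~\ref{lemma: balanced measure decomposition} followed by a stitching of repeated cycles with transition words. One small imprecision: transition words are needed not only between cycles in \emph{different} classes but also between distinct cycles lying in the \emph{same} class (since the endpoint of one minimal cycle need not connect directly to the start of the next under $p$); Lemma~\ref{lemma: stitching words} covers this case as well, so the fix is immediate.
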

\begin{proof}
	Let $\mu\in \mathcal P(\spaces^2)$ and assume Property~\ref{item: sequence vn L[vn] to mu}. 
	Let us show that $\mu\in\ith \abscont 2$. Let $(x,y)\in \spaces^2$ be such that $p(x,y)=0$. Since $\mathbb P_{|w^m|}(w^m)>0$, the letters $x,y$ cannot be consecutive letters in $w^m$. Therefore, $L[w^m](x,y)=0$, thus $\mu(x,y)=0$.
	We now prove that $\mu$ is balanced.
	First, observe that 
	\begin{equation*}
		\inf_{\nu\in \balanced(\spaces^2)}\tvnorm{L[w^m]-\nu}\leq \tvnorm{L[w^m]-L[w^mw^m_1]}\leq \frac 2{|w^m-1|}\to 0,
		\qquad m\to \infty.
	\end{equation*}
	By the closedness of $\balanced(\spaces^2)$, $\mu$ is balanced. 
	Let us show that $\mu$ is admissible. Let $xy\in\spaces^2$ be a two-letter word that does not belong to $C_j^2$ for any $j\in\mathcal J$. Since $\mathbb P_{|w^m|}(w^m )>0$, the subword $xy$ appears at most once in $w^m$. As a result, $\mu(x,y)=\lim_{n\to\infty} L[w^m](x,y)= 0$. 
	It follows that $\mu$ is pre-admissible. 
	Let $j_1,j_2\in {\mathcal J}_\mu$. 
	There exists some $m\geq 1$ such that $\hat L[w^m](C_{j_1}^2)>0$ and $ L[w^m](C_{j_2}^2)>0$. Hence there exists a letter $x_1\in C_{j_1}$ and a letter $x_2\in C_{j_2}$ in $w^m$, and since $w^m$ has positive probability under $\mathbb P$, we have $\beta\leadsto x_1\leadsto x_2$ or $\beta\leadsto x_2\leadsto x_1$. It follows that $\beta\leadsto C_{j_1}\leadsto C_{j_2}$ or $\beta\leadsto C_{j_2}\leadsto C_{j_1}$, hence $\mu$ is admissible, so  Property~\ref{item: mu admissible and absolutely continuous} holds.
	
	Conversely, assume now Property~\ref{item: mu admissible and absolutely continuous}. Consider the sequence $(\mu_n)$,  the words $u^k$ and the coefficients $\alpha_{n,k}$ given by Lemma~\ref{lemma: balanced measure decomposition}, and let $m\in \N$. We define the word $w^m$ as follows.
	By Properties~\ref{item: support of mun} and~\ref{item: mun to mu and R(mun) to R(mu} of Lemma~\ref{lemma: balanced measure decomposition}, $\mu_n\in \Adebal$, and there exists a certain $n$ such that $\tvnorm{\mu_n-\mu}\leq 1/m$. 
	By the admissibility condition, up to reordering words $u^k$, we can assume that the first letter of each word $u^{k+1}$ is reachable from the last letter of its predecessor $u^k$, and we fix a word $\xi^{k+1}$ such that $p(u^k\xi ^{k+1}u ^{k+1})>0$. Additionally, $u^1_1$ is reachable from $\beta$, and we can choose a word $\xi^1$ such that $\beta(\xi^1)p(\xi^1u^1)>0$. 
	Let $N\in \N$ be such that
	\begin{equation*}
		\lambda_{N,k}:=\left\lfloor{\alpha_{n,k}N}\right\rfloor\geq 1,
		\qquad 1\leq k\leq n.
	\end{equation*}
	Since each $u^k$ is a minimal cycle, we can set $u^k=\tilde u^k\tilde u^k_1$. We set
	\begin{equation*}
		v^N=\xi^1\big(\tilde u^1\big)^{\lambda_{N,1}}u^1_1\xi^2\big(\tilde u^2\big)^{\lambda_{N,2}}u^2_1\cdots\xi^k\big(\tilde u^n\big)^{\lambda_{N,n}}u^n_1,
		\qquad 
		\ell_N=|v^N|-1.
	\end{equation*} 
	Let $\tau$ be the maximal length of words $\xi^k$. 
	We have
	\begin{align}
		\tvnorm{L[v^N]-\mu}
		&\leq \frac {\tau n}{\ell_N}+\frac n{\ell_N}
		+\tvnorml{\sum_{k=1}^n\frac {\lambda_{N,k}}{\ell_N}M[u^k]-\mu}
		\nonumber
		\\&\leq\frac {(\tau+1) n}{\ell_N}
		+\tvnorm{\mu_n-\mu}+\tvnorml{\sum_{k=1}^n\bigg(\frac {\lambda_{N,k}}{\ell_N}-\alpha_{n,k}\bigg)M[u^k]}
		\nonumber
		\\&\leq \frac {(\tau+1) n}{\ell_N}
		+\frac 1m+\sum_{k=1}^n\Big|\frac {\lambda_{N,k}}{\ell_N}-\alpha_{n,k}\Big|(|u^k|-1).
		\label{eqloc: majoration v_N mu}
	\end{align}
	When $N\to \infty$, the first term vanishes. Moreover, we have $|v^N|=N+o(N)$, thus for all $1\leq k\leq n$, the coefficient $\lambda_{N,k}/\ell_N$ converges to $\alpha_{n,k}$, thus each term of the sum vanishes. It follows that the sum vanishes because $n$ is fixed. 
	Consequently, there exists $N(m)\in \N$ such that for all $N\geq N(m)$,
	\begin{equation*}
		\tvnorm{L[v^N]-\mu}\leq \frac 1m+\frac 1m+\frac 1m.
	\end{equation*}
	The word $w^m:=v^{N(m)}$ satisfies $\mathbb P_{|w^m|}(v^m)>0$ and $\tvnorm{L[w^m]-\mu}\leq 3/m$. Since it is always possible to choose $N(m+1)\geq N(m)+1$, the length of $w^m$ is at least $m$, thus $|w^m|\to \infty$ as $m\to \infty$. 
\end{proof}
\begin{remark}\label{remark: set M fortelle}
	Proposition~\ref{prop: characterization A2bal} is adapted from Proposition~1 of~\cite{fortelle2002}, which is, unfortunately, false as formulated. Indeed, the set described in this proposition actually coincides with $\Adebal\cap \ith \abscont2$, which sometimes differs from the set $\mathcal M$ defined in~\cite{fortelle2002}. See Example~\ref{ex: right only random walk} for instance. This overlook seems to have no consequence in the proofs in \cite{fortelle2002}, provided we replace the definition of $\mathcal M$ with that of $\Adebal\cap \ith \abscont2$.
\end{remark}	
\subsection{Admissible measures for infinite $k$}
\label{section: admissible measures on the process level}
In this appendix, we study the set $\ith \Abal \infty$ introduced in Definition~\ref{def: admissibility on the process level}. We show that $\ith \Abal \infty$ is the projective limit of $(\ith \Abal k)_{k\geq 1}$, defined in Definition~\ref{def: admissible measures}. 
The objects of this appendix, for instance measures $\ith \mu k$, where $\mu\in \mathcal P(\spaces^\N)$, and the applications $\pi_k:\mu\mapsto \ith \mu k$, were introduced at the beginning of Section~\ref{section: dawson gartner}.
\begin{lemma}
	\label{lemma: A infty projective limit of Ak}
	The set $\ith \Abal \infty$ is the projective limit of $(\ith \Abal k)$. In other words, $\ith \Abal \infty$ is closed and satisfies
	\begin{equation}\label{eq: proj lim Abalk}
		\ith \Abal \infty=\bigcap_{k=1}^\infty \pi_k^{-1}(\ith \Abal k).
	\end{equation}
\end{lemma}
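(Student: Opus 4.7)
The plan is to prove the two claims separately: closedness and the projective-limit identity. Closedness will be immediate once the identity is established, since by Proposition~\ref{prop: Ade closed} each $\ith \Abal k$ is closed, each $\pi_k$ is continuous, and an intersection of closed sets is closed. Thus I focus on proving the equality $\ith \Abal \infty = \bigcap_{k\geq 1} \pi_k^{-1}(\ith \Abal k)$.

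For the inclusion $\ith \Abal \infty \subseteq \bigcap_{k\ge 1}\pi_k^{-1}(\ith \Abal k)$, I would argue as follows. Fix $\mu\in\ith \Abal\infty$ and $k\ge 1$. The marginal $\ith \mu k$ is balanced in $\mathcal P(\spaces^k)$ because shift-invariance of $\mu$ forces $\ith \mu k$ to satisfy \eqref{eq: definition balanced on Sk}, as was already observed in Section~\ref{section: dawson gartner}. From the decomposition $\mu=\sum_{j\in \mathcal J_\mu}\mu|_{C_j^\N}$, taking the first $k$ marginals gives $\ith \mu k=\sum_{j\in \mathcal J_\mu}(\mu|_{C_j^\N})^{(k)}$, and each summand is supported on $C_j^k$. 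Hence $\ith \mu k$ is pre-admissible with index set contained in $\mathcal J_\mu$, so total order of $\leadsto$ and $\beta$-reachability are automatically inherited, giving $\ith \mu k\in \ith \Abal k$.

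For the reverse inclusion, suppose $\ith \mu k\in\ith \Abal k$ for every $k\ge 1$. Since each $\ith \mu k$ is balanced, $\mu$ is shift-invariant (as noted in Section~\ref{section: dawson gartner}), hence $\mu\in \balanced(\spaces^\N)$. It remains to show admissibility. The key observation is that the minimal index sets $\mathcal J_{\ith \mu k}$ are all equal to $\mathcal J_{\ith \mu 1}$: indeed, the inclusion $\mathcal J_{\ith \mu{k+1}}\subseteq \mathcal J_{\ith \mu k}$ follows from $\ith \mu k = \pi_{k+1,k}(\ith \mu{k+1})$ and disjointness of classes, while the reverse inclusion $\mathcal J_{\ith \mu k}\subseteq \mathcal J_{\ith \mu{k+1}}$ follows because $\ith \mu k(C_j^k)=\mu(C_j^k\times \spaces^\N)\ge \ith \mu{k+1}(C_j^{k+1})$, so positivity of the left side forces positivity of the right side (after iterating through all indices). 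Set $\mathcal J_\mu:=\mathcal J_{\ith \mu 1}$. Defining $B_k:=\bigcup_{j\in \mathcal J_\mu}C_j^k\times \spaces^\N$, we have $\mu(B_k)=\ith \mu k\bigl(\bigcup_{j\in \mathcal J_\mu}C_j^k\bigr)=1$ for all $k$; and $(B_k)$ is a decreasing sequence of Borel sets whose intersection is, thanks to the disjointness of the $C_j$, exactly $\bigcup_{j\in \mathcal J_\mu}C_j^\N$. By monotone convergence, $\mu\bigl(\bigcup_{j\in \mathcal J_\mu}C_j^\N\bigr)=1$, which yields \eqref{eq: admissibility on the process level}. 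The total order on $(C_j)_{j\in \mathcal J_\mu}$ and $\beta$-reachability for every $j\in \mathcal J_\mu$ are inherited from the admissibility of $\ith \mu 1$.

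The main obstacle I anticipate is the passage from the family of marginal supports to the support of $\mu$ on $\spaces^\N$: the sets $B_k$ decrease to $\bigcup_{j\in \mathcal J_\mu}C_j^\N$ only because the classes are pairwise disjoint, which forces any sequence eventually constrained to a class at every finite time to lie in that single class throughout. Once this monotone-limit identification is made, the whole argument falls into place.
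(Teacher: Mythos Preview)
Your proof is correct and follows essentially the same route as the paper: closedness from the identity, the forward inclusion via pushing the decomposition through $\pi_k$, and the reverse inclusion via shift-invariance plus a monotone limit $\mu(C_j^k\times\spaces^\N)\downarrow\mu(C_j^\N)$.

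One remark on clarity: your sentence justifying $\mathcal J_{\ith\mu k}\subseteq\mathcal J_{\ith\mu{k+1}}$ reads as a non sequitur, since the inequality $\ith\mu k(C_j^k)\ge\ith\mu{k+1}(C_j^{k+1})$ alone does not let positivity propagate from left to right. Your parenthetical ``after iterating through all indices'' presumably means that both sides sum to $1$ over $j$ (by pre-admissibility of $\ith\mu k$ and $\ith\mu{k+1}$), forcing termwise equality; you should make that explicit. The paper handles this step more directly: using that $\ith\mu k$ is pre-admissible, it observes $\ith\mu k(C_j\times\spaces^{k-1})=\ith\mu k(C_j^k)$, whence $\ith\mu k(C_j^k)=\ith\mu1(C_j)$ for every $k$, so the sequence is constant and the limit identifies $\mu(C_j^\N)=\ith\mu1(C_j)$ in one stroke. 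In fact, for the bare admissibility conclusion you only need the easy inclusion $\mathcal J_{\ith\mu k}\subseteq\mathcal J_{\ith\mu1}$ (to get $\mu(B_k)=1$) together with $\mathcal J_\mu\subseteq\mathcal J_{\ith\mu1}$, since total order and $\beta$-reachability are inherited by subsets; the hard inclusion is only needed if you also want to assert that the minimal $\mathcal J_\mu$ equals $\mathcal J_{\ith\mu1}$.
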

\begin{proof}
	Since every $\ith \Abal k$ is closed and every $\pi_k$ is continuous, it suffices to prove~\eqref{eq: proj lim Abalk}.
	Let $\mu \in\ith \Abal \infty$ and $k\geq 1$. The measure $\ith \mu k$ is balanced, and satisfies
	\begin{equation}
		\label{eqloc: decomposition muk}
		\ith \mu k=\pi_k\Bigg(\sum_{j\in\mathcal J_\mu}\mu|_{C_j^\N}\Bigg)=\sum_{j\in\mathcal J_{\mu}}\pi_k\left(\mu|_{C_j^\N}\right)=\sum_{j\in\mathcal J_{\mu}}\ith \mu k|_{C_j^k}.
	\end{equation}
	Thus $\ith \mu k$ is pre-admissible and $\mathcal J_{\ith \mu k}\subseteq\mathcal J_\mu$.
	Moreover, for all $j\in {\mathcal J}$,
	\begin{equation*}
		\ith \mu {k}(C^{k}_j)=\mu (C^{k}_j\times \Omega)=\mu (C^\N_j),
	\end{equation*}
	thus $\mathcal J_\mu$ is minimal in~\eqref{eqloc: decomposition muk}
	and we have $\mathcal J_{\ith \mu k}=\mathcal J_\mu$. 
	Since the order $\leadsto $ is total on $(C_j)_{j\in {\mathcal J}_\mu}$ and $\beta\leadsto C_j$ for all $j\in {\mathcal J}_\mu$, it follows that $\ith \mu k\in \ith \Abal k$. 
	We now prove the converse inclusion. Let $\mu \in \bigcap_{k=1}^\infty \pi_k^{-1}(\ith \Abal k)$. Then, $\mu$ is shift-invariant.
	It remains to prove that $\mu\in \ith {\mathcal A} \infty$.
	More precisely, we will show that $\mu$ is pre-admissible with $\mathcal J_\mu=\mathcal J_{\ith \mu 1}$, and therefore admissible.
	It suffices to show that 
	\begin{equation}\label{eqloc: equivalence mu infty admissible}
		\sum_{j\in \mathcal J_{\ith \mu 1}}\mu(C^\N_j)=1.
	\end{equation}
	Let $k\in \N$. For all $j\in\mathcal J$, we have
	\begin{equation*}
		\ith \mu {1}(C^{}_j)= \mu (C^{}_j\times \spaces ^{\N})=\ith \mu k(C_j\times \spaces^{k-1})=\ith \mu k(C_j^k),
	\end{equation*}
	where the last equality uses that $\ith \mu k$ is pre-admissible. Thus, the sequence $(\ith \mu {k}(C_j^{k}))_{k\geq 1}$ is constant for every $j\in \mathcal J$.
	Therefore, 
	\begin{equation*}
		\mu(C_j^\N)=\mu\bigg(\bigcap_{k\in \N}C_j^k \times \spaces^\N\bigg)=\lim_{k\to \infty}\ith \mu k(C_j^k)=
		\ith \mu 1(C_j).
	\end{equation*}
	By definition of $\mathcal J_{\ith \mu1}$, \eqref{eqloc: equivalence mu infty admissible} is satisfied.
\end{proof}
\section{Convex conjugates and duality}
\label{section: duality}
Let $k\in \N$. We recall that $\bounded(\spaces^k)$ denotes the set of all bounded functions on $\spaces^k$ and $\mathcal M(\spaces^k)$ denotes the set of all finite signed measures on $\spaces^k$.
In addition to the weak topology on $\mathcal M(\spaces^k)$ defined in the introduction, we define the \emph{weak topology} on $\bounded(\spaces^k)$ as the coarsest topology such that $\langle \mu, \cdot\rangle$ is continuous for all $\mu\in \mathcal M(\spaces^k)$.
The sets $\mathcal M(\spaces^k)$ and $\bounded(\spaces^k)$ are equipped with their respective weak topologies.\footnote{We follow the same philosophy as in Chapter 2.3 of~\cite{zalinescu2002}: 
	rather than taking a Banach space and considering its dual equipped with the operator norm, we define simultaneously two topologies on two sets that make them the dual of each other. Beware that this weak topology on $\bounded(\spaces^k)$ is not the topology of uniform convergence!}
We recall that $\mathcal P(\spaces^k)$ denotes the set of probability measures on $\spaces^k$.
\begin{definition}
	\label{def: convex conjugate}
	Let $\Lambda:\bounded(\spaces^k)\to (-\infty,+\infty]$ be a function. The \emph{convex conjugate} of $\Lambda$ is defined over $\mathcal M(\spaces^k)$ by
	\begin{align*}
		\Lambda^*(\mu)=\sup_{V\in\bounded(\spaces^k)}\left(\langle \mu, V\rangle -\Lambda(V)\right).
	\end{align*}
	Let $J:\mathcal M(\spaces^k)\to (-\infty,+\infty]$ be a function.
	The convex conjugate of $J$ is defined over $\bounded(\spaces^k)$ by
	\begin{equation*}
		J^*(V)=\sup_{\mu\in\mathcal M(\spaces^k)}\left(\langle \mu, V\rangle-J(\mu)\right).
	\end{equation*} 
	Let $I:\mathcal P(\spaces^k)\to (-\infty,+\infty]$ be a function. Let $J$ be the extension of $I$ to $\mathcal M(\spaces^k)$ defined by setting $J(\mu)=\infty$ for all $\mu\notin \mathcal P(\spaces^k)$. The convex conjugate of $I$ is defined over $\bounded(\spaces^k)$ as the convex conjugate of $J$ and is still denoted $I^*$.
\end{definition}
The convex conjugate of a function on $\bounded(\spaces^k)$, $\mathcal M(\spaces^k)$, or $\mathcal P(\spaces^k)$ is always a lower semicontinuous function because it is the supremum of a family of continuous functions.
The notion of convex conjugation is designed for vector spaces, which is the reason why we need to extend the function $I:\mathcal P(\spaces^k)\to (-\infty,+\infty]$ to $\mathcal M(\spaces^k)$ before defining its convex conjugate. Nevertheless, we still have
\begin{equation*}
	I^*(V)=\sup_{\mu\in\mathcal P(\spaces^k)}\left(\langle \mu, V\rangle-I(\mu)\right),
	\qquad V\in \bounded(\spaces^k),
\end{equation*}
as a consequence of Definition~\ref{def: convex conjugate}.

Let $I:\mathcal P(\spaces^k)\to(-\infty,+\infty]$ be a function.
If $I$ is finite at one point (at least), $I^*$ is a function from $\bounded(\spaces^k)$ to $(-\infty,+\infty]$.
The convex biconjugate of $I$ is the function $I^{**}:=(I^*)^*$ defined on $\mathcal M(\spaces^k)$. 
Otherwise, $I^{**}$ is set to be the infinite function.
The function $I^{**}$ is described by the Fenchel-Moreau Theorem; see for instance Theorems~2.3.3 and~2.3.4 of~\cite{zalinescu2002}. In this paper, we only use the following corollary. For a function $I:\mathcal P(\spaces^k)\to (-\infty,+\infty]$, we denote by $\cl I$ the lower semicontinuous envelope of $I$ and by $\conv I$ the convex envelope of $I$.
\begin{corollary}
	\label{coro: fenchel moreau}
	Let $I:\mathcal P(\spaces^k)\to (-\infty,+\infty]$ be a function. 
	Then, for all $\mu\in \mathcal P(\spaces^k)$,
	\begin{equation*}
		I^{**}(\mu)=\cl(\conv I)(\mu).
	\end{equation*}
\end{corollary}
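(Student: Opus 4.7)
The plan is to invoke the Fenchel--Moreau theorem on the locally convex space $\mathcal M(\spaces^k)$, equipped with the weak topology induced by its dual pairing with $\bounded(\spaces^k)$, and then transfer the conclusion to the convex subset $\mathcal P(\spaces^k)$. Throughout, let $\tilde I:\mathcal M(\spaces^k)\to(-\infty,+\infty]$ be the extension of $I$ with $\tilde I = I$ on $\mathcal P(\spaces^k)$ and $\tilde I = +\infty$ elsewhere, as in Definition~\ref{def: convex conjugate}; by construction $I^* = \tilde I^*$ on $\bounded(\spaces^k)$, and therefore $I^{**} = \tilde I^{**}$ on $\mathcal M(\spaces^k)$. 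If $I\equiv+\infty$ both sides of the claim are trivially $+\infty$, so I may assume $\tilde I$ is proper. The pairing separates points on both sides, so the weak topology on $\mathcal M(\spaces^k)$ is a Hausdorff locally convex topology and the general Fenchel--Moreau theorem applies.

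First, I would apply the Fenchel--Moreau theorem (Theorems~2.3.3 and~2.3.4 of~\cite{zalinescu2002}) to the proper function $\tilde I$ on $\mathcal M(\spaces^k)$ to obtain $\tilde I^{**} = \cl(\conv \tilde I)$, where $\cl$ and $\conv$ are respectively the weak lower semicontinuous envelope and the convex envelope taken in the vector space $\mathcal M(\spaces^k)$. Second, I would verify that $\conv \tilde I$ coincides with $\conv I$ on $\mathcal P(\spaces^k)$ and equals $+\infty$ outside: any finite convex combination with finite $\tilde I$-value must involve only measures in $\mathcal P(\spaces^k)$, and since $\mathcal P(\spaces^k)$ is itself a convex subset of $\mathcal M(\spaces^k)$, the combination again lies in $\mathcal P(\spaces^k)$.

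Third, for $\mu\in\mathcal P(\spaces^k)$, I would check that $\cl(\conv\tilde I)(\mu)$ computed in $\mathcal M(\spaces^k)$ equals $\cl(\conv I)(\mu)$ computed in $\mathcal P(\spaces^k)$. Since the weak topology on $\mathcal P(\spaces^k)$ is the subspace topology inherited from $\mathcal M(\spaces^k)$, every weak neighborhood of $\mu$ in $\mathcal M(\spaces^k)$ intersected with $\mathcal P(\spaces^k)$ is a weak neighborhood of $\mu$ in $\mathcal P(\spaces^k)$ and conversely; combined with $\conv\tilde I\equiv+\infty$ off $\mathcal P(\spaces^k)$, this ensures that the infimum of $\conv\tilde I$ over an ambient neighborhood is attained inside $\mathcal P(\spaces^k)$, whence the two $\liminf$ computations agree.

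The main obstacle is not any specific computation but rather careful bookkeeping between the ambient vector space $\mathcal M(\spaces^k)$, where Fenchel--Moreau naturally lives, and the convex subset $\mathcal P(\spaces^k)$, where $I$ is defined; in particular I must make sure that the $+\infty$-extension does not create artificial finite values of $\cl(\conv\tilde I)$ at boundary points of $\mathcal P(\spaces^k)$ that would spoil the identification on $\mathcal P(\spaces^k)$, and that the passage between the envelope on $\mathcal M(\spaces^k)$ and the envelope on $\mathcal P(\spaces^k)$ is clean.
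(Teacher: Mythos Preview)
Your proposal is correct and follows essentially the same route as the paper: extend $I$ by $+\infty$ to $\mathcal M(\spaces^k)$, apply Fenchel--Moreau there (citing the same theorems of Z\u{a}linescu), use convexity of $\mathcal P(\spaces^k)$ to identify the convex envelope of the extension, and then pass back to $\mathcal P(\spaces^k)$ for the closure. The only cosmetic difference is that the paper justifies the last step by invoking closedness of $\mathcal P(\spaces^k)$ in $\mathcal M(\spaces^k)$, whereas you argue directly via the subspace topology and the fact that $\conv\tilde I\equiv+\infty$ off $\mathcal P(\spaces^k)$; both justifications are valid and amount to the same observation.
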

\begin{proof}
	If $I$ is infinite everywhere, we have $I^{**}=\infty=\conv I$ everywhere.
	Otherwise, let $J$ be the extension of $I$ to $\mathcal M (\spaces^k)$ defined as in Definition~\ref{def: convex conjugate}. 
	By the Fenchel-Moreau Theorem (see Theorem~2.3.4 of~\cite{zalinescu2002}), and by definition of $I^{**}$, we have
	\begin{equation*}
		I^{**}(\mu)=J^{**}(\mu)=\cl_{\mathcal M(\spaces^k)}(\conv _{\mathcal M(\spaces^k)}J)(\mu),
		\qquad \mu\in \mathcal P(\spaces^k),
	\end{equation*}
	where $\cl_{\mathcal M(\spaces^k)}(\cdot)$ denotes the lower semicontinuous envelope taken on $\mathcal M(\spaces^k)$ and $\conv _{\mathcal M(\spaces^k)}(\cdot)$ denotes the convex envelope taken on $\mathcal M(\spaces^k)$. 
	Since $\mathcal P(\spaces^k)$ is convex in $\mathcal M(\spaces^k)$, by definition of $J$, we have, for all $\mu \in \mathcal M(\spaces^k)$,
	\begin{equation*}
		\conv_{\mathcal M(\spaces^k)}J(\mu)=
		\begin{cases}
			\conv I(\mu),\quad &\hbox{if $\mu\in \mathcal P(\spaces^k)$},
			\\\infty,\quad &\hbox{otherwise}.
		\end{cases}
	\end{equation*}
	Since $\mathcal P(\spaces^k)$ is closed in $\mathcal M(\spaces^k)$, it follows that for all $\mu\in \mathcal P(\spaces^k)$,
	\begin{equation*}
		\cl_{\mathcal M(\spaces^k)}(\conv _{\mathcal M(\spaces^k)}J)(\mu)=\cl(\conv I)(\mu),
	\end{equation*}
	which concludes the proof.
\end{proof}

\section{The finite case}\label{section: finite case}
In this section, we consider the case of a finite state space $\spaces$. This assumption, as it implies that $\mathcal P(\spaces)$ is compact, provides tightness and goodness of rate functions. By doing so, it greatly simplifies many proofs in the paper.
Nevertheless, we stress that finiteness does not imply any kind of irreducibility.
In the following theorem, functions $(\ith \Lambda1)^*$, $\ith \idv1$, $\ith R1$, $(\ith \Lambda\infty)^*$, $\ith \idv\infty$ and $\ith R\infty$ and sets $\Aun$ and $\ith \Abal \infty$ are as in Theorems~\ref{theorem: intro weak LDP for Ln1} and~\ref{theorem: intro weak LDP for Lninfty}.
\begin{theorem}\label{theorem: full LDP finite case}
	Assume that $\spaces$ is finite. Then, the following hold:
	\begin{enumerate}
		\item\label{item: level 1 full LDP finite case} {(Level-1 full LDP)} For all $f:\spaces\to \mathbb R^d$, the sequence $(A_nf)$ satisfies the full LDP with good rate function $I_f$, which satisfies
		\begin{equation*}
			I_f(a)=\inf\{\ith I1(\mu)\ |\ \mu\in \mathcal P(\spaces),\ \langle \mu, f\rangle =a\},
			\qquad a\in \mathbb R^d,
		\end{equation*}
		where $\ith I1$ is the function of Property~\ref{item: level 2 full LDP finite case};
		\item \label{item: level 2 full LDP finite case}(Level-2 full LDP) The sequence $(\ith L1_n)$ satisfies the full LDP with good rate function $\ith I1$, which satisfies, for all $\mu \in \mathcal P(\spaces)$,
		\begin{equation*}
			\ith I1(\mu)=
			\begin{cases}
				(\ith \Lambda1)^*(\mu)=\ith \idv1(\mu)=\ith R1(\mu),\quad &\hbox{if $\mu\in \Aun$},\\
				\infty,\quad &\hbox{otherwise};
			\end{cases}
		\end{equation*}
		\item \label{item: level 3 full LDP finite case}(Level-3 full LDP) The sequence $(\ith L\infty_n )$ satisfies the full LDP with good rate function $\ith I\infty$, which satisfies, for all $\mu \in \mathcal P(\mathcal \spaces^\N)$,
		\begin{equation*}
			\ith I\infty(\mu)=
			\begin{cases}
				(\ith \Lambda \infty)^*(\mu)=\ith \idv \infty (\mu)=\ith R\infty(\mu),\quad &\hbox{if $\mu\in \ith \Abal \infty$},\\
				\infty,\quad &\hbox{otherwise}.
			\end{cases}
		\end{equation*}
		Moreover, assuming that $\mu \in \ith \Abal \infty$ and $\ent (\ith \mu1|\beta)<\infty$, we have the additional expression
		\begin{equation*}
			\ith I\infty(\mu)=\lim_{k\to \infty}\frac 1k \ent (\ith \mu k|\mathbb P_k).
		\end{equation*}
	\end{enumerate}
\end{theorem}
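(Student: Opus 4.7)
The plan is to leverage the weak LDPs already established in Theorems~\ref{theorem: intro weak LDP for Ln1} and~\ref{theorem: intro weak LDP for Lninfty} and upgrade them to full LDPs by exploiting compactness. When $\spaces$ is finite, $\mathcal P(\spaces)$ is a compact simplex, and $\spaces^\N$ is compact by Tychonoff, hence $\mathcal P(\spaces^\N)$ equipped with the weak topology is also compact (probability measures on a compact metric space form a compact set, by Prokhorov). I would begin by noting that this makes the sequences $(\ith L1_n)$ and $(\ith L\infty_n)$ trivially exponentially tight, since the choice $K$ equal to the whole space satisfies Definition~\ref{def: weak and full LDP}(1). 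Under exponential tightness, the weak LDP upgrades to a full LDP with the same rate function by a standard covering argument (e.g.\ Lemma~1.2.18 of~\cite{DZ}), and lower semicontinuous functions on a compact Hausdorff space have closed---hence compact---level sets, so the rate functions are automatically good.

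With this upgrade in hand, Property~\ref{item: level 2 full LDP finite case} is immediate from Theorem~\ref{theorem: intro weak LDP for Ln1}, and Property~\ref{item: level 3 full LDP finite case} is immediate from Theorem~\ref{theorem: intro weak LDP for Lninfty}, since the identifications of $\ith I1$ on $\Aun$ and of $\ith I\infty$ on $\ith \Abal\infty$---including the additional relative-entropy expression~\eqref{eq: level 3 relative entropy}---are carried over verbatim from those theorems. Nothing new needs to be proved about the rate functions themselves.

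For Property~\ref{item: level 1 full LDP finite case}, I would apply the standard full contraction principle (Theorem~4.2.1 of~\cite{DZ}) to the continuous map $F: \mathcal P(\spaces) \to \mathbb R^d$ given by $F(\mu) = \langle \mu, f\rangle$, which is well defined and continuous since $f$ is automatically bounded on the finite set $\spaces$. The key point where finiteness matters is that $\mathcal P(\spaces)$ is compact, so $F$ is automatically proper: preimages of compact sets are closed subsets of a compact space. This is precisely the obstruction flagged in Remark~\ref{remark: no level 1 LDP}, which is removed here. Since $(\ith L1_n)$ satisfies the full LDP with good rate function $\ith I1$ from Property~\ref{item: level 2 full LDP finite case}, the contraction principle yields the full LDP for $(A_n f) = (F(\ith L1_n))$ with good rate function $I_f(a) = \inf\{\ith I1(\mu) \mid \langle \mu, f\rangle = a\}$, as claimed.

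No step constitutes a genuine obstacle: once the compactness of the various measure spaces is invoked, the finite case reduces essentially to bookkeeping on top of the weak LDPs already proven. The only items worth writing out carefully are the weak-to-full upgrade via exponential tightness and the verification that $F$ is proper in Property~\ref{item: level 1 full LDP finite case}, both of which are one-line observations in this setting.
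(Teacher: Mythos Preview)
Your proof is correct and is precisely the route the paper itself acknowledges as the shortest one: just before the sketch of proof, the paper states that Theorem~\ref{theorem: full LDP finite case} ``could be obtained as a corollary of Theorems~\ref{theorem: intro weak LDP for Ln1} and~\ref{theorem: intro weak LDP for Lninfty}''. You carry out exactly this corollary argument, upgrading the weak LDPs to full ones via the trivial exponential tightness that compactness of $\mathcal P(\spaces)$ and $\mathcal P(\spaces^\N)$ provides, and then applying the standard full contraction principle for Property~\ref{item: level 1 full LDP finite case}.

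The paper deliberately takes a different route: rather than invoking the general weak LDPs as black boxes, it re-traverses the entire proof architecture (slicing and stitching, RL functions, Varadhan, rate-function identification, contraction, Dawson--G\"artner) in the finite setting, pointing out at each stage which technical complications disappear. For instance, one can take $K=\spaces$ throughout so that no truncation set is needed, the standard Varadhan lemma replaces Section~\ref{section: varadhan} since $\Lambda=\Lambda_\infty$, and Lemma~\ref{lemma: balanced measure decomposition} is no longer required. Your approach is logically tighter and is what one would write if the goal were simply to establish the theorem; the paper's approach is pedagogical, its purpose being to isolate exactly where the infinite-state difficulties enter.
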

Theorem~\ref{theorem: full LDP finite case} could be obtained as a corollary of Theorems~\ref{theorem: intro weak LDP for Ln1} and~\ref{theorem: intro weak LDP for Lninfty}.
However, in order to highlight how the finiteness assumption simplifies many proofs, we propose a sketch of a direct proof by going through the paper again with this assumption in mind. 
\begin{proof}[Sketch of proof]
	Here are the main steps of the proof, and how much simpler they are.
	\begin{itemize}
		\item The slicing, stitching, coupling and decoupling  maps defined in Sections~\ref{section: slicing and stitching} and~\ref{section: coupling and decoupling maps} have the same definition as in the infinite case. However, their properties are simpler to prove. Indeed, since $\spaces$ is now finite, we do not need to introduce a finite set $K\subseteq \spaces$ to be able to bound probabilities and lengths of words (that is, we can simply take $K=\Kdelta=\spaces$ throughout). Moreover, the sums that appear, for example in~\eqref{eqloc: bound on the sum of Q(u)}, can be bounded by simply counting the number of terms and crudely replacing the probabilities being summed by 1; see Example 2.20 of~\cite{papierdecouple}.
		
		\item We then use the RL method of Section~\ref{section: ruelle lanford functions} to show that $(\ith L2_n)$ has a RL function. By Lemma~\ref{lemma: RL implies LDP}, $(\ith L2_n)$ satisfies the weak LDP with rate function $\Ide$. But since $\mathcal P(\spaces^2)$ is compact, the LDP is full and the rate function good.
		\item We derive Proposition~\ref{prop: I=I**} using convexity properties of $\Ide$ as in Section~\ref{section: convexity}. 
		\item By Varadhan's Lemma (the standard version for full LDPs; see Section III.3 in~\cite{denhollander2008} or Theorem 4.3.1 of~\cite{DZ}), we have $(\Ide)^*=\ith \Lambda2=\ith \Lambda2_\infty$ everywhere. Section~\ref{section: varadhan} can be skipped entirely.
		\item The content of Section~\ref{section: alternative expressions of the rate function} is standard in the finite case.
		To derive $\ith J2=\ith R2$, one can reproduce the proof of theorem 13.1 of~\cite{rassoul}, as we did in Section~\ref{section: alternative expressions of the rate function}, but an alternative proof, that gets simpler in finite dimension, consists in explicitly optimizing~\eqref{eq: definition DV entropy}.\footnote{The supremizer is the function $(x,y)\mapsto \indic _{p(x,y)>0}\mu(x,y)/\ith \mu 1(x)p(x,y)$, though it does not belong to $\expbounded(\spaces^2)$, and should be approximated to get $\ith \idv2$.}
		We then have to prove that $(\ith \Lambda2)^*=\ith\idv2$ on $\Adebal$. 
		We outline here a proof of this standard equality, following the same approach as in the proof of Proposition~\ref{prop: rate functions for occupation time}.\footnote{Since $\ith \Lambda2$ and $\ith \Lambda2_\infty$ are equal, there is no need to reproduce the full proof of Proposition~\ref{prop: equality of all rate functions on Adebal} for comparing $\ith J2$ with $(\ith \Lambda2_\infty)^*$.}
		Let $\mu\in \Adebal$.
		We deduce the inequality $\ith \idv 2(\mu)\leq (\ith \Lambda2)^*(\mu)$ like in step 1 of the proof of Proposition~\ref{prop: equality of all rate functions on Adebal}.
		The proof of the converse inequality is simplified in the following way.
		Since the set $\spaces_\beta:=\{x\in \spaces\ |\ \beta\leadsto x\}$ is finite, for all $V\in \bounded(\spaces^2)$,
		\begin{equation*}
			\begin{split}
				\ith \Lambda2(V)
				&=\max_{x\in \spaces_\beta}\left(\limsup_{n\to \infty}\frac 1n\log \mathbb E_x\left[e^{n\langle \ith L2_n,V\rangle}\right] \right)=:\ith{\widetilde \Lambda}2(V).
			\end{split}
		\end{equation*} 
		Here, $\mathbb E_x[\cdot]$ denotes the expectation associated with the Markov chain conditioned to $(X_1=x)$.
		For all $\lambda>\ith{\widetilde \Lambda}2(V)$, the expression
		\begin{equation*}
			u_n(x)=\sum_{k=0}^n e^{-\lambda k}(P^V)^k1(x),\qquad x\in \spaces,
		\end{equation*}
		defines a function $u_n\in \expbounded(\spaces^2)$. After observing that $Pu_n=e^{V-\lambda}(u_{n+1}-1)$ and that $(u_n)$ converges pointwise to a function $u$ satisfying $u(x)>1$ for all $x\in \spaces_\beta$, we obtain 
		\begin{equation*}
			\ith J2(\mu)\geq \langle \mu, V\rangle -\lambda+\sum_{x\in \spaces_\beta}\mu(x)\liminf_{n\to \infty}\log\frac{u_n(x)}{u_{n+1}-1}\geq \langle \mu, V\rangle -\lambda+0.
		\end{equation*}
		This shows $\ith \idv 2(\mu)\geq (\ith \Lambda2)^* (\mu)$.
		Notice that we did not use Lemma~\ref{lemma: balanced measure decomposition} in this reasoning.
		\item Since the LDP is full, the content of Section~\ref{section: contraction} can be entirely replaced by the simple use of the standard contraction principle (see Section III.5 in~\cite{denhollander2008} or Theorem 4.2.1 of~\cite{DZ}) to obtain the full LDP for $(\ith L1_n)$. Recall also that, in the infinite case, Remark~\ref{remark: no level 1 LDP} prevented us from using Lemma~\ref{lemma: contraction principle} to obtain the LDP for $(A_nf)$. In the finite case, the standard contraction principle yields the full LDP for $(A_nf)$.
		\item We use the same proofs as in Section~\ref{section: weak LDP on the process level} to get the level-3 full LDP. In the proof of Theorem~\ref{theorem: intro weak LDP for Lninfty}, we can use the Dawson-Gärtner Theorem.
	\end{itemize}
\end{proof}
\begin{remark}
	An even shorter route to the level-2 full LDP of Theorem~\ref{theorem: full LDP finite case} is to perform the same steps while replacing $\ith L2_n$ by $\ith L1_n$ everywhere. This makes the formulation of certain statements simpler and skips the use of a contraction principle. This approach would not have worked in the infinite $\spaces$ case, because we needed measures on $\spaces^2$ in Section~\ref{section: alternative expressions of the rate function}. More precisely, Step 3 of the proof of Proposition~\ref{prop: equality of all rate functions on Adebal} used the decomposition of balanced measures given in Lemma~\ref{lemma: balanced measure decomposition}.
\end{remark} 
\section{Examples and counterexamples}\label{section: examples and counterexamples}
\begin{example}\label{ex: simple example three states}
	\begin{figure}[thb]
		\centering
		\includegraphics{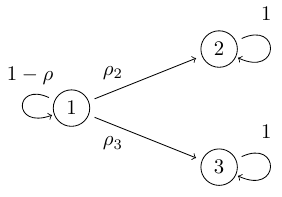}
		\caption{The 3-state Markov chain of Example~\ref{ex: simple example three states}.}
		\label{figure: simple example three states}
	\end{figure}
	Let $\spaces=\{1,2,3\}$, $\beta=\delta_1$, and 
	\begin{equation*}
		p=\begin{pmatrix}
			1-\rho&\rho_2&\rho_3\\
			0&1&0\\
			0&0&1
		\end{pmatrix},
	\end{equation*}
	where $\rho_2,\rho_3\in [0,1]$ are such that $\rho=\rho_2+\rho_3\leq1$.
	Although very simple and easily solvable by hand, the problem in this example does not fit into the irreducibility framework. We get
	\begin{equation*}
		\begin{split}
			\Aun&=\{\mu \in \probaun\ |\ \mu(2)\mu(3)=0\}\\
			&=\{\lambda \delta_1+(1-\lambda)\delta_2\ |\ \lambda\in [0,1]\}\cup\{\lambda \delta_1+(1-\lambda)\delta_3\ |\ \lambda\in [0,1]\},
		\end{split}
	\end{equation*}
	and $\ith L1_n$ satisfies the full LDP with rate function $I$, where
	\begin{equation*}
		I(\mu)=\begin{cases}
			-\mu(1)\log(1-\rho),\quad& \hbox{if $\mu\in \Aun$},\\
			\infty,\quad& \hbox{otherwise.}
		\end{cases}
	\end{equation*}
	$\ith I1$ is not convex, since $I(\frac12\delta_{2}+\frac12\delta_{3})=\infty$ and $I(\delta_{2})=I(\delta_{3})=0$. The same argument holds for $\ith I2$.
\end{example}
\begin{example}[The right-only one-dimensional random walk]\label{ex: right only random walk}
	\begin{figure}[thb]
		\centering
		\includegraphics{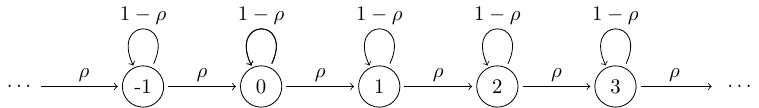}
		\caption{The right-only one-dimensional random walk on $\mathbb Z$.}
		\label{fig: right only one dimensional random walk}
	\end{figure}
	Let $\spaces=\mathbb Z$ be equipped with the stochastic kernel defined by $p(x,\cdot)=\rho\delta_{x+1}+(1-\rho)\delta_{x}$, where $\rho\in [0,1]$ is a parameter. This kernel is shown in Figure~\ref{fig: right only one dimensional random walk}. Let $\beta\in\mathcal P(\spaces)$ satisfy $\inf \supp\beta=-\infty$.
	If $\rho=1$, there are no irreducible classes. In this case $\Aun=\emptyset$ and $\ith L1_n$ satisfies the weak LDP with rate function $I_1= \infty$. If $\rho\in (0,1)$, all singletons are irreducible classes, and $\Aun=\mathcal P(\spaces)$. Then, $\ith L1_n$ satisfies the weak LDP with rate function $I_\rho$ defined as the constant function $-\log (1-\rho)$. If $\rho=0$, all singletons are irreducible classes, but none is reachable from another. Thus $\Aun=\{\delta_x\ |\ x\in\supp\beta\}$ and $(L_n^{(1)})$ satsfies the weak LDP with rate function $I_0$ defined by
	\begin{equation*}
		I_0(\mu)=
		\begin{cases}
			0,\quad &\hbox{if $\exists x\in \supp \beta$, $\mu=\delta_x$ },\\
			\infty,\quad &\hbox{otherwise.}
		\end{cases}
	\end{equation*}
	This example illustrates several interesting facts:
	\begin{enumerate}
		\item When $\rho=1$, the functions $\ith \Lambda1$ and $\ith {\widetilde \Lambda}1$ of the proof of Proposition~\ref{prop: rate functions for occupation time} may not coincide. Let $V(x)=\indic_{x\geq 0}$ and let $\beta = \sum_{k\geq 1}2^{-k} \delta_{-k}$. Then, $\ith \Lambda1 (V)= 1-\log 2>0=\ith {\widetilde \Lambda}1(V)$.
		\item When $\rho\in (0,1)$, no state is recurrent, yet the rate function is not everywhere infinite.
		\item When $\rho\in (0,1)$, $\Aun=\mathcal P(\spaces)$ and $I_\rho$ is convex, but the Markov chain is not irreducible. Although the convexity of the rate function is closely related to the irreducibility of the Markov chain, it is not a sufficient condition.
		\item When $\rho\in (0,1)$, the set $\mathcal M$ defined in~\cite{fortelle2002} and mentioned in Remark~\ref{remark: set M fortelle} is the set of balanced measures whose support is bounded from below. It does not coincide with $\Adebal\cap \ith \abscont 2$. As a consequence, the statement provided in Theorem~7 of~\cite{fortelle2002} is false. For instance, the measure $\mu=\sum_{x<0}2^x\delta_{(x,x)}$ is admissible, thus $I_\rho(\mu)=-\log (1-\rho)<\infty$, but $\mu\notin \mathcal M$. As mentioned in Remark~\ref{remark: set M fortelle}, Theorem~7 of~\cite{fortelle2002} holds if the definition of $\mathcal M$ is replaced by that of $\Adebal\cap \ith \abscont2$.
		\item When $\rho\in (0,1)$, one can compute that $\Lambda_\infty(0)=\log \rho<0=\Lambda(0)$, showing that the inequality $\Lambda_\infty\leq \Lambda$ is not an equality in general.
		\item When $\rho\in (0,1)$, $\Lambda$ is not lower semicontinuous in the weak topology on $\bounded(\spaces^2)$.\footnote{Yet it is $1$-Lipschitz, thus continuous with respect to the uniform convergence topology. The weak topology is defined in Appendix~\ref{section: duality}.} If it were, since it is also convex, the Fenchel-Moreau Theorem would cause $\Lambda^{**}=\Lambda$ on $\bounded(\spaces^2)$ (see Theorem~2.3.3 of~\cite{zalinescu2002} for instance). Yet this equality is false because $\Lambda^{**}(0)=I_\rho^*(0)=\log(1-\rho)<0=\Lambda(0)$.
		\item When $\rho=0$, let $\conv I_0$ denote the convex envelope of $I_0$. One can compute that $\conv I_0(\mu)$ is zero if the support of $\mu$ is finite and $\infty$ otherwise. This function is not lower semicontinuous. Moreover,  both $I_0^{**}$ and the $\sigma$-convex envelope of $I_0$ are the zero function. Hence, $\conv I_0$ does agree with $I_0^{**}$ on $\Aun$ but not on $\mathcal P(\spaces)$.
	\end{enumerate}
\end{example}
\begin{example}\label{ex: climber's markov chain}
	\begin{figure}[thb]
		\centering
		\includegraphics{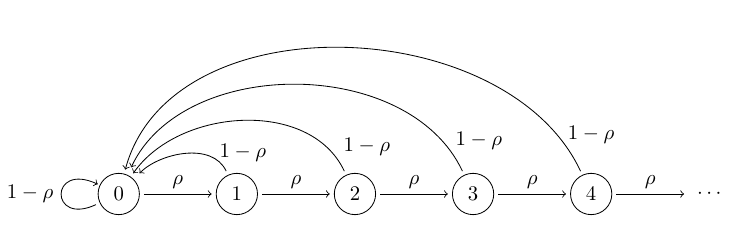}
		\caption{The Markov chain of Example~\ref{ex: climber's markov chain}. The state $0$ is reachable from every other state in one step and the state $x$ is reachable from $0$ in $x$ steps, thus the Markov chain is matrix-irreducible.}
		\label{fig: climber's markov chain}
	\end{figure}
	Let $\spaces=\mathbb N_0=\{0,1,2,\ldots\}$, and $p(x,\cdot)=\rho \delta_{x+1}+(1-\rho)\delta_{0}$ for all $x$, where $\rho$ is a parameter in $(0,1)$. The kernel $p$ is represented in Figure~\ref{fig: climber's markov chain}. This Markov chain is matrix-irreducible. As mentioned in the introduction, this example shows that, in absence of exponential tightness, it is a very subtle question to determine whether the LDP is full or not. For every initial measure $\beta$, the empirical measure satisfies the weak LDP. If $\beta=\delta_0$, one can show that this LDP is actually full. If $\beta\neq \delta_0$, the empirical measure fails to satisfy the full LDP, as proved in detail in Example~10.3 of~\cite{deacosta2022} and in~\cite{baxter1991}.
\end{example}
\begin{example}
	\label{ex: infinite entropy}
	Let $\spaces=\N$ and $p$ be defined by $p(n,\cdot)=e^{-n}\delta_{n}+(1-e^{-n})\delta_{n+1}$ for all $n\in \N$. Let $\beta=\delta_1$. Then, we have
	\begin{equation*} 
		\Adebal\cap \ith \abscont 2=\Adebal=\mathcal P(\{(n,n)\ |\ n\in\N\}).
	\end{equation*}
	Let
	\begin{equation*} 
		\mu=\sum_{n=1}^\infty\frac{6}{\pi^2n^2}\delta_{(n,n)}\in \Adebal \cap \ith \abscont2.
	\end{equation*}
	By Theorem~\ref{theorem: intro weak LDP for Ln2}, $\mu$ satisfies
	\begin{equation*}
		\Ide(\mu)=\ith R2(\mu)=\sum_{n=1}^\infty\frac{6}{\pi ^2n^2}n=\infty.
	\end{equation*}
	In this example, the inclusion of the domain of $\Ide$~---~that is the set $\{\mu\in \proba\ |\ \Ide(\mu)<\infty\}$~---~in $\Adebal \cap \ith \abscont 2$ is strict.
\end{example}
In the following example, from~\cite{brycdembo1996}, the Markov chain satisfies a level-2 weak LDP but fails to satisfy a level-1 weak LDP. By doing so, it disproves any general statement of a contraction principle from a weak level-2 LDP to a weak level-1 LDP. 
\begin{example}
	\label{ex: bryc and dembo's counterexample for level 1}
	\begin{figure}[thb]
		\centering
		\includegraphics{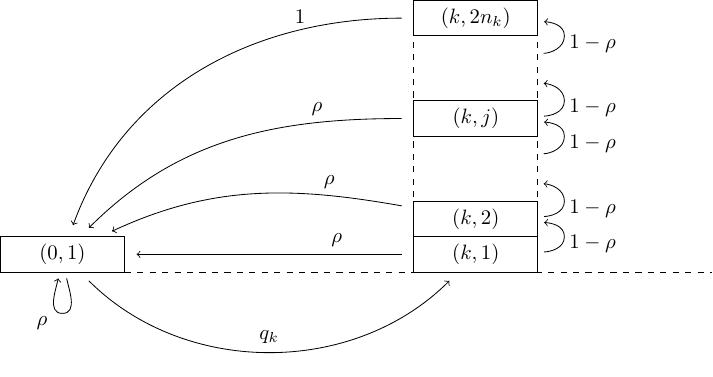}
		\caption{Visual representation of the Markov chain of example~\ref{ex: bryc and dembo's counterexample for level 1}. There is one `tower' as represented on the right of the picture per value of $k\in \N$.}
		\label{figure: climber's markov chain}
	\end{figure}
	Let $n_k=3^k$ if $k\geq 1$ and $n_0=1/2$, and let $\spaces=\{(k,j)\in \mathbb N_0^2\ |\ 1\leq j\leq 2n_k\}$. Let $p$ be defined by Figure~\ref{figure: climber's markov chain}, with the following parameters:
	$\rho\in (0,1)$ such that $\alpha:=1-4\log (1-\rho)>0$, and $q_k=C_1e^{-\alpha n_k}$, where $C_1$ is such that $q_1+q_2+\ldots=1-\rho$.
	Let $\beta(k,j)=C_2(1-\rho)^{j-1}q_k$ where $C_2$ is a normalizing constant. 
	This is a matrix-irreducible Markov chain. We consider the following observable:
	\begin{equation*}
		f(k,j)=
		\begin{cases}
			+1,\quad &j\geq n_k+1,\\
			-1,\quad &j\leq n_k.
		\end{cases}
	\end{equation*}
	Suppose that the Markov chain satisfies a level-1 weak LDP. Denote $A_n=A_nf=\frac 1n\sum_{i=1}^nf(X_i)$. Since $(A_n)$ takes its values in the compact set $[-1,1]$, it must satisfy the full LDP. Therefore, by Varadhan's lemma, the limit 
	\begin{equation*}
		\lim_{n\to\infty}\frac 1n\log\mathbb E\left[e^{3\alpha A_n}\right]
	\end{equation*} 
	exists. But~\cite{brycdembo1996} provided a proof that it actually does not converge. 
	Eventually, this shows by contradiction that the level-1 weak LDP is not valid. The level-2 weak LDP remains valid by Theorem~\ref{theorem: intro weak LDP for Ln1}.
\end{example}
\begin{example}[The Wright-Fisher model for genetic drift]
\label{ex: wright fisher}
In genetics, a gene can have several versions called \emph{alleles}. 
We consider a gene that has $k$ different alleles $A_1,\ldots , A_k$, and study the evolution of the frequencies of alleles $A_1,\ldots, A_k$ in a population over time. We consider a population of $N$ individuals, each one being characterized by their allele.\footnote{For simplicity, we study haploid cells, which are cells having a single copy of each chromosome; the gametes are typically haploid.} At the beginning, each allele is present, at least once, in the population. At each time step, a new generation of $N$ children is created and replaces the previous generation. Each child inherits the allele of their parent, which is chosen uniformly 
among the $N$ individuals of the previous generation. Let $\spaces=\{x\in \N_ 0^k\ |\ x_1+\ldots +x_k=N\}$ and let $X_{n,i}\in \{0, 1,\ldots ,N\}$ denote the number of individuals having the allele $A_i$ at the $n$-th generation. Then, $(X_n)$ is a Markov chain on $\spaces$ with stochastic kernel
\begin{equation*}
	p(x,y)=N!\prod_{i=1}^k\Big(\frac1{y_i !}\Big(\frac {x_i}N\Big)^{y_i}\Big),
	\qquad (x,y)\in \spaces^2.
\end{equation*}

Let us consider at first the case $k=2$, which is the most common formulation of the Wright-Fisher model. Then the state space reduces to $\spaces =\{0, 1, \ldots, N\}$ and $X_n\in \spaces$ denotes the number of individuals having the allele $A_1$ at the $n$-th generation. Almost surely, $X_n=0$ or $X_n=N$ for some $n$, which correspond to the extinction among the population of the allele $A_1$ or $A_2$, respectively.
Extinction scenarios provide the reducibility of the Markov chain. The irreducible classes are $\{0\}$, $\{N\}$, and $\{1,\ldots, N-1\}$. Hence, $\Aun=\{\mu\in \probaun \ |\ \mu(0)\mu(N)=0\}$, and the empirical measure satisfies the (full) LDP with non-convex rate function given by~\eqref{eq: rate functions I1}.

The reducibility structure becomes even more complex in cases $k\geq 3$; there are $2^k-1$ irreducible classes $(C_E)$, indexed by non-empty subsets of $\{1,\ldots, k\}$ defining which alleles exist in the population. If $E_1$ and $E_2$ are non-empty subsets of $\{1,\ldots, k\}$, the class $C_{E_2}$ is reachable from the class $C_{E_1}$ if and only if $E_2\subseteq E_1$. A measure $\mu\in \probaun$ is admissible if and only if there are no distinct indices $i,j\in\{1,\ldots, k\}$ such that $\langle \mu, \indic_{\{x_i>0,\,x_j=0\}}\rangle>0$ and $\langle \mu, \indic_{\{x_i=0,\,x_j>0\}}\rangle>0$ simultaneously. The empirical measure satisfies the (full) LDP with non-convex rate function given by~\eqref{eq: rate functions I1}.

Both the LDPs of the cases $k=2$ and $k\geq 3$ are out of the range of the theory of large deviations of irreducible Markov chain.
\end{example}

\section{Glossary}
\setlength\columnsep{20pt}
\begin{multicols}{2}
	\subsubsection*{Markov chain}
	\begin{itemize}
		\setlength\itemsep{-0.3em}
		\item[$\spaces$,] countable state space, \pageref{not: S}
		\item[$(X_n)$,] Markov chain, \pageref{not: X}
		\item[$\beta$,] initial measure, \pageref{not: beta}
		\item[$p$,] transition kernel, \pageref{not: p}, \pageref{not: p words}
		\item[$\mathbb P$,] law of the Markov chain, \pageref{not: P}
		\item[$\mathbb P_t$,] induced law on $\spaces^t$, \pageref{not: Pt}
		\item[$\mathbb E$,] expectation, \pageref{not: E}
		\item[$\leadsto$,] reachability relation, \pageref{not: reachable}
		\item[$(C_j)$,] irreducible classes, \pageref{not: Cj}
		\item[$\mathcal J$,] \pageref{not: Jcal}
		\item[$L_n$,] short for $\ith L2_n$, \pageref{not: Ln2}
		\item[$\ith Lk_n$,] empirical measure, \pageref{not: Ln1}, \pageref{not: Ln2}, \pageref{not: Lnk}
	\end{itemize}
	\subsubsection*{Sets}
	\begin{itemize}
		\setlength\itemsep{-0.3em}
		\item[$\bounded(\cdot)$,] bounded measurable functions, \pageref{not: Bounded}
		\item[$\expbounded(\cdot)$,] \pageref{not: expbounded}, \pageref{not: expbounded 1}, \pageref{not: expbounded k}, \pageref{eq: definition idv infty}
		\item[$\continuous(\cdot)$,] continuous functions, \pageref{not: Ccal}
		\item[$\unifcont(\cdot)$,] uniformly continuous functions, \pageref{not: Cu}
		\item[$\expbounded_\mathrm{u}(\cdot)$,] \pageref{eq: alternative expression level-3 IDV}
		\item[$\mathcal M(\cdot)$,] finite signed measures, \pageref{not: Mcal}
		\item[$\mathcal P(\cdot)$,] probability measures, \pageref{not: Pcal}, \pageref{not: Pcal infty}
		\item[$\balanced(\cdot)$,] balanced probability measures, \pageref{not: Pbal}, \pageref{not: Pbalinfty}
		\item[$ \ith {\mathcal A} k$,] admissible measures, \pageref{not: Ak}, \pageref{not: Ainfty}
		\item[$ \ith \Abal k$,] admissible balanced measures, \pageref{not: Akbal}, \pageref{not: Abal infty}
		\item[$\ith \abscont k$,] \pageref{not: abscont}, \pageref{not: abscont k}	
	\end{itemize}
	\subsubsection*{Functions and rate functions}
	\begin{itemize}
		\setlength\itemsep{-0.3em}
		\item[$I$,] short for $\ith I2$, \pageref{not: I2}
		\item[$\ith I k$,] rate function, \pageref{not: I2}, \pageref{eq: I1 = inf I2}, \pageref{not: Ik}, \pageref{eq: dawson gartner rate function}
		\item[$\underline s,\overline s$,] \pageref{not: sbar}
		\item[$s$,] RL function, \pageref{not: s}
		\item[$\langle \cdot, \cdot\rangle$,] dual pairing, \pageref{not: pairing}, \pageref{not: pairing infty}
		\item[$\|\cdot\|$,] supremum norm, \pageref{not: uniform norm}
		\item[$\cdot^*$,] convex conjugate,  \pageref{eq: definition Lambda infty *}, \pageref{section: duality}
		\item[$\ent (\cdot|\cdot)$,] relative entropy, \pageref{not: relative entropy}
		\item[$\idv$,] short for $\ith \idv2$, \pageref{not: IDV}
		\item[$\ith \idv k$,] DV entropy, \pageref{not: IDV}, \pageref{not: IDV 1}, \pageref{not: IDV k}, \pageref{not: IDV infty}
		\item[$R$,] short for $\ith R2$, \pageref{not: R}
		\item[$\ith R k$,] \pageref{not: R}, \pageref{not: R1}, \pageref{not: Rk}, \pageref{not: R infty}
		\item[$\Lambda$,] short for $\ith \Lambda2$, \pageref{not: Lambda}
		\item[$\ith \Lambda k$,] SCGF, \pageref{not: Lambda}, \pageref{not: Lambda 1}, \pageref{not: Lambda k}, \pageref{not: Lambda level 3}
		\item[$(\ith \Lambda k)^*$,] \pageref{not: Lambda *}, \pageref{not: Lambda 1}, \pageref{not: Lambda k}, \pageref{not: Lambda * level 3}, \pageref{section: duality}
		\item[$\Lambda_\infty$,] short for $\ith \Lambda2_\infty$, \pageref{not: Lambda infty}
		\item[$(\ith \Lambda 2_\infty )^*$,] \pageref{not: Lambda infty *}, \pageref{section: duality}
	\end{itemize}
	\subsubsection*{Measures}
	\begin{itemize}
		\setlength\itemsep{-0.3em}
		\item[$\tvnorm{\mu}$,] total variation norm of $\mu$, \pageref{not: TV}
		\item[$\mathcal B(\mu, \rho)$,] ball around $\mu$, \pageref{not: ball}
		\item[$\supp \mu$,] support of $\mu$, \pageref{not: supp}
		\item[$\ith \mu k$,] marginal of $\mu$, \pageref{not: mu1}, \pageref{not: Pbalk}, \pageref{not: Pbalinfty}
		\item[$\mu\otimes q$,] \pageref{not: muotimeq}
		\item[$\mu|_A$,] \pageref{not: muA}
		\item[$\pi_k$,] \pageref{not: pik}
		\item[$\pi_{k, k'}$,] \pageref{not: pi1}, \pageref{not: pikk}
	\end{itemize}
	\subsubsection*{Words and operations on words,\\ slicing and stitching}
	\begin{itemize}
		\setlength\itemsep{-0.3em}
		\item[$e$,] empty word, \pageref{not: e}
		\item[$p(u)$,] \pageref{not: p words}
		\item[$M\mbox{[$u$]}$,] \pageref{not: M(u)}
		\item[$L\mbox{[$u$]}$,] \pageref{not: L(u)}
		\item[$|u|$,] length of $u$, \pageref{not: length}
		\item[$|\underline u|$,] total length of $\underline u$, \pageref{not: |u|}
		\item[$\kv u$,] number of items in $\underline u$, \pageref{not: ku}
		\item[$\words$,] set of words, \pageref{not: words}
		\item[$\wordspos$,] set of words, \pageref{not: wordspos}
		\item[$\closeword_t$,] set of words, \pageref{not: closeword1}
		\item[$\closeword_{t,S'}$,] set of words, \pageref{not: closeword2}
		\item[$\mathcal J_0$,] finite set of indices, \pageref{section: slicing and stitching}
		\item[$r$,] number of classes, \pageref{eq: class naming convention}
		\item[$K$,] finite subset of $\spaces$, \pageref{section: slicing and stitching}
		\item[$K_j$,] finite subset of a class, \pageref{eq: class naming convention}
		\item[$\Gamma$,] \pageref{eq: definition Gamma}
		\item[$\xi_{x,y}$, $\xi^{i}$,] transition words, \pageref{lemma: stitching words}
		\item[$\eta$,] minimal transition probability, \pageref{lemma: stitching words}
		\item[$\tau$,] maximal transition length, \pageref{lemma: stitching words}
		\item[$\stitchable$,] stitchable lists, \pageref{def: the stitching map}
		\item[$\sigma$,] reordering map, \pageref{lemma: reoredring the list}
		\item[$F_{\mathcal J'}$, $F$, $F_n$,] slicing map, \pageref{def: the slicing map}, \pageref{lemma: proba ineq for the slicing map}
		\item[$G_t$,] stitching map, \pageref{def: the stitching map}
		\item[$\Psi_{n,t}$,] coupling map, \pageref{def: the coupling map}
		\item[$\widetilde \Psi_n$,] decoupling map, \pageref{def: the decoupling map}
	\end{itemize}
	
\end{multicols}
\section*{Acknowledgments}
The author would like to thank Noé Cuneo for many pivotal conversations on this topic. The author is also grateful to	Orphée Collin, Armen Shirikyan and Ofer Zeitouni for helpful discussions and advice, to Arnaud de La Fortelle, who, altough he has long since left academia, was kind enough to answer some questions about his 2002 article, and to Andrea Agazzi and Léo Micollet for sharing interesting examples and references.

\bibliographystyle{elsarticle-num} 
\bibliography{papier.bib}
\end{document}